 \def\unit{\Eins}
 \def\gh{\mathbbnew{\Gamma}}
 \def\DDelta{\mathbbnew \Delta}
\def\numberbysection{\@addtoreset{equation}{section}
         \renewcommand{\theequation}{\thesection.\arabic{equation}}}
\def\subsubsection{\@startsection{subsubsection}{3}%
  \normalparindent{.5\linespacing\@plus.7\linespacing}{-.5em}%
  {\normalfont\bfseries}}
 \newtheorem{thm}{Theorem}[section]
 \newtheorem{theorem}[thm]{Theorem}
 \newtheorem{lem}[thm]{Lemma}
 \newtheorem{lemma}[thm]{Lemma}
 \newtheorem{prop}[thm]{Proposition}
 \newtheorem{proposition}[thm]{Proposition}
 \newtheorem{cor}[thm]{Corollary}
 \newtheorem{corollary}[thm]{Corollary}
 \newtheorem{dfprop}[thm]{Definition-Proposition}
 \newenvironment{customthm}[1]
   {\innercustomthm}
   {\endinnercustomthm}
 \newenvironment{customlem}[1]
   {\innercustomlem}
   {\endinnercustomlem}
 \newenvironment{customcor}[1]
   {\innercustomcor}
   {\endinnercustomcor}
 \theoremstyle{definition}
 \newtheorem{df}[thm]{Definition}
 \newtheorem{definition}[thm]{Definition}
 \newtheorem{rmk}[thm]{Remark}
 \newtheorem{remark}[thm]{Remark}
 \newtheorem{nota}[thm]{Notation}
 \newtheorem{ex}[thm]{Example}
 \newtheorem{example}[thm]{Example}
 \newtheorem{assump}[thm]{Assumption}
 \newenvironment{customdf}[1]
   {\innercustomdf}
   {\endinnercustomdf}
 \newcommand{\leftsub}[2]{{\vphantom{#2}}_{#1}{#2}}
 \def\egr{\unit_{\emptyset}}
 \def\eps{\epsilon}
 \def\G{\Gamma}
 \def\Vect{\mathcal{V}ect}
 \def\dgVect{dg\Vect}
 \def\dgvect{\dgVect}
 \def\gVect{\Vect^{\Z}}
 \def\Fass{{\FF}_{Assoc}}
 \def\Flie{{\FF}_{Lie}}
 \def\Fprelie{{\FF}_{pre-Lie}}
 \def\Graphs{{\mathcal G}raphs}
 \def\Gr{\Agg}
 \def\CalC{{\mathcal C}}
 \def\CalD{{\mathcal D}}
 \def\CalE{{\mathcal E}}
 \def\base{{B}}
 \def\CalB{{\mathcal B}}
 \def\CO{{\mathcal O}}
 \def\Agg{{\mathcal A}gg}
 \def\Set{{\mathcal S}et}
 \def\Top{{\mathcal T}op}
 \def\SSet{\mathcal {SS}et}
 \def\Crl{{\mathcal C}rl}
 \def\nCrl{\Crl_{\N}}
 \def\Cyclic{{\mathcal C}yc}
 \def\opd{\mathcal{O}pd}
 \def\CCyclic{\mathfrak  {C}}
 \def\operads{{\mathfrak O}}
 \def\props{{\mathfrak P}}
 \def\properads{{\mathfrak P}^{ctd}}
 \def\dioperads{{\mathfrak D}}
 \def\modular{{\mathfrak M}}
 \def\F{\mathcal F}
 \def\FF{\mathfrak F}
 \def\GG{\mathfrak G}
 \def\FV{\F_{\V}}
 \def\VV{\V_{\V}}
 \def\FFV{\FF_{\V}}
 \def\iV{\imath_{\V}}
 \def\fL{\mathfrak{L}}
 \def\C{\CalC}
 \def\Z{{\mathbb Z}}
 \def\N{{\mathbb N}}
 \def\G{\Gamma}
 \def\del{\partial}
 \def\colim{\mathrm{colim}}
 \def\FinSet{\mathcal{F}in\mathcal{S}et}
 \def\Surj{\mathcal{S}urj}
 \def\FSurj{\mathfrak{Sur}}
 \def\Inj{FI}
 \def\O{{\mathcal O}}
 \def\P{{\mathcal P}}
 \def\Sn{{\mathbb S}_n}
 \def\SS{{\mathbb S}}
 \def\odo{\otimes \cdots \otimes}
 \def\K{\mathfrak K}
 \def\crl{*}
 \def\Gra{\mathfrak{G}ra}
 \def\day{\circledast}
 \def\wt{\mathrm{wt}}
 \def\deg{\mathrm{deg}}
 \def\kill{trun}
 \def\leaf{leaf}
 \newcommand\ccirc[2]{\, \leftsub{#1}{\circ}_{#2}}
 \def\scirct{\ccirc{s}{t}}
 \newcommand\mge[2]{\, \leftsub{#1}{\boxminus}_{#2}}
 \newcommand{\ds}{\displaystyle}
 \def\V{\asts}
 \def\asts{{\mathcal V}}
 \def\F{\clusters}
 \def\clusters{{\mathcal F}}
 \def\isoclusters{Iso(\F)}
 \def\Ab{{\mathcal Ab}}
 \def\CalG{{\mathcal G}}
 \def\opcat{{\mathcal O }ps}
 \def\op{\mathcal}
 \def\smodcat{{\mathcal M}ods}
 \def\forget{\mathit {forget}}
 \def\free{\mathit {free}}
 \def\T{{\mathbb T}}
 \def\I{{\mathcal I}}
 \def\D{\mathcal D}
 \def\oper{op}
 \def\opers{\opcat}
 \def\FT{\mathsf{FT}}
 \def\fops{\F\text{-}\opcat}
 \def\foddops{\F^{odd}\text{-}\opcat}
 \def\fopsc{\F\text{-}\opcat_\C}
 \def\vmods{\V\text{-}\smodcat}
 \def\vmodsc{\V\text{-}\smodcat_\C}
 \def\vseq{\V\text{-}\text{Seq}_\C}
 \def\Cobar{\Omega}
 \def\Bar{\mathsf{B}}
 \def\tensor{\otimes}
 \def\fopst{\F\text{-}\opcat_{\Top}}
 \newcommand{\id}{\mathbb{I}}
 \newcommand{\fr}{\mathfrak}
 \newcommand{\cdc}{,\dots,}
 \newcommand{\fdops}{\F\text{-}\opcat_\op{D}}
 \newcommand{\eops}{\op{E}\text{-}\opcat_\op{C}}
 \newcommand{\vemods}{\V_\fr{E}\text{-}\smodcat_\op{C}}
 \newcommand{\vfmods}{\V_\FF\text{-}\smodcat_\op{C}}
 \newcommand{\adj}[4]{#1\negmedspace: #2\rightleftarrows #3:\negmedspace #4}
 \newcommand{\Fpair}{\F_{dec\O}}
 \newcommand{\Fepair}{\Fe_{dec\O}}
 \newcommand{\Fepairtwo}{\Fe'_{dec f_{\ast}(\O)}}
 \newcommand{\Fe}{\mathfrak{F}}
 \def\FFdeco{\FF_{dec\O}}
 \def\Fdeco{\F_{dec\O}}
 \def\Vdeco{\V_{dec\O}}
 \def\ideco{\iota_{dec\O}}
 \def\Po{\P}
 \def\final{\mathcal T}
\begin{document}

 \title[Feynman Categories]{Feynman Categories}

 \author
 [Ralph M.\ Kaufmann]{Ralph M.\ Kaufmann}
 \email{rkaufman@math.purdue.edu}

 \address{Purdue University Department of Mathematics,
  West Lafayette, IN 47907}

 \author
 [Benjamin C.~Ward]{Benjamin C.~Ward}
 \email{bward@math.su.se}

 \address{Stockholm University, Stockholm, SE 106-91}

 \begin{abstract}

 In this paper we give a new foundational, categorical formulation for operations and relations and objects parameterizing them.  This generalizes and unifies the theory of operads and
 all their cousins including but not limited to PROPs, modular operads, twisted (modular) operads, properads, hyperoperads, their colored versions, as well as algebras over operads and an abundance of other related structures, such as crossed simplicial groups, the augmented simplicial category or FI--modules.

 The usefulness of this approach is that it allows us to handle all the classical as well as more esoteric structures under a common framework and we can treat all the situations simultaneously.
 Many of the known constructions simply become Kan extensions.

 In this common framework, we also
 derive universal operations, such as those underlying Deligne's conjecture, construct Hopf algebras as well as perform resolutions, (co)bar transforms
 and Feynman transforms which are related to master equations. For these applications, we construct the relevant model category structures. This produces many new examples.
 \end{abstract}

 \maketitle

\section*{Introduction}

\subsection{General overview and  background}
We introduce Feynman categories as a universal foundational framework for treating operations and their relations.
It unifies a plethora of theories which have previously  each been treated individually. We give the axiomatics, the essential as well as novel, universal constructions
which provide a new conceptual level to a sweeping set of examples.
In this sense Feynman categories present a new {\it lingua} or {\it characteristica universalis} for  objects governing operations.
We use a categorical formulation in which all constructions become Kan extensions, true to the dictum of MacLane \cite{MacLane}.

The history of the  problem of finding universal languages goes back to Leibniz \cite{leib}  and in modern times it was first considered by Whitehead \cite{Whitehead}. Depending on the situation, the special types of operations that have appeared were encoded by various objects fitting the setup. Classically these came from the PROPs of \cite{MacLanePROP} and their very successful specialization to operads \cite{Mayoperad}. As the subject entered its renaissance in the 90s \cite{renaissance} fueled by the connection to physics, especially (string) field theory and correlation functions, more constructions appeared, such as cyclic operads \cite{GKcyclic}, modular \& twisted modular operads \cite{GKmodular} and even in the 2000s there are additions such as properads \cite{Vallette}, wheeled versions \cite{wheeledprops} and many more, see e.g. \cite{MSS,KWZ}, Table \ref{univtable} and \S2. There are still more examples to which our theory applies, such as FI--modules \cite{Farb}, which appear in representation theory, as well as operations appearing in  open/closed field theories \cite{KP} or those in a future direction, homological mirror symmetry \cite{KOOO}, where the relevant framework has perhaps not even been defined yet axiomatically.

The beauty of Feynman categories is that all these examples ---and many more--- can
be treated on equal footing.  Although differing greatly in their details the above structures do have a common fabric, which we formalize by the statement that these are all functors from Feynman categories.   This enables us to replace ``looks like'', ``feels like'', ``operad--like'' and ``is similar/kind of analogous to'' by
the statement that all examples are examples of functors from Feynman categories. The upshot being that there is no need to perform constructions and prove theorems in every example separately, as they can now be established in general once and for all.

What we mean by operations and relations among them can be illustrated by considering a group (which yields an example of a Feynman category) and its representations. There are the axioms of a group, the abstract group itself, representations in general and representation of a particular group. The operations when defining a group are given by the group multiplication and inverses, the relations are given by associativity and unit relations. Thus the operations and relations deal with groups and representations in general. These specialize to those of a particular group. Universal constructions in this setting are for instance the free group, resolutions, but also induction and restriction of representations.  This situation may be interpreted categorically by viewing a group as a groupoid with one object. Here the morphisms are the elements of the group, with the group operation being composition. The fact that there are inverses makes the category into a groupoid by definition. The representations then are functors from this groupoid.

In many situations however, there are many operations with many relations. One main aim in the application of the theory is to gather information about the object  which admits these operations. A classical paradigmatic topological example is for instance the recognition theorem for loop spaces, which states that a connected space is a loop space if and only if it has all the operations of a topological homotopy associative algebra \cite{Stasheff}.
A more modern example of operations are Gromov--Witten invariants \cite{KoMa,ManinBook} and string topology \cite{CS} which are both designed to study the geometry of spaces via operations and compare spaces/algebraic structures admitting these operations, see also \cite{KLP,woods,CHV, KWZ}.
It then becomes necessary to study the operations/relations themselves as well as the objects and rules governing them.

After giving the definition of a Feynman category, we give an extensive list of examples. This can be read in two ways. For those familiar with particular examples they can find their favorite in the list and see that the theory applies to it. Importantly, for those not familiar with operads and the like, this can serve as a very fast and clear definition. Some, but not all, examples are built on graphs.
This gives a  connection to Feynman diagrams which is responsible for the terminology.
 In our setup the graphs are ---crucially--- the morphisms and not the objects.

Having encoded the theories in this fashion, we can make use of categorical tools, such as Kan-extensions.
This enables us to define pull--backs, push--forwards (left Kan extensions), which are always possible as well as extension by zero (right Kan extension) which sometimes exist, {\it et cetera}. Indeed we show that ``all'' classical constructions, such as free objects, the PROP generated by an operad, the modular envelope, etc., are
 all push--forwards. To further extend the applicability, we also consider the enriched version of the theory.

Among the universal constructions are universal operations, Hopf algebras, model structures, and bar/co-bar and Feynman transforms. Examples of universal operations are Gerstenhaber's pre--Lie algebra structure for operads
and its cyclic generalization, as well as BV operators etc. The classical examples are collected in Table \ref{univtable}. We show that given any Feynman category there is a universal construction for this type of universal operation, which is actually highly calculable.

The reason for giving the bar/co-bar and Feynman transforms is two-fold. On one hand, they give resolutions. For this to make sense, one needs a model category structure, which we provide.  The construction of this model structure unifies and generalizes a number of examples found in the literature (see section $\ref{htsec}$).
It for instance establishes these for all the presented examples, which includes the whole zoo of known operad--like structure.
On the other hand, said transforms give rise to master equations as we discuss. A case-by-case list of these was given in \cite{KWZ} and here we show that they all fit into the same framework. We furthermore give a W--construction which under some technical assumptions, which the interesting examples satisfy, gives a cofibrant replacement.

The Hopf algebra structures  give a new source for such algebras and go back to an observation of Dirk Kreimer. They are expounded in \cite{GKT} where we treat the Hopf algebra structures of Connes--Kreimer \cite{CK}, Goncharov \cite{Gont} and Baues \cite{Baues} as examples.

Our new axiomatization rests on the shoulders of  previous work, as mentioned above,
but it was most influenced by \cite{BM} who identified Markl's \cite{Markl} hereditary condition as essential.
For the examples based on graphs, we also use their language.
Our construction, however, is one category level higher. For the examples involving graphs, they are not objects, but underlie morphisms.
With hindsight there are many similar but different
generalizations and specializations like \cite{getzler},\cite{BergerMoerdijk},\cite{multicategory} and others, such as  Lavwere theories \cite{Lawvere} and
FI--modules \cite{Farb} and  crossed--simplicial groups \cite{Lodaycrossed}, whose relationship to Feynman categories we discuss in \S\ref{otherpar}.

\subsection{Main definition}

The main character is the notion of a Feynman category. It has the following concise definition.
 Fix a  symmetric monoidal category $\F$ and let $\asts$ be a category that is a groupoid, that  is $\asts=Iso(\asts)$. Denote the free symmetric monoidal category on $\V$ by $\V^{\otimes}$.
Furthermore let $\imath\colon\V\to \F$ be a functor and let $\imath^{\otimes}$ be the induced symmetric monoidal functor $\imath^{\otimes}\colon \V^{\otimes}\to \F$.

\begin{customdf}{\ref{feynmandef}}A triple $\FF=(\V,\F,\imath)$ of objects as above is called a Feynman category
if

\begin{enumerate}
\renewcommand{\theenumi}{\roman{enumi}}

\item (Isomorphism condition)
The monoidal functor $\imath^{\otimes}$ induces an equivalence of symmetric monoidal categories between $\V^{\otimes}$ and $Iso(\F)$.

\item (Hereditary condition) The monoidal functor $\imath^{\otimes}$ induces an equivalence of symmetric monoidal categories between $Iso(\F\downarrow \V)^{\otimes}$ and
$Iso(\F\downarrow\F)$.

\item (Size condition) For any $\ast\in \asts$, the comma category $(\clusters\downarrow\ast)$ is
essentially small,
viz.\ it is equivalent to a small category.
\end{enumerate}
\end{customdf}
How to think about these conditions is discussed in detail in \S\ref{explainsec}. A very brief description is as follows: Condition (i) means that the objects of $\F$ are up to isomorphism words in objects of $\V$ and the isomorphisms in $\F$ are words of isomorphisms in $\V$ and permutations of letters. Condition (ii) is the vital condition. It states that up to isomorphism any morphism can be decomposed into simpler, elementary, or as we call them one--comma generating morphisms. Namely, any morphism $\phi:X\to Y$ in $\F$ can be factored up to isomorphism into a tensor product $\bigotimes_{v\in I}\phi_v$, where $\phi_v: X_v\to \imath(\ast_v)$, whenever an isomorphism of  $Y$  with a word $\imath^\otimes(\bigotimes_{v\in I} \ast_v)$  in elements of $\V$ has been picked. The notation $\V$ is chosen for vertices and a vertex is called $\ast_v$.
Such a word exists by (i).
It is implied that $X\simeq \bigotimes_v X_v$. The condition (ii) actually asks more, namely that these decompositions are unique up to a unique block isomorphism which is a tensor product of isomorphisms on the sources of the $\phi_v$. The last condition (iii) is technical and ensures that certain colimits exist.
Hence the comma category $(\F\downarrow\V)$, mapping into words of length one, generates the morphisms under
tensor product up to (unique) isomorphism.

Feynman categories are interesting on their own, just like groups, the simplicial category $\DDelta_+$ or the crossed simplicial group $\DDelta S$, which are all examples. This intrinsic interest also manifests itself by the construction of Hopf algebras from their morphisms. These Hopf algebras include the ones of Connes and Kreimer, Goncharov and a Hopf algebra considered by Baues for the chains on a double loop space. This is discussed in \S\ref{hopfsec}.

A main motivator for the study of Feynman categories, however, arises by considering functors out of them, aka.\ algebras over them, or $ops$ as we call them to avoid confusion arising by the overused term algebra. The technical definition is as follows.
For a given $\FF=(\V,\F,\imath)$ we denote by
$\F$-$\opcat_\C$ the strong symmetric monoidal functors $Fun_{\otimes}(\F,\C)$ where $\C$ is a symmetric monoidal category, which is taken to be cocomplete and satisfying that tensor and colimits commute.
We also consider $\V$-$\smodcat_\C$ which is $Fun(\V,\C)$. Both $\fopsc$ and $\vmodsc$ are themselves symmetric monoidal categories using the ``levelwise'' monoidal structure inherited from $\C$.

Of course, there is a forgetful functor from
$\F$-$\opcat_\C$ to $\V$-$\smodcat_\C$
given by restricting along $\imath$.
The first set of results then deals with a free functor left adjoint to the forgetful one and an equivalence of categories between the category  of functors
from a Feynman category and the category algebras over the monad (aka.\ triple) given by the forget and free functors. This is known as monadicity and gives two equivalent ways of characterizing the operations. The definition as algebras over a triple (aka.\ monad)  is usual for instance in (modular) operad theory \cite{MSS,GKmodular}.

\subsection{Examples}
To illustrate the concept, we provide a few examples with increasing difficulty. Some of the examples are based on the premise that some of these structures are known already to the reader. For the main text, no such knowledge is required. All notions are introduced to make the presentation self--contained and furthermore the theory goes beyond the collection of known examples. Indeed this is one of the main aims.

\subsubsection{Tautological examples} Given a groupoid $\V$, we can let $\F=\V^{\otimes}$ and $\imath:\V\to \F$ the standard inclusion. This yields a Feynman category whose category of $\fops_\C$ is equivalent to $\vmodsc$ which is the category of groupoid representations of $\V$. If the groupoid $\V$ only has one element $*$, then these are the group representations of $G=Aut(*)$ mentioned above. It is in this setting that the push--forward, or left Kan extension, becomes induction, see  Theorem \ref{pushthm} below.

\subsubsection{Trivial $\V$}
These examples are treated in detail in \S\ref{trivsec}.
We consider $\V=Triv$  the category with one object $*$ with $Hom_{\V}(*,*)=id_*$.
Then  let $\V^{\otimes}$ be the  free symmetric category and let $\bar\V^{\otimes}$ be its strict version. It has  objects $n:=1^{\otimes n}$ and each of these has an $\Sn$ action: $Hom(n,n)\simeq \SS_n$.
Let $\bar\imath$ be the functor from $\V^{\otimes}\to\bar\V^{\otimes}$.
 $\F$ will be a category with $Iso(\F)$ equivalent to $\bar\V^{\otimes}$, which itself
 is equivalent to the skeleton of  $Iso(\FinSet)$ where $\FinSet$ is the category of finite sets with $\amalg$ as monoidal structure.  Here,  for  convenience, we do use the strictification of $\amalg$.

A particular example, which is discussed \S\ref{surjpar}, is given by choosing $\F=\Surj$ which is the category of finite sets with surjections as morphisms and disjoint union as monoidal structure.  Using $\imath$ as the natural inclusion, this yields the Feynman category $\FF_{surj}=(Triv,\Surj,\imath)$.
In this case $\fopsc$ is the category of  non-unital commutative associative monoids.

If we are in the non--symmetric version, then $\V^{\otimes}$ again has the natural numbers as objects, but is discrete, which means that the only morphisms are identities. The analog of $\Surj$ in this case is $\Surj_<$ the category of order preserving surjections. In this case $\fopsc$ will be non--unital associative monoids in $\C$.

Other examples are crossed simplicial groups, the simplicial category or the category FI, which appears in the threory of FI--modules.

These examples become even more interesting if one allows enrichment. For instance enriching $\FF_{surj}$ amounts to specifying an operad $\O$ (see below) with trivial $\O(1)$. The category $\fopsc$ in this case is the category of
algebras over the specified operad $\O$ (with trivial $\O(1)$).
The algebras over operads with non-trivial $\O(1)$ also form examples of $\opcat$ for a Feynman category, but these Feynman categories do not have a trivial $\V$.

There is an abundance of other related structures discussed throughout the text.

\subsubsection{Graphs, Trees and Operads}
There is a Feynman category $\operads$ such that the objects of $\operads$-$\opcat_\C$ are the operads in $\C$ and the objects of $\V$-$\smodcat_\C$ are the $\Sigma$-- aka.\ $\SS$-modules. In general, we will say that a Feynman category is the Feynman category for a certain structure $X$, e.g.\ operads, algebras, etc., if $\fopsc$ is the category of $X$--structures in $\C$, e.g.\ operads in $\C$, algebras in $\C$, etc..

 $\operads$ and all
 the Feynman categories for the beasts in the known zoo of operad--like structures,   including but not limited to PROPs, modular operads, twisted (modular) operads, properads, hyperoperads, operads with multiplication, their colored versions arise from {\it one fundamental} example $\GG=(\Crl,\Agg,\imath)$, see \S\ref{ggpar}.
 Here  $\Crl$ is the full subgroupoid of corollas and $\Agg$ is the full subcategory of aggregates of corollas in $\Graphs$,  Borisov--Manin's category of graphs \cite{BM}, which is reproduced in Appendix A.
 These examples are discussed in detail in \S\ref{examplesec}. That section also contains practical information about how to include special elements, like units, multiplications, differentials, {\it et cetera}.

 We will briefly introduce these categories informally.  A graph consists of a collection of vertices, a collection of half edges, aka.\ flags and two morphisms. The first says to which vertex a half edge is attached and the second says which half edges are glued to an edge and which ones remain unpaired, i.e.\ are tails. Objects of the Borisov--Manin category of graphs are these graphs. The definition of morphism is a bit tricky. This allows one to have edge contractions, merging of vertices and gluing of tails and all their compositions as  morphisms.

 A morphism actually has three parts. A surjection on vertices from those of the source graph to those of the target graph. An injection from the flags of the target graph to the flags of the source graphs. The third part is an involution, which keeps track of the flags that are glued to edges and then contracted.
 These are called ghost edges of the morphisms.  Technically this is a pairing of all the flags that are not in the image of the injection on flags. This category is monoidal under disjoint union.

 Our key observation is  that the ghost edges of a morphism then define an underlying ghost graph of the morphism, by using the vertices and flags with their incidence of the source graph and the ghost edges as edges. It is these ghost graphs {\em underlying morphisms} (as opposed to graphs as objects of $\Graphs$) that play the role of the graphs in the usual description of the zoo; e.g.\ trees, rooted trees, directed graphs, etc. We will now give some details on this.

Namely, with this preparation, we can introduce several of the beasts. A corolla is a one vertex graph without loops. It only has tails attached to it. The objects of $\Crl$ are corollas whose vertex is $*$ and whose tails are the set $S$, the standard notation for this graph being $*_S$. $\Crl$ is then the full subgroupoid on corollas. That is, the morphisms are the isomorphisms between corollas. An isomorphism of corollas $*_S$ and $*'_T$ is uniquely determined by  a  bijection  $S\leftrightarrow T$.  An aggregate is a disjoint union of corollas. This explains all the data that defines the Feynman category $\GG$.

By restricting morphisms in $\Agg$, via restriction of the type of underlying ghost graphs of the morphisms, one obtains several of the beasts. If one, for instance, says that the underlying ghost graphs for the morphisms from an aggregate to a corolla have to be trees, one arrives at the Feynman category for cyclic operads. A general morphism then will have a forest as the ghost graph. To obtain the Feynman category $\operads$ one has to do this restriction, but also add the additional data of a root. One way is to just define morphisms with this extra data. This is generalized by the notion of a Feynman category indexed over another Feynman category, see Definition \ref{overdef}.

There is a more general way to do this in several steps. First, one decorates all the flags as ``in'' or ``out''.
Then one can restrict to morphisms whose ghost edges have exactly one ``in'' and one ``out'' flag. This yields PROPs. If one further restricts to connected ghost graphs for the morphisms from aggregates to corollas, one obtains properads. Finally, if one restricts to aggregates whose corollas all have exactly one ``out'' flag one obtains operads.
Decoration here is a technical term \cite{decorated}, which we introduce in \S\ref{decopar}.

Adding a unit or an associative multiplication can again be done in two different ways. In the first, one adjoins  special morphisms to the Feynman category and then quotients by the relations defining the structure. This is explained in \S\ref{unitmultsec}. The special morphisms all have the unit of the Feynman category as the source.  In $\GG$ this is the empty corolla.
The units or multiplication will then be the image of these morphisms under a functor $\O\in \fopsc$. This is again a case of indexing.

The second way to view this is as a decoration of the vertices. A familiar example are operads with multiplication, here the underlying ghost graphs have the additional decoration of vertices being black or white, where a black vertex indicates the morphism whose image under $\O$ is multiplication, see e.g.\cite{KS,del,KSchw}. These decorations are discussed in Example \ref{bwex}.

The whole zoo of examples are derived from $\GG$ by adding special morphisms, decoration and restriction. (See Table \ref{table1}).

\subsection{Discussion of the results}

Here we give a short overview of the results providing their logical interconnectedness, while giving examples of the structures under discussion.
This synopsis is not intended to be exhaustive and there are more constructions, (technical) details and applications in the main text.  See the organization of the text below for more particulars.

We prove the following structural theorem, which shows that there are two equivalent ways to characterize  $\fopsc$. Either as the category of strong symmetric monoidal functors, or as the category of algebras over a triple.
This is known in the theory of operads, where the triple description is in terms of rooted trees.

\begin{customthm}{\ref{freethm}}
Let $\CalC$ be a cocomplete symmetric
monoidal category such that $\otimes$ preserves colimits in each variable, then there is a left adjoint (free) functor  $F=\free$
to the forgetful functor $G=\forget$ which is comonoidal.
That is, these functors are adjoint  symmetric (co)monoidal  functors for the symmetric monoidal categories of $\smodcat$ and $\opcat$.
\end{customthm}

This theorem has been proven over and over again for each special case and is now available in full generality.

It is actually a special case of a more general statement about the existence of push--forwards along morphisms of Feynman categories.
Morphisms between Feynman categories are given by functors compatible with $\iota$, see Definition \ref{mordef} for a more nuanced definition. A more general theorem for such functors is as follows. Let $\O\in \fopsc$, consider  a morphism of Feynman categories $\FF$ to  $\FF'$ and let $f:\F\to \F'$ be a strong symmetric monoidal functor which is part of the data of such a morphism. Denote the left Kan extension of $\O$ along $f$ by $Lan_f\O$.
This is a functor from $\F'$ to $\C$ whose existence is guaranteed by the the condition (iii) and assumption that $\C$ is cocomplete.

\begin{customthm}{\ref{pushthm}}
$Lan_{f}\O$ is a strong symmetric  monoidal functor, which we denote  as push--forward
 $f_*\O:=Lan_{f}\O$, and hence $f$ defines a functor $f_*:\fopsc\to \F'$-$\opcat_\C$. Moreover $f_*,f^*$   form an adjunction of symmetric monoidal functors between the symmetric
 monoidal categories $\fopsc$ and $\F'$-$\opcat_\C$.
\end{customthm}
The free construction follows by applying this theorem to the tautological Feynman category $(\V,\V^{\otimes},\jmath)$ and its inclusion into $\FF$. Further examples include the modular envelope of a cyclic operad, the PROP generated by an operad or the restriction of a cyclic operad to an operad or a modular one to a cyclic one. There are more exotic ones like the PROP or properad generated by a modular operad. Again, the theorem is universally applicable and the usual case--by--case scenarios can be avoided.

The most natural setting is to regard strong symmetric monoidal functors, which is what is considered when using $\fopsc$. However, given a Feynman category $\FF$, we show in \S\ref{constructionsec} that there are Feynman categories
$\FF^{\boxtimes}$ called the free Feynman category and  $\FF^{nc}$ called the non--connected Feynman category,
such that $\FF^{\boxtimes}$-$\opcat_\C$ is equivalent to the category of all functors $Fun(\F,\C)$.
and $\FF^{nc}$-$\opcat_\C$  is equivalent to the category of all lax symmetric monoidal functors $Fun_{lax}(\F,\C)$.

As mentioned previously, there are several levels at which one can consider Feynman categories. The lowest corresponds essentially to restricting the monodial structure to be like that of the category $\Set$ and
disjoint union, we will call this combinatorial. This includes all the classical operad--like examples (except the twisted ones). Here one can weaken condition (ii) to the existence of the tensor factorizations of morphisms. Namely,  any morphism $\phi:X\to Y$ can be factored up to isomorphism into a tensor product $\bigotimes_{v\in I}\phi_v$ of one--comma generators.
The difference is that the condition (ii) also requires such a decomposition to be unique up to a unique isomorphism that has to be in ``block form''.  Here ``block'' form means that the $\phi_v$ may be permuted and replaced only individually by isomorphic morphisms. This is discussed in detail in \S\ref{explainsec} and \S\ref{othersec}. This condition is easily checked for instance in  graph based examples which include the zoo of operad--like examples. We emphasize that the graphs are not objects, but underly morphisms. The more general view is given by passing from categories with graph morphisms to  symmetric monoidal categories satisfying additional conditions, which is our basic definition. There is also a non--symmetric version that arises when demanding only that $\F$ is simply a monoidal category.

\subsubsection{Decoration}
As mentioned before, many examples come from decorations. Decoration is defined in the following way.

\begin{customthm}{\ref{decoexistthm}} Given an $\O\in \F$-$\opcat_\C$ with $\C$ Cartesian, there exists the following Feynman category $\FFdeco=(\Vdeco,\Fdeco,\ideco)$ indexed over $\FF$. The objects of $\F_{dec\O}$ are pairs $(X,dec\in \O(X))$ and $Hom_{\F_{dec\O}}((X,dec),(X',dec'))$ is the set of $\phi:X\to X'$, s.t. $\O(\phi)(dec)= (dec')$. $\Vdeco$ is given by the pairs $(\ast,a_{\ast})$, $\ast \in \V$, $a_\ast\in \O(\imath(\ast))$ and $\ideco$ is the obvious inclusion. The indexing
is given by forgetting the decoration.
\end{customthm}

There is a slightly more involved construction for decorations in the case that $\C$ is not Cartesian.
Examples are the genus decoration for modular operads, the direction for PROPs and properads, the colors for colored versions and so on. Other types of decorations come from functors such as the associative operad, its cyclic version, or its modular envelope which yield the non-$\Sigma$ versions of operads, cyclic operads and modular operads respectively. This is discussed in detail in \S\ref{decopar}.

\subsubsection{Enrichment and twists}
A higher level of generalization is given by considering enriched monoidal categories and subsequently twists.
To this end, we give an alternative formulation of the conditions, which is rather highbrow and allows one to generalize to the enriched case, where the enrichment is over a Cartesian category. Note that here we are not talking about, say operads in a category $\C$, but Feynman categories enriched over a Cartesian category $\CalE$. Examples will then be the Feynman category whose $\fops$ are algebras over an operad in $\CalE$, cf.\ \ref{operfeypar}, e.g.\ algebras over a simplicial operad.
In order to add twisted (modular) operads to the story, we have to go beyond Cartesian enrichment, since these beasts live in $k$--linear settings. The answer to this is to work with so--called indexed enriched Feynman categories.  These enrichments will be needed starting in Chapter \ref{enrichedsec}  and the reader not interested in this level of generality can skip these rather heavy categorical considerations and return to them later if needed. An indexed enrichment is given by a lax 2--functor $\D$ from $\F$ to $\CalE$ satisfying additional axioms, where both monoidal categories are viewed as 2--categories. This theory allows us to also include the twisted versions. In fact, this allows us to prove the analogues of the two theorems (monadicity and push-forwards) as above also in the twisted case. Indeed the full theory of twists and co-cycles for modular operads is generalized to arbitrary Feynman categories. These twists, in particular the odd twists, are necessary in order to define the bar/cobar or Feynman transforms.

An equivalent description of the enrichment functors $\D$ is given as elements of $\F^{hyp}$-$\opcat$, where $\FF^{hyp}$, called the hyper Feynman category, is a category constructed from $\FF$ via a two--step process. The first is a $+$ construction, similar to the opetopic principle \cite{BaezDolan}, and the second is a quotient in which the value of $\D$ on isomorphisms becomes invertible. This construction is done in \S\ref{Fplussec} and \S\ref{hypersec}.
Examples are that if $\modular$ is the Feynman category for modular operads, then $\modular^{hyp}$ is the Feynman category for hyper--operads. These are what can be used to twist modular operads. Other examples are $\FF_{surj}^+=\FF_{\rm May-Operads}$, the Feynman category for (non--unital) May operads, while $\FF_{surj}^{hyp}=\operads_0$ the Feynman category of reduced operads, i.e.\ whose $\O(1)=\unit_\C$. The $+$ and $hyp$ constructions lead to a boot-strap. There are however several sources for this boot-strap. As far as is known $\modular$ can not be obtained as $\FF^+$ or $\FF^{hyp}$.

Importantly for any Feynman category $\FF=(\V,\F,\imath)$ and a  $\D$ thought of equivalently as a 2--functor or as an element of $\FF^{hyp}$-$\opcat_\C$, we define an {\it indexed enriched} Feynman category $\FF_{\D}=(\V_\D,\F_\D,\imath_\D)$. With this definition, generalizations of the two structural theorems hold.

\begin{customthm}{\ref{enrichedtriplethm}}  $\FF_{\D}$ is a weak Feynman category.  The forgetful functor from $\FF_{\D}$-$\opcat$ to $\V_{\CalE}$-$\smodcat$ has a left adjoint and more generally push-forwards among indexed enriched Feynman categories exist. Finally there is an equivalence of categories between the category of algebras over the triple $GF$ and $\F_{\D}$-$\opcat$.
\end{customthm}

Here a weak Feynman category defined in Definition \ref{weakdef} is a weakening of the condition (i).  This construction also allows one to give a definition of algebras of a given element of $\fopsc$ as follows. Suppose $\FF=\bar\FF^{hyp}$ for some underlying Feynman category $\bar\FF$,
and fix $\D\in \fopsc$. Then $\bar \FF_{\D}$ is an indexed enriched Feynman category such that $\bar \FF_{\D}$-$\opcat_\C$ are the {\it algebras over $\D$}.
In particular for a reduced operad $\O$, $\FF_{surj,\O}$ is the Feynman category for algebras over $\O$. There are constructions to deal with non--reduced operads given in \S\ref{examplesec} and \S\ref{operfeypar}.

\subsubsection{Feynman categories through generators and relations and their odd versions}
Going back to considering algebras and operads, there are in fact three ways of defining them. For operads, first one may say that for each tree there is a composition, this corresponds to being an element in $\operads$-$\opcat$. The second is as an algebra over the monad (or triple) of trees, which is the setting of the monadicity theorem.  Lastly, one may give the operations $\circ_i$ and impose $\SS_n$ equivariance and associativity axioms.  This generators and relations approach is also familiar for defining associative algebras. Namely, one postulates a multiplication and then adds its associativity as a relation.

For Feynman categories there is also such a generators and relations approach. This is described in \S\ref{genrelpar}. Here the generators are a subset of the one--comma generators, which can be used to write any one--comma generator  using composition (aka.\ concatenation) and tensor powers of identities. I.e.\ they are generators for the partial algebra structure of morphisms under composition and the algebra structure under the monoidal product. The way this works is most easily seen in the graph based examples, with simple morphisms. Here a simple morphism is either a simple edge contraction or simple loop contraction, i.e.\ by a morphism whose underlying graph has one edge, i.e.\ a tree with two vertices or a one vertex graph with one loop, see Figure \ref{generatorfig}. If the graphs are not connected there is another operation on graphs called a merger, whose image under a functor will be a ``horizontal'' composition. These remarks are detailed in \S\ref{graphstrucsec}, where $\GG$  as the paradigmatic example is studied in depth. Up to isomorphism there are the three generating morphisms above. Restricting to the connected case, the possible ways to decompose a morphism into a sequence of simple morphisms up to isomorphism correspond to ordering the edges of the underlying graph. If there are mergers, then one can agree to  do the mergers last. These decompositions are not unique and hence yield relations. A typical such relation is that when performing two edge contractions their order does not matter for the resulting composition.

 For general Feynman categories this leads to the notion of ordered presentations as well as ordered and oriented Feynman categories, which allow one to decompose morphisms in a standard fashion up to isomorphisms and permutations. When passing to an enrichment over $\Ab$, the oriented versions become the odd versions $\FF^{odd}$, after one assigns parity to the relations. These parities have to be compatible, a fact that can most easily be checked by introducing 2--morphisms in the diagrams yielding the relations, see \S\ref{2catsec}.  The full data of this is called an ordered presentation. This gives an action of the symmetric group and allows one to define $\FF^{odd}$, the odd twisted version of $\FF$. That is, $\FF^{odd}$ is dependent on such a choice. There are additional conditions of being ``resolving'' (Definition \ref{cocompsec}), and being ``cubical'' (Definition \ref{cubicaldef}), which will become important for the existence of differentials and resolutions.

\subsubsection{Feynman category of universal operations}
One of the most fruitful observations in operad theory is the one by Gerstenhaber that given an operad $\O$ in a $k$--linear category there is a pre--Lie structure given by $a\circ b=\sum a\circ_i b$ here the sum runs from $1$ to $n$ if $a$ is in $\O(n)$.
The pre-Lie structure is on $\bigoplus_n \O(n)$. In section \S\ref{universalsec}, we generalize this type of observation to arbitrary Feynman categories.
To set this up, we have to first go to the cocompletion of $\F$ denoted by $\hat \F$ as spelled out in \S\ref{cocompsec}. Let $y: \F\to \hat \F$ be the inclusion, known as the Yoneda embedding. Inside the cocompletion $1:=\colim_{\V} y \circ \imath$
makes sense. By general theory, which we review, $\hat \F$ is a symmetric monoidal category with respect to Day convolution $\day$. Thus, we can regard the symmetric monoidal subcategory $\F_\V$ generated by $1$ = inside $(\hat \F,\day)$. Let $\V_\V$ be the trivial subcategory with object $1$, then:

\begin{customthm}
{\ref{universalthm}}
The category $\FFV=(\VV,\FV,\iV)$ is a Feynman category. This analogously holds in the enriched
and indexed-enriched case.
\end{customthm}
The upshot is that any $\O\in \fopsc$ to a cocomplete $\C$ factors through $\hat \F$ and by restriction yields a functor $\hat\O:\F_\V\to \C$. In the particular case of an operad this is $\hat\O(1)=\bigoplus \O(n)_{\SS_n}$. But there is more structure,
since there are morphisms in the Feynman category $\FFV$. These morphisms give universal operations. After applying $\hat \O$ they operate on the $\hat\O(n)=\hat\O(1)^{\otimes n}$. They are highly calculable, see e.g.\ \S\ref{preliesec}.
Table \ref{univtable} contains results for many of the usual suspects. We note that these operations live by construction on the full coinvariants. If there are planar versions, e.g.\ given by an extra decoration, forgetting the planar structure/decoration gives a morphism of Feynman categories along which the operations can be pulled back. This explains why the pre-Lie structure for an operad lifts to $\bigoplus_n \O(n)$ while the Lie algebra structure for an anti-cyclic operad lifts from $\bigoplus_n \O((n))_{\SS_n}$ only to $\bigoplus_n \O((n))_{C_n}$. In that case the $\V$ for the Feynman category for non-$\Sigma$ cyclic operads is not discrete.

\subsubsection{Bar/cobar and Feynman transforms}

A striking fact is that these operations also appear in master equations associated to the Feynman transforms, see Table \ref{Ftable}.  This is no coincidence, as we show.

The first step is the construction of another type of universal operation called $d_{\Phi}$ associated to a set of morphisms $\Phi$, which is an element in $Mor(\F_{\V'}^{\prime op})$. Passing to the odd version, we can ask if this element squares to zero, then the set $\Phi$ is called resolving and $d_\Phi$ yields a differential, see \S\ref{unidiffsec}. This is the avatar of the differential for the Feynman transform, in which there is one term for each element in $\Phi$ and accordingly there is one term in the master equation corresponding to each such element.  Moreover in the odd version, these elements are exactly the ones that behave as if they have degree one. In particular for the graph based examples, these are usually the edges and loops.

To be more explicit, we shortly describe the bar/cobar and Feynman transforms for Feynman categories. They generalize the bar and cobar transforms for algebras or operads. The Feynman transform is a hybrid which exists when there is a duality. The archetypical situation is a finite dimensional algebra $A$. The dual or Feynman transform would be the co-bar transform on the dual co-algebra $\Omega \check A$. In order to be able to talk about the transforms, we have to pass to complexes in $\C$. Notice that the classical bar construction is a basically free construction, but there is a degree shift to make the differential work. One can interpret this as assigning degree $1$ to the symbols $|$. For (most of) the graph examples, this will be achieved by making the edges have degree $1$,  see Figure \ref{barfig}. This is because in the generators and relations picture, the edges of the ghost graphs correspond to  simple morphisms (edge and loop contractions) making up a morphism. The relation between two such contractions is quadratic, i.e.\ two such contractions commute. Giving the generators degree $1$ will then make two contractions anti--commute, whence the name odd.

Things become more difficult when including mergers, since there is a non--homogeneous relation, see \S\ref{graphstrucsec}, in particular equation \eqref{triangleeq}. This means that although simple morphisms, mergers will have degree $0$ assigned to them.

In general to define what ``odd'' means, one has to have a Feynman category $\FF$ with a so--called ordered presentation, see Definition \ref{opdef}. This extra datum will determine $\FF^{odd}$ and hence the degrees.

\begin{figure}
    \centering
    \includegraphics[width=0.5\textwidth]{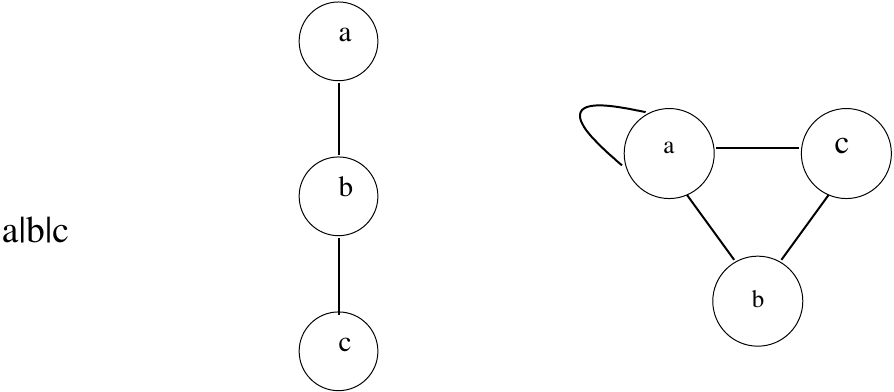}
    \caption{The sign mnemonics for the bar construction, traditional version with the symbols $|$ of degree $1$, the equivalent linear tree with edges of degree $1$,  and a more genaral graph with edges of degree $1$. Notice that in the linear case there is a natural order of edges, this ceases to be the case for more general graphs}
    \label{barfig}
\end{figure}

In order to give the definition, we need a bit of preparation.
Since $\V$ is a groupoid, we have that $\V\simeq \V^{op}$. Thus, given a functor $\Phi:\V\to \C$, using the equivalence we get a functor from $\V^{op}$ to $\C$ which we denote by $\Phi^{op}$. Since the bar/cobar/Feynman transform adds a differential, the natural target category from $\fops$ is not $\C$, but complexes in $\C$, which we denote by $Kom(\C)$. Thus any $\O$ may have an internal differential $d_\O$.

\begin{customdf}{\ref{ftdef}}
Let $\FF$ be a Feynman category enriched over $\Ab$ and with an ordered presentation and let $\FF^{odd}$ be its corresponding odd version.
Furthermore let $\Phi^1$ be a resolving subset of one--comma generators and let $\op{C}$ be an additive category.  Then:
\begin{enumerate}
\item  The bar construction is a functor
\begin{equation*}
\Bar \colon \fops_{Kom(\C)}\to \foddops_{Kom(\C^{op})}
\end{equation*}
defined by
\begin{equation*}
\Bar(\O):=\imath_{\FF^{odd} \; *}(\imath_{\FF}^*(\O))^{op}
\end{equation*}
together with the differential $d_{\op{O}^{op}}+d_{\Phi^1}$.

\item  The cobar construction is a functor
\begin{equation*}
\Cobar \colon \foddops_{Kom(\C^{op})}\to \fops_{Kom(\C)}
\end{equation*}
defined by
\begin{equation*}
\Cobar(\op{O}):=\imath_{\FF \; *}(\imath^\ast_{\FF^{odd}}(\op{O}))^{op}
\end{equation*}
together with the co-differential $d_{\op{O}^{op}}+d_{\Phi^1}$.

\item Assume there is a duality equivalence $\vee\colon \CalC\to \CalC^{op}$.
The Feynman transform is a pair of functors, both denoted $\FT$,
\begin{equation*}
\FT\colon \fops_{Kom(\C)}  \leftrightarrows \foddops_{Kom(\C)}\colon \FT
\end{equation*}
defined by
\begin{equation*}
 \FT(\O):=\begin{cases} \vee\circ \Bar(\O) & \text{ if } \O \in \fops_{Kom(\C)} \\ \vee\circ \Cobar(\O) & \text{ if } \O \in \foddops_{Kom(\C)}
\end{cases}
\end{equation*}
\end{enumerate}
\end{customdf}
The differentials are explained in more detail in \S\ref{diffsec}.

\begin{customlem} {\ref{adjunctionlem}} The bar and cobar construction form an adjunction.
\begin{equation*}
\adj{\Cobar}{\foddops_{Kom(\op{C}^{op})}}{\fops_{Kom(\op{C})}}{\Bar}
\end{equation*}
\end{customlem}

The quadratic relations in the graph examples are a feature that can be generalized to the notion of cubical Feynman categories, see Definition \ref{cubicaldef}. The name reflects the fact that in the graph example the $n!$ ways to decompose a morphism whose ghost graph is connected and has $n$ edges into simple edge contractions correspond to the edge paths of $I^n$ going from $(0,\dots,0)$ to $(1,\dots, 1)$. Each  edge flip in the path represent one of the quadratic relations and furthermore the $\SS_n$ action on the coordinates is transitive on the paths, with transposition acting as edge flips.

This is a convenient generality in which to proceed.

\begin{customthm}{\ref{resthm}}  Let $\FF$ be a cubical Feynman category and $\op{O}\in \fops_{Kom(\C)}$.  Then the counit $\Cobar\Bar(\op{O})\to\op{O}$ of the above adjunction is a levelwise quasi-isomorphism.
\end{customthm}

\subsubsection{Master equations}
By construction the Feynman transform is a quasi-free element in $\FF^{odd}$-$\opcat_{Kom(\C)}$ and hence one may ask what extra equation governs compatibility of morphisms from the Feynman transform with the differentials.  Or even more generally how to characterize the Feynman transform by Yoneda. The answer is as follows in the classical cases:
\begin{customthm}{\ref{methm}}(\cite{Bar},\cite{MerkVal},\cite{wheeledprops},\cite{KWZ})  Let $\op{O}\in \fopsc$ and $\op{P}\in \foddops_\op{C}$ for an $\F$ represented in Table $\ref{Ftable}$.  Then there is a bijective correspondence:
\begin{equation*}
Hom(\FT(\op{P}),\op{O})\cong ME(lim(\op{P} \tensor\op{O}))
\end{equation*}
\end{customthm}

Here ME is the set of solutions of the appropriate master equation set up in each instance.  With Feynman categories this tabular theorem can be compactly written and generalized.  The first step is the realization that the differential specifies a natural operation, in the above sense, for each arity $n$.  This natural operation living on a space associated to an $\op{O}p$ $\op{Q}$ is denoted $\Psi_{\op{Q},n}$ and is formally defined by:

\begin{customdf}
{\ref{MEdef}}
For a Feynman category $\FF$ admitting the Feynman transform and for $\op{Q}\in\fopsc$ we define the formal master equation of $\FF$ with respect to $\op{Q}$ to be the completed cochain $\Psi_{\op{Q}}:= \prod \Psi_{\op{Q},n}$.  If there is an $N$ such that $\Psi_{\op{Q},n}=0$ for $n>N$, then we define the master equation of $\FF$ with respect to $\op{Q}$ to be the finite sum:
\begin{equation*}
d_{\op{Q}}+\ds\sum_{n}\Psi_{\op{Q},n} = 0
\end{equation*}
We say $\alpha\in lim(\op{Q})$ is a solution to the master equation if $d_\op{Q}(\alpha)+\sum_{n}\Psi_{\op{Q},n}(\alpha^{\tensor n}) = 0$, and we denote the set of such solutions as $ME(lim(\op{Q}))$.
\end{customdf}
Here the first term is the internal differential and the term for $n=1$ is the differential corresponding to $d_{\Phi^1}$, where $\Phi^1$ is the subset of odd generators.  We then show the following generalization of Theorem \ref{methm}:

\begin{customthm}{\ref{methm2}}  Let $\op{O}\in \fopsc$ and $\op{P}\in \foddops_\op{C}$ for an $\F$ admitting a Feynman transform and master equation.  Then there is a bijective correspondence:
\begin{equation*}
Hom(\FT(\op{P}),\op{O})\cong ME(lim(\op{P} \tensor\op{O}))
\end{equation*}
\end{customthm}

\subsubsection{Homotopy theory for $\fopsc$}
Finally, let us turn to the homotopy theory of $\fopsc$ using model categories. We refer to \S\ref{htsec} for full details. The motivation is two-fold.  First, having a model category structure is of independent interest and much effort has gone into constructing these.  We thus view it as an application of our theory that this type of  work which up to now has been in a case-by-case analysis, can now be done very broadly (independent of $\FF$). 
The second motivation is to show that the above transforms yield cofibrant replacements as they do for algebras.  Among other applications this allows us to study homotopy classes of ME solutions.

Our general theorem on  model category structures is:

\begin{customthm}{\ref{modelthm} and \ref{feymodthm}}  Let $\FF$ be a Feynman category.  There exist conditions on a symmetric monoidal model category $\op{C}$, which in particular include $\Top$, $\SSet$, and chain complexes in characteristic $0$, such that
$\fopsc$ is a model category where a morphism $\phi\colon \op{O}\to\op{Q}$ of $\F$-$\opers$ is a weak equivalence (resp. fibration) if and only if $\phi\colon \op{O}(v)\to\op{Q}(v)$ is a weak equivalence (resp. fibration) in $\op{C}$ for every $v\in \V$.
\end{customthm}

The proof involves transfer across the forgetful-free adjunction.  The non-smallness of objects in $\Top$ makes that case more complicated and we are careful to give full details in Appendix B.

Notice that we have formulated this result to put the restrictions on $\C$, and have no restriction on $\FF$.  In particular we have a model structure on $\fopsc$ for any $\FF$ including cyclic operads, modular operads, colored properads, colored PROPs, and the whole bestiary, regardless of any particular details of said $\FF$ such as existence of units etc.

A benefit of formulating the problem this way is that the adjunction associated to a morphism of Feynman categories $f$ is now always an adjunction between model categories:
\begin{equation*}
f_*\colon\eops \leftrightarrows \fopsc\colon f^*
\end{equation*}
where $f^*(\op{A}):= \op{A}\circ\alpha$ is the right adjoint and $f_*(\op{B}):=Lan_f(\op{B})$ is the left adjoint, see Remark \ref{shriekrmk} for the use of $f_*$ vs.\ $f_!$.  Under mild conditions this is a Quillen adjunction:

\begin{customlem}{\ref{qalem}} Suppose $f^*$ restricted to $\vfmods\to \vemods$ preserves fibrations and acyclic fibrations (see Remark $\ref{vmodmodelrmk})$.  Then the adjunction  $(f_*, f^*)$ is a Quillen adjunction.
\end{customlem}
An example of such an adjunction is the one between  $\CCyclic$ and $\modular$, i.e.\ the Feynman categories for cyclic and modular operads respectively and that given by the morphism $i\colon\CCyclic\to\modular$ by including as genus zero. The push-forward is called the modular envelope. This is a Quillen adjunction.  We give further examples of such adjunction in \S\ref{Quillensec}.

Turning to the issue of cofibrancy we show that, under suitable finiteness conditions, quasi-free objects are cofibrant.  In particular this implies:

\begin{customcor}
{\ref{cofcor1}}
The Feynman transform (Definition $\ref{ftdef}$) of a non-negatively graded dg $\F$-$\oper$ is cofibrant.
\end{customcor}

\begin{customcor}{\ref{cofcor}}  The double Feynman transform of a non-negatively graded dg $\F$-$\oper$ in a cubical Feynman category is a cofibrant replacement.
\end{customcor}

In light of Theorem $\ref{methm2}$ discussed above, this tells us how to make sense of homotopy classes of ME solutions, since the set of such solutions is now given by maps from a cofibrant object to a fibrant object.  We can thus define $\pi_0$ of the associated ME simplicial set without appealing to its fibrancy; although fibrancy of this simplicial set is an interesting future direction.

We then turn to the topological situation. Whereas the Feynman transform yields the replacement in the linear case, the W-construction is the correct replacement in the topological setting. As is expected there are a few technical conditions, which are encoded in the definition of simple (Definition \ref{simpledef}) and $\rho$--cofibrant (Definition \ref{rhocofdef}).  In the graphical cases these conditions reflect the fact that, unlike for trees and operads, we must take into account the existence of tail-fixing automorphisms.

\begin{customthm}
{\ref{Wthm}}
Let $\FF$ be a simple Feynman category and let $\op{P}\in\fopst$ be $\rho$-cofibrant.  Then $W(\op{P})$ is a cofibrant replacement for $\op{P}$ with respect to the above model structure on $\fopst$.
\end{customthm}

The W-construction $W(\op{P})$ is naturally an object in $\fopst$, ie it is a symmetric monoidal functor (see Proposition \ref{fopstprop}), and it is defined by considering a certain colimit of weighted chains of morphisms in $\FF$.

\begin{customdf}
{\ref{Wdef}}
Let $\op{P}\in\fopst$.  For $Y \in ob(\F)$ we define
\begin{equation*}
W(\op{P})(Y):= colim_{w(\FF,Y)}\op{P}\circ s (-)
\end{equation*}
\end{customdf}
The idea, going back to \cite{BoVo}, is to replace a sequence of morphism of length $n$ by a
simplex.  Since we have symmetries, under the conditions above these sequences naturally define an $n$-cube and the symmetric group action is permutation of the axis. The quotient then is the sought after simplex.

This for instance gives novel W-constructions for cyclic and modular operads.

\subsection{Organization of the text}
We give a short outline as a reference guide.

\subsubsection*{\S 1 Feynman categories}
In \S1 we directly give the definition of a Feynman category in its most useful concise form as
a triple of objects satisfying certain axioms.
We then go on to discuss simple examples, which play a role later on, and give a detailed study of the axioms. After this, the main theorems about monadicity, push--forwards and pull--backs are given and proven. The section continues with more elaborate definitions involving Feynman categories.
These include a weakening that is equivalent in the combinatorial situation as well as Cartesian enriched and indexed enriched Feynman categories. The full discussion of the latter is postponed to Chapter \S\ref{enrichedsec}.
The section ends with a discussion of similar and related structures that have appeared in the literature.

\subsubsection*{\S 2 Examples}

Chapter \S2 is devoted to examples.
They basically come in four types. The first are ones based on graphs and give the usual suspects, operads, cyclic/modular operads, PROPs etc, by varying the types of graphs. The second type is the enriched version for these, which are introduced in \S4. This is an essential addition to the theory. It allows us for the first time to give a definition of a twisted modular operads on par with their non--twisted versions.
The third type of example is the one in which the symmetries of the Feynman category are trivial.
We classify these examples, which include crossed--simplicial groups, the simplicial category and
the category FI of finite sets with injection as well as the category of finite sets with surjections. The forth type are free constructions on these.
We first treat the graph based examples, which yield all the known operad--like structures, cf.\ Table \ref{table1}. These all arise from {\it one fundamental} example $\GG=(\Crl,\Agg,\imath)$ where $\Agg$ is the full subcategory of aggregates of corollas in Borisov--Manin's category of graphs. This category is described in Appendix A.
All the examples are derived from $\GG$ by restriction and decoration. Decoration here is a technical term which is defined in \S\ref{decopar}.

\subsubsection*{\S3 General constructions}
Before giving the enriched theory, we provide several essential general constructions which produce Feynman categories from Feynman categories   in \S3. Among them are,
iterations of Feynman categories $\FF'$, free Feynman categories $\FF^{\boxtimes}$, non--connected Feynman  categories $\FF^{nc}$, level Feynman categories (or plus construction) $\FF^+$ and hyper Feynman categories $\FF^{hyp}$. Each of these constructions has a purpose in the general theory. The latter  two for instance are essential for enrichment in linear categories. We also review the construction of decorated Feynman categories $\Fdeco$ from \cite{decorated}.

\subsubsection*{\S4 Indexed Enriched Feynman categories, (odd) twists and Hopf algebras}
\S4 contains the discussion about enrichment in the general case. Here the key definition is that of a Feynman category with an indexed enrichment. Such an enrichment is given by a type of functor $\D$.
To be precise this is a 2--functor from the Feynman category $\F$ to the enrichment category $\CalE$,
which satisfies several axioms given in Definition \ref{enrichmentfunctordef}. Here monoidal categories are treated as 2--categories.
For each such $\D$, we define an indexed enriched Feynman category $\FF_\D$. An example of such an enrichment is given by trivial enrichment and by twisted modular operads. In fact, we can generalize the construction of twisted modular operads to arbitrary Feynman categories. In this way, we also obtain the Feynman category of algebras over a fixed operad.

As an application, we review the construction of a Connes--Kreimer type Hopf algebra out of any $\Ab$ enriched Feynman category from \cite{GKT}.

\subsubsection*{\S5 Feynman categories given by generators and relations}
In \S5 we pay attention to the construction of Feynman categories and their functors via generators and relations. We start by working out  the example $\GG$.
For the special subcase of (pseudo)-operads this is the consideration that an operad structure can be given by the simple morphisms $\circ_i$ and
the associativity equations.  In general, we need such a presentation to define the transforms of \S6. In particular, we  show that given a presentation and a compatible assignment of degrees to relations, there are natural ordered, oriented and odd versions of the Feynman category.  This is the case for instance if the Feynman category is quadratic.
 For the basic graph examples the generators are simply morphisms whose underlying graphs are one edge graphs or more succinctly the  ``edges''.

\subsubsection*{\S6 Universal operations}
In \S6 we treat universal operations on colimits and limits. The paradigmatic example being pre-Lie operations on operads and Brace operations on operads with multiplication. This is generalized to any Feynman category. In particular, we recover the known pre-Lie structure for operads, the newer Lie structure for anti-cyclic operads and other structures that are known or new, cf.\ Table \ref{universaltable}. This now works for any Feynman category.

\subsubsection*{\S7 Feynman transform, the (co)bar construction and Master Equations}

\S7 introduces the (co)bar and Feynman transforms for presented  Feynman categories, i.e.\ Feynman categories given by generators and relations with a consistent choice of degrees for relations (or generators) as set forth in \S6. The examples encompass all graph examples and the theory is set up in great generality. The object underlying the Feynman transform is by definition free. We then show that under the mentioned assumptions there is a natural morphism $d_{\Phi^1}$ defined by the presentation which yields a differential and hence the Feynman transform is quasi-free dg.

\subsubsection*{\S 8 Homotopy theory of $\fopsc$}

\S8 then contains the homotopy theory needed to state that the transforms give resolutions which are cofibrant. Here the case of topological spaces is especially tricky and
we study it in detail. The upshot is a model category for $\F$-$\opcat_\C$ for any $\F$ with values in $\C$, where $\C$ is simplicial sets, chain complexes in characteristic $0$, topological spaces and any $\C$ that satisfies the condition of Theorem \ref{modelthm}.
This of course includes all the $\FF$ in Table \ref{table1}.

Quasi-free objects, and hence the Feynman transform, are cofibrant and, under stated conditions on $\FF$,  the double Feynman transform is a cofibrant replacement. In the topological category we also define a W construction and show that this is a cofibrant replacement, again under stated conditions on $\FF$.

\subsubsection*{Appendices}
We conclude with two appendices for the convenience of the reader. The first, Appendix A, is a reference section on graphs and sets up all the language of graphs that is used throughout the paper.  The second, Appedix B, is a careful analysis of the topological model category structure as Theorem \ref{modelthm} is not directly applicable and a more subtle study is needed.

\subsection*{Acknowledgments}
We would like to thank Yu.~Manin, D.~Kreimer, M.~Kontsevich, C.~ Berger, D.~Borisov, B. Fresse, M.~Fiore and M. Batanin and M.~Markl for enlightening discussions.

RK gratefully acknowledges support from NSF DMS-0805881, the Humboldt Foundation and the Simons Foundation.
He also thanks the
Institut des Hautes Etudes Scientifiques, the Max--Planck--Institute
for Mathematics in Bonn and the Institute for Advanced Study
for their support and the University of Hamburg for its hospitality.
RK also thankfully acknowledges the Newton Institute where a large part
extending the original scope of the paper came into existence during
the activity on ``Grothendieck-Teichm\"uler Groups, Deformation and Operads''.

Part of this work was conceptualized when RK visited the IAS in 2010.
At that time at the IAS RKs work was supported by the NSF under agreement
DMS--0635607. It was written in large parts while staying at the aforementioned locations.
The final version of this paper  was  finished at the IAS during a second stay which is supported by a Simons Foundation Fellowship and during stays at the Simons Center for Geometry and Physics.

 Any opinions, findings and conclusions or
recommendations expressed in this
 material are those of the authors and do not necessarily
reflect the views of the National Science Foundation.

BW gratefully acknowledges support from the Purdue Research Foundation as well as from the Simons Center for Geometry and Physics.

We also thank the referee for the many detailed comments which greatly improved the text.
\vfill

\pagebreak
\tableofcontents

\listoftables
\listoffigures
\pagebreak

\subsection*{Conventions and notations}
We will use the notion of disjoint union of sets. This gives a monoidal structure to sets. It is not
strict, but sometimes it is useful to use a strict monoidal structure. If this is the case we will say so and tacitly use MacLane's Theorem \cite{MacLane} to do this.

We will denote the comma categories by $(\,\downarrow\,)$. Recall that for two functors $\imath:\D\to \C$ and $\jmath:\CalE\to \C$
$(\jmath\downarrow \imath)$ is the category whose objects are morphisms $\phi\in Hom_{\C}(\jmath(X), \imath(Y))$.
A morphism between such $\phi$ and $\psi$ is given by a commutative diagram.
$$
\xymatrix{
\jmath(X)\ar[d]_{\jmath(f)}\ar[r]^{\phi}&\imath(Y)\ar[d]^{\imath(g)}\\
\jmath(X')\ar[r]_{\psi}&\imath(Y')
}
$$
with $f\in Hom_{\D}(X,X'), g\in Hom_{\CalE}(Y,Y')$. We will write $(\imath(f),\imath(g))$ for such morphisms or simply $(f,g)$.

If a functor, say $\imath: \V\to \F$ is fixed, we will just write  $(\F\downarrow \V)$,
and given a category $\CalG$ and an object $X$ of $\CalG$, we denote by $(\CalG\downarrow X)$ the respective comma category. I.e.\ objects are morphisms $\phi: Y\to X$ with $Y$ in $\CalG$ and
morphisms are morphisms over $X$, that is morphisms $Y\to Y'$ in $\CalG$ which commute with the base
maps to $X$.

For any category $\mathcal E$ we denote by $Iso({\mathcal E})$ the category whose objects are those of $\mathcal E$, but whose morphisms are only the isomorphisms of $\mathcal E$.

For a morphism $\phi$ in a given category, we will use the notation $s(\phi)$ for the source of $\phi$ and $t(\phi)$ for the target of $\phi$.

For a natural number $n$, we define the sets $\bar n=\{1,\dots,n\}$, $[n]=\{0,\dots, n\}$.

Finally, we will consider categories enriched over a monoidal category $\CalE$. Unless otherwise
stated, we assume that the monoidal product of $\CalE$ preserves colimits in each variable and
that the objects of $\CalE$ have underlying elements. The first assumption is often essential while the latter
is for the sake of exposition.

\section{Feynman categories}
In this section, we start by giving the basic definition of a Feynman category. After discussing the axioms, we show that weaker conditions lead to an equivalent notion in the combinatorial case. We then prove several general theorems for Feynman categories, such as monadicity, and give an adjoint pair of functors (push--forward and pull--back) for maps between Feynman categories. We finish the section by discussing related concepts which might be familiar to the initiated reader.

\subsection{ Feynman categories -- the definition}

Fix a  symmetric monoidal category $\F$ and let $\asts$ be a category that is a groupoid, that  is $\asts=Iso(\asts)$. Denote the free symmetric monoidal category on $\V$ by $\V^{\otimes}$.
Furthermore let $\imath\colon\V\to \F$ be a functor and let $\imath^{\otimes}$ be the induced monoidal functor $\imath^{\otimes}\colon \V^{\otimes}\to \F$.

\begin{df}
\label{commadef}
\label{feynmandef}
A triple $\FF=(\V,\F,\imath)$ of objects as above is called a Feynman category (FC)
if

\begin{enumerate}
\renewcommand{\theenumi}{\roman{enumi}}

\item (Isomorphism condition)
\label{objectcond}
The monoidal functor $\imath^{\otimes}$ induces an equivalence of symmetric monoidal categories between $\V^{\otimes}$ and $Iso(\F)$.

\item (Hereditary condition) The monoidal functor $\imath^{\otimes}$ induces an equivalence of symmetric monoidal categories between $(Iso(\F\downarrow \V))^{\otimes}$ and
$Iso(\F\downarrow\F)$.
\label{morcond}

\item (Size condition) For any $\ast\in \asts$, the comma category $(\clusters\downarrow\ast)$ is
essentially small,
viz.\ it is equivalent to a small category.
\end{enumerate}

\end{df}

Here the induced functor from $(Iso(\F\downarrow \V))^{\otimes}$ to $Iso(\F\downarrow\F)$ is given as follows. Given
a collection $\phi_v:X_v\to \imath(\ast_v), v\in I$ of objects of $(\F\downarrow \V)$, the free tensor product $\bigotimes \phi_v$ is sent to the tensor product
$\bigotimes\phi_v:\bigotimes_{v\in I}X_v\to \bigotimes_{v\in I} \imath(\ast_v)=\imath^{\otimes}(\bigotimes_{v\in I}\ast_v)$  in $(\F\downarrow \F) $. The morphisms in $(Iso(\F\downarrow \V))^{\otimes} $ are generated by permutation of the $\phi_v$ and the
collection of isomorphisms $(\sigma,\sigma')$
\begin{equation}
\xymatrix{
X_v\ar[d]_\sigma\ar[r]^{\phi_v}&\imath(\ast_v)\ar[d]^{\imath(\sigma')}\\
X_{v'} \ar[r]_{\psi_{v'}}&\imath(\ast'_{v'})
}
\end{equation}
and these are sent to permutations and to $(\bigotimes_{v\in I} \sigma_v,\bigotimes_{v\in I} \imath(\sigma'_v))$, using the notation introduced in the conventions.

Given a Feynman category $\FF$ we will sometimes write $\V_{\FF}$ and $\F_{\FF}$ for the underlying groupoid and monoidal category and often
take the liberty of dropping the subscripts if we have already fixed $\FF$.

We will say that $\asts$ is a {\em one--comma generating subcategory},  and call its objects stars or vertices.
The objects of the comma category $(\F\downarrow\V)$ will be referred to as one--comma generating morphisms or simply as one--comma generators.
The  objects of $\V^{\otimes}$ or $\imath^{\otimes}(\V^{\otimes})$
 are sometimes called clusters or aggregates (of stars or vertices).

\subsection{(Re)--Construction}
The above axioms can be used to reduce the data needed to give a putative Feynman category or to encode one. In particular, up to equivalence, we only need to know  $\V$ to know the objects and isomorphisms of $\F$ by (i). For the morphisms, (ii) tells us that up to equivalence  the morphisms in $(\F\downarrow \V)$ determine all the morphisms. To construct a Feynman category, one must thus give the data $\V$ and the morphisms from $\V^{\otimes}\to \V$, aka.\ the one--comma generators. Finally, one has to add in their tensor powers and compositions compatibly with the axioms.

Vice-versa, to test if $\F$ gives rise to a Feynman category, one has to find
a $\V$ such that $\V^{\otimes}$ is equivalent to $Iso(\F)$,
and then test whether morphisms to the included $\V$ one--comma generate.
To check (i) one then needs to check if the isomorphisms of $\V$ generate all the isomorphisms up to equivalence. $\V$ also  defines
$(\F\downarrow \V)$ and one can check condition (ii), and (iii).

We will now give examples and details.

\subsubsection{Tautological Feynman category on $\V$}
\label{groupex}
One way to construct a Feynman category is to fix a groupoid $\V$. Then the inclusion of $\imath: \V\to \V^{\otimes}$  already tautologically yields a Feynman category structure by setting $\F=\V^{\otimes}$.
If $\V$ only has one object then it is of the form $\underline{G}$, that is the groupoid corresponding to a group and $\F$ is the free symmetric monoidal category on $\underline{G}$.

By general theory, we know that strong monoidal functors from $\V^{\otimes}$ are the same as ordinary functors from $\V$; $Fun_{\otimes}(\V^{\otimes},\C)\simeq
Fun(\V,\C)$.  In the special case of a groupoid defined by a group, $\V=\underline{G}$, these functors are exactly group representations in $\C$.

\subsubsection{Adding one--comma generating morphisms} After having selected $\V$,
to get to any other Feynman
category structure with this $\V$ up to equivalence, one can now select new morphisms in $(\V^{\otimes}\downarrow \V)$ and generate the sets of morphisms of $\F$ by
taking tensor products of these to satisfy (ii). Then one has to define a composition and check that it is associative. For compositions,
one only needs to define the compositions for diagrams of the type:

\begin{equation}
\label{basicmoreq}
\xymatrix{ X=\bigotimes_{v\in I} X_{v} \ar[rr]^{\otimes_{v\in I}(\phi_v)}\ar@/_{2pc}/[rrr]_{\phi_1} &&Y=\bigotimes_{v\in I}\ast_v \ar[r]^{\ \ \ \ \phi_0}&\ast
}
\end{equation}
with $\ast \in \V$. Usually this amounts to showing that the natural candidate for $\phi_1$ is again of the chosen type of morphism.

Concretely, for instance in the case of underlying graphs, inserting  a graph  of a certain type (e.g.\ a tree) with tails
 into a vertex of another graph of the same type which has the same number of flags as legs of the first graph yields a graph, again of the chosen type. See Appendix A and \S\ref{examplesec} for examples.

Finally, one has to check
 the compatibility with the isomorphisms that are already present. This means that the morphisms
 transform into each other under isomorphisms and in particular they carry an action of the automorphisms.  Often these new morphisms can be defined by using generators and relations. This will be discussed in detail in \S
\ref{genrelpar}.


Vice-versa, given a Feynman category, the data of $\imath$ and the one--comma generating morphisms are enough to reconstruct it, by using the procedure above. Moreover, many times one can even identify a smaller set of generators and give generators and relations for the one--comma generators.

\subsection{Surjections: A simple, but not too simple, example $\FF_{surj}$}
\label{surjpar}
The following example is helpful to keep in mind. Let $\V=\underline{\unit}$ be the trivial category,  that is one object $\ast$ with only the identity morphism.
Let $\F=FinSet_{surj}$ be the category of finite sets with surjections and disjoint union $\amalg$ as the monoidal structure.
By realizing that $\ast \amalg \dots \amalg \ast=\bar n=\{1,\dots,n\}$ one sees $\V^{\otimes}\simeq \aleph_0$, the category of finite cardinals with symmetric group actions. This also defines $\imath$, and $\imath^{\otimes}$ is
an equivalence, but not an isomorphism of categories. Nevertheless (i) is satisfied and so is (iii). It remains to check (ii).
 For this, notice that a surjection decomposes into its fibers. Namely, for every surjection $f:S\twoheadrightarrow T$, we get a diagram

$$
\xymatrix{S\ar@{->>}[rr]^f\ar@{<->}[d] && T\ar@{<->}[d]\\
\coprod_{t\in T} f^{-1}(t)\ar[rr]_{\amalg_{t\in T}f_t} && \coprod_{t\in T}
\ast
}
$$
where the vertical arrows are bijections and $f_t\colon f^{-1}(t)\to \ast$ is the unique surjection to a point. These are the one--comma generators.
Here we identified $\{t\}$ with $\ast$ and hence $T=\amalg_{t\in T} \{t\}\leftrightarrow \amalg_{t\in T} \ast$.

This diagram proves (ii), since (a), every morphism is isomorphic in the comma category to a disjoint union of morphisms $S\to \ast$, which are the
one--comma generators, i.e. elements of $(\F\downarrow \V)$. And (b), this decomposition is unique up to unique isomorphism, which necessarily preserves the fibers.
We call the resulting Feynman category $\FF_{surj}=(\underline{\unit}, FinSet_{surj} , \imath)$.

This example shows the concept of one--comma generating morphisms and also has the special features of the monoidal structure, the disjoint union being the restriction of the co-product of the category finite sets and set maps, and the existence of fibers.  These features characterize a less general situation in which a check of diagrams such as above is sufficient, see below.

Also in this example, up to isomorphism the one--comma generators are generated by one morphism for each $n$, namely the surjection $\bar n\twoheadrightarrow \bar 1$.

\subsection{Induced structures}
\label{explainsec}
To give a better understanding of Definition $\ref{feynmandef}$, we wish to make the following remarks which
 address consequences from the different axioms.

\begin{rmk}  (Choice and change of base.) Due to the condition (\ref{objectcond})
for each $X\in \clusters$ there exists an isomorphism
\begin{equation}
\label{objdecompeq}
\phi_X\colon X\stackrel{\sim}{\rightarrow} \otimes_{v\in I} \imath(\ast_v) \text{ with } \ast_v\in \asts
\end{equation}
 for a finite index set $I$. Moreover, fixing a quasi--inverse (i.e.\ a functor $\jmath:Iso(\F)\to \V^\otimes$ which exhibits the equivalence) entails fixing such a decomposition for each $X$. We will call this a choice of basis.
The decomposition (\ref{objdecompeq}) has the following property: For any two such isomorphisms (choices of basis) there is a bijection  of the two index sets $\psi\colon I\to J$ and a diagram

\begin{equation}
\xymatrix
{&\bigotimes_{v\in I} \imath(\ast_v) \ar[dd]^{\simeq \bigotimes \imath(\phi_v)}\\
X\ar[ur]^{\phi_X}_{\simeq}\ar[dr]^{\simeq}_{\phi'_X}&\\
&\bigotimes_{w\in J} \imath(\ast'_w)
}
\end{equation}
where
$\phi_v\in Hom_{\asts}(\ast_v,\ast'_{\psi(v)})$ are isomorphisms.
We call the unambiguously defined value $|I|$ the {\em length of $X$}.

\end{rmk}

\begin{rmk}(Hereditary diagram)
\label{hereditaryrem}
The hereditary condition means that the comma category $(\clusters\downarrow\asts)$ monoidally generates the morphisms in the following way.

Given $\phi:X\to X'$ and a decomposition $X'\simeq \bigotimes_{v\in I} \imath(\ast_v)$, $\ast_v\in \asts$, there are objects $X_v$ of $\F$,
morphisms $\phi_v: X_v\to \imath(\ast_v)$  in $\F$ and an isomorphism $X\simeq \bigotimes_{v\in I} X_v$ in $\F$ such that the diagram \eqref{morphdecompeq} commutes.

\begin{equation}
\label{morphdecompeq}
\xymatrix
{
X \ar[rr]^{\phi}\ar[d]_{\simeq}&& X'\ar[d]^{\simeq} \\
 \bigotimes_{v\in I} X_v\ar[rr]^{\bigotimes_{v\in I}\phi_{v}}&&\bigotimes_{v\in I} \imath(\ast_v)
}
\end{equation}

Furthermore, given {\em any} two decompositions of a morphism, (\ref{morcond}) also implies that
 there is a unique isomorphism in $(\imath^{\otimes}\downarrow \imath^{\otimes})$
giving an isomorphism between the two decompositions --- i.e.\ an isomorphism between the lower rows.

The condition of equivalence of comma--categories furthermore  implies that

\begin{enumerate}
\item  Any two such decompositions $\bigotimes_{v\in I} \phi_v$ and $\bigotimes_{v'\in I'}\phi'_{v'}$ are isomorphic via an isomorphism of block type.
This means that
there is a bijection
$\psi:I\to I'$ and isomorphisms
$\sigma_v:X_v\to X'_{\psi(v)}$ , $\sigma':\ast_v\to \ast_{\psi(v)}$, s.t. $\bigotimes \phi'_{v}=
P_{\psi}\circ \bigotimes_v( \sigma'_v\circ \phi_v \circ\sigma_v^{-1})\circ P^{-1}_{\psi}$ where
$P_{\psi}$ is the permutation corresponding to $\psi$.

\item These are the only isomorphisms between morphisms.
\end{enumerate}
The decomposition and the conditions above (namely that the isomorphisms are unique up to unique block decomposition) are equivalent to (ii).

Notice that if the vertical isomorphisms in (\ref{morphdecompeq}) are fixed, then so is the lower morphism.  Hence, a choice of basis also fixes a  particular diagram of  type (\ref{morphdecompeq}).
 Namely, we can further decompose the $X_v$ as $X_v\simeq \bigotimes_{w\in I_v}\imath(\ast_w)$ in (\ref{morphdecompeq}), so that
for $J= \amalg_{v\in I}I_v$: $X\simeq \bigotimes_{w\in J}\ast_w=
\bigotimes_{v\in I} \left(\bigotimes_{w\in I_v}\imath(\ast_w)\right)$. And, we can also assume that this is the decomposition of $X'$ and that of $X$ (up to permutation) are those given by the choice of a base functor $\jmath$. This means there is a diagram

\begin{equation}
\label{morphdecompeq2}
\xymatrix
{
X \ar[rr]^{\phi}\ar[d]_{\simeq}&& X'\ar[d]^{\simeq} \\
 \bigotimes_{v\in I} \left(\bigotimes_{w\in I_v}\imath(\ast_w)\right)
 \ar[rr]^{\bigotimes_{v\in I}\phi_{v}}&&\bigotimes_{v\in I} \imath(\ast_v)
}
\end{equation}
with $\phi_v:\bigotimes_{w\in I_v}\imath(\ast_w)\to \imath(\ast_v)$ and the vertical isomorphisms given by $\jmath$ up to a possible permutation.

It can happen that $|J|=|I|$, but this is usually not the case.

\end{rmk}

\begin{rmk}(Monoidal unit in $\F$ and morphisms)
Notice that the empty monoidal product in $\V^{\otimes}$ is the monoidal unit in $\V^{\otimes}$. Hence after applying $\imath^\otimes$ it becomes $\unit_{\F}$ of $\F$ the monoidal unit of $\F$, which has length $0$. It is therefore possible with the above definition
 to have one--comma generators $\psi$ in $Hom_\F(\unit_{\F},\iota(\ast))$ for $\ast$ in $\V$. In particular, in the decomposition (\ref{morphdecompeq}) there can be several factors of length $0$.
Such a $\psi$
can modify any morphism $\phi\colon X\to Y$ via $X\to X\otimes \unit_{\F} \stackrel{\phi\otimes \psi}{\longrightarrow} Y\otimes \ast$.
This will play an important role in the Feynman category of finite sets and injections and thus with FI--modules.
It will also allow us to ensure the existence of certain morphisms, like the morphisms which represent units or multiplications.

In contrast to this freedom for morphisms with source $\unit_{\F}$,  by  axiom (\ref{morcond}), any morphism with target $\unit_{\F}$,  must also have $\unit_{\F}$ as the source, up to isomorphism.

\end{rmk}
\subsection{Functors as a generalization of operads and $\SS$--modules: $\opcat$ and $\smodcat$}
\begin{df}\label{opdef1}
Let $\CalC$ be a symmetric monoidal category and $\FF=(\V,\F,\imath)$ be a Feynman category.
Consider the category of strong symmetric monoidal functors
$$\fopsc:=Fun_{\otimes}(\clusters,\CalC)$$
which we will call $\F$--$\opers$ in $\CalC$ and a particular object will be called an $\F$-$\oper$ in $\C$. The category
of functors
$$\vmodsc:=Fun(\asts,\CalC)$$ will be
called $\V$-modules in $\CalC$ with particular objects being called a $\V$--mod in $\C$.

If $\CalC$ and $\FF$ are fixed,
we will only write $\opcat$ and $\smodcat$.  We occasionally choose to emphasize the role of the triple $\mathfrak{F}$ by writing $\mathfrak{F}$-$\opcat$ in place of $\mathcal{F}$-$\opcat$.
\end{df}

There is the symmetric monoidal product in $\opcat$ and $\smodcat$ given levelwise by that of $\CalC$. That is $(\O\otimes \O')(X)=\O(X)\otimes \O'(X)$, with the identity being the trivial functor $\unit(X)=\unit_{\CalC}$.

\begin{ex}
In the case of (pseudo)-operads, $\opcat$ is the category
of (pseudo)-operads and $\smodcat$ is the category of $\SS$--modules.
A longer list of classical notions is given in \S\ref{examplesec}.
\end{ex}

Notice that since $\isoclusters$ is equivalent to the free symmetric
monoidal category on $\asts$ we have an equivalence of
categories between $Fun(\asts,\CalC)$ and $Fun_{\otimes}(\isoclusters,\CalC)$. This, or the pull-back along $\imath$, yields a natural forgetful functor $\forget:\opcat\to\smodcat$.

In the following, we will often have to assume that the monoidal product $\otimes$ in $\CalC$ preserves colimits in each variable. This is automatic if $\CalC$ is cocomplete and closed monoidal.
\begin{thm}
\label{freethm}
Let $\CalC$ be a cocomplete symmetric
monoidal category such that $\otimes$ preserves colimits in each variable, then there is a left adjoint (free) functor  $F=\free$
to the forgetful functor $G=\forget$ which is comonoidal.
That is, these functors are adjoint  symmetric (co)monoidal  functors for the symmetric monoidal categories of $\smodcat$ and $\opcat$.
\end{thm}

\begin{proof}(Short proof.) Here we give a brief sketch. More details are given below.
First by the remark above $Fun(\V; \C), Fun_{\otimes}(\V^{\otimes}; \C)$ and
$Fun_{\otimes}(Iso(\F); \C)$ are all equivalent. Since $\C$ is cocomplete, the left Kan extension  along $\imath^{\otimes}$ exists, and we define the image of the functor $F$ via left Kan extension.  (See \cite{MacLane} for the definition of Kan extensions and below for a calculation). Given a  $\Phi \in \smodcat$, we need to check if $F(\Phi)$
 is a strong monoidal functor. It is clear that the extension to $\V^{\otimes}$ is strong monoidal.
 By definition of the Kan extension it is clear that
the functor $F(\Phi)$ is monoidal, since $\otimes$  has been assumed to
preserves colimits in each variable. This structure can be shown to be strong because of the hereditary condition (ii), see below, and so $F(\Phi)$ is strong monoidal. The compatibility of the
commutativity constraints  is easily checked  and hence $F(\Phi)$ is symmetric monoidal.
The adjunction follows immediately. The second statement follows by general theory \cite{Kelly}, since $f^*$ is a right adjoint of $f_*$ and $f^*$ is monoidal, it follows that $f_*$ is comonoidal (aka colax monoidal). The structural morphisms are simply induced by the adjunction. Furthermore by \cite{Kelly} it is also symmetric, thus we obtain an adjoint pair for the symmetric monoidal categories $\vmodsc$ and $\fopsc$.
\end{proof}

\begin{rmk}
In fact this is just one instance of the more general Theorem \ref{pushthm} below. Our proof follows the lines of the proofs for operads, see e.g.\ \cite{MSS} and modular operads \cite{GKmodular}. The theorem also works in the enriched case. For that, we use a different approach  involving more category theory. First we show that one can translate condition (ii) to another more categorical condition (ii'), which allows to pass to the enriched setting; see  Proposition \ref{altprop}. Then
 Theorem \ref{freethm} and the Monadicity Theorem \ref{triplethm} below follow from general category theory as first observed in \cite{getzler}.
\end{rmk}

\subsubsection{Proof details} We would like to provide the details of the arguments above for reference and to expose the inner workings of our concepts.
The expert may want to skip this section.

Given a $\asts$--module $\Phi$, we extend $\Phi$
to all objects of $\clusters$ by picking a functor $\jmath$ which yields the equivalence of
$\V^{\otimes}$ and $Iso(\F)$.  Then, if $\jmath(X)=\bigotimes_{v\in I}\ast_v$, we set
\begin{equation}
\Phi(X):=\bigotimes_{v\in I}\Phi(\ast_v)
\end{equation}

Now, for any $X\in\clusters$ we set
\begin{equation}
\label{kandefeq}
F(\Phi)(X)=\colim_{Iso(\clusters\downarrow X)}\Phi\circ s
\end{equation}
where $s$ is the source map in $\clusters$ from $Hom_{\clusters}\to Obj_{\clusters}$ and on the right hand
side, we mean the underlying object.
For a  given morphism $X\to Y$ in $\F$, we get an induced morphism of the colimits  and it is straightforward that this defines a functor.

We claim that the functor is strong monoidal.  On objects this means that $F(\Phi)(X)\simeq \bigotimes_{v\in I}F(\Phi)(\ast_v)$. This identification then also shows the existence of the colimit due to (iii) and the fact that $\C$ was assumed to be cocomplete.

The argument is basically the following: assembling the maps $\phi_v$ into a map $\phi$ we get a morphism  $\bigotimes_{v\in I}F(\Phi)(\ast_v)\to F(\Phi)(X)$.
On the other hand, we can decompose each $\phi$ into $\bigotimes_{v\in I}\phi_v$ using the condition (\ref{morcond}) and the functor $\jmath$.
As any morphism decomposes as in (\ref{morcond}) up to unique block isomorphism, this gives a morphism going in the other direction $F(\Phi)(X)\to \bigotimes_{v\in I}F(\Phi)(\ast_v)$.
 These maps are then easily checked to be inverses of each other.

Technically this can be seen as follows.
Let $(C(X),\psi^X_{ \phi}:\Phi\circ s(\phi)\to C(X))$ with the $ \phi$  running through the objects of $Iso(\clusters\downarrow X)$ be a collection (cocone) defining a colimit
$F(\Phi(X))$. Likewise let  $(C(\ast_v),\psi^{\ast_v}_{ \phi}:\Phi\circ s(\phi)\to C(\ast_v))$ with the $ \phi_v$  running through the objects of $Iso(\clusters\downarrow *_v)$ be a cocone for the colimit.

Since we assumed that tensor preserves colimits in each variable and $\Phi$ is monoidal, the tensor of the colimits $(\bigotimes_{v \in I} C(\ast_v),\bigotimes_v \psi^{\ast_v}_{ \phi}:\bigotimes_v \Phi\circ s( \phi_v)\to \bigotimes_v C(\ast_v)) $ represents the colimit $\colim_{\times_{v \in I} Iso(\F\downarrow \imath(\ast_v))}\Phi\circ s\simeq \bigotimes_{v \in I} \colim_{Iso(\F\downarrow \imath(*_v))}\Phi\circ s=\bigotimes_{v\in I}F(\Phi)(\imath(\ast_v))$.
Due to condition (\ref{morcond}) the colimit over the indexing category $Iso(\F\downarrow X)$   is equivalent to the colimit over the idexing category $\times_v Iso(\F\downarrow \imath(\ast_v))$
from which we obtain:
\begin{align*}
F(\Phi)(X) & =\colim_{Iso(\clusters\downarrow X)}\Phi\circ s  \\
& \simeq \colim_{\times_{v \in I} Iso(\F\downarrow \imath(\ast_v))}\Phi\circ s  \simeq \bigotimes_{v \in I} \colim_{Iso(\F\downarrow \imath(\ast_v))}\Phi\circ s=\bigotimes_{v\in I}F(\Phi)(\imath(\ast_v))
\end{align*}
Notice that restricting $(C(X),\psi^X_{ \phi}:\Phi\circ s(\phi)\to C(X))$ to $X=\bigotimes_{v\in I} \imath (\ast_v)$, $\phi=\bigotimes_{v\in I}\phi_v$ and block isomorphisms is always a cocone over  $\times_v Iso(\F\downarrow \imath(\ast_v))$ and hence there is always a morphism
$\bigotimes_{v\in I}F(\Phi)(\imath(\ast_v))\to F(\Phi)(\bigotimes_{v\in I}\imath(\ast_v))$.
To get the map the other way around, we need (ii) and in particular that all the isomorphisms are of block type.

To check the monoidal structure on morphisms, we note that for a morphism $\phi\in Hom_{\clusters}(X,Y)$, the map $F(\Phi)(\phi)$ is the map induced
by composing with $\phi$. Namely,
 $(C(Y),\psi^Y_{\phi\circ \tilde \phi})$ is also a co-cone over $Iso(\clusters\downarrow X)$ and hence by the universal property of the colimit we have a map:
$C(X) \to C(Y)$. The monoidal structure on maps is now easily checked using this description.
The symmetric monoidal structure is then again given by the universal properties
of the co-cones and that tensor preserves colimits in each variable.

The adjointness of the functors follows from the following pair of adjunction morphisms.
For  $X\in \clusters$ and $\CO\in \opcat$,
let $\sigma(X)\colon (FG\CO)(X)\to \CO(X)$ be
the morphism induced by the universal property of the co-cone $\CO(C(X)))$ applied to $\CO(X)$, which is made
into a co-cone by mapping for each $\G\in (\F\downarrow X)$
each $\CO(s(\G))$ with $\CO(\G)$ to $\CO(X)$.
For  $\ast\in \asts$ and $\Phi\in \smodcat$,
let $\tau:\Phi(\ast)\to (GF\Phi)(\ast)$ be the morphism given by the inclusion of the identity,
that is the morphism $\psi^{\ast}_{id_{\ast}}$ in the notation above.

It is straightforward to check that these morphisms satisfy the needed conditions using the assumptions. This boils down to the fact that composing or
precomposing with the identity leaves any map invariant.

Finally, it is clear that $G$ is a strict symmetric monoidal functor $\fopsc\to \vmodsc$ for the levelwise tensor product. This means that its left adjoint $F$ is automatically colax. This is a case of doctrinal adjunction \cite{Kelly}.

\begin{cor}
\label{triplecor}
The triple (aka monad) generated by $F=\free$ and $G=\forget$ in $\smodcat$ given by
$\T=GF$ has the following explicit description: On objects it is given by
$\T\Phi(\ast_v)=\colim_{(\clusters\downarrow\iota(\ast_v))}\Phi\circ s$
and on morphisms $\phi\in Hom_{\V}(\ast_v,\ast_v)$,
$\T$ acts by composing with $\phi$.

Furthermore the monadic product $\mu:\T\circ \T\to \T$ is given
by composition of morphisms in $\mathcal{F}$.\qed
\end{cor}

\begin{proof}
The map $\mu\colon G(FG)F\to GF$ is defined by the adjoint map above. We will make this explicit and show how the conditions of Definition \ref{commadef} are needed.

Let us compute $FGF(\Phi)(\ast)$.
\begin{equation}
\label{doublecolimeq}
FGF(\Phi)(\ast) = \colim_{Iso(\clusters\downarrow\ast)}( GF\Phi\circ s )
=  \colim_{Iso(\clusters\downarrow\ast)}(\colim_{Iso(\F\downarrow s(\,.\,))}\Phi\circ s)
\end{equation}
where $\colim_{Iso(\F\downarrow s(\,.\,))}\Phi\circ s$ is the functor that sends an object $\phi$ of $(\clusters\downarrow\ast)$ to $\colim_{Iso(\F\downarrow s(\phi))}$ $\Phi\circ s$. Choosing a representation, we see that the double colimit
is indexed by elements $(\phi, \amalg_{v\in I} \phi_v)$,
where $s(\phi)\simeq \bigotimes_{v\in I}\ast_v$
and the $\phi_v$ are morphisms $X_v=s(\phi_v)\to \ast_v$.
Composing the morphisms to $\phi':=(\amalg_v \phi_v)\circ \phi:\bigotimes X_v\to \ast$ yields a co-cone (by commutativity of tensor and colimits)
 and in turn
a map $\mu' \colon FGF(\Phi)(\ast) \to F(\ast)$ such that $\mu = G(\mu')$.

\end{proof}

From this description, we immediately get a monadicity theorem in parallel to the arguments found in e.g.\ \cite{MSS,GKmodular}.
\begin{thm}
\label{triplethm}
There is a equivalence of categories between $\opcat$ and algebras over the triple (aka monad) $\T$.
\qed
\end{thm}

\begin{proof}
Given an element $\O\in\opcat$ we use the $\O(\phi)$ to define maps $\T(\Phi(\ast))\to \Phi(\ast)$. Vice-versa,
for an algebra over $\T$, and for $\phi\in Hom_{\F}(X,\ast)$, we let $\O(\phi)$ be the component corresponding to $\phi$ of the given map
$\T(\Phi(\ast))\to \Phi(\ast)$. The fact that this yields an equivalence in now a straightforward computation using Corollary \ref{triplecor} and in particular \eqref{doublecolimeq}.
\end{proof}

\begin{nota}\label{sumdef}
If $\CalC$ is cocomplete, then for $\O\in \opcat$ we
let $\O^{\oplus}=colim_{\asts}\O\circ \imath$.
\end{nota}

\subsection{Morphisms of Feynman categories}
Morphisms of Feynman categories will explain many of the standard operations, such as
the PROP generated by an operad, the modular envelope, or the operad contained in a PROP.
It also covers the restriction of modular to cyclic, cyclic to non-cyclic or less well known structures such as the PROP or properad generated by a modular operad, see \cite{hoch2}.

\begin{df}
\label{mordef}
A {\em morphism of Feynman} categories $(\V,\F,\imath)$ and $(\V',\F',\imath')$
is basically a pair $(v,f)$ of  a functor $v\colon \V\to \V'$ and  monoidal functor $f\colon \F\to \F'$ which preserves all
the structures, in particular they commute with $\imath$ and $\imath'$, the induced functor
 $v^\otimes:\V^{\otimes}\to \V'^{\otimes}$ is compatible with $f$,
and the decompositions of (\ref{morcond}) are preserved.
\end{df}

In particular this means that in
$$
\xymatrix{
\\
\V \ar[d] _{v}
\ar@(ur,ul)[rr] ^{\imath}
\ar[r]&\V^{\otimes}\ar[r]^{\imath^{\otimes}}\ar[d]_{v^{\otimes}}&\F\ar[d]_{f}\\
\V'\ar[r]
\ar@(dr,dl)[rr] ^{\imath'}
&\V'^{\otimes}\ar[r]^{\imath'^{\otimes}}&\F'\\
}
$$
where $v^{\otimes},\imath^{\otimes}, \imath^{\prime\otimes}$ are defined by the universal property of $\V^{\otimes}$ (respectively $\V'^{\otimes}$),
the right square, and hence the outer square, 2--commute, which means that the two compositions of functors are isomorphic. To be very precise, the datum of a morphism will be a triple $(v,f,N)$ with $N:f\circ \imath^\otimes\simeq \imath'^\otimes \circ v^\otimes$.   In the examples $N$ is usually the identity. Hence, we will tacitly assume such a choice  and abuse notation by only referencing the pair. All compatibilities for $N$ are a straightforward check.
Together with the natural notion of natural transformations of such pairs, Feynman categories form a 2--category.

\subsubsection{Pull--backs and push--forwards}
\label{pushpar}

Given a morphism $(v,f)$
from one Feynman category $\FF=(\V,\F,\imath)$ to another $\FF'=(\V',\F',\imath')$,
there is a pull-back $f^*:\F'$--$\opcat_{\CalC}\to{\F}$--$\opcat_{\CalC}$ given by sending $\CO$ to $\CO\circ f$.
The same is true for $\smodcat$ using $v$. These are compatible by the conditions on morphisms of FCs.

Suppose that the category $\CalC$ is cocomplete.  Then it is possible to construct a left Kan  extension $Lan_f\O$ or push--forward $f_*\O$ along a
functor $f\colon \F\to \F'$. Further assuming that the monoidal product  in $\CalC$ preserves colimits in both variables,
 we will show that this transforms symmetric monoidal functors to symmetric monoidal functors.

The value on $X'\in {\clusters}'$ is then
\begin{equation}
f_*\O(X')= \colim_{(f\downarrow X')}\O\circ P
\end{equation}
where $P$ is the projection $P(Y,\phi:f(Y)\to X)=Y$ \cite{MacLane}.

\begin{thm}\label{amorthm}
\label{pushthm}
$Lan_{f}\O$ is a strong symmetric  monoidal functor, which we denote  as push--forward
 $f_*\O:=Lan_{f}\O$, and hence $f$ defines a functor $f_*:\fopsc\to \F'$-$\opcat_C$. Moreover $f_*,f^*$   form an adjunction of symmetric (co)monoidal functors between the symmetric
 monoidal categories $\fopsc$ and $\F'$-$\opcat_\C$.
\end{thm}
\begin{proof} The proof is parallel to that of Theorem \ref{freethm}. We will use the notation as above. It is clear that the functor $f_*$ exists, since we assumed $\C$ to be cocomplete. We, however, still need to check that the functor $f_*$ is  monoidal, which means that each $f_*\O$ is a monoidal functor, as this is not guaranteed.
Let $X'\simeq\bigotimes_{v\in V} \ast^\prime_v$ then each $\phi:f(Y)\to X$ splits into
$\phi_v\colon f(Y)_v\to \ast'_v$ since $\clusters'$ is a Feynman category.
Decomposing $Y\simeq \bigotimes_{w\in W} \ast_w$ and using the fact that $f$
is a morphism of Feynman categories, we have  $f(Y)\simeq \bigotimes f(\ast_w)\simeq \bigotimes f(Y)_v$. Now
since the $f(\ast_w)$ have length one, we see that there is
an isomorphism $f(Y)_v\simeq \bigotimes_{w\in W_v}f(\ast_w)$ for each $v$ such that $\bigotimes W_v=W$.
Summing up we showed that $f(Y)_v\simeq f(Y_v)$ with $Y_v=\bigotimes_{w\in W_v}\ast_w$. Thus
we decomposed the objects $(Y,\phi)$ in the comma
category  into factors of the form $(Y_v,\phi_v)$.
The morphisms in the comma category factor likewise. Taking
the colimits, we get the desired result by using that the monoidal product respects colimits in each variable and again
using composition of morphisms in one direction and decomposition according to condition (\ref{morcond}) in the other direction.

The symmetric structure is straightforward. For the adjointness, we notice
that $f^*$ is surely a symmetric monoidal functor. Now $f_*$ is by construction adjoint as a functor. This can also be checked explicitly. The morphisms $\eps$ and $\eta$  again are by inclusion of the identity component and composition of morphism.  Like before, writing out the adjunction maps one obtains the induced comonoidal structure. 
\end{proof}

\begin{ex}(Free functor as push--forward)
In this language the free functor can be understood as follows.
The identity $id_{\V}$ together with inclusion functor $inc:\isoclusters\to \clusters$ is a morphism
of Feynman categories  $(\V,\isoclusters,\imath) \stackrel{i=(id_{\V},inc)}{\longrightarrow}(\V,\F,\imath)$.
Noticing that
$\colim_{Iso(\clusters\downarrow *)}\O\circ s= \colim_{(i\downarrow *)}\O\circ P$,
one obtains that the pull-back $i^*$ is the forgetful functor
 $\forget$ and the push-forward $i_*$ is the free functor $\free$.
\end{ex}
\begin{rmk}

One could also consider right Kan extensions, but they are not
always well behaved with respect to the monoidal structure. They do provide an ``extension by zero'' in some cases.
\end{rmk}

\begin{rmk}
\label{shriekrmk}
We will stick with the algebraic geometric notation and intuition for sheaves with $f_*$ denoting the natural push--forward and $f_!$ the extension by zero for special morphisms. Categorically one could argue that the symbols should be switched for left and right Kan extensions, but this may be counter--intuitive for geometers. The categorically minded reader will however know to make this substitution.
\end{rmk}

\begin{ex}
A simple example is given by the following. Let $H\to G$ be a group morphism. This induces a morphism of tautological Feynman categories $(\underline{H},\underline{H}^\otimes,\imath)\to(\underline{G},\underline{G}^\otimes,\imath)$.
The push--forward is then induction and the pull--back restriction of representations.
\end{ex}
\subsection{Other relevant notions}

\subsubsection{$\V$-Sequences}
 For a given Feynman category $\FF$ let $\V_{disc}$ be the discrete version of $\V$; that is the same objects but just identity morphisms.
We will call
$$
Fun(\V_{disc},\CalC)$$
 $\V$-sequences. These will play a role in the chapter \ref{htsec}.

The trivial construction on $\V_{disc}$ yields the Feynman category $(\V_{disc},\V_{disc}^{\otimes},\imath)$ which has a morphism to $\F$ given by the inclusion maps $i=(inc_{\V_{disc}},\imath^{\otimes} \circ inc_{\V_{disc}^\otimes})$. The pull--back is the forgetful functor from $\F$--$\opcat$ down to $\V$-sequences. The push-forward $i_*$ is the free functor, which factors as first making a $\V$-sequence into a free $\V$--module and then into a free
$\F$-$\oper$. Notice that this now has a free $\V$ action. In the example of operads, these are the regular operads in the terminology of Loday.


\subsubsection{Indexed Feynman categories}
Many of the familiar examples involve some sort of graphs with extra structure. To capture
this, we look at indexed Feynman categories. The paradigmatic example for the indexing category will
then be the Feynman category $\GG$ introduced in \S\ref{ggpar}.

\begin{df}
\label{overdef}
Let ${\mathcal B}$ be a Feynman category.
A {\em Feynman category  $\FF$ indexed over  ${\mathcal B}$} is a morphism
of  Feynman categories from $\clusters$ to $\CalB$,
which is surjective on objects. We will write
$\base$ for the underlying functor: $\clusters\to\mathcal B$.
\end{df}

\subsection{Weaker, alternative and Cartesian enriched notions of Feynman categories}
\label{othersec}
There are several modifications one can make on the conditions of a Feynman category. The first deals with the non--symmetric situation. The second is a strictification. The third replaces the hereditary condition by the weak hereditary one. This is usually indeed weaker, but under other categorical
conditions of $\F$, which can be called combinatorial, again equivalent.

Furthermore, the hereditary axiom can be translated into a pre--sheaf condition. In this form one can generalize to enriched Cartesian monoidal categories. To obtain a notion of Feynman categories for non--Cartesian monoidal categories, one has to use this pre--sheaf condition and has to specify that in the isomorphism condition a groupoid means a freely enriched groupoid. This is rather technical and not needed for the examples in \S\ref{examplesec} and the construction in \S\ref{constructionsec}, and so we will return to this in \S\ref{enrichedsec}.

\subsubsection{Non--symmetric Feynman categories}
It is sometimes useful to not use symmetric monoidal categories, but just monoidal categories. In this case $\F$ is assumed to be monoidal and
$\V^\otimes$ means the free monoidal category $\V$. With this adjustment the definition carries over.

This adjustment does change things. For instance, if $\V$ is  the trivial category, then the free monoidal $\V$ is just the discrete category of natural numbers, that is objects $\bar n$ with only the identities as morphisms.

Non--symmetric Feynman categories are the basic objects to construct the Hopf algebras.

\subsubsection{Strict Feynman categories}

Since we will be mainly considering categories of functors from Feynman categories to other monoidal categories, up to equivalence of categories one can assume that $\V$ is a subcategory of $\F$.  We call the process of passing to such an equivalent Feynman category reduction:

\begin{definition}  Let $\FF=(\V,\F,\imath)$ be a Feynman category.  The reduction of $\FF$ is defined to be the triple $\tilde{\FF}=(\imath(\V),\tilde{\F},in)$, where $\tilde{\F}$ is defined to be the full symmetric monoidal subcategory of $\F$ generated by $\imath(\V)$ and where $in$ is the inclusion $\imath(\V)\to\tilde{\F}$. We call a Feynman category strict if it is equal to its reduction.
\end{definition}

\begin{lemma}\label{redlem}  The reduction of a Feynman category is itself a Feynman category, and these two Feynman categories are equivalent.
\end{lemma}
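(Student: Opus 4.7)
The plan is to reduce everything to a single key fact: the inclusion $j: \tilde{\F} \hookrightarrow \F$ is an equivalence of symmetric monoidal categories. Full faithfulness of $j$ holds by construction since $\tilde{\F}$ is by definition a full subcategory of $\F$ and inherits its tensor product. Essential surjectivity follows directly from the isomorphism condition (i): every $X \in \F$ admits an isomorphism $X \simeq \bigotimes_{v \in I} \iota(\ast_v)$, and the target lies in $\tilde{\F}$ by definition. So $j$ is a strong symmetric monoidal equivalence.

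Next, I would observe that the corestricted functor $\iota': \V \to \iota(\V)$ is an equivalence of groupoids. It is essentially surjective onto $\iota(\V)$ by construction, and fully faithful because axiom (i) identifies $\V^{\otimes}$ with $Iso(\F)$, and restricting this equivalence to length-one objects identifies $\V$ with the groupoid of objects $\iota(\ast_v)$ and their isomorphisms in $\F$ --- which is precisely $\iota(\V)$ (all morphisms there are isomorphisms, since they come from $\V$).

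With these two equivalences in hand, the three Feynman category axioms for $\tilde{\FF}$ are inherited from those of $\FF$ by transport of structure. For (i), the functor $in^{\otimes}: \iota(\V)^{\otimes} \to Iso(\tilde{\F})$ is the composite
\[
\iota(\V)^{\otimes} \xleftarrow{\;(\iota')^{\otimes}\;} \V^{\otimes} \xrightarrow{\;\iota^{\otimes}\;} Iso(\F) \xleftarrow{\;Iso(j)\;} Iso(\tilde{\F}),
\]
in which all three arrows are equivalences. For (ii), the equivalence $j$ induces equivalences of comma categories $Iso(\tilde{\F}\downarrow \iota(\V)) \simeq Iso(\F \downarrow \V)$ and $Iso(\tilde{\F} \downarrow \tilde{\F}) \simeq Iso(\F \downarrow \F)$ compatible with tensor products, so the desired equivalence for $\tilde{\FF}$ follows from that for $\FF$. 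For (iii), essential smallness of $(\tilde{\F} \downarrow \iota(\ast))$ is likewise inherited from $(\F \downarrow \iota(\ast))$.

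Finally, the equivalence of $\FF$ and $\tilde{\FF}$ amounts to packaging $\iota'$ and $j$ into a pair of symmetric monoidal functors intertwining $\iota$ and $in$ up to natural isomorphism, which is exactly what we have. The only real conceptual point to be careful about --- and the one I would flag as the main subtlety --- is that a formal notion of ``equivalence of Feynman categories'' has not been spelled out in the excerpt; I would read it as an equivalence of triples in the sense just described, under which no further verification is required beyond what has already been assembled.
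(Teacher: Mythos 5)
Your proposal is correct and follows essentially the same route as the paper: both rest on the observation that the inclusion $\tilde{\F}\hookrightarrow\F$ is fully faithful by construction and essentially surjective by axiom (i), hence an equivalence, from which the axioms for the reduction and the equivalence of the two Feynman categories follow. The only cosmetic difference is that you transport the hereditary condition along induced equivalences of comma categories, whereas the paper verifies it directly by applying the hereditary condition of $\FF$ twice, and the paper writes out an explicit quasi-inverse $\bar{\eta}$ to the inclusion rather than invoking the general existence of one.
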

\begin{proof}  In the above notation, $in^\tensor\colon \imath(\V)^\tensor\to Iso(\tilde{\F})$ is simply the identity functor, and is fully faithful since $\imath$ is, and thus is an equivalence.  Moreover, applying the hereditary condition for $\FF$ twice gives the necessary decomposition of morphisms in $\tilde{\FF}$.

To see that $\FF$ and $\tilde{\FF}$ are equivalent we may construct an explicit equivalence.  Let $\pi\colon \tilde{\F}\to\F$ be the inclusion functor.  Let $\jmath\colon Iso(\F)\to \V^{\tensor}$ be a quasi-inverse to $\imath^\tensor$.  Then define a symmetric monoidal functor $\bar{\jmath}\colon \F\to \tilde{\F}$ by taking $X \mapsto \imath^\tensor \jmath(X)$ on objects, and by taking $\phi\colon X\to X^\prime$ to the composite
\begin{equation}
\imath^\tensor \jmath(X)\stackrel{\nu_X}\to X\stackrel{\phi}\to X^\prime \stackrel{\nu^{-1}_{X^\prime}}\to \imath^\tensor \jmath(X^\prime)
\end{equation}
where $\nu, \nu^{-1}$ are natural transformations giving the isomorphism of functors $id_{Iso(\F)}\cong \imath^\tensor\jmath$.  Then one easily sees that $\pi$ and $\bar{\jmath}$ form an equivalence by using the natural transformations $\nu^{-1}$ and $\nu$.
\end{proof}

\subsubsection{Weakly hereditary Feynman categories}
\begin{df} We say that a triple $(\V,\F,\imath)$ satisfies the weak hereditary condition if each morphism $\phi$ of $\F$ admits a hereditary diagram and such a diagram is unique up to isomorphism.  We say that a triple $(\V,\F,\imath)$ is a weakly hereditary Feynman category if it satisfies
(i) and (iii) and the weak hereditary diagram condition.
\end{df}

\begin{rmk} Weakly hereditary Feynman categories where the first definition of Feynman categories \cite{talks}. It turns out however
that this notion is too weak in general and has to be replaced by the stronger condition (ii) in order for the free construction to work properly.
While weaker in general, if we are in a combinatorial situation, that is if $(\F,\otimes)$ has a fully faithful strong symmetric monoidal functor to $(\Set,\amalg)$, the two notions coincide.
\end{rmk}



\begin{lem}
\label{oldcritlem}
If $(\F,\otimes)$ has a  faithful strong symmetric monoidal functor to $(\Set,\amalg)$, then (i) and the weak hereditary condition imply (ii).
\end{lem}
\begin{proof}
Given two decompositions $\otimes_{v\in I} \phi_v: \bigotimes_{v\in I}X_v\to \bigotimes_{v\in I}\imath(\ast_v)$
and $\otimes_{v\in I} \phi'_v: \bigotimes_{v\in I}X'_v\to \bigotimes_{v\in I}\imath(\ast_v)$
completing (\ref{morphdecompeq}), we have that after passing to $\Set$: $\phi_v^{-1}(\ast_v)=X_v$ and
$\phi_v^{\prime -1}(\ast_v)=X'_v$.
Then we can decompose further $X_v\simeq\bigotimes_{w_v \in I_v}\imath(\ast_{w_v})$
and $X_v\simeq\bigotimes_{w'_{v'} \in I'_v}\imath(\ast_{w_{v'}})$. By (i) the isomorphism between the full decomposition is a permutation followed by an isomorphism of the individual $\ast_{v
_w}$. This means in particular that under the isomorphism given by the decompositions:
$\bigotimes_{w_v\in I_v}\imath(\ast_{w_v})\simeq \bigotimes_{w'_{v'}\in I_{v'}}\imath(\ast_{w'_{v'}})$
since both are the pre--images of $\imath(\ast_v)$. It also follows that all isomorphisms are of this type.
\end{proof}

This result implies:
\begin{prop} If $(\F,\otimes)$ has a  faithful strong symmetric monoidal functor to $(\Set,\amalg)$ and $\FF$ is a weakly hereditary Feynman category then it is a Feynman category.
\qed
\end{prop}

\begin{rmk}
Instead of asking for the  faithful functor to $(\Set,\amalg)$, we can ask that the category has fiber products. It is then clear that isomorphisms can only be fiberwise, the fibers over $\imath(\ast_v)$ being the $X_v$, and hence isomorphisms between two diagrams are unique block isomorphisms.
\end{rmk}

\subsubsection{(Non)--examples}
\label{nonexamplesec}
The following examples show
 that in general the existence of the hereditary diagrams \eqref{morphdecompeq} is weaker than the hereditary condition (ii) and moreover that weakly hereditary is indeed weaker than hereditary. Their background is from spin chains, but this can be ignored for the present purposes.

As a first example consider $\V=\mathbb{C}^2$  with just the identity morphism and let the objects of $\F$ be $(\mathbb{C}^2)^{\otimes n}\simeq \mathbb{C}^{2n}$ considered in $\mathbb{C}$ vector spaces. Let $e_1, e_2$ be the two standard basis elements of $\mathbb{C}^2$.
We add the one--comma generators $\phi_n:(\mathbb{C}^2)^{\otimes n}\to \mathbb{C}^2$ which are defined by
$$\phi_n(e_{i_1}\odo e_{i_n})=\begin{cases}
e_1&\text{ if } e_{i_1}\odo e_{i_n}=e_1\odo e_1\\
0&else
\end{cases}
$$
It is  easily checked that these morphisms   compose correctly: $\phi_k\circ (\phi_{n_1}\odo \phi_{n_k})=\phi_{n_1+\dots+ n_k}$. Now, diagrams of the type \eqref{morphdecompeq} exist. But since $\phi_2\otimes \phi_2=\phi_3\otimes \phi_1\in Hom((\mathbb{C}^2)^{\otimes 4},(\mathbb{C}^2)^{\otimes 2})$,
we see that there is no isomorphism of block type between the two decompositions. The decompositions are however isomorphic and this isomorphism is unique up to general permutations.


As a second example consider the  same $\V$, but now with the one--comma generators

$$\psi_n(e_{i_1}\odo e_{i_n})=\begin{cases}
e_1&\text{ if } e_{i_1}\odo e_{i_n}=e_1\otimes e_{i_2}\odo e_{i_n} \\
e_2&\text{ if } e_{i_1}\odo e_{i_n}=e_2\otimes e_{i_2}\odo e_{i_n}\\
\end{cases}
$$
closing the morphisms by tensor and decomposition, we obtain a weak hereditary Feynman category that is not strong.
Indeed one can check that any morphism is a tensor product of $\psi_n$ and units. On the standard basis elements the $\psi_n$ reads out the left basis element of the given block, so that indeed all the morphisms given by different tensor products are different and hence two decomposition only differ by a block isomorphism permuting the respective blocks. However, there are extra isomorphisms in the comma category which are not of block type, and thus although the weak hereditary condition is satisfied, $\F$ is not a Feynman category. One such example is:

$$
\xymatrix{
(\mathbb{C}^2)^{\otimes 4}\ar[r]^{\phi_2\otimes\phi_2}\ar[d]_{\sigma_{13}}&(\mathbb{C}^2)^{\otimes 2}\ar[d]^{\sigma_{12}}\\
(\mathbb{C}^2)^{\otimes 4}\ar[r]^{\phi_2\otimes\phi_2}&(\mathbb{C}^2)^{\otimes 2}
}
$$
where $\sigma_{ij} \in Iso(\F)$ permutes the $i$th and $j$th factors.
%

\subsubsection{An alternative formulation of condition (\ref{morcond}) }
There is another more high-brow way to phrase the condition on morphisms.
We claim that the condition (\ref{morcond}) can equivalently be stated as
\begin{equation}
\label{dayeq}
\imath^{\otimes \wedge}Hom_{\F}(\,\cdot\, , X\otimes Y):=
Hom_{\F}(\imath^{\otimes}\, \cdot\, ,X\otimes Y)=\imath^{\otimes\wedge}Hom_{\F}( \,\cdot\,, X)\day \imath^{\otimes\wedge}  Hom_{\F}(\,\cdot\, , Y)
\end{equation}
where $\day$ is the Day convolution of presheaves on $\V^{\otimes}$, that is functors in $[\V^{\otimes op},\Set]$, and $\imath^{\otimes \wedge}$ is the pull-back induced by $\imath^{\otimes}$.
The Yoneda embedding for $\F$ is a strong monoidal functor, i.e.~ $Hom_{\F}(\,\cdot\, , X\otimes Y)=Hom_{\F}(\,\cdot\, , X)\day
Hom_{\F}(\,\cdot\, , Y)$, where now the convolution is of representable pre--sheaves on $\F$.
Thus (\ref{dayeq}) can be cast into
\begin{itemize}

\item[(ii')] The pull-back of presheaves $\imath^{\otimes \wedge}\colon [\F^{op},Set]\to [\V^{\otimes op},Set]$ {\em restricted to representable pre-sheaves} is  monoidal.
\end{itemize}

In order to compare the two definitions, let us establish the following.
\begin{lem}(Iterative decompositions)
\label{iterativelem}

Axiom (ii) implies that for any morphism $\phi\in Hom_\F(W,X\otimes Y)$ there exists a pair $(Z,Z')$ of objects in $\F$ such
that
\begin{equation}
\xymatrix{W\ar[rr]^\phi\ar[dr]_\simeq&&X\otimes Y\\
&Z\otimes Z'\ar[ur]_{\phi_1\otimes \phi_2}&\\
}
\end{equation}
and this decomposition is unique up to unique block isomorphism.
Moreover this condition is equivalent to (ii).
\end{lem}
\begin{proof}
First notice that we may decompose $X=\bigotimes_{v\in I_X}*_v$ and $Y=\bigotimes_{v\in I_Y}*_v$. Then $\bigotimes_{v\in I_X\amalg I_Y}*_v$
 gives a lower right corner for \eqref{morphdecompeq}. Completing the diagram and setting $Z=\bigotimes_{v\in I_X}W_v$ and  $Z'=\bigotimes_{v\in I_Y}W_v$ gives the diagram. This is unique up to unique block isomorphism by (ii).

In order to show that it is equivalent to (ii), we have to prove the existence and uniqueness of diagrams according to Remark \ref{hereditaryrem}.
We can apply the construction iteratively until the length of the tensor components are all $1$, that means that they are isomorphic to some $\imath(*_v)$
and we arrive at a diagram of the type \eqref{morphdecompeq}.
At each stage the decomposition is unique up to unique block isomorphism, which proves the claim.
\end{proof}

\begin{prop}\label{altprop}

$\FF$ is a Feynman category if and only if (i), (ii') and (iii) hold.
\end{prop}

\begin{proof}
For any choice of $X$ and $Y$ the condition (ii') is equivalent to (\ref{dayeq}).
Using the definition of the Day convolution the right hand side of (\ref{dayeq})  becomes
\begin{multline}
\imath^{\otimes\wedge}Hom_{\F}( \,\cdot\,, X)\day\imath^{\otimes\wedge}
Hom_{\F}( \,\cdot\, , Y)
 =Hom_{\F}(\imath^{\otimes} \,\cdot\,, X)\day Hom_{\F}(\imath^{\otimes} \,\cdot\, , Y)\\
=\int^{Z,Z'}Hom_{\F}(\imath^{\otimes} Z, X)\times
Hom_{\F}(\imath^{\otimes} Z' , Y)\times Hom_{\V^{\otimes}}( \,\cdot\,,Z\otimes Z')
\end{multline}
Now since $\V^{\otimes}$ was a groupoid, deciphering the co-end,
 there are contributions for each decomposition of the argument which decompose any chosen  morphism into two
 pieces. The whole situation is precisely
 equivariant with respect to pairs of  isomorphisms and, due to the definition of a  co-end,  changing the isomorphisms gives the same morphism.
Fix $\jmath\colon Iso(\F)\to \V^{\otimes}$ as before.
Since any $X$  can be decomposed completely into $X\simeq \bigotimes_v{\imath( \ast_v)}$, we can
apply (\ref{dayeq}) iteratively to arrive exactly at (\ref{morcond}) by using Lemma \ref{iterativelem}.
\end{proof}

\begin{rmk} There is a  similarity between the iterative decomposition and one of the conditions for a rigid tensor category \cite{TannakaDel} in the formulation of the 2012 version of \cite{DelMilne}
\begin{equation}
\label{rigideq}
Hom(Z',X)\otimes Hom(Z'',Y')\simeq Hom(Z'\otimes Z'',X\otimes Y)
\end{equation}
The difference is that although in a Feynman category any morphism $\phi$ from $Z=Z'\otimes Z''$ to $X\otimes Y$ will factor through some $W'\otimes W''$, it is not guaranteed that this decomposition is block isomorphic to the given one. This means that neither $Z'\simeq W'$ and $Z''\simeq W''$ nor $Z''\simeq W''$ nor $Z'\simeq W''$ and $Z'\simeq W'$ nor $Z''\simeq W''$ must hold and if this is not the case, $\phi$ is not of the form needed for the isomorphism \eqref{rigideq}. However, due to the presence of natural Hopf algebras in both theories, a possible connection merits further study.
\end{rmk}
\subsubsection{Feynman categories in the Cartesian enriched setting}
In this section, $\mathcal E$ is a symmetric monoidal category that is Cartesian. In this case, the condition of being a groupoid still makes sense, but the hereditary condition should be replaced by (ii'). Likewise, the condition (iii) has to be rephrased in terms of indexing functors.

\begin{df} A Feynman category $\FF$ enriched
over $\mathcal E$ is a triple $(\asts,\clusters,\imath)$ of a category $\clusters$ enriched
over $\mathcal E$ and an enriched category $\asts$ which satisfy the enriched
version of the axioms of Definition \ref{commadef}. That is (i), (ii') as given above and
\begin{equation*}
\text{(iii') For all $*\in \V$, the  indexing functors }\tilde \imath^{\otimes}(*):=Hom_{\F}(\imath^{\otimes} *, -) \text{ are essentially small.}
\end{equation*}
\end{df}


\begin{rmk} It is easy to check that if $\CalE$ is $(\Set,\times)$ then the two definitions coincide.
\end{rmk}
Fixing a target category $\CalC$, which is also enriched over $\mathcal E$,
we define the categories $\opcat$ and $\smodcat$ as before, but insisting
that the functors are functors of enriched categories.

This definition allows one to enrich over $\Top$ or simplicial sets.

\subsection{Weak Feynman categories and indexed enriched Feynman categories}
To enrich over the standard Abelian categories, like $\Ab$ or $\Vect$ or any other category that is not Cartesian, there are two choices. First one can simply weaken the conditions.
Secondly, one can use an ordinary Feynman category as an indexing system, which we develop in \S\ref{enrichedsec}.
This is guided by the free construction and is more important for our current purposes.
It will allow us to treat twisted modular operads or more generally twisted Feynman categories and give a second approach to algebras.
Here we will just give the basic idea with the details in \S\ref{enrichedsec}.

\begin{df}
\label{weakdef}
A weak Feynman category is a triple $({\mathcal W},\F,\imath)$, both ${\mathcal W}$ and $\F$ enriched over $\CalE$ and $\mathcal V$ symmetric monoidal tensored over $\CalE$ satisfying: (i') $\imath^{\otimes}$ is essentially surjective, and (ii') and (iii') as above.
\end{df}

Notice, we dropped the condition on $\mathcal V$ that it is a groupoid, see \S\ref{Getzlerpar} for a closely related notion due to Getzler and further discussion.

\begin{rmk}
In the non-enriched case, given a Feynman category $(\V,\F,\imath)$, we can take any
groupoid $\V'$ equivalent to a subgroupoid  between $\V_{disc}\hookrightarrow \bar\V' \hookrightarrow \V$ to form a weak Feynman category $(\V',\F,\imath)$.
Here the maximal and minimal $\V'$ are determined by $Iso(\F)$ and its objects.
\end{rmk}
\begin{rmk}
For $\Ab$ or $\Vect$ (ii') translates to the hereditary condition that each $\phi$ can be decomposed into summands. In general, proceeding as in Proposition \ref{altprop} the analogue of (ii) in the enriched case is that for each choice of base
for $X\simeq\bigotimes_{w\in W}\imath(\ast_w)$ and $Y=\bigotimes_{v \in V} \imath(\ast'_v)$
there is an induced isomorphism
$$Hom_{\F}(X,Y)\simeq \bigoplus_{ (W_v)_{v\in V}:\amalg_v W_v=W}
\bigotimes_{v\in V}  Hom_{\F}(\bigotimes_{w\in W_v}\imath(\ast_w),\imath (\ast'_v))$$
where the coproduct is over all partitions and the isomorphism is given by the monoidal structure, viz.\ the product and
commutativity/associativity constraints.
\end{rmk}

\subsubsection{Freely enriched Feynman categories}
\label{freeenrichedpar}
We recall from \cite{kellybook} that if $\CalE$ is a symmetric monoidal closed category with the underlying $\CalE_0$ being locally small, complete and cocomplete, that there is a left adjoint functor
$( - )_{\CalE}$ to the underlying category functor $( - )_0$.
We will assume these conditions on $\CalE$ from now on.

 A freely enriched Feynman category is then a triple
 $\FF_{\CalE}:=(\V_{\CalE},\F_{\CalE},\imath_{\CalE})$ where $\FF=(\V,\F,\imath)$ is a Feynman category.
This gives the basic idea for indexed enriched Feynman categories.

\subsubsection{Indexed enrichment}
At this point, we will just write the definition for reference and explain it when more of the theory has been developed.
We will need the definition of an enrichment functor as defined in Definition \ref{enrichmentfunctordef}.
Given a monoidal category $\F$ considered as a 2--category $\underline{ \F}$ and lax 2--functor $\D$ to $\underline{\CalE}$,  which is an enrichment functor, we define an enriched monoidal category $\F_{\D}$ as follows. The objects of $\F_{\D}$ are those of $\F$. The morphisms are given by

\begin{equation*}
Hom_{\F_{\CalD}}(X,Y):=\bigoplus_{\phi\in Hom_{\F}(X,Y)}\D(\phi), \text{ where } \D(\phi)\in Obj(\CalE)
\end{equation*}
and composition defined via the functor $\D$.
\begin{rmk}
We obtain the free enriched case for $\D(\phi)\equiv \unit$.
\end{rmk}
\begin{rmk}
As we shall show, the enrichment functors alternatively can be thought of as $\FF^{hyp}$-$\opcat_{\CalE}$ for the hyper--Feynman
category of a given Feynman category defined in \S\ref{hypersec}.
\end{rmk}

%

The definition given in Chapter \S\ref{enrichedsec} is:
\begin{customdf}{\ref{enrichedfeydef}}
Let $\FF$ be a Feynman category and let $\D$ be an enrichment functor.
We call $\FF_{\D}:=(\V_{\CalE},\F_{\D},\imath_{\CalE})$ a
Feynman category enriched
over $\CalE$ indexed by $\D$.

\end{customdf}

 \subsection{Connection to Feynman graphs and physics}
The intuition for Feynman categories
comes from quantum field theory and Feynman graphs, whence the name.
The physical terminology is meant as follows.
Feynman graphs for a quantum field theory contain two pieces of vital information.
First there are fixed vertex types encoding the possible interactions (this role is played by $\asts$)
and secondly these vertices are connected by edges of the graph, which are the propagators or field lines. There is one type of edge for
each particle and the vertices have flags according to the interactions set forth in the action.

The morphisms in the category $\F$ are analogous to an $S$ matrix.  More precisely,
the $S$ matrix with a given external structure $\ast$ is encoded by the slice category $(\F\downarrow \ast)$.
 That is, we fix the external legs as
the target of a morphism and then the possible morphisms with this target are all possible graphs that we can put inside
this effective scattering vertex. Each such graph gives rise to a morphism, where the source is
given by specifying the collection of vertices and the morphism is nailed down by putting in the propagation lines.  This also explains the use of the symbol $\ast$ for the objects of $\V$.

We wish to point out two things though: First, the definition of a Feynman category itself {\em does not need any underlying graphs}.
And secondly, if graphs are present, they are the {\em morphisms} not the objects.  FCs using graphs are formalized below
as FCs indexed over a category of graphs see Definition  \ref{overdef} and section \S\ref{graphsec}.

\subsection{Discussion and relation to other structures}
\label{otherpar}
There are several related notions that have appeared independently in the literature. In this section, we discuss them
as they relate to the notion of Feynman categories.

 One can distinguish two different setups by their presentation of the classical examples like operads.
 In the first, the graphs or composition/pasting schemes are objects of a category, such as in
 the Borisov--Manin setup or the classical colored operad setup as founded by May.
  In the second setup, which is where Feynman categories
 live, the graphs are actually part of morphisms. After a little sleuthing, with the help of C.~Berger, one can find out that this approach
 actually predates the first one and traces back to Boardman--Vogt.

 Other different approaches that have been studied concurrently to ours are polynomial monads \cite{monads} and operadic categories \cite{BatMar}.

 \subsubsection{Boardman--Vogt Categories of operators in standard form}
 The first appearance of treating graphs  or compositions as morphisms can be found in \cite{BV}, under
 the notion of Categories of operators in standard form. Here however, we are only in $\Top$ and the category $\V$ is trivial.

\subsubsection{Multi-categories and colored operads}
\label{coloredpar}
Several people have remarked a certain connection to colored operads or multicategories
These indeed exist. First,
our framework  subsumes the notions of operads and colored operads as specific examples, see \S \ref{classexsec}

Secondly, for those already familiar with colored operads it is  useful to note the following. Consider a discrete
$\V$, then as a subcategory of $\F$, $\V$ is the set of colors of a colored
operad defined by $\F$. For this, set $\O(\ast_1,\dots,\ast_n;\ast_m):=\F(\ast_1\otimes \dots\otimes \ast_n,\ast_m)$
and let the composition of $\O$ be given by $\circ$ and the monoidal structure in $\F$.
This is of course the same way to obtain a multicategory from a monoidal category. What the axiom (\ref{morcond}) is saying is that these ``multicategory morphisms''
are ``all the morphisms''. Of course, we will only get back things up to equivalence.

In this light, our setup generalizes the observation of \cite{KS,BergerMoerdijk} that  a non-symmetric operad
is an algebra over a certain colored operad in several ways.
First, we now look at all possible generalized operadic gadgets --symmetric or not--  that can be formed this way and secondly,
 we include a precise axiomatization of the possible isomorphisms of the colors needed
 e.g. in the symmetric case, that is we enlarge the set of colors to a groupoid of basic colors with the
 all colors given by words in the basic colors.

The main conceptual difference is  that in May's operad theory the $\O(n)$ are thought of as objects,
while in our theory they are morphisms, see \S\ref{operfeypar}.
Another conceptual difference is that we always look at something like the PROP generated
by an operad, which does have underlying objects.
This makes our considerations closer to categories of
operators in standard form of Boardman--Vogt \cite{BoVo} as was remarked by C.~Berger.

The relation to colored PROPs may also make this clearer.
Colored PROPs are $\F$--$\opcat$ for a specific Feynman category, they do not form one.
This is important, since in this view our results on homotopy theory and model structures {\it et cetera} for $\F$--$\opcat$ in general
will pertain equally to colored PROPs and colored operads {\it et cetera}.
The precise relationship between these concepts has been under study in \cite{BKW,Caviglia}.

Procedurally or ontologically, note that if we want to avoid circularity in the definition of concepts,
we {\em cannot define} a Feynman
category using colored operads, as these are secondary and
are defined as functors from a particular Feynman category, see \S \ref{classexsec}.

\subsubsection{Manin-Borisov version}
As we will discuss below, graphs are an important source of examples. Graphs and their morphisms form a category which
was nicely analyzed and presented in \cite{BM}. We will review this below and use this category. Whereas in their generalization of operads
Borisov and Manin used graphs as objects, we will take graphs as {\em morphisms}. More precisely the graphs will be a part of the data of a morphism in the full subcategory whose objects are collections or aggregates of corollas. Recall that a corolla is a graph with one vertex and only flags, i.e.\ no edges or loops.
Thus one could say that we go up one categorical level.

\subsubsection{Getzler's version: patterns}
\label{Getzlerpar}
As we realized after having given our construction, there is a related
   notion of operads starting from patterns \cite{getzler}. Although the connection was not obvious \cite{Getzlernote}, these are related through the condition (ii') and our Proposition \ref{altprop}.
 Part of the condition (i) appears in this context,
as the notion of regularity, namely that the functor $\imath^{\otimes}$ is essentially surjective.
The groupoid condition is not included. A regular pattern would  also require the stronger condition (iv)
that $\imath^{\otimes \wedge}$ is monoidal on all presheaves, which in many cases is not too restrictive.
 This depends on the size of the category. They are equivalent if $\F$ is small.

Using our Proposition \ref{altprop}, we can see that up to representability of presheaves, our notion of weak Feynman categories
coincides with that of a regular pattern. A Feynman category then has an extra condition, namely (i).

So in one sense a pattern is more general as one may omit (i) and in another sense it is more restrictive since (iv)
is stronger than (ii) and might be harder to check, if one is not in the case that the category is suitably small, see  e.g. \cite{kellybook}.
Starting from our work, the exact relationship up to equivalences and strictification has been recently analyzed in detail in the subsequent work \cite{BKW} and also in \cite{Caviglia}.

A big advantage is that our axiom (ii) is very easily checked in all the examples, for instance using hereditary diagrams which go back to Markl \cite{Markl}.

\section{Examples}
\label{examplesec}
In this section, we give examples of Feynman categories.  We treat the usual notions of operad-like structures in our framework
as well as new examples.

We start
with those indexed over a basic category $\GG$ and is variations.
 These yield all the usual suspects like operads, colored
operads, PROPs, properads, cyclic operads, modular operads, etc..
Furthermore, many constructions like the modular envelope or the PROP generated
by an operad can be understood via push-forward and pull-back. This allows us to also study not-so-classical notions and their relations, such as ungraded, nc, and non-$\Sigma$ variants of modular operads which are for instance helpful in string topology constructions, see e.g. \ \cite{hoch1}.
We will start this section by discussing the basic Feynman category $\GG$ of aggregates of corollas which is a full subcategory of the Borisov--Manin
category of graphs. Indeed all classical examples can be obtained via  decorations, in the technical sense as defined in \S\ref{decopar},
 and restrictions from this basic example.

The next set of examples we treat are actually of a simpler kind, namely those that have trivial $\V$ or variations thereof.
This includes the basic example of \S\ref{surjpar} as well as injections, crossed simplicial groups and the augmented simplicial category.  Among these examples are also Feynman categories whose $\F$-$\opcat$ are algebras over (colored) operads.
Thus, in our theory operads and algebras over them are on the same footing.  In particular, we get free algebras this way.
To fully grasp these examples however, one needs the theory of enrichments \S\ref{enrichedsec} and the construction of $\FF^{hyp}$ given in \S\ref{hypersec}.

\begin{nota}
In the following if $\V$ is actually a subcategory of $\F$, we just write $\imath$ for the inclusion without further ado.
Also, since it often happens that the categories we regard are subcategories which contain {\em all objects},
we recall that the standard nomenclature for these subcategories is {\em wide} subcategories.
\end{nota}

All the Feynman categories appearing here are combinatorial, i.e.\ have a fully faithful functor to $\Set,\amalg$ and hence Lemma \ref{oldcritlem} applies.

The condition (iii) will not be explicitly checked as it is obvious in all the cases we discuss here and later on.

\subsection{The Feynman category  $\GG=(\Crl,\Agg,\imath)$ and categories indexed over it}
\label{ggpar}

\begin{df} Let $\Graphs$ be the category whose objects are abstract graphs and
whose morphisms are the morphisms described in Appendix A.
We consider it to be a monoidal category with monoidal product $\amalg$ (see
Remark \ref{disjointrmk}).

We let $\Agg$ be the full subcategory
of disjoint unions (aggregates) of corollas.
And we let $\Crl$ be the groupoid subcategory whose objects are corollas and
whose morphisms are the isomorphisms between them.
We denote the inclusion of $\Crl$ into $\Agg$ by $\imath$.
\end{df}

The main result which allows one to realize ``graph based operad like things''  as functors from Feynman categories is the following:

\begin{prop}
\label{aggprop}
The triple $\GG=(\Crl,\Agg,\imath)$ is a Feynman category.
\end{prop}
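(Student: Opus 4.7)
The plan is to verify the three defining axioms of a Feynman category for $\GG=(\Crl,\Agg,\imath)$, with the strategy of leveraging Lemma \ref{oldcritlem} to reduce the full hereditary condition (ii) to the weaker decomposition statement (\ref{morphdecompeq}). The paper flags that all examples in this section embed fully faithfully and strong symmetric monoidally into $(\Set,\amalg)$ (via the underlying flag set, say), so this reduction is available.

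For the isomorphism condition (\ref{objectcond}), I would observe that by the very definition of $\Agg$ every object is a disjoint union of corollas, so $\imath^\otimes$ is essentially surjective. For fullness and faithfulness, I would argue that an isomorphism in $\Agg$ between two aggregates $\bigotimes_{v\in I}\ast_v$ and $\bigotimes_{w\in J}\ast'_w$ is an isomorphism of underlying graphs, which must preserve the partition into corollas (each corolla being a connected component containing exactly one vertex of its ghost graph). This yields a bijection $\psi\colon I\to J$ together with isomorphisms $\ast_v\to\ast'_{\psi(v)}$, which is precisely a morphism in $\Crl^\otimes$. So $\imath^\otimes$ is fully faithful.

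For the decomposition (\ref{morphdecompeq}), given $\phi\colon X\to Y$ in $\Agg$ and a decomposition $Y\simeq\bigotimes_{v\in I}\imath(\ast_v)$, I would use the description of graph morphisms recalled in Appendix A: such a morphism is assembled from isomorphisms and edge contractions, and therefore has a well-defined ``preimage'' subgraph over each vertex of the target. Concretely, the corollas of $X$ partition according to which corolla $\ast_v$ of $Y$ they map to, giving subaggregates $X_v$ with $X\simeq\bigotimes_v X_v$ and restrictions $\phi_v\colon X_v\to \ast_v$ in $(\Agg\downarrow\Crl)$. This produces the required commutative diagram.

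For the size condition, I would note that a morphism $X\to \ast$ with $\ast$ a corolla of arity $n$ is encoded by a connected graph with $n$ tails whose vertex set is $\pi_0$ of $X$; since such a graph has a finite flag set (bounded by $n$ plus twice the number of edges, itself controlled by the combinatorics of $X$), the isomorphism classes of such morphisms form a set, so $(\Agg\downarrow\ast)$ is essentially small. Combining these three points with Lemma \ref{oldcritlem} yields (\ref{morcond}), completing the verification. The main subtlety I anticipate is step 2: spelling out, from the Appendix A definition of graph morphisms, why the decomposition of the target $Y$ into corollas canonically induces the decomposition $\phi\simeq\bigotimes_v\phi_v$ up to the expected isomorphisms—this is where the combinatorial content of ``ghost graph + flag map'' must be unpacked carefully.
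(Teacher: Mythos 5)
Your proposal is correct and follows essentially the same route as the paper: the key step in both is to decompose a morphism $\phi=(\phi^F,\phi_V,i_\phi)$ into restrictions $\phi_v\colon\amalg_{w\in\phi_V^{-1}(v)}\ast_w\to\ast_v$ over the fibers of $\phi_V$, and to deduce condition (i) from the fact that this decomposition applied to an isomorphism yields isomorphisms of corollas (your connected-component phrasing amounts to the same thing). The paper likewise relies on Lemma \ref{oldcritlem} via the faithful embedding into $(\Set,\amalg)$, and omits the size condition as routine, so there is nothing substantive to flag.
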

\begin{proof}
By construction $Iso(\Agg)$ is equivalent to $\Crl^{\otimes}$ where the morphisms $\imath^{\otimes}$ map the abstract $\otimes$  to $\amalg$.
The condition (i) is obviously
satisfied on objects. For the equivalence, we first
 check the condition (\ref{morcond}).
A morphism from one aggregate of corollas $X=\amalg_{w\in W}\ast_w$
to another $Y=\amalg_{v\in V}\ast_v$ is given by
$\phi=(\phi^F,\phi_V,i_{\phi})$. We let
$\phi_v:\amalg_{w\in {\phi}^{-1}(v)}*_w\to \ast_v$ be given by the restrictions (see Appendix A).
This yields the needed decomposition.  Any isomorphism thus also decomposes in this fashion, and
in this case we see that the decomposition yields a decomposition into isomorphisms of corollas.  Hence
$\imath^{\otimes}$ induces an equivalence of $\Crl^{\otimes}$ with $Iso(\Agg)$.
\end{proof}

As is apparent by the use of only aggregates of corollas as objects, the graph structure is not part of the objects, it is part of the morphisms.  More precisely they are the ghost graph of a morphism. Here, we wish to point out that a morphisms is not completely determined by its ghost graph.
It is only indexed by it. We will now give the details.

\subsubsection{Morphisms in $\Agg$ and graphs: a secondary structure}
\label{grsecondsec}
As many users are used to graphs appearing in operadic contexts, we wish to make their role clear in the current
setup. The objects in $\Agg$ are by definition just aggregates of corollas, so they are very simple graphs. Notice we do not take any other graphs as objects.
All the structure is in the morphisms and these  have underlying graphs, the ghost graphs defined in \S\ref{ghostpar}.
These are, up to {\em important details},
the graphs readers might  expect to appear. Since this is a potential source of confusion, we review the details here carefully.

\begin{enumerate}
\item By definition, a morphism $\phi$ in $\Agg$ from  an aggregate $X=\amalg_{v\in V}\ast_v$ to another aggregate $Y=\amalg_w\ast_w$
gives rise to a (non--connected) graph $\gh(\phi)$, viz.\ its ghost graph (see Appendix A). This is
the graph $\gh(\phi)=(V_X,F_X,\hat \imath_{\phi},\del_X)$ where $\hat \imath_{\phi}$ is
the  extension of $i_{\phi}$ to all flags of $X$ by being the identity
on all flags in the image of $\phi^F$.
This is the graph obtained from $X$ by using orbits of
$i_{\phi}$ as edges.

The ``extreme'' cases occur when either $\gh(\phi)$ is connected, in which case
 the morphism is a called a {\em contraction}, or when $\gh(\phi)$ has no edges, in which
case the morphism is called a merger. Here we follow the terminology of \cite{BM}.
For a merger, $\phi^F$ is a bijection and $i_{\phi}$ is hence
the unique map from $\emptyset$ to itself.
Following the proof of Proposition \ref{aggprop}  shows that in $\Agg$ the associated graph $\gh(\phi)$  is the disjoint union of
the $\gh_v(\phi):=\gh(\phi_v)$, we call these the associated graphs.

Given the source  and target of a morphism and a $\gh(\phi)$ however,
{\it does not} (even up to isomorphism) determine the morphism $\phi$.
One crucial piece of missing information is the map $\phi_V$ which tells which components
need to be merged. That is, we need to know which  components of $\gh(\phi_v)$ correspond to the map $\phi_v$.

More precisely, to reconstruct the map
$\phi$ we need to have
\begin{itemize}
\item[(a)] the graph $\gh(\phi)$

\item[ (b)] the decomposition
$\gh(\phi)=\amalg_{v \in V_Y} \gh_v(\phi)$  (notice the $\gh_v$ do not have to be connected),  and

\item[(c)] the isomorphism $\phi^F$ restricted to its image.
\end{itemize}
The data (b) then is equivalent to  the map $\phi_V$,  which is the map sending all vertices in $\phi_v$ to $v$. The map $i_{\phi}$ is just the restriction of $\hat i_{\gh(\phi)}$ to its elements which have 2 element orbits.
If we only have (a) and (b), then the data fixes a map up to the isomorphism $\phi^F$ restricted onto its image, which is the same as a morphism up to isomorphism on the target and source.
Fixing (a) only we do not even get a class up to isomorphism, since the morphism $\phi_V$ is unrecoverable.

For a general graph morphism $\phi\colon X\to Y$, a ghost graph or even the full set of data (a)--(c) do not uniquely determine the source and target, as there is no way to recover $i_X$ or $i_Y$, but in the case of aggregates, we know that both these maps are isomorphisms.

\item
If the target $Y=\ast_v$ is just one corolla, then the situation is slightly better. Namely  there is only one associated graph and this is just the underlying graph $\gh(\phi)=\gh_{v}(\phi)$. So that although on all of $\Agg$ morphisms are not determined just by their underlying graph, up to isomorphism this is possible for the comma category $(\Agg\downarrow \Crl)$.
This fact has the potential to cause confusion and it is the basic reason why, in many descriptions of operad like structures, graphs arise as objects rather than morphisms.

\end{enumerate}

\subsubsection{Compositions in $\Gr$ compared to morphisms of  the underlying graphs}
This section  is intended for the reader that is used to dealing with graphs
as objects not as underlying objects of morphisms and gives details to clarify the relationship and differences.
The morphisms of graphs of this section appear in $\Gr$ as compositions in the following
way.  Let $\phi_0$, $\phi_1$ and $\phi_2$ be morphisms such that $\phi_0=\phi_1\circ \phi_2$

$$
\xymatrix{
X
\ar@(dr,dl)[rr] _{\phi_0}
\ar[r]^{\phi_2}&Y\ar[r]^{\phi_1}&\ast\\
\\
}
$$
Now let $\gh_v$ be the associated (ghost) graphs of $\phi_2$, and $\gh_0$ and $\gh_1$
be the associated graphs to $\phi_0$ and $\phi_1$ respectively.

Picking a basis, $Y\simeq \amalg_{v\in V} \ast_v$, $X_v\simeq \amalg_{w\in V_v} \ast_{w}$, and $\phi\simeq \bigotimes_v \phi_v$, set $\gh_v=\gh(\phi_v)$.  The diagram above corresponds, up to isomorphisms, to the formal diagram of graphs:
$$
\xymatrix{
\amalg_v\amalg_{w\in V_v}\ast_{w}
\ar[r]^--{\amalg_v \gh_v}
\ar@(dr,dl)[rr] _{\gh_0}
& \amalg_v\ar[r]^{\gh_1}
&\ast\\
\\
} \text{  up to isomorphisms.}
$$

Now going through the definitions, we see that there is a morphism of graphs $\psi:\gh_0\to \gh_1$ in $\Graphs$, such that
for $v\in V(\gh_1)$,  the $\gh_v$ are the inverse images of the respective vertices,
that is the given restrictions of $\psi$ to the vertices $v$.

Under the caveat that all statements about the associated graphs are only well defined up to isomorphisms,
the composition
of graphs in $\Agg$ is related to morphisms of graphs in $\Graphs$ as follows.
To compose morphisms of aggregates, we {\em insert} a collection of graphs into the vertices, that is we blow
up the vertex $v$ into the graph $\gh_v$.
Reinterpreting the graphs  $\gh(\phi)$ as objects, {\em as one actually should not}, one
can view the composition as a morphism from $\Gamma_1$ to $\Gamma_0$ as being given by the inverse operation, that is
contracting the subgraphs $\Gamma_v$ of $\Gamma_1$.
\subsubsection{Physics parlance}
The composition by insertion is again related to the physics terminology.
Here we are thinking of the vertices as effective vertices, that is we can expand them by inserting interactions.
As Dirk Kreimer pointed out this leads to the possibility of transferring many aspects of Connes--Kreimer \cite{CK} theory to Feynman categories;
see \S\ref{CKsec}.

\subsubsection{Directed, oriented and ordered versions of $\GG$}
\label{orderedversionspar}

Graphs may come with extra structures. A list of the most commonly used ones is included in Appendix A.
Adding these structures as extra data to the morphisms, we arrive at Feynman categories indexed over
$(\Crl,\Agg,\imath)$. Of course we then have to define how to {\em compose} the extra structure.
Moreover, via pull--back, for any Feynman category indexed over $(\Crl,\Agg,\imath)$ we
can add the extra structure to the morphisms as we will explain.

The most common indexing is over directed graphs. We will also introduce the relevant categories with orders on
 the edges and orientations on the edges. The latter two can be understood as analogous to ordered and oriented simplices.
We will consider other extra structures as needed.

\begin{enumerate}

\item
$\GG^{dir}=(\Crl^{dir},\Agg^{dir},\imath^{dir})$.
$\Crl^{dir}$ has directed corollas as its objects and isomorphisms of directed corollas as its morphisms.
$\Agg^{dir}$ is the full subcategory of aggregates of directed corollas in the category of directed graphs; see Appendix A.

\item  $\GG^{ord}=(\Crl^{ord},\Agg^{ord},\imath^{ord})$ and  $\GG^{or}=(\Crl^{or},\Agg^{or},\imath^{or})$.
The underlying objects  in both cases are the same as in $\Agg$ and $\Crl$.
A morphism is a pair of a  morphism in $\Agg$ or $\Crl$ {\em together} with an
orientation  or order of the set of all the edges of the associated graphs.

\item We of course can have both, direction of edges and an orientation/order of all edges.
\end{enumerate}

\begin{df}
Given a Feynman category $(\asts,\clusters,\jmath)$ indexed over $(\Crl,\Agg,\imath)$
with base functor $\base$,
we define the oriented/ordered version  $(\asts,\clusters^{or/ord},\jmath)$
as the Feynman category whose objects are the same, but whose morphisms are pairs $(\phi,ord)$
where $\phi$ is a morphisms in $\F$ and  $ord$ is an order/orientation on the edges of  $\gh(\base(\phi))$,
and composition on the $ord$ factors is the one given above.

For $\F^{dir}$ and $\V^{dir}$ we take as objects pairs $(X,F_{B(X)}\to\{in,out\})$ with $X$ an original object.
Morphisms $(X,F_{B(X)}\to\{in,out\})$ to $(Y,F_{B(Y)}\to\{in,out\})$
are now those morphisms, for which $B(\phi)$ is a morphism in $\GG^{dir}$ from
$(B(X),F_{B(X)}\to\{in,out\})$ to $(B(Y),F_{B(Y)}\to\{in,out\})$.
\end{df}

Notice that this is just the pull--back  or change of base in the category of
Feynman categories.

The directed versions correspond to a decoration, as in \S\ref{decopar} and the orientations and orders to an indexed enrichment over $\Set$.

\subsection{Feynman categories indexed over $\GG^{dir}$}
\label{classexsec}
The most commonly known types of structures are
operads, PROPs and their variations. These are all indexed over directed graphs, and so we will start with these, although
the more basic ones have less structure. They are however all based on subcategories of $\GG^{dir}$.
Of course $\GG^{dir}$ is indexed over $\GG$, so that these examples are also indexed over $\GG$. For operads the subcategories of $\GG^{dir}$ are actually the most restricted.
\subsubsection{Operads}\label{ops-ex-sec}
There is a Feynman category whose $\fops$ are operads.  More precisely, there are several minor variations in the definition of operad, each encoded by its own Feynman category.  We will begin with the simplest variation and define the Feynman category for pseudo-operads.  Recall \cite{MSS} a pseudo-operad satisfies the conditions of an operad, except is not required to have a unit for the composition.

{\bf Pseudo-Operads.}  The Feynman category for pseudo-operads is denoted $\mathfrak{O}^{ps}=(\Crl^{rt},\opd,\imath)$ and defined as follows:
 The objects of $\Crl^{rt}$ are directed corollas having exactly one ``out'' flag (called the root).  The morphisms of $\Crl^{rt}$ are those isomorphisms which preserve the directed structure; ie permutations of the ``in'' flags.  The objects of $\opd$ are aggregates (disjoint unions) of such corollas and the morphisms of $\opd$ are $\tensor$-generated by those morphisms of oriented graphs whose associated ghost graphs $\gh(\phi_v)$ are (connected) trees.  Notice these trees are consequently both oriented and rooted.  The root of $\gh(\phi_v)$ is given by the image of the root flag of $v$.   A pseudo-operad is by definition a $\mathfrak{O}^{ps}\text{-}\oper$.

Those morphisms in $\mathfrak{O}^{ps}$ whose ghost graphs have one edge are often called partial compositions \cite{MSS} and are denoted $\circ_s$.\footnote{ In fact the $\circ_s$ and the isomorphisms $\sigma\in Aut(S)$ actually generate all morphisms; for a thorough discussion see Lemma \ref{genlem}.}  Explicitly, let $S_1\sqcup r_1$ and $S_2\sqcup r_2$ be two objects in $\Crl^{rt}$, specified by their sets of $in\sqcup out$-flags.  For $s\in S_1$ we define $\circ_s$ to be the morphism which connects the flag $s\in S_1$ to the flag $r_2$.  Contracting this edge yields the morphisms' target; $(S_1\setminus s\amalg S_2)\sqcup r_1$ with $\phi^F$ being $id$.  Pictorially:

\begin{figure}[htb]
\begin{center} \includegraphics[scale=.35]{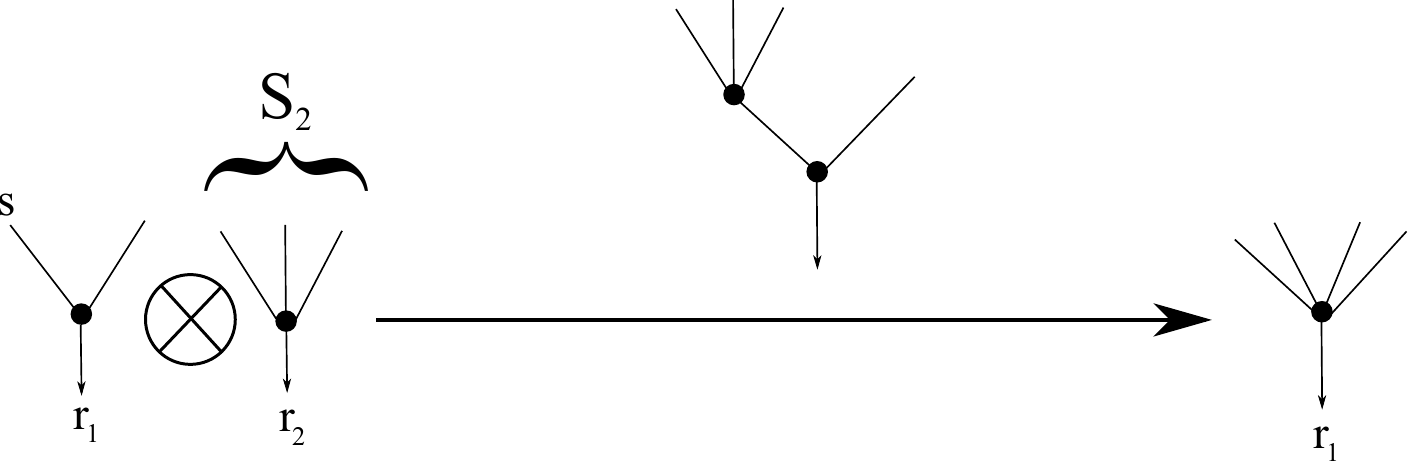} \end{center}
\caption{An edge contraction. The tree is the ghost graph.}
\end{figure}

If $\mathcal{O}$ is a pseudo-operad one more commonly denotes $\mathcal{O}(S\sqcup r)$ by $\mathcal{O}(S)$.  Such an object carries an $Aut(S)$ action and the partial compositions specify morphisms
\begin{equation*}
\mathcal{O}(S_1)\tensor\mathcal{O}(S_2)\stackrel{\circ_s}\longrightarrow \mathcal{O}(S_1\setminus s\amalg S_2)
\end{equation*}
This data satisfies the usual associativity and $\SS$ equivariance axioms (see e.g.\ \cite{MSS}) which are now {\em consequences} of the structure of $\mathfrak{O}^{ps}$.

In the definition of $\mathfrak{O}^{ps}$ we allowed any finite set of flags to form a corolla.   After Markl \cite{Markl} we call this the ``unbiased'' definition of operads.  If we choose to express a bias for the standard skeleton of the category of finite sets via natural numbers, then we get the ``biased'' definition as we presently explain.

{\bf Biased version.}  Let $\Crl^{rt}_{\N}$ be the category of directed corollas with flags $[n]:=\{0,1,2,\dots,n\}$, for $n \in \N_0$ and with $0$ being the ``out'' flag. The automorphism group of such a corolla is naturally identified with $\SS_n$.  Let $\mathfrak{O}^{ps}_\N=(\Crl^{rt}_\N,\opd_\N,\imath)$ be the corresponding full Feynman subcategory of $\mathfrak{O}^{ps}$.

If $\CalC$ is cocomplete and tensor preserves colimits in each variable, the inclusion morphism $i\colon\mathfrak{O}^{ps}_{\N}\to \mathfrak{O}^{ps}$ yields an adjunction between the respective categories of $\mathcal{O}ps$ in $\CalC$ (Theorem $\ref{amorthm}$).  The pull-back $i^*$ is just the restriction, and the push-forward $i_*$ is given by
\begin{equation*}
\CO(S):=\left[ \bigoplus_{S\leftrightarrow \bar n} \CO(n)\right]_{\SS_n}
\end{equation*}
where the action is simultaneously on the sum and summand.  In this case it is easy to see that $i^*$ and $i_*$ give an equivalence of categories, both between the $\smodcat$ and the $\opcat$; in particular the notions of biased and unbiased operads are equivalent.


{\bf May Operads.}  Historically operads were defined by May as having operations parametrized by rooted trees with levels.  This is a property of a tree which is stable under simultaneous composition at all input flags; graphically:

\begin{figure}[htb]
\begin{center} \includegraphics[scale=.4]{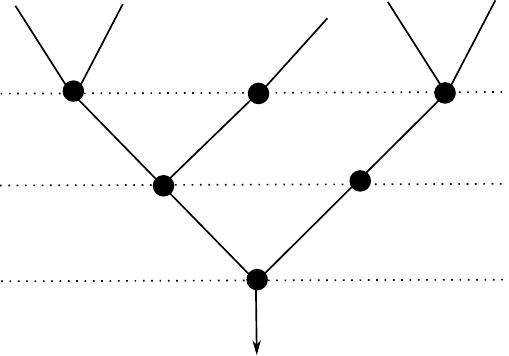} \end{center}
    \caption{A level tree corresponding to the ghost tree of a morphism}
    \end{figure}

Restricting to those morphisms whose ghost graphs are level trees specifies a Feynman subcategory $\mathfrak{O}^{May}\hookrightarrow \mathfrak{O}^{ps}$ and we call $\mathfrak{O}^{May}$-$\mathcal{O}ps$ ``May operads''.  Again we may consider the biased or unbiased versions of these Feynman categories and it is here also the case that biased and unbiased May operads are equivalent.

{\bf Operads.}  Operads are most commonly defined to include a unit for the composition.  We define the Feynman category $\mathfrak{O}=(\Crl^{rt},\opd^\prime,\imath)$ where $\opd^\prime$ has the same objects as $\opd$ but has one additional generating morphism $u\colon \emptyset\to [1]$,
and subject to the requirement that composing generating morphisms with $u$ recovers the monoidal unit isomorphism.  Explicitly $ [n]\tensor \unit \to [n]\tensor [1] \stackrel{\circ_i}\to [n]$ recovers the isomorphism $[n]\tensor k \cong [n]$ and symmetrically.  Notice that $u$ is not a morphism of graphs in our setup, in that such a morphism would require a map of sets (on flags) to the empty set.

It is often practical to denote the unit morphism $u$ graphically by a segment:
\begin{figure}[htb]
\begin{center} \includegraphics[scale=.4]{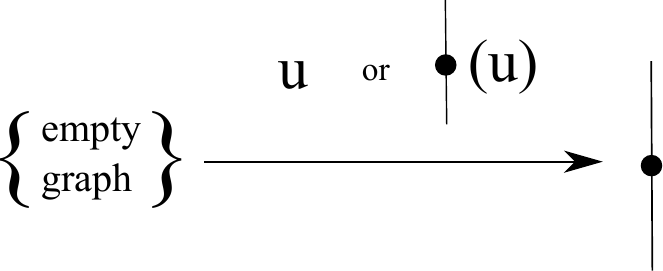} \end{center}
\caption{Graphically adding units}
\end{figure}

This is because pre-composition with the unit morphism has the graphical effect of erasing a unary black vertex.  As an exercise in this graphical calculus one can show that, in the presence of a unit, the partial compositions and the level-wise compositions are equivalent.  In particular any rooted tree maps to a leveled version via the unit:
\begin{figure}[htb]
\begin{center} \includegraphics[scale=.5]{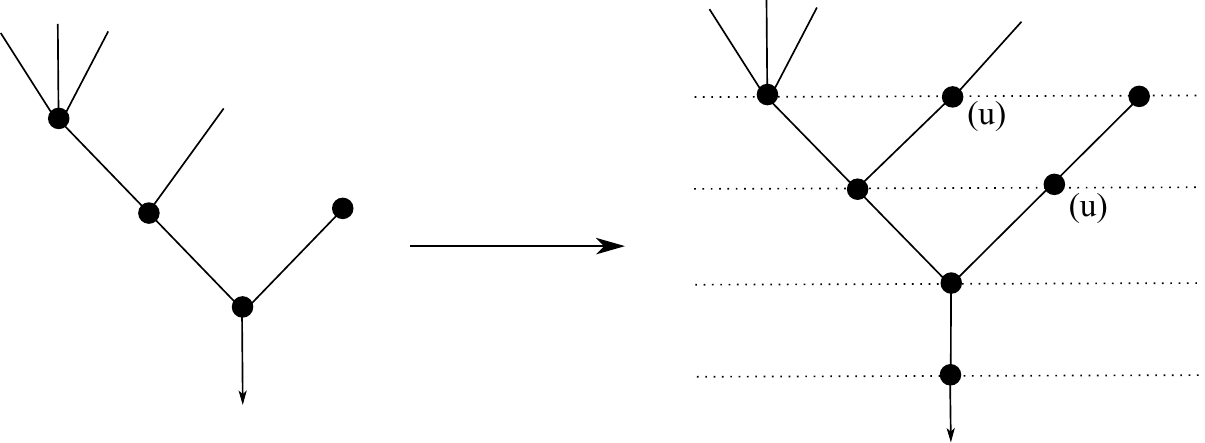} \end{center}
\caption{Leveling a tree using units.}
\end{figure}
Thus the distinction between May operads and pseudo-operads vanishes in the presence of a unit.  Said another way, the Feynman category for operads is isomorphic to the Feynman category for unital May operads.

 \subsubsection{ (Wheeled) PROPs}
 Let $\Agg^{dir,nl}$ be the wide subcategory whose morphisms
 satisfy the condition that the graphs $\gh(\phi_v)$
associated to  $\phi$ do not contain any oriented
cycles.
Set $\props=(\Crl^{dir},\Agg^{dir,nl},\imath)$.

Strictly speaking $\props$--$\opcat$  could be called non--unital pseudo--PROPs. For the classical version of non--unital
PROPs one has to require that the underlying graphs of the morphisms
have levels and all input tails are on one level and all output tails are one  level; see e.g. \cite{Markl}.
We will call the classical version strict PROPs and call the $\props$--$\opcat$  simply PROPs.

{\sc Wheeled version.}
Dropping the condition about oriented cycles, we obtain the Feynman
category underlying the notion of wheeled (non--unital) PROPs, which is simply $\GG^{dir}$.  Oriented one-edged cycles are often called wheels.

{\sc Based Versions.}
In the above two constructions, we can again replace the categories by equivalent  small
categories $(\V,\F,\imath)$ as follows.
For $\V$ these are given by oriented corollas indexed
by $\bar n:=\{1,\dots, n\}$ on the inputs and $\bar m$ on the outputs. Strictly speaking
we use $S=\overline{n+m}\simeq \bar n\amalg \bar m$ together with the function that is $in$ on the
$i\leq n$ and $out$ on the rest. The automorphism groups  are then canonically isomorphic to
 $\SS_{n}\times \SS_m$ and we again get the usual picture. For $\F$ one has to take the full subcategory of the respective categories whose objects are aggregates of these corollas.

 The generators are morphisms from two corollas to one. If we are in the strict case they are given by pairing the inputs of one corolla with the outputs of the other or a merger.
 In the non-strict case, there are given by partial pairings or mergers.

{\sc Functors}  Besides the biased/non--biased inclusion, which again yields an equivalence of $\opcat_{\CalC}$ for suitable $\CalC$, there are other interesting morphisms.  Inclusion gives rise to a morphism of Feynman categories $i\colon\operads\to \props$. The pull--back  $i^*$ is again restriction.
This is the operad contained in a PROP. If it exists $i_*\CO$ is the PROP generated by an operad and if $\CalC$ is say Abelian complete monoidal with the product preserving limits in both variables, then $i_!$ is the extension by $0$ of the operad to a PROP.

\subsubsection{Dioperads.} Consider the wide Feynman subcategory of $\props$ formed by restricting to those morphisms in $\Agg^{dir}$ whose ghost graphs are aggregates of trees.  We call the resulting Feynman category $\dioperads$.  It's ops are called dioperads and the generating (one edged) compositions are called dioperadic.

\subsubsection{(Wheeled) properads.}
We may consider wide Feynman subcategories $\properads\subset \props$ and  $\props^{\circlearrowleft,ctd}\subset \props^{\circlearrowleft}$ by restricting to those morphisms whose ghost graphs are aggregates of connected graphs.  A properad (resp.\ wheeled properad) is then an op over $\properads$ (resp. $\props^{\circlearrowleft,ctd}$). Notice $\props^{\circlearrowleft,ctd}$ is generated by one edged (dioperadic and wheeled) compositions.


\subsubsection{More classical examples}
Furthermore, one can quickly write down the Feynman categories for
other species such as $\frac{1}{2}$-PROPs and so on.

\subsection{Feynman categories indexed over $\GG$}

\subsubsection{Cyclic operads}
\mbox{}

{\sc Unbiased version.}
Consider
the Feynman category given by $\CCyclic= (\Crl,\Cyclic,\imath)$ where
$\Cyclic$ is the wide subcategory of $\Agg$ whose morphisms are only those morphisms $\phi$
for which each $\gh(\phi_v)$ in the decomposition of condition (\ref{morcond}) of Definition \ref{commadef} is a  tree
and hence by definition connected.  In this definition it is clear that this Feynman category is indexed
over $\Gr$.

{\sc Classical notation.}
 If $\CO$ is a functor from $\Cyclic$ to $\CalC$ then set
$\CO(S):=\CO(\ast_S)$. The basic morphisms are those from two corollas $\ast_S$ and $\ast_T$ to a corolla $\ast_U$
where the ghost tree has a single edge $(s,t)$. If $S$ and $T$ are disjoint and $U=S\setminus \{s\} \cup T\setminus \{t\}$, then we can use $id=\phi^F$ and denoting this morphism by $\phi_{s,t}$, usually one sets $\CO(\phi_{s,t}) =\scirct$.

{\sc Biased version.} Let $\Crl_{\N_0}$ be the corollas indexed by the sets $[n]$ and let $\Cyclic_{\N}$ be the full subcategory of $\Cyclic$ whose objects are aggregates of objects of $\Crl_{\N}$.
Then $\CCyclic_{\N_0}=(\Crl_{\N_0},\Cyclic_{\N_0},\imath)$ is a Feynman category. The classical notations are
$\O(n):=\O((n+1)):=\O([n])$. Notice there is now an $\SS_{n+}=Aut([n])\simeq \SS_{n+1}$ action on $\O(n)$ and furthermore the $\circ_s:=\ccirc{0}{s}$ for $s\neq 0$ do generate with the associativity and $\SS$ equivariance, but due to the $\SS_{n+}$ action there is an additional relation,
which is the well known relation for cyclic operads $(a\circ_1b)^*=b^*\circ_n a^*$, where $*$ is the action of $(01\dots n)\in \SS_{n+}$ and where $a$ is in $\O(n)$. Again in this formalism, this is a {\em consequence}.

{\sc Pull--back and push--forward functors}
There is an obvious inclusion functor of Feynman categories $i:\CCyclic_{\N_0}\to \CCyclic$.
Now $i^*$ is just the restriction and $i_*$ is the usual colimit formula $\O(S)=\bigoplus_{S\leftrightarrow [|S-1|]}
\O((|S|))$ for target categories $\CalC$,  where these exist. As was the case for operads, $i^*$ and $i_*$ are not only adjoint, but yield an equivalence.

There is also a forgetful functor $inc:\operads\to\CCyclic$ which forgets the direction.
Now $inc^*$ is the underlying operad of the cyclic operad. This is often used in the biased version. Furthermore, there is the push-forward which yields the free cyclic operad on an operad.

\subsubsection{Ungraded  (nc)--modular operads}
 We call  $\GG$-$\opcat_{\CalC}$ ungraded nc--modular operads in $\CalC$.
 Here ``nc'' stands for non-connected and
ungraded for the fact that there is no genus or Euler characteristic labeling.

{\sc Connected version.}
Let $\Agg^{ctd}$ be the wide subcategory of $\Agg$  whose morphisms have associated graphs $\gh_v(\phi)$ which are all connected.  Then $\GG^{ctd}=(\Crl,\Agg^{ctd},\imath)$ is a Feynman category and we refer to $\GG^{ctd}$-$\opcat_{\CalC}$ as the category of ungraded modular operads.

{\sc Biased version/MOs.}
For the biased version we consider $\nCrl$, the full subcategory of $\Crl$ with objects $\ast_{\bar n}$ and $\Agg_{\N}$ the full subcategory of $\Agg$ with objects aggregates of $\Crl_{\N}$ then $\GG_{\N}=(\Crl_{\N},\Agg_{\N},\imath)$.

Some variant of these objects has previously been defined by Schwarz in \cite{schw} and were called MOs.
This identification uses the fact that all morphisms in $\Agg$ are generated by merging two corollas and
contracting one small ghost loop on a corolla, see \ref{graphstrucsec}. One has to be careful,
since one can only write down generators using indexing sets which is why
MOs were basically defined on invariants, see \cite{schw} and \cite{HVZ}.

{\sc Functors.}
Via the inclusion the biased and unbiased versions give equivalent $\opcat$, when the push--forwards exist.

There are also inclusion functors $\CCyclic \stackrel{i}{\rightarrow}\GG^{ctd}\stackrel{j}{\rightarrow}\GG$.
The pull-backs are restrictions, if they exist $i_*$ is the ungraded modular envelope and $i_!$ the extension by zero.
The push-forward $j_*$ is analogous to the free PROP construction, it produces the free ungraded nc--modular
operad. Here mergers just go to tensor products. Again, if it exists $j_!$ is the extension by zero.

\subsubsection{Modular and nc--modular operads}
\mbox{}

{\sc NC modular operads.}
Let $\Crl^{\gamma}$ be $\gamma$ labeled corollas with their isomorphisms. A $\gamma$ labeling
in this case is just a number, that is the label of the lone vertex.
The category  $\Agg^{\gamma}$ has as objects aggregates of $\gamma$ labeled corollas.
A morphism $\phi\in\Agg^{g}(X,Y)$ is a morphism of the underlying corollas, with the restriction that
for each vertex $v\in Y$, $\gh(\phi_v)$ has $\gamma(\gh(\phi_v))=\gamma(v)$.  Here the genus labeling of
  $\gh(\phi_v)$ is inherited from $X$ and  $\gamma$ of a graph is defined in (\ref{genuseq}).
We let $\modular^{nc}=(\Crl^{\gamma},\Agg^{\gamma}, \imath)$  and call $\modular^{nc}$--$\opcat_{\C}$
nc modular operads in $\C$.  Forgetting the marking gives a morphism to $\GG$ and hence an indexing.

{\sc Modular operads.}  The category  $\Agg^{\gamma,ctd}$ is the wide subcategory of $\Agg^{\gamma}$,
with the restriction on morphisms $\phi$ that
 each associated graph $\gh(\phi_v)$ is connected.
 Set $\modular=(\Crl^{\gamma},\Agg^{\gamma,ct}, \imath)$  and call $\modular$--$\opcat_{\C}$
modular operads in $\C$.

{\sc Biased versions.} There are biased versions built on $\Crl^{\gamma}_{\N_0}$. As above this yields the standard notation $\CO((g,n)):=\CO(g,n-1):=\CO(\ast_{[ n]}, \gamma(\ast_{[n]}))$ for functors $\CO$.
Via the inclusion the biased and unbiased versions give equivalent $\opcat$, when the push--forwards exist.

 {\sc Functors.}  Assigning $0$ as the value of $\Gamma$ to each unlabeled corolla
yields an inclusion functor $\CCyclic \stackrel{i}{\rightarrow}\modular$, and then there is the natural inclusion $\modular \stackrel{j}{\rightarrow}\modular^{nc}$.
 The pull--backs are restrictions, if they exist $i_*$ is modular envelope and $i_!$ the extension by zero.

\subsection{More functors}  There are additional functors between the Feynman categories discussed above which produce categorical constructions and illuminate the relation between their respective categories of $\fopsc$.

First there is the forgetful functor $\imath\colon\modular^{nc}\to \GG$ which forgets the genus marking. Secondly there is a forgetful functor $\jmath\colon\GG^{dir} \to \GG$ which forgets the direction. So given a wheeled PROP, that is a functor $\CO\colon\GG^{dir}\to\CalC$, we get an associated modular operad $j^*i_*\CO$. Vice--versa given an nc--modular operad, we get an associated wheeled PROP.

If we consider modular operads, then there is a natural map from wheeled properads to unmarked modular operads, given by pushing along the forgetful map $\props^{ctd}\to \GG^{ctd}$ and pulling along the forgetful map $\modular \to \GG^{ctd}$. Vice-versa, we get a properad and even a PROP from a modular operad.  This is for instance the case in string topology and the Hochschild action of \cite{hoch1,hoch2}.

Other interesting relations exist along these lines between operads, dioperads, properads, PROPs and modular operads.

\subsection{Colored versions} We obtain colored versions of the notions above if we use colored corollas and morphisms
of graphs which preserve the coloring, in the sense that the flags of the ghost edges have the same color, i.e.\ that the ghost graph is colored. Here a colored corolla is a corolla together with a map of its flags to a fixed set of colors. For a colored graph one asks that the two flags of an edge have the same color.  Maps between color sets induce functors between the corresponding Feynman categories, and hence adjunctions between their respective ops.

\subsection{Planar versions}
Here we will use the special planar subcategories given in Appendix A \S\ref{planarsec}.
Planar versions are best understood as decorations by various versions of the associative operad, \cite{decorated}, see also \S\ref{decopar}.

\subsubsection{Non-$\Sigma$ cyclic operads}The relevant Feynman category is $\CCyclic^{pl}=(\Crl^{pl},\Cyclic^{pl},\imath)$
There is a biased version using only $\Crl^{pl,dir}_{\N}$.  Forgetting the planar structure gives a morphism
$(v,f):\CCyclic^{pl}\to\CCyclic$ and hence this is again a Feynman category indexed over $\GG$.

\subsubsection{Non-$\Sigma$ operads}
The relevant Feynman category is $\operads^{pl}=(\Crl^{pl,dir},\opd^{pl},\imath)$.  There is a biased version using only $\Crl^{pl,dir}_{\N}$.  Here again there is a forgetful morphism $\operads^{pl}\to \CCyclic^{pl}$ forgetting the root.

\subsubsection{Non-$\Sigma$ modular operads}

Non-$\Sigma$ modular operads were formally introduced in \cite{Marklnonsigma}. They are a specialization of the C/O structures (see section $\ref{cosec}$) having trivial closed sector and only one brane label.

For the Feynman category: the objects of $\V$ are corollas along with an Euler characteristic marking and a poly-cyclic order on the set of flags. A strict poly-cyclic order on a set $F$ is equivalently described by one of the following:
\begin{enumerate}
\item A decomposition $F=F_1\amalg  \dots \amalg F_b$ plus a cyclic order on each of these subsets. Note, here there is no  order on the decomposition, we treat it as a coproduct or as if it were an orbit decomposition.  This formulation is used in \cite{postnikov} and \cite{Marklnonsigma}.
\item An element of the symmetric group $\SS_{|F|}$. This is used in \cite{KontsevichEurope, Bar}.
\item An isomorphism $N:F\to F$ as used in \cite{KP}.
\end{enumerate}
Certainly fixing an order on $F$ (3) and (2) become equivalent. Given a permutation, we can look at the cycle decomposition. The elements in each cycle are exactly the orbits of $N$ and these have a cyclic order. ($N$ stands for ``next'').

Notice that one can include empty sets in (1), which is what is called a poly-cyclic order in \cite{Marklnonsigma} and this is equivalent to introducing an extra parameter $s$ in the latter two cases. In a geometric picture this data represents a surface of genus $g$ with $b$ marked boundaries and $b_i$ points on the $i$-th boundary. If $b_i=0$, then this type of boundary is equivalent to a puncture. That is one can reduce to only marked boundaries and $s$ punctures.

The morphisms are then those that respect these poly-cyclic orders as described in \cite{Marklnonsigma,decorated}, see also \S\ref{decopar}, i.e.\  morphisms are those with underlying (connected) graphs such that source decoration and target decoration are compatible as explained in \S\ref{decopar}. In particular, using the formulation with $N$ this decoration is determined as follows. Given a decoration on each of the source vertices and given a morphism $\phi$ with underlying graph $\gh(\phi)$, we have to give a poly-cyclic order on the target vertex. For this, notice that a graph with poly-cyclic orders at the vertices still has cycles, just like a ribbon graph. They are the orbits of $\nu:=\imath\circ N$. These orbits also have a natural cyclic structure on all the flags in the orbit given by $f\succ \nu(f)$. The subset of tails in each such cycle inherits this cyclic order. If $b$ is the total number of cycles, then $s$ is the number of cycles which contain no flags.

\subsubsection{Tabular list}
Before proceeding to the not-so-classical examples, we provide a tabular list of many of the usual subjects, see Table \ref{table1}. These are all
obtained by decorations and restrictions of $\GG$. The decoration ensures the composability of morphisms.
When restricting, one has to be careful to preserve the composition structure i.e.\ make sure that
one is restricting to a subcategory.

\begin{table}[htb]
\begin{tabular}{l|l|l|l}
$\FF$ & Feynman category & additional & restriction \\
& encoding & decoration & on graphs \\ \hline
$\operads$ & operads & direction & trees, one out flag \\
$ \CCyclic$ & cyclic operads & {\it none} & trees  \\
$\dioperads$ & dioperads & direction & trees \\
$\operads^{pl}$ & non-$\Sigma$ operads & dir., planarity & trees, one out flag \\
$ \CCyclic^{pl}$ & non-$\Sigma$ cyclic operads & planarity & trees  \\
$\modular$ & modular operads & genus marking & connected \\
$\modular^{\neg\Sigma}$&non--Sigma modular operads&genus marking&connected\\
&&poly--cyclic structure\\
$\properads$ & properads & direction & connected, no wheels\\
$\props$ & PROPs & direction & no wheels \\
$\props^{\circlearrowleft,ctd}$ & wheeled properads & direction & connected \\
$\props^{\circlearrowleft}$ & wheeled props & direction & {\it none} \\
$\GG^{ctd}$& unmarked modular operads & {\it none} & connected \\
$\modular^{nc,} $ & nc modular operads & genus marking & {\it none} \\
$\GG^{ctd}$& unmarked nc modular & {\it none} & {\it none} \\
\end{tabular}
\caption{\label{table1}List of Feynman categories with conditions and decorations on the graphs, yielding the zoo of examples.}
\end{table}

\subsection{Not so classical examples}

\subsubsection{$k$ truncation} A simple modification to any of the above Feynman category is the restriction to a full subcategory containing vertices of a special type.
E.g.\ only $k$ valent, which appears in $\phi^k$ theory. These morphisms then will have underlying $k$--regular ghost graphs. Likewise, one can restrict to the full subcategory whose objects have $\leq k$ valent vertices. This is used for instance in \cite{Turchin}.

\subsubsection{Orders and Orientations: The main examples.}
We obtain another class of examples from the above, by using extra data
on the morphisms. These will be main examples we will deal with as
their $\opcat$ carry the algebraic structures discussed in \cite{KWZ}. For these one needs an ``odd'' version of the Feynman category. This is discussed in detail in \cite{KWZ} and in section \S\ref{genrelpar}, where we explain the construction in detail and generalize it to Feynman categories with ordered presentations. The cases at hand follow from a canonical ordered presentation on $\GG$.

These are actually the corresponding $\Set$ enriched indexed versions. They should be viewed in analogy to singular and ordered chains, where one can avoid signs on the simplices and  then realize them by putting restrictions on functors. A similar construction is used for involutive functors in TFT.

{\em Ordered cyclic operads.} In this case the category $\asts$ is again $\Crl$
and the objects of $\clusters$ are the disjoint union of corollas.
The new input is that a morphism is a morphism  $\phi$ in $\Cyclic$ together
with an order on the edges of the graphs $\Gamma_v$. Since there are
no mergers or loops this is the same as a choice of decomposition of $\phi$
into no--loop edge contractions. The composition of morphisms is
then just the composition of the two factorizations. In terms of
the edges this is the lexicographical order obtained from the two morphisms.

{\em Oriented cyclic operads.} Analogous to the above, but only retaining
an orientation of the edges, which is a class of total orders under
the equivalence relation of even permutations.

{\em Ordered/oriented NC modular operads.} Again using orders/orientations
on the set of {\em all edges} we obtain ordered/oriented versions of these operads.

We will use these Feynman categories for instance to define $\K$--modular operads.

\subsubsection{C/O structures}\label{cosec}
Other structures, which were up to now not ordered under some overall algebraic
structure, but can be neatly expressed in terms of Feynman categories, are the
{\em C/O structures} of \cite{KP}, and one can now readily define
{\em NC C/O structures} by allowing disconnected surfaces.

Another relevant example are the algebraic structures underlying the paper \cite{HVZ}.
Here we have the Feynman category for two-colored (NC) modular operads
 which has additional morphisms changing the color. These have not been handled
in any previous framework.

Here the vertex structure is relatively complicated. With hindsight it can be handled by decorations. The extra decorations are as follows.

\begin{enumerate}
\item
The set of flags is partitioned in $F=F_{cl}\amalg F_{op}$ which can be viewed as a coloring by open and closed. This partition is called $S\amalg T$ in \cite{KP}.
\item The set $T$ is given a poly-cyclic order.
\item There is a further coloring of all the flags.
\item There is a bi--decoration by $(g,\chi-1)$.
\item There is a possible brane labelling given by two functions $\lambda,\rho:T\to {\mathcal \P}\times {\mathcal \P}$, where $\P$ is a monoid.
\end{enumerate}

We will not discuss the conditions on (5) here. They can be found in \cite{KP}.
Composition along graphs of the decorations follow those of modular operads in the variable $g$ for elements of $T$ and in the variable $\chi-1$ in $S$.

There are several more variants. One can use strict poly-cyclic sets and then there is an additional decoration by $s$, which geometrically corresponds to the brane labelled punctures as mentioned above. One can also leave this extra grading out of the Feynman category by a forgetful functor.

Vice-versa, it is possible to add closed marked points as well, see e.g.\ \cite{KP}. Or to regard the markings $T$ as punctures instead of marked boundaries.

The compositions here are along morphisms with underlying (connected) graphs in a manner that the source decoration and target decoration are compatible morphisms as explained in \S\ref{decopar}, see also \cite{KP}[Appendix A] for the concrete description.

\subsubsection{Boundary type change}
In \cite{HVZ} the authors also consider a similar situation to the above, if not as rigorously axiomatized. They add one operator, which allows for the change of a marking from a marked closed point or boundary to a marked brane labelled point.

\subsubsection{Further applications}
Finally, let us mention that we expect other moduli spaces occurring in homological mirror symmetry to be a source of emerging examples of Feynman categories.  While these notions have not been entirely formalized yet, once this is done we expect our full theory to apply to them.


\subsection{$\opcat$ with special elements: units and multiplication}
\label{unitmultsec}

Many operads carry extra structure in the form of distinguished elements.  These can be encoded in our framework, since we have the freedom to have morphisms in $\F$ from the unit element to any other element.  If we already have a Feynman category, we can simply adjoin these morphisms to the monoidal category.

\subsubsection{$\opcat$ for an added multiplication}\label{multpar} A multiplication is generally an element in $\CO(2)$.
Given any of the Feynman categories above, we define its version with multiplication  as follows. Add a new morphism from $\egr$ to $\ast_{[2]}$, for a corolla of the correct type and complete the morphisms by including this element.

Let us be explicit for {\sc operads with a multiplication.} Here we adjoin a new arrow
$m:\egr\to \ast_{[2]}$ monoidally. Call the resulting category $\operads_m$. This will have arrows which are
composable words in old arrows and in $id \amalg\dots \amalg id\amalg m\amalg id\amalg id$. We can depict such a word by a rooted b/w tree, where the
black vertices are trivalent  ---we can think of them as decorated by $m$---
and the white vertices of arity $k$ are decorated by a morphism from $\amalg_{1}^k\ast_{S_i}$ to $\ast_T$.

We can think of the underlying tree as the ghost tree of a particular morphism $w'$ as follows.
 If there are $k$ occurrences of $m$ in a morphism from $X$ to $Y$,  consider
the factorization of $w$
$$
X\simeq X\amalg \egr \amalg \dots \amalg \egr\stackrel{id\amalg m^k}{\rightarrow} X \amalg
\ast_{[2]} \amalg \dots\amalg\ast_{[2]}\stackrel{w'}{\rightarrow} Y
$$
 where now $w'$ sends flags at the black vertices to the new copies of $\ast_{[2]}$.

Picking a functor $\CO$, the morphism $m$ gets sent to $\CO(m):\unit\to \CO(2)$, which is nothing but an element of $\CO(2)$.

{\sc Associative/commutative multiplication}. If the multiplication is supposed to be commutative, we take $m$ to be appropriately symmetric. If it is to be associative, we impose the relation that the
two compositions $\emptyset \amalg \emptyset \stackrel{m\amalg m}\longrightarrow \ast_{[2]}\amalg \ast_{[2]} \rightrightarrows \ast_{[3]}$ are the same.

Graphically, this manifests itself as  a collapse if the  subtrees representing the multiplication and results in trees which are almost bi--partite as in
\cite{del,cact}.

{\sc $\opcat$ with $A_{\infty}$ multiplication.} In an enriched context we may consider operads with an $A_{\infty}$ multiplication.  In this case we adjoin morphisms $m_n:\egr \to \ast_{[ n]}$ for $n\geq 1$ and then impose the standard relations.

\subsubsection{$\opcat$ with a unary morphism}
This is the same procedure as above, using a corolla with $2$ flags of the appropriate type.
The underlying graphs will then again be b/w with black vertices of valence $2$.

{\sc Differential} If we are adding a differential, we impose that $m^2=0$, which can
only be done in the enriched version. This means that we set to $0$ any tree with more that one black vertex inserted into an edge.

{\sc Operadic unit.}  If we are adding a unit, we impose $\phi \circ m= (id \amalg\dots \amalg id\amalg m\amalg id\amalg id)\circ\phi=\phi$, for $\phi\in (\F\downarrow \V)$. In a graph picture, the black vertices can be erased.
Erasing the black vertex from a corolla with 2 flags, leaves a degenerate graph which is just a lone flag/edge (see section $\ref{ops-ex-sec}$). This is the presentation for graphs that Markl has used in \cite{Markl}. Formally, it is actually a colored empty graph.

\subsection{Truncation, stability, and the role of $0,1,2$  flag corollas}

It is often common, for example when considering reduced operads, to truncate by omitting corollas with $0, 1$ or $2$ flags or any other unstably labeled corollas.  Notice that truncated versions are related to the original via push-forward and pull-back under the natural inclusion of Feynman categories.  There operations on the respective categories of ops may also be referred to as truncation or restriction.

We recall the notation $\CO(n-1)=\CO((n))$ for operads and cyclic operads etc. Eg the operadic identity is in $\CO(1)=\CO((2))$.


\subsubsection{Ground monoid: $\O((0))$} Corollas with no flags may appear.  For example in the non-directed or wheeled versions as the target of a contraction of a corolla with two flags.

If there are no mergers, then there is only the identity map from these corollas and their presence is benign.
If there are mergers then their inclusion introduces a ``ground monoid'' $S=\CO(\ast_{\emptyset})$;
the associative monoidal product on $S$ is given by the merger and all the $\CO(\ast_T)$ become $S$--modules.
If one wants to forget this structure, one can truncate or restrict to functors for which $\CO(\ast_{\emptyset})=\unit$.

\subsubsection{Algebra elements/units $\CO((1))$} We have allowed corollas with only one flag. Given a functor $\CO$, we hence automatically have an object $\CO((1))=\CO(0)$ this will be an algebra or module over the truncation of $\O$. In order to  encode distinguished elements in algebras over $\CO$ one again uses extra morphisms $\unit_{\emptyset}\to \ast_{[0]}$.  Truncation or restriction permits one to encode their non-unital versions.  Note however these algebra units are an important part of the data when one wants to work with pointed spaces or to have $\opcat$ with multiplication {\em and} a unit for the multiplication (as in section $\ref{simppar}$).


\subsubsection{$K$--collections $\CO((2))$}
If one includes two-flag corollas which are not represented by special elements as above, then one obtains a second ring structure $K:=\CO((2))$, with multiplication  $\mu=\circ_1:\CO(1)\otimes \CO(1)\to \CO(1)$. Now the other $\CO(n)$ are left monoid modules, but
have a more complicated right multiplication over these. This was formalized as  $K$--collections in the
terminology of \cite{GinzKap}. If one does not want to {\em a priori discard}
this information, one can put this information into the target category {\it loc.\ cit.}.

\subsubsection{Semi--simplicial and (co)simplicial structures}
\label{simppar}
If we allow 1-flag corollas and pick a specific element ---we will call this the pointed case--- in all the cases above, we get a semi--simplicial structure by using the morphism $\delta_i:\ast_{[n]} \to \ast_{[n]}\amalg \emptyset\to\ast_{[n]}\amalg \ast_{[0]}\to \ast_{[n-1]}$ where the first morphism is the  monoidal unit, the second is given by the chosen element and the third  morphism has one virtual edge connecting $0$ to the flag $i$.

If one has an associative multiplication (subsection $\ref{multpar}$), a unit $\emptyset\to \ast_{[1]}$ and an algebra unit $\emptyset\to \ast_{[0]}$ which is a unit for the multiplication, then composition of these morphisms will satisfy (co)simplicial identities.  Here unit for the multiplication means that one imposes the relation that the two morphisms $\emptyset \to \emptyset\amalg\emptyset \to \ast_{[2]} \amalg \ast_{[0]} \to \ast_{[1]}$ given by virtual edges with flags $0$,$1$ and $0$,$2$ coincide with $\emptyset \to \ast_{[1]}$.  In the case of operads the associated cosimplicial structure was studied in \cite{MS}.






\subsubsection{Algebra type examples}
\label{algsec}
We can also consider smaller examples having algebras as their $\opcat$.
 Here $\V$ is trivial and $\V^\otimes$ has as objects the natural numbers with morphisms given by the symmetric groups, see the next section for details on this situation in general.
This ``explains'' the appearance of trees/graphs in defining the relevant structures
in these algebras.

There are several types of algebras encoded by graphs. These now naturally yield examples of $\opcat$ for Feynman
categories whose morphisms are graphs.

\subsubsection{Non--associative algebras}

Let us start with the Feynman category whose $\opcat$ are non--associative algebras.
Now the morphisms from $n$ to $1$ correspond to the possible bracketings of $n$ elements, which are given by planar planted trivalent trees with $n$ labeled leaves. A general morphisms is then a forest of such trees.
The composition of such morphisms is given by gluing roots to flags.

\subsubsection{Associative algebras}
To obtain the Feynman category for associative algebras as graphs, one quotients out the construction in the last paragraph by the associativity equation on morphisms.
This means that there are just $n!$ morphisms left in $Hom(n,1)$ corresponding to $n$--labelled planar corollas.
The composition is then given by grafting the corollas and then contracting the edges, see e.g.\ \cite{woods}.

\subsubsection{Commutative algebras}
For the commutative case, one further quotients by the $\SS_n$ action on morphisms, to obtain only one operation in each $Hom(n,1)$.

\begin{rmk}
There is a connection to $\FF_{surj}$ and its planar version. Indeed $\FF_{surj}$ is isomorphic to the Feynman category for commutative algebras. The one for associative algebras is then its planar version, viz.\ decorated version of \S\ref{decopar}. In that version, the objects are then finite sets with an order, or planar corollas, and this structure is passed on to the morphisms. Alternatively, one can also work with non--symmetric Feynman categories.
\end{rmk}

\begin{rmk}
These examples have in common that the morphisms are actually presented by combinatorial operads.

Many other examples such as Lie or pre-Lie are not of pure combinatorial type and thus need the enriched setup given in paragraph \ref{enrichedsec}. These examples are then generalized in \ref{operfeypar}, see also  \S\ref{hypersec}.
\end{rmk}

\subsection{Feynman categories with trivial $\V$}
\label{trivsec}

We consider $\V$  the category with one object $1$ with $Hom_{\V}(1,1)=id_1$.
Then  let $\V^{\otimes}$ be the  free symmetric category and let $\bar\V^{\otimes}$ be its strict version. It has  objects $n:=1^{\otimes n}$ and each of these has an $\Sn$ action: $Hom(n,n)\simeq \SS_n$.
Let $\bar\imath$ be the functor from $\V^{\otimes}\to\bar\V^{\otimes}$.
 $\F$ will be a category with $Iso(\F)$ equivalent to $\bar\V^{\otimes}$, which itself
 is equivalent to the skeleton of  $Iso(\FinSet)$ where $\FinSet$ is the category of finite sets with $\amalg$ as monoidal structure.  Here,  for  convenience, we do use the strictification of $\amalg$.

 \subsubsection{Finite sets with surjections} The example of finite sets and surjections given in subsection $\ref{surjpar}$ is of this type.  Recall this takes $\F$ to be the wide subcategory $Surj$ of $\FinSet$ with morphisms being only surjections. The inclusion $\imath$ is $\bar\imath$ followed
 by the identification of $n$ with the set $\bar n$. Then $\FF$ is naturally a Feynman category, since $f^{-1}(X)=\amalg_{x\in X} f^{-1}x$. This Feynman category is a basic building block and we denote it by $\FSurj$.
 We may of course also use the skeleton $\aleph_0$ of $\FinSet$ and surjections $\bar m \to \bar n$.

 \subsubsection{ $\F=\mathbf\Delta S_+$.}  Another possibility is to use $\mathbf{\Delta} S$ as defined by Loday \cite{Loday} and augment it by adding a monoidal unit (the empty set).
 This is the category with objects $[n]$, $Aut([n])\simeq \SS_{n+1}$ and morphisms that decompose as $\phi\circ f$ where $\phi$ is in $Aut([n])$
 and $f$ is a non--decreasing map.

\subsubsection{$FI$--modules} In \cite{Farb} the wide category $\Inj$ of $\aleph_0$ was considered, which has only the injections as morphisms.
This again yields a Feynman category $\FF$. Here we start with $\bar \V^{\otimes}$ and
 adjoin the morphism $j:1^{\otimes 0}=\emptyset \to 1$
to define $\F$, then $\F$ is equivalent to $\Inj$. Indeed let $i\colon X\to X'$ be an injection and let $I=\overline{|X\setminus X'|}$, then we can write it as
$X\simeq X\amalg \emptyset \amalg \dots\amalg \emptyset \stackrel{id\amalg j^I}{\longrightarrow}X\amalg I\stackrel{i\amalg id}
{\longrightarrow}Im(i)\amalg I\simeq X'$. Now $FI$--modules are simply functors from $FI$. This can be accommodated in our framework by passing to a free monoidal construction, see \S\ref{freemsec}.

\subsubsection{$\F=\FinSet$} In order to incorporate injections,  just like above,
we  adjoin a morphism $1^{\otimes 0}=\emptyset \to 1$
 to the morphisms of $\Surj$ and obtain a category equivalent (even isomorphic) to $\FinSet$. To show this notice that  any given map of sets $f=i\circ s$ can be decomposed into the surjection $s$ onto its image followed by the injection of the image into the target.

\subsubsection{Non--sigma versions, $\mathbf \Delta_+$} If we take the non--symmetric version of Feynman categories, by the above, we can realize non--sigma operads, but also the augmented simplicial category $\mathbf \Delta_+$. To obtain augmented simplicial objects, we should then use the free monoidal construction, \S\ref{freemsec} to realize these as $\FF^{\otimes}$-$\opcat$.

\subsection{Remarks on relations to similar notions}
Looking at these particular examples, there are connections to PROPs, to Lavwere
theories and crossed--simplicial groups.
\subsubsection{Crossed--simplicial groups}
There are two ways crossed simplicial groups can appear in the Feynman category context.
First, we found the crossed simplicial group $\mathbf {\Delta}S$ above. This is since we started out with the groupoid
given by the objects $[n]$ with their automorphisms and added the morphisms of  $\mathbf \Delta$ in a way that the
morphisms have the standard decomposition as set forth in \cite{Lodaycrossed}.
We arrived at the objects and automorphisms through a symmetric monoidal category construction on a trivial category.
We could likewise arrive at the braid groups if we were to consider braided monoidal categories instead. One could in this way, by altering the background of monoidal categories, achieve some of the symmetries in a free construction. From the fixed setup of symmetric monoidal categories this is not too natural.

The second way crossed simplicial groups can appear is in the wider context of non--trivial $\V$.
For instance the datum of objects $[n]$ with automorphism groups $G_n^{op}$ gives rise to a groupoid $\V$.
We can then use the category $\mathbf{\Delta}G$ as $(\imath\downarrow \imath)$.
This gives a  set  of basic morphisms, in general we may still choose morphisms for all of $(\F\downarrow \imath)$. The
existence of these implements both these symmetries {\em and} a co--simplicial structure on $\opcat$.
This is most naturally expressed as follows. Let $\FF_{G}=(\V,\F,\imath)$ be given by letting $\V$ be as above,
and $\F$ be the category with objects $\V^{\otimes}$ and morphisms monoidally generated by $\mathbf{\Delta}G$.
That is any morphism $Hom_{\F}(X,Y)$ is not empty only
if both $X$ and $Y$ have the same length and in that case
the decomposition of the morphism is given by $\phi=\amalg_i\phi_i$ with $\phi_i:[n_i]\to [m_i]$ for appropriate decompositions $X\simeq\amalg_i[n_i]$ and $Y\simeq\amalg_i[m_i]$.
 Since $\opcat$ are strong monoidal functors, we see that $\F_{\mathbf{\Delta}G}$--$\opcat_{\C}$ is isomorphic to ordinary functors in $Fun(\mathbf{\Delta}G, \C)$
 and the $\opcat^{co}$ are functors in $Fun(\mathbf{\Delta}G^{op},\C)$.

 We can implement these symmetries and the (co)--simplicial structure by looking at Feynman categories $\FF$ which have
 a faithful functor $\FF_{\mathbf{\Delta}G}\to\FF$ and we may also consider the strong version, namely that
 the functor is essentially surjective.

\subsubsection{Lavwere theories}
For these theories, the first formal similarity is that we have a functor $\V^{\otimes}\to \F$ whereas in a Lavwere theory
one has a functor $\aleph_0^{op}\to L$. Apart from that similarity there are marked differences. First, the basic datum is $\V$ not
$\V^{\otimes}$, and $\V$ and $\V^{\otimes}$ have to be groupoids. Secondly $\V$ is not fixed, but variable.

A second deeper level of similarity is given by regarding functors of Feynman categories.
Since $\FinSet$ is a part of the Feynman category $\FF_{\FinSet}$, we can consider morphisms of Feynman categories
$\FF_{\FinSet}\to \FF$ which are the identity on objects. Now this gives functors $\FF_{\FinSet}^{op}\to \FF^{op}$ and one can ask if $\F^{op}$ is a Lavwere theory.
This is the case if $\F$ has a co--product. The requirement of being a Feynman category is then an extra requirement.
If $\otimes$ in $\F$ is a co--product, which it is in $\F_{\FinSet}$, then $\F$--$\opcat^{op}_{\C}$ for a $\C$ with $\otimes$ a product are models, e.g.\ $\C=(\mathcal{T}op,\times)$ or $(\mathcal{A}b,\otimes)$.
Thus, we can say that some special Feynman categories give rise to some special Lavwere theories and models.

\section{General constructions}
\label{constructionsec}
In this section, we gather several constructions which turn a given Feynman category into another one.

\subsection{Free monoidal construction $\F^{\boxtimes}$}
\label{freemsec}
Sometimes it is convenient to construct a new Feynman category from a given one whose vertices
are the objects of $\F$.
Formally, we set $\FF^{\boxtimes}=(\V^{\otimes},\F^{\boxtimes},\imath^{\otimes})$ where $\F^{\boxtimes}$ is the free monoidal
category on $\F$ and we denote the ``outer'' free monoidal structure by $\boxtimes$.
This is again a Feynman category.
There is a functor $\mu:\F^{\boxtimes}\to \F$ which sends $\boxtimes_i X_i\mapsto \bigotimes_i X_i$
and by definition $Hom_{\F^{\boxtimes}}({\bf X}=\boxtimes_i X_i,{\bf Y}=\boxtimes_iY_i)=\bigotimes_{i}Hom_{\F}(X_i,Y_i)$. The only way that the index sets can differ, without the Hom--sets being empty, is if some of the factors are $\unit\in \F^{\boxtimes}$. Thus the one--comma generators are simply the elements of $Hom_\F(X,Y)$.
Using this identification  $Iso(\F^{\boxtimes})\simeq Iso(\F)^{\boxtimes}\simeq( \V^{\otimes})^{\boxtimes}$.
The factorization and size axiom follow readily from this description.

\begin{prop}
$\F^{\boxtimes}$-$\opcat_{\C}$ is equivalent
to the category of functors (not necessarily monoidal)
$Fun(\F,\C)$.
\end{prop}
\begin{proof}
Since $\F^{\boxtimes}$ is the free symmetric monoidal category on $\F$.
\end{proof}

\begin{ex}
Examples are  $FI$ modules and (crossed) simplicial objects for the free monoidal Feynman categories for $FI$ and
$\mathbf \Delta_+$ where for the latter one uses the non--symmetric version.
\end{ex}
\subsection{NC--construction}
For any Feynman category one can define its nc version.

This plays a crucial role in physics and mathematics and manifests itself through the BV equation \cite{KWZ}.
Let $\FF=(\V,\F,\imath)$, then we set $\FF^{nc}=(\V^{\otimes},\F^{nc},\imath^{\otimes})$ where $\F^{nc}$ has objects $\F^{\boxtimes}$, the free monoidal product. We however add more morphisms. The one--comma generators will be
$Hom_{\F^{nc}}({\bf X},Y):=Hom_\F(\mu({\bf X}),Y)$,
where for ${\bf X}=\boxtimes_{i\in I}X_i$, $\mu({\bf X})=\bigotimes_{i\in I}X_i$.
This means that for ${\bf Y}=\boxtimes_{j\in J}Y_j$,
$Hom_{\F}({\bf X},{\bf Y})\subset Hom_{\F}(\mu({\bf X}),\mu({\bf Y}))$, includes only those morphisms for which there is a partition $I_j, j\in J$ of $I$ such that the morphism factors through $\bigotimes_{j\in J} Z_j$ where $Z_j\stackrel {\sigma_j}{\to}\bigotimes_{k\in I_j} X_k$ is an isomorphism. That is $\psi=\bigotimes_{j\in J} \phi_j\circ \sigma_j$ with $\phi_j:Z_j\to Y_j$. Notice that there is a map of ``disjoint union'' or ``exterior multiplication''  given by $\mu:X_1\boxtimes X_2\to X_1\otimes X_2$ via $id\otimes id$.

\begin{ex}
The terminology non--connected has its origin in the graph examples.
Examples can be found in \cite{KWZ}, where also a box--picture for graphs is presented. The connection is that morphisms in $\F^{nc}$, have an underlying graph that is disconnected and the connected components are those of the underlying $\F$.

\end{ex}

\begin{prop}
There is an equivalence of categories between $\F^{nc}$-$\opcat_\C$ and
symmetric lax monoidal functors $Fun_{lax\;\otimes}(\F,\C)$.
\end{prop}
\begin{proof}
Recall that a lax monoidal functor is a functor $\O$ together with a
a morphism $\eps:\unit \to \O(\unit)$ and
natural transformation  $\mu_{X,Y}\colon\O(X)\otimes \O(Y)\to \O(X\otimes Y)$
satisfying natural diagrams. Symmetric means that the symmetries on both sides are compatible.

Let $\O \in \F^{nc}$-$\opcat_\C$. Since $\F$ can be embedded into $\F^{nc}$ as words of length one, we see that the restriction $\O^{res}\in Fun(\F,\C)$. This also yields the map $\eps$. Now the natural transformation $\mu_{X,Y}$ is given by $\O(\mu)$, for $\mu$ given above. The compatibility diagrams are all fulfilled since $\O$ already was a symmetric monoidal functor.

Vice--versa, given the datum of a lax symmetric monoidal functor $(\O,\eps,\mu)$,
we construct $\hat \O\in \F^{nc}$-$\opcat_\C$ as follows. Up to equivalence, we assume that $\FF$ is strict. Since $\hat \O$ will be symmetric monoidal, and $\FF^{nc}$ is a Feynman category, it will suffice to define $\hat \O$ on $\V$ and on one--comma generators. On $\V$: $\hat \O(X):=\O(X)$ and hence for ${\bf X}=X_1\boxtimes \cdots \boxtimes X_n$: $\hat \O({\bf X})=\O(X_1)\odo \O(X_k)$.
On one--comma generators $\phi: {\bf X}\to Y$ in $\FF^{nc}$, we let
$\hat\O(\phi)$ be the composition
$$
\hat \O ({\bf X}):=\O(X_1)\odo \O(X_l)\stackrel{\mu^{l-1}}{\to} \O(X_1\odo X_n)\stackrel{\O(\phi)}{\rightarrow}\O(Y)=\hat \O(Y)
$$

In order to extend to other morphisms, we use that $\hat\O$ will be monoidal.
The functoriality composition is then guaranteed by the diagrams for lax monoidal functors and the axioms of a Feynman category.
\end{proof}

\subsubsection{Mergers or $B_+$ operator}
Notice that there are no obvious functors between $\FF$ and $\FF^{nc}$. For instance starting with the inclusion $\V\to \V^{\otimes}$ we would need to have $\boxtimes_v \imath(\ast_v) $ as the image of $\bigotimes_v \imath(\ast_v)$. But not all maps in $Hom(\mu({\bf X}),\mu({\bf Y}))$ are in $\F^{nc}$.

Sometimes it is possible to define a $B_+$ operator, which is a collection of equivariant morphisms $\V^{\otimes n} \to \V$. By abuse of notation these are called  $\boxminus$.

\begin{df}
We call $\F^{nc}$, $B_+$-mergeable if there is a  $B_+$ operator, and a category $\F^{nc}_{merge}$ that has the following properties
\begin{enumerate}
\item $\FF^{nc}_{merge}=(\V,\F^{nc}_{merge},\imath)$, is a Feynman category.
    \item $\F^{nc}_{merge}$ has a  faithful functor from $\F$. $\F^{nc}_{merge}$ is obtained from $\F$ by adjoining the morphisms $\boxminus$ (not necessarily freely) and moreover $\FF$ is a sub-Feynman category of $\FF^{nc}_{merge}$.

    \item There is a morphism $B$ of Feynman categories $\FF^{nc}$
    to $\FF^{nc}_{merge}$ which is induced by $\mu$.
\end{enumerate}

\end{df}
\begin{ex}
In the case of a Feynman category whose morphism have underlying connected graphs, the usual suspect for $\boxminus$ is the merger.
Among them are $(\properads)^{nc}_{merge}=\props$ and $\modular^{nc}=(\modular)^{nc}_{merge}$ whence the name.
Notice that there are relations between mergers and edge/loop contractions, see \S\ref{graphstrucsec}.
\end{ex}
\begin{ex}
The $\operads^{nc}$ is interesting and described first in \cite{KWZ}. Here a merger is provided by the $B_+$ operator, with $\boxminus:*_n\times *_m\to *_{n+m}$ given by merging the vertices and identifying the two outputs.

Concretely, given an operad $\O$,
we get $\hat\O(n_1,\dots,n_k)=\O(n_1)\odo \O(n_k)$.
Pushing this forward along $B$, $B_*(\hat O)(n)=\bigoplus_k \bigoplus_{(n_1,\dots,n_k)}\O(n_1)\odo\O(n_k)$.
The additional structure is a multiplication $\boxminus:\O(n)\otimes \O(m)\to O(n+m)$.
\end{ex}
\subsection{$\Fdeco$: Decorated Feynman categories}
\label{decopar}
This theory was developed after the introduction of Feynman categories in \cite{decorated}. We refer to {\it loc.\ cit.} for the details and just give the main definitions and theorems here.
We do give new examples of decorations, for the procedures of \ref{unitmultsec} and non--trivial $\O(1)$

\begin{thm}\cite{decorated}
\label{decoexistthm}
Given an $\O\in \F$-$\opcat_\C$ with $\C$ Cartesian, there exists the following Feynman category $\FFdeco=(\Vdeco,\Fdeco,\ideco)$ indexed over $\FF$. The objects of $\F_{dec\O}$ are pairs $(X,dec\in \O(X))$ and $Hom_{\F_{dec\O}}((X,dec),(X',dec'))$ is the set of $\phi:X\to X'$, s.t. $\O(\phi)(dec)= (dec')$. $\Vdeco$ is given by the pairs $(\ast,a_{\ast})$, $\ast \in \V$, $a_\ast\in \O(\imath(\ast))$ and $\ideco$ is the obvious inclusion. The indexing
is given by forgetting the decoration.
\end{thm}

\subsubsection{Non--Cartesian case}
This construction works a priori for Cartesian $\C$, but with the following  modifications it also  works for the non--Cartesian case.
Choose a base functor $\jmath$, then the category $\F$ has objects $(X,(a_{v_1},\dots, a_{v_{|X|}}))$ where $a_{v_i}\in\O(\ast_{v_i})$ and $\jmath(X)=\ast_{v_1}\odo \ast_{v_{|X|}}$. The restriction on the morphisms then comes first from the monoidal structure of $\O$ and then applying $\O(\phi)$. That is, for a one--comma map $\phi$ with source $X$ and target $(\ast,a)$ we require that
$
\O(a_{v_1}\odo a_{v_{|X|}})=\O(a)
$

There is a general theorem saying that the decoration by the push--forward exists and how such push--forwards factor.

\begin{thm}\cite{decorated}
\label{theorem1deco}  Let $f\colon \Fe\to \Fe^\prime$.  There are the following commutative squares which are natural in $\O$.
\begin{equation}
\xymatrix{\Fepair \ar[r]^{f^{\O}} \ar[d]_{forget} & \Fe'_{dec\, f_{\ast}(\O)} \ar[d]^{forget'} \\
\Fe \ar[r]^f & \Fe'}
\quad
\xymatrix{\Fepair \ar[r]^{\sigma_{dec}} \ar[d]_{f^{\O}} & \Fe_{dec\Po} \ar[d]^{f^{\Po}} \\
\Fe'_{decf_{\ast}(\O)} \ar[r]^{\sigma'_{dec}}& \Fe'_{decf_{\ast}(\Po)} }
\end{equation}
 That is, for any $\O$, we have the first square and  for a natural transformation $\sigma:\O\to \P$ we have the second.
On the categories of monoidal functors to $\C$, we get the induced diagram of adjoint functors.
\begin{equation}
\xymatrix{\Fpair\text{-}\opcat \ar@/^/[rr]^{(f^{\O})_*}\ar@/_/[d]_{forget_*} && \F'_{dec\,f_{\ast}(\O)}\text{-}\opcat   \ar@/^/[ll]^{({f^{\O}})^*}\ar@/^/[d]^{forget'_*} \\
\F\text{-}\opcat \ar@/^/[rr]^{f_*}\ar@/_/[u]_{forget^*} && \F'\text{-}\opcat\ar@/^/[u]^{forget'^*}\ar@/^/[ll]^{f^*} }
\end{equation}
\end{thm}

\begin{ex}  Planar (aka non-$\Sigma$) versions of operads, cyclic operads, and modular operads are obtained by decoration in which $\O$ is $\mathcal{A}ss$ (the associative operad), $\mathcal{A}ss^{cyc}$ (the associative cyclic operad), and $\mathcal{M}od(\mathcal{A}ss^{cyc})$ (its modular envelope).

Other natural operads for decoration are given by various versions (cyclic, modular envelope, etc) of $Lie$, dihedral variants of these operads, or any of their $\infty$-versions.  Decorating with $Com$, its cyclic or modular version changes nothing as the operads are trivial.
\end{ex}

\begin{ex}
Further examples include genus decoration, directions, color decoration, etc. These recover such types of operad-like structures discussed previously. The details are in \cite{decorated}.

The genus decoration is the functor $G$ defined by $G(\ast)=\mathbb{N}$, the natural numbers. On morphisms the maps are $+:\N\times \N\to \N $ for edge contractions, and $+1:\N\to \N$ for loop contractions. For mergers the map is given by $(n,m)\to n+m-1$.

Colors are given by $C(\ast_S)=Hom(S,C)$ for the set of colors $C$. This means that $C(X)=Hom(F_X,C)$ (recall that $F_X$ is the set of flags of the aggregate $X$). For a morphism $\phi$, $C(\phi)=(\phi^F)^*$. To get colored versions, one then restricts to $\phi:(X,c)\to (Y,c')$ on $\FF_{dec C}$ which satisfy
$c\circ \imath_{\phi}=c$, that is only ghost edges whose flags have the same color are allowed.

For directions one uses the decoration $dir$, $dir(\ast_S)=Hom(S,\Z/2\Z)$, again with $dir(phi)=(\phi^F)^*$. The restriction on morphisms is that ghost edges have to have opposite flags. $dir(\imath_{\phi}(f))=1-dir(f)$ for any ghost edge $\{f,\imath_\phi(f)\}$.
\end{ex}

\begin{thm}\cite{decorated}
\label{decothm}
If $\final$ is the terminal element in $\fops$ and $forget:\Fdeco\to \F$ is the forgetful functor,
then $forget^*(\final)$ is a terminal object for $\Fdeco\text{-}\opcat$.
We have that $forget_*forget^*({\final})=\O$.
\end{thm}


\begin{df}
We call a morphism of Feynman categories $i:\FF\to \FF'$ a minimal extension over $\C$ if  $\F$-$\opcat_{\C}$ has a terminal/trivial functor $\final$ and $i_*{\final}$ is a terminal/trivial functor in $\F'$-$\opcat_{\C}$.
 \end{df}
\begin{prop}
\label{minextprop}
If $f:\Fe \to \Fe'$ is a minimal extension over $\C$, then $f^{\O}:\Fepair \to \Fepairtwo$ is as well.
\end{prop}

\begin{ex} The discussion in the example of non-$\Sigma$ modular operads is as follows.

 \begin{equation}
\label{modulardiag}
\xymatrix{\F_{dec(\mathcal{A}ss^{cyc})}=\CCyclic^{pl} \ar[rr]^{i^{\mathcal{A}ss^{cyc}}} \ar[d]_{forget} && \modular_{dec(\mathcal{M}od(\mathcal{A}ss^{cyc}))} =\modular^{\text{non-}\Sigma}\ar[d]^{forget} \\
\CCyclic \ar[rr]^i && \modular}
\end{equation}

For $\mathcal{C}=\mathcal{S}et$, the inclusion $i$ is a minimal extension.  This is a fact explained by basic topology.
Namely gluing together polygons in their orientation by gluing edges pairwise precisely yields all closed oriented surfaces, see e.g.\ \cite{munkres} and this is unique up to switching diagonals or Whitehead moves.  Hence $i^{\mathcal{A}ss^{cyc}}$ is also a minimal extension.

This explains the result of Markl (Theorem 35 of \cite{Marklnonsigma}) that the push-forward (aka non-$\Sigma$ modular envelope) of the terminal non-$\Sigma$ cyclic operad is terminal.  It reflects the fact that not gluing all edges pairwise, but preserving orientation, does yield all surfaces with boundary.

\end{ex}

\begin{ex}
\label{bwex}
We can also treat the added morphisms of \S\ref{unitmultsec} as decorations and even generalize them to ``family''  decorations. First, the usual added structures are again of the type of the original structure, i.e.\ an $\O\in \fopsc$ the paradigmatic examples being units or multiplication for operads. These then are the operad $\O$ with the only non--vanishing $\O(n)$ is $\O(1)=\unit$ for units, and for multiplication the associative operad or the $A_{\infty}$ operad.

In general, say we have $\FF$ and $\O\in \fopsc$ we can proceed in three steps
\begin{enumerate}
\item Consider the Feynman category $\FF\times \FFdeco$.
\item Add morphisms according to $\forget$. This means that we add  morphisms $(X,dec) \to X$ compatible
with all the relations given by the data of the symmetric monoidal functor $\forget$ on morphisms, in particular, these morphisms are determined by the extra morphisms on the vertices $(\ast_v,dec)\to \ast_v$.  We call the resulting category $\FF \rtimes \FFdeco$. It is an easy check that this is again a Feynman category. $\FF \rtimes \FFdeco$-$\opcat$ will be the category whose objects triples $(\P,\P',N)$ of an $\P\in \fopsc$,  an $\P'\in\FFdeco$-$\opcat$ together with $N\in Hom_{\Fdeco-\opcat}(\P',\forget^*\P)\simeq
Hom_{\fops}(\forget_*\P',\P)$.
\item Restricting to $\O'$ to be terminal, that is $\O=\forget^*(\final)=\final'$ is a terminal object, we obtain the elements of the from $(\final,\P,N)$, with $N\in Hom_{\Fdeco-\opcat}(\final',\forget^*\P)$.

The Feynman category for these $\opcat$ then is simply given by adding morphisms from $\emptyset$ which can be identified with the unit in $\FF$. This is the description in \S\ref{unitmultsec}.

Thinking of  $N$ as an element in  $Hom_{\fops}(\forget_*\forget^*(\final),\P)=Hom_{\fops}(\O,\P)$, we obtain the Feynman category whose $\opcat$ are pairs of  $\P\in \fops$ and a morphism $\O\to \P$.
\end{enumerate}
In particular for the example of a unit for operads, to specify $N$ we need to fix one morphism $\unit\to \P(1)$ and for a multiplication for opearad, to specify $N$, we need to fix one morphism $\unit\to \P(2)$.

For the examples based on  graphs, the morphisms in step (2) are effectively given by  decorating the vertices by elements of $\P$, or not, that is undecorated vertices. Note that these are allowed to lie on the same graph. For example, this results in the b/w trees
in \cite{del,MSS} for multiplications or $A_{\infty}$ multiplications. In the latter, each black subtree is viewed as a decoration of one vertex. Step (3) then fixes that there is not a ``family'' of such multiplications, but only a specific choice.  The same technique applies for decorating with differentials hence explaining the ``twisted'' constructions of \cite{Willwacher}.
\end{ex}

\subsection{Iterating Feynman categories}
Given a Feynman category $\FF$ there is a simplicial tower of iterated categories built on the original one.

Let $\V'$ be the groupoid $\V'=(\imath^{\otimes}\downarrow \imath)$ and let $\F'=(id_{\F}\downarrow \imath^{\otimes})$.
There is an obvious inclusion $\imath'\colon\V'\to \F'$ and  $\V^{\prime \otimes} \simeq Iso(\F')$, since $\FF$ was a Feynman category
and condition (i) holds. The difference between the two is just
the choice of the representation
of the source and target.
$\F'$ is also equivalent to $(id_{\F}\downarrow i)$ where $i\colon \imath^{\otimes}(\V^{\otimes})\to \F$ is the inclusion of the image.
The difference being the choice of a representation of the target.
The morphisms in that category are given by diagrams
\begin{equation}
\label{moreq}
\xymatrix{
X\ar[r]^{\phi}\ar[d]_{\psi}&Y\ar[d]^{\simeq \sigma}\\
X'\ar[r]^{\phi'}&Y'
}
\end{equation}
and these morphisms can be factored into two morphisms, which we will call type I and type II:
\begin{equation}
\xymatrix{
X\ar[r]^{\phi}\ar[d]_{\psi}&Y\ar@{=}[d]\\
X'\ar@{=}[d]\ar[r]^{\sigma^{-1}\circ \phi'}&Y\ar[d]^{\simeq \sigma}\\
X'\ar[r]^{\phi'}&Y'
}
\end{equation}
and similarly there is a factorization in $\F'$.

Notice that $Hom_{\F'}(\phi_0,\phi_1)$ is empty if $t(\phi_0)\not\simeq t(\phi_1)$.
Also, if we are indexed over $\GG$, the underlying graphs satisfy $\gh(\phi)=\gh(\phi')\circ\gh(\psi)$.

There are also the two standard source and target functors
 $s,t:\F'\to\F$, which also restrict to $s,t:\V'\to \V$. They are just the restrictions of the functors on the full arrow category.
Explicitly any object $\phi\in \F'$ is sent to its source or its target, and $s$ sends a morphism (\ref{moreq})
to $\phi$ and $t$ sends it to $\sigma$.

\begin{dfprop}
The iterate of a Feynman category $\FF$ is the Feynman category $\FF'=(\V',\F',\imath')$. We set $\FF^{(k)}=
\FF^{(k-1)\prime}$. The morphisms $(s,s)$ and $(t,t)$ are morphisms of Feynman categories from $\FF'\to \FF$,
which by abuse of notation we will just call $s$ and $t$.
\end{dfprop}

\begin{proof}
We need to show that $\FF'$ is indeed a Feynman category. Since $\FF$ was a Feynman category,
the condition (\ref{morcond}) for $\F$ guarantees that indeed $\imath'^{\otimes}$ is an equivalence
of $\V^{\prime \otimes}$ and $\F'$. For  condition (\ref{morcond}) for $\F'$, one uses
(\ref{morcond}) on composition in $\F$. Given $\phi_1\colon X\stackrel{\psi}{\to}Y\stackrel{\phi_0}{\to}Z$
we can first decompose $Z\simeq \amalg_v\ast_v$ and hence $\phi_0\simeq \amalg_v \phi^0_v$
for the decomposition
$Y\simeq \amalg_v Y_v$ and $\phi_0\simeq \amalg_v \phi^1_v$ for the decomposition
$X\simeq \amalg_v X_v$. Further decomposing the $Y_v\simeq \amalg_{w_v\in I_v} \ast_{w_v}$
we get the desired decomposition of  $\psi$ by first splitting into the various $\psi_{w_v}$ and
then assembling them  into the $\amalg_{w_v\in I_v}\psi_w$, where now up to choosing
isomorphisms $\psi_w\colon X_v\to Y_v$.
The assertion about $(s,s)$ is straightforward.
\end{proof}

\begin{prop}
 The Feynman categories $\FF^{(k)}$ form a simplicial object in the category of Feynman categories with the simplicial morphisms
given by composition, the source and target maps and identities.
\end{prop}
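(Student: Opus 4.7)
The plan is to unpack $\FF^{(k)}$ as the Feynman category encoding $k$-fold composable chains $X_0 \to X_1 \to \cdots \to X_k$ in $\F$ (with each target in the image of $\imath^{\otimes}$), and then exhibit the standard nerve-style face and degeneracy maps. Under this identification the face map $d_0$ drops $X_0$ and the first arrow, $d_k$ drops $X_k$ and the last arrow, the interior face $d_i$ for $0<i<k$ composes the $i$-th and $(i{+}1)$-st arrows at the vertex $X_i$, and the degeneracy $s_i$ inserts the identity on $X_i$ between positions $i$ and $i+1$. The extreme faces $d_0$, $d_k$ are, after reindexing, precisely the source and target morphisms $s$, $t$ supplied by the preceding definition-proposition; the interior faces and the degeneracies have to be built by hand.

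The first substantive step is to check that each $d_i$ and $s_i$ is a morphism of Feynman categories, and not merely a functor of underlying categories. For $d_0$ and $d_k$ this is already established. For the interior $d_i$, the point is that composing two consecutive one-comma generators of $\F^{(k)}$ yields a morphism in $\F^{(k-1)}$ whose hereditary decomposition is coherently determined by the decompositions of the factors: this is exactly what condition (ii) of Definition \ref{feynmandef} supplies for the composite $\phi_{i+1}\circ\phi_i$. Compatibility of $d_i$ with the monoidal structure and with the embeddings $\imath^{(k)}$ is then routine, and the degeneracies are unproblematic because identity arrows have trivial hereditary decompositions.

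The final step is to verify the simplicial identities $d_i d_j = d_{j-1} d_i$ for $i<j$, $s_i s_j = s_{j+1} s_i$ for $i\le j$, and the mixed identities relating $d_i$ and $s_j$. Each of these reduces to a familiar fact about $\F$: associativity of composition (when two interior faces are applied in either order), the unit laws (for the mixed and degeneracy relations), and naturality of source/target with respect to composition (for the boundary relations involving $d_0$ or $d_k$). I expect the main obstacle to be entirely in the previous step, namely showing that interior composition really produces a well-defined morphism of Feynman categories at every iteration level $k$, with the full monoidal and hereditary structure preserved. Once that is in place the simplicial identities are immediate consequences of structure already present in $\F$.
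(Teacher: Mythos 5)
Your proposal is correct and follows essentially the same route as the paper, whose entire proof is the one-line observation that the simplicial structure comes from the nerve functor on categories together with the remark that the nerve's face and degeneracy maps are compatible with the Feynman category structure. Your identification of the outer faces with $s$ and $t$, the use of the hereditary condition (ii) to see that interior composition preserves the Feynman structure, and the reduction of the simplicial identities to associativity and unit laws is exactly the content the paper leaves implicit.
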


\begin{proof}
For this, one just uses the nerve functor for categories and notices that this is compatible with the Feynman category structure.
\end{proof}

\begin{lem}\label{slicelemma}
The Feynman category $\FF'$ splits into subcategories or slices $\FF'_{[X]}$ whose objects have targets isomorphic to $[X]$.
Restricting to $\ast\in \V$ we even obtain
full Feynman subcategories $\FF'|_{[\imath(\ast)]}$ for $\imath(\ast)\in\F$.
Here splits means that $Hom_{\FF'}(\phi,\phi')=\emptyset$ unless $\phi$ and $\psi$ are in the same subcategory.
\end{lem}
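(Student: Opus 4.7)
The splitting is a direct consequence of the explicit description of morphisms in $\F'$. By definition, a morphism in $\F'$ from $\phi_0$ to $\phi_1$ is a commutative square (\ref{moreq}) whose right vertical arrow $\sigma\colon t(\phi_0)\to t(\phi_1)$ is required to be an isomorphism. Hence no such square exists unless $t(\phi_0)$ and $t(\phi_1)$ lie in the same isomorphism class, so $Hom_{\F'}(\phi_0,\phi_1)=\emptyset$ whenever $[t(\phi_0)]\ne[t(\phi_1)]$. This yields the decomposition of $\F'$ into the subcategories $\FF'_{[X]}$ indexed by isomorphism classes $[X]$ of targets.

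For the Feynman subcategory claim, fix $\ast\in\V$ and define $\FF'|_{[\imath(\ast)]}=(\V'|_{[\imath(\ast)]},\F'|_{[\imath(\ast)]},\imath'|_{[\imath(\ast)]})$ by letting $\V'|_{[\imath(\ast)]}$ be the full subgroupoid of $\V'$ on arrows whose target in $\F$ is isomorphic to $\imath(\ast)$, and $\F'|_{[\imath(\ast)]}$ the full subcategory of $\F'$ on arrows whose target in $\F$ is isomorphic to some tensor power $\imath(\ast)^{\otimes n}$. The componentwise monoidal product of $\F'$ restricts to $\F'|_{[\imath(\ast)]}$ because $\imath(\ast)^{\otimes n}\otimes\imath(\ast)^{\otimes m}\simeq\imath(\ast)^{\otimes(n+m)}$, and $\imath'|_{[\imath(\ast)]}$ is the evident restriction of $\imath'$.

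Conditions (i) and (iii) are essentially inherited from $\FF'$. The key observation for (i) is that by the essential uniqueness of basis decompositions in $\FF$ (the diagram following (\ref{objdecompeq})), if an object $Y\in\F$ is isomorphic to $\imath(\ast)^{\otimes n}$ then in any basis decomposition $Y\simeq\bigotimes_v\imath(\ast_v)$ one has $\ast_v\simeq\ast$ for every $v$. Combined with the equivalence $\V'^{\otimes}\simeq Iso(\F')$ for $\FF'$, this immediately restricts to the desired equivalence $(\V'|_{[\imath(\ast)]})^{\otimes}\simeq Iso(\F'|_{[\imath(\ast)]})$. The size axiom (iii) is automatic since comma categories only shrink under passage to a full subcategory.

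The main obstacle, as expected, is verifying the hereditary axiom (ii). Given a morphism $\Phi$ in $\F'|_{[\imath(\ast)]}$ whose target (as an object of $\F'$) is some $\phi_1\colon X_1\to Y_1$ with $Y_1\simeq\imath(\ast)^{\otimes n}$, the hereditary axiom for $\FF'$ provides a decomposition $\Phi\simeq\bigotimes_v\Phi_v$ with each $\Phi_v$ lying in $(\F'\downarrow\V')$ and whose targets together reconstitute $\phi_1$ as in (\ref{morphdecompeq}). Each $\Phi_v$ has target a vertex $\phi_v\colon X_v\to\imath(\ast_v)$ in $\V'$, and the aggregate of the $\imath(\ast_v)$ recovers $Y_1\simeq\imath(\ast)^{\otimes n}$. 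The uniqueness of decomposition observation above then forces $\ast_v\simeq\ast$ for every $v$, so each factor $\Phi_v$ lies in $(\F'|_{[\imath(\ast)]}\downarrow\V'|_{[\imath(\ast)]})$. Thus the decomposition provided by $\FF'$ lives inside the slice, establishing (ii) for $\FF'|_{[\imath(\ast)]}$ and completing the proof.
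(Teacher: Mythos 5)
Your proof is correct and follows essentially the same route as the paper: the splitting is immediate because a morphism in $\F'$ is a square whose right-hand vertical arrow is an isomorphism of targets, and the slice statement follows by restricting this observation to $\V'$. The paper's own proof is just these two observations stated in two sentences; your additional verification of axioms (i)--(iii) for $\FF'|_{[\imath(\ast)]}$ (in particular using the essential uniqueness of basis decompositions to keep the hereditary factorization inside the slice) is a sound and welcome elaboration of what the paper leaves implicit.
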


\begin{proof}
The first statement is true by definition of the morphisms in $\FF'$: there are only morphisms between $\phi$ and $\phi'$
if they have isomorphic targets. The second assertion then follows from the same fact restricted to $\V'$.
\end{proof}

\begin{lem}
\label{FGlem}
If $\O\in \F$--$\opcat_{\C}$ then $FG\O=t_*s^*\CO$.
\end{lem}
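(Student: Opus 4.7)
The plan is to express both sides as colimits over comparable indexing categories and then identify them via a cofinal functor.

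First, I would recall from Theorem~\ref{patternthm} (and its ``Proof details'' subsection) that for every $X\in \F$,
\[
FG\O(X) \;\simeq\; \colim_{Iso(\F\downarrow X)} \O \circ s,
\]
using the equivalence $\imath^\otimes\colon \V^\otimes \simeq Iso(\F)$ to identify the monoidal extension of $G\O = \O|_\V$ to $Iso(\F)$ with $\O|_{Iso(\F)}$ itself (since $\O$ is already monoidal). On the other side, Theorem~\ref{pushthm} gives
\[
t_*s^*\O(X) \;=\; \colim_{(t\downarrow X)} \O \circ s \circ P,
\]
where $P\colon (t\downarrow X)\to \F'$ is the projection and, abusively, $s$ denotes the source functor on both $\F$ and $\F'$.

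Next I would introduce a functor $K\colon Iso(\F\downarrow X)\to (t\downarrow X)$ given on objects by $(Y,g)\mapsto (\mathrm{id}_Y, g)$ and on an isomorphism $\tau\colon Y\to Y'$ over $X$ by $\tau\mapsto (\tau,\tau)$. By Lemma~\ref{redlem} we may pass to the strict reduction of $\FF$ so that every $Y\in Iso(\F)$ lies in $\imath^\otimes(\V^\otimes)$, making $\mathrm{id}_Y$ a legitimate object of $\F'$. Directly from the definition, $\O\circ s\circ P\circ K = \O\circ s$, so it suffices to prove that $K$ is cofinal in the sense needed for colimits.

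This cofinality verification is the main obstacle. For fixed $(\phi\colon W\to Z,\,f\colon Z\to X)\in (t\downarrow X)$, I would unwind a morphism $(\phi, f)\to K(Y, g) = (\mathrm{id}_Y, g)$ as a pair $(\alpha, \sigma)$ with $\sigma\colon Z\to Y$ an isomorphism (forced by the structure of $\F'$), $\alpha = \sigma\phi$, and $g\sigma = f$. Non-emptiness of the under-category $((\phi, f)\downarrow K)$ is witnessed by $(Y,g) = (Z, f)$, $\sigma = \mathrm{id}_Z$, $\alpha = \phi$. For connectedness, given two such data $((Y, g), \sigma)$ and $((Y', g'), \sigma')$, the iso $\tau := \sigma'\sigma^{-1}\colon Y\to Y'$ lies over $X$ (as $g'\tau = g'\sigma'\sigma^{-1} = f\sigma^{-1} = g$), and $(\tau,\tau)$ provides a bridging morphism in the slice (both triangles commute since $\tau\sigma = \sigma'$ and $\tau\alpha = \sigma'\phi = \alpha'$). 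Thus $K$ is cofinal, giving
\[
t_*s^*\O(X) \;\simeq\; \colim_{Iso(\F\downarrow X)} \O\circ s \;\simeq\; FG\O(X),
\]
and the functoriality of $K$ in $X$ upgrades this to a natural isomorphism $FG\O\cong t_*s^*\O$.
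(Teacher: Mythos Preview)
Your proof is correct and follows essentially the same approach as the paper: the paper's ``reduce via type~I morphisms with $\psi=\phi$'' is precisely your non-emptiness witness $(\phi,\mathrm{id}_Z)\colon(\phi,f)\to(\mathrm{id}_Z,f)$, and the subsequent ``type~II reduction to automorphisms'' is your connectedness step, so your cofinality of $K$ is just a cleaner packaging of the paper's informal co-cone comparison.
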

\begin{proof}
On the l.h.s.\ evaluating at $Z$, we get $colim_{(\imath^{\otimes}\downarrow Z)}\CO\circ s$.
On the r.h.s.\ we get $colim_{(t\downarrow Z)}\CO\circ s\circ P$. We see that for a given object ($\phi\colon X\to Y$, $\chi\colon Y\to Z$),
we can use morphisms of type I with $\psi=\phi$ to reduce the colimit to just a colimit over objects $(id_Y,\chi\colon Y\to Z)$ and then use morphism
of type II to reduce to the colimit over automorphisms of these objects. Now $\CO\circ s\circ P (id_Y,\chi:Y\to Z)=\CO(Y)=\CO(s\chi)$.
The claim then follows, since we get an isomorphism between the two universal co-cones.
\end{proof}

\subsection{Arrow category}
For a category $\F$ we set $Ar(\F)$ to be the category $(\F\downarrow\F)$ which has as objects morphisms in $\F$ and as morphisms
commutative diagrams in $\F$. It is clear that any functor $\imath$ of categories induces a functor $Ar(\imath)$ on the level or arrow categories.
\begin{prop} If $\FF=(\V,\F,\imath)$ is a Feynman category, so is $Ar(\FF)=((\V^{\otimes} \downarrow \V),Ar(\F),Ar(\imath))$.
\end{prop}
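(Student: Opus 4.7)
The plan is to verify the three Feynman category axioms for $Ar(\FF)$ by systematically reducing to the corresponding axioms for $\FF$. First, note that $Ar(\F)$ inherits a symmetric monoidal structure from $\F$ by taking tensor products of arrows componentwise, and that $(\V^\otimes \downarrow \V)$ is indeed a groupoid because $\V^\otimes$ and $\V$ are groupoids (so any morphism in this comma category is a commuting square of isos). The functor $Ar(\imath)$ sends a morphism $\phi\colon \imath^\otimes(X) \to \imath(\ast)$ to itself, viewed as an object of $Ar(\F)$, and extends monoidally.

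For the isomorphism condition (i), I would observe that this is essentially a restatement of the hereditary condition for $\FF$ itself. Indeed, $Iso(Ar(\F)) = Iso(\F \downarrow \F)$ by definition, and the hereditary condition for $\FF$ gives an equivalence $Iso(\F \downarrow \V)^\otimes \simeq Iso(\F \downarrow \F)$. Combining with condition (i) of $\FF$ (so every object in the source position is iso to one of the form $\imath^\otimes(X)$) yields $(\V^\otimes \downarrow \V)^\otimes \simeq Iso(Ar(\F))$.

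The hereditary condition (ii) for $Ar(\FF)$ is the main step. Given a morphism in $Ar(\F)$, i.e.\ a commuting square with horizontals $\psi\colon X \to X'$ and $\sigma\colon Y \to Y'$ and verticals $\phi, \phi'$, my plan is to perform a cascade of decompositions: first decompose $\phi'$ using (i)--(ii) of $\FF$ as $\bigotimes_v \phi'_v$ with $\phi'_v\colon X'_v \to \imath(\ast'_v)$ and $Y' \simeq \bigotimes_v \imath(\ast'_v)$; second, apply (ii) of $\FF$ to $\sigma$ to get $\sigma \simeq \bigotimes_v \sigma_v$ compatible with $Y \simeq \bigotimes_v Y_v$; third, refine each $X'_v$ into a tensor of $\imath(\ast)$'s using (i) and decompose $\psi$ by (ii), then regroup to get $\psi \simeq \bigotimes_v \psi_v\colon X_v \to X'_v$ with $X \simeq \bigotimes_v X_v$; fourth, decompose $\phi\colon X \to Y$ similarly (after further expanding each $Y_v$ into vertices of $\V$) and regroup to produce $\phi_v\colon X_v \to Y_v$. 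Finally, the commutativity $\sigma\phi = \phi'\psi$ together with the essential uniqueness of such decompositions (clause (1) of the essential remarks following Definition \ref{feynmandef}) forces the two resulting decompositions of $\sigma\phi = \phi'\psi\colon X \to Y'$ to agree up to a canonical isomorphism, which in turn glues the four decompositions into a bona fide decomposition of the whole square into tensor products of squares whose right-hand column lies in $(\V^\otimes \downarrow \V)$. The argument for morphisms between such decomposed squares is analogous and cleaner, since it only uses isos. The hardest part of the whole proof is precisely this gluing, and it relies crucially on the uniqueness clause of condition (ii) of $\FF$.

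For the size condition (iii), I would note that an object of $(Ar(\F) \downarrow \phi)$ with $\phi\colon \imath^\otimes(X') \to \imath(\ast)$ is the data of a square whose two horizontals are morphisms $\psi \in \F(X'', X')$ and $\sigma \in \F(Y, \imath(\ast))$ satisfying $\phi\psi = \sigma\phi_1$ for some $\phi_1\colon X'' \to Y$. Forgetting the commutativity constraint, the collection of such data embeds into $(\F \downarrow X') \times (\F \downarrow \imath(\ast))$; using (ii) of $\FF$ to decompose $X' \simeq \bigotimes_w \imath(\ast_w)$ and hence $(\F \downarrow X') \simeq \prod_w (\F \downarrow \ast_w)$, essential smallness follows from the size condition of $\FF$. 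This completes the verification.
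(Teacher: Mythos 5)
Your proposal is correct and follows the same route as the paper's (much terser) proof: condition (i) for $Ar(\FF)$ is exactly the hereditary condition (ii) of $\FF$ combined with its condition (i), and the one-comma generation of $Ar(\F)$ comes from iterating the decompositions of $\FF$ together with their essential uniqueness. You have simply supplied the details — the cascade of decompositions of the square and the gluing via uniqueness, plus the size check — that the paper leaves implicit.
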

\begin{proof}
It is clear that $(\V^{\otimes}\downarrow \V)$ is a groupoid and straightforward that since $\V^{\otimes}\simeq Iso(\F)$ that
$Iso(Ar(\F))\simeq   (\V^{\otimes}\downarrow \V)^{\otimes}$. The condition (\ref{morcond}) guarantees that $Ar(\F)$ is one--comma generated.
\end{proof}

\subsection{Feynman  level category $\FF^+$}
\label{Fplussec}
Given a Feynman category $\FF$, and a choice of basis for it, we will define its
Feynman level  category  $\FF^+=(\V^+,\F^+,\imath^+)$ as follows.
The underlying objects of $\F^+$ are the morphisms of $\F$. The morphisms
of $\F^+$ are given as follows: given $\phi$ and $\psi$, consider their decompositions
\begin{equation}
\xymatrix
{
X \ar[rr]^{\phi}\ar[d]_{\sigma}^{\simeq}&& Y\ar[d]^{\hat\sigma}_{\simeq} \\
 \bigotimes_{v\in I}\bigotimes_{w\in I_v} \ast_w\ar[rr]^{\bigotimes_{v\in I}\phi_{v}}&&\bigotimes_{v\in I} \ast_v
}
\quad
\xymatrix
{
X' \ar[rr]^{\psi}\ar[d]_{\tau}^{\simeq}&& Y'\ar[d]^{\hat\tau}_{\simeq} \\
 \bigotimes_{v'\in I'}\bigotimes_{w'\in I'_{v'}} \ast_{w'}\ar[rr]^{\bigotimes_{v'\in I'}\psi_{v'}}&&\bigotimes_{v'\in I'} \ast_{v'}.
}
\end{equation}
where we have dropped the $\imath$ from the notation, $\sigma,\hat\sigma,\tau$ and $\hat\tau$ are given  by the choice of basis
and the partition $I_v$ of the index set for $X$ and $I'_{v'}$ for the index set of $Y$
is given by the decomposition of the morphism.

 A morphism from $\phi$ to $\psi$ is a two level partition of $I: (I_{v'})_{v'\in I'}$, and partitions of $I_{v'}:(I^1_{v'}\dots,I_{v'}^{k_{v'}})$ such that
 if we set $\phi_{v'}^i:=\bigotimes_{v\in I_{v'}^i}\phi_v$ then $\psi_{v'}=\phi_{v'}^k\circ\dots\circ \phi_{v'}^1$.

 To compose two morphisms $f\colon\phi\to \psi$ and $g\colon\psi\to \chi$, given by partitions of $I:(I_{v'})_{v'\in I'}$ and of the $I_{v'}:(I^1_{v'}\dots,I_{v'}^{k_{v'}})$ respectively of
 $I':(I'_{v''})_{v''\in I''}$ and the $I_{v''}:(I^{\prime 1}_{v''}\dots,I_{v''}^{\prime k_{v''}})$,
 where $I''$ is the index set in the decomposition of $\chi$, we set the compositions to be
 the partitions of $I:(I_{v''})_{v''\in I''}$ where  $I_{v''}$ is the set partitioned by
 $(I_{v'})_{v'\in I^{\prime j}_{v''},j=1,\dots, k_{v''}}$.
 That is, we replace each morphism $\psi_{v'}$ by the chain $\phi_1^{v'}\circ\dots\circ\phi_k^{v'}$.

Morphisms alternatively correspond to  rooted forests of level trees thought of as flow charts. Here the vertices are decorated by the $\phi_v$ and the composition along the rooted  forest is $\psi$.
There is exactly one tree $\tau_{v'}$ per $v'\in I'$ in the forest and accordingly
the composition along that tree is $\psi_v'$.

Technically,   the vertices are the $v\in I$.
The flags are the union $\amalg_v \amalg_{w\in I_v} \ast_w\amalg \amalg_{v\in I}\ast_v$
with the value of $\del$ on $\ast_w$ being $v$ if  $w\in I_v$ and
$v$ on $\ast_v$ for $v\in I$. The orientation at each vertex is given by the target being out. The involution $\imath$  is given by matching source and target objects of the various $\phi_v$.
The level structure of each tree is given by the partition  $I_{v'}$.
 The composition is the composition of rooted  trees by gluing trees  at all vertices ---that is we blow up the vertex marked by $\psi_{v'}$ into the tree $\tau_{v'}$.

\begin{figure}
    \centering
   \includegraphics[width=0.8\textwidth]{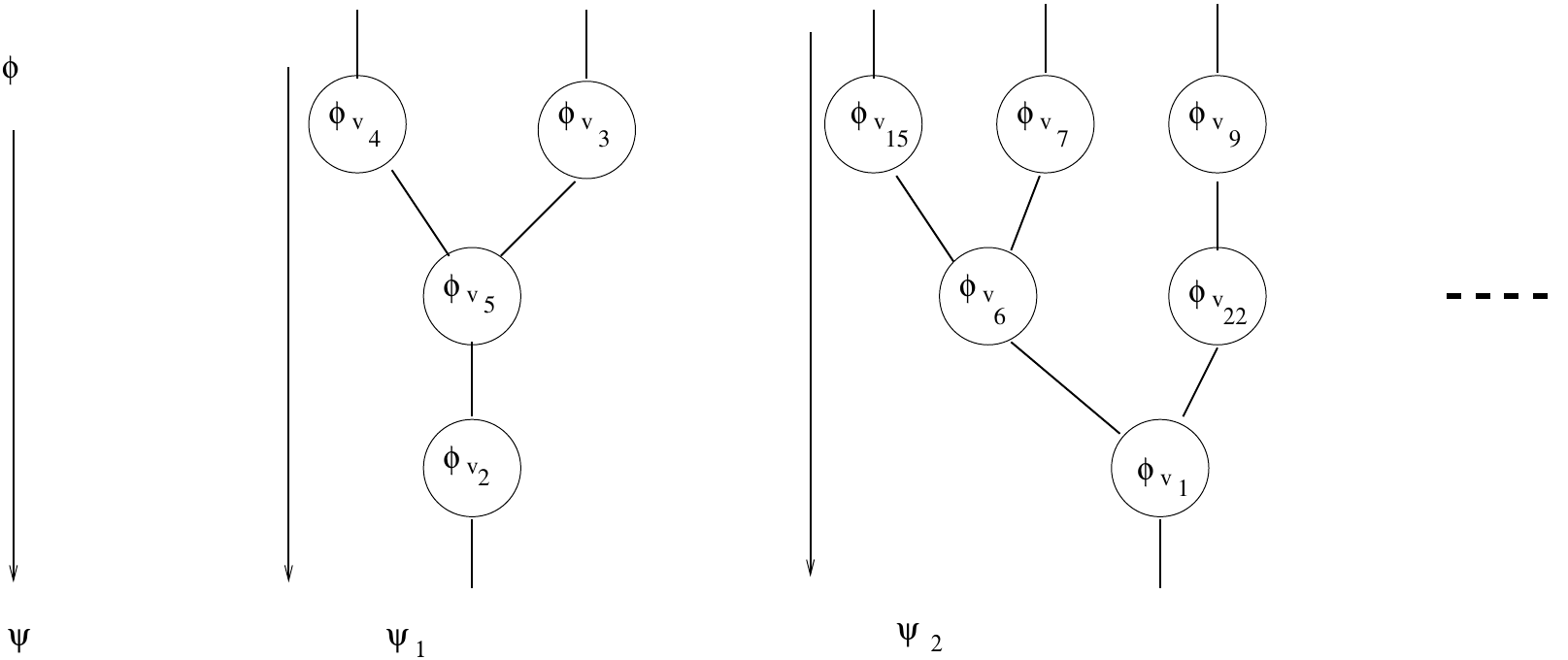}
    \caption{The level forest picture for morphisms in $\FF^+$. Indicated is a morphism from $\phi\simeq\bigotimes_v\phi_v$ to $\Psi\simeq \bigotimes_{i}\Psi_i$}
    \label{Fplusfig}
\end{figure}

The groupoid $\V^+$ are morphisms in $(\F\downarrow \V)$, with the one level trees as morhpisms, i.e.\  $(I)$ is the partition into the set itself and the second level partitions are given by the elements of $I$.
Notice that in this case $\phi\simeq\psi$ even in the comma-category via $(\tau^{-1}\sigma,\hat\tau^{-1}\circ\hat\sigma)$. We also recall that since we are indexing our tensor products by sets, there is always the permutation action of the automorphisms of this set.

The one--comma generating maps are those corresponding to
the morphisms from any morphism $\phi$ to a morphism $\chi\in (\F\downarrow \V)$.
The other axioms of a Feynman category are easily checked.

A set of generators for the one--comma generating maps is given by rooted two level trees. The root is on the $0$ level and the leaves are on the levels above. If we decorate the root by $\phi_0$ and the vertices at level $1$ by the $\phi_v$, then these morphisms  are morphism from $\phi=\phi_0\otimes \bigotimes_v \phi_v$ to $\phi_1=\bigotimes_v\phi_v\circ \phi_0$.

\subsubsection{$\F^+$-$\opcat$.}
After passing to the equivalent strict Feynman category, an element $\CalD$ in $\F^+$-$\opcat$ is a symmetric monoidal functor that has values on each morphism $\CalD(\phi)=\bigotimes\CalD(\phi_v)$ and has composition maps $\CalD(\phi_0\otimes \phi)\to \CalD(\phi_1)$ for each decomposition $\phi_1=\phi\circ\phi_0$. Further decomposing $\phi=\bigotimes \phi_v$ where the decomposition is according to the target of $\phi_0$, we obtain morphisms
\begin{equation}
\label{hypercompeq}
 \CalD(\phi_0)\otimes \bigotimes_v\CalD(\phi_v)\to \CalD(\phi_1)
 \end{equation}

It is enough to specify these functors for $\phi_1\in (\F \downarrow \V)$ and then check associativity for triples.

\begin{ex}
If we start from the tautological  Feynman category on the trivial category $\FF=(1,1^{\otimes}, \imath)$ then $\FF^+$ is the Feynman category $\FSurj$ of surjections. Indeed the possible trees are all linear, that is only have 2--valent vertices, and there is only one decoration. Such a  rooted tree is specified by its total length $n$ and the permutation which gives the bijection of its vertices with the set $n_i$. Looking at a forest of these trees we see that we have the natural numbers as objects with morphisms being surjections.
\end{ex}
\begin{ex}
We also have $\FSurj^+=\operads_{May}$, which is the Feynman category for  May operads. Indeed the basic maps (\ref{hypercompeq}) are precisely the composition maps $\gamma$. To be precise, these are May operads without units.

\end{ex}

\subsection{Feynman hyper category $\FF^{hyp}$}
\label{hypersec}
There is a ``reduced'' version of $\FF^+$ which is central to our theory of enrichment. This is the universal Feynman category through which any functor $\CalD$ factors, which satisfies the following restriction
{\it  $\CalD(\sigma)\simeq\unit$ for any isomorphism $\sigma$ where $\unit$ is the unit of the target category $\CalC$.}

For this, we invert the morphisms corresponding to composing with isomorphisms.
That is for all $\phi\colon X\to Y$ and $\sigma\in Aut(X)$ or respectively $\sigma'\in Aut(Y)$, we invert the morphisms
$\phi\otimes \sigma\to \phi\circ\sigma$ and respectively $\sigma'\otimes \phi\to \sigma'\circ \phi$ corresponding to the partitions yielding the composition.
And we furthermore add isomorphisms $id_{\emptyset}\stackrel{\sim}{\to} \sigma$ for any $\sigma \in \V$
where $id_{\emptyset}$ is the identity unit of $\F$ which is the empty word in $\V$.
This gives us an action of the isomorphisms of $(\F\downarrow \F)$. Let $\boldsymbol \sigma=(\sigma^{-1},\hat\sigma^{-1})$ be such an isomorphism from $\phi$ to ${}^{\boldsymbol \sigma}\phi=\sigma\circ\phi\circ\hat\sigma$.
Decomposing say $\sigma\otimes \hat \sigma=\bigotimes_v \sigma_v$ as usual and denoting the tree corresponding to $\hat\sigma\circ\phi\circ\sigma$ this gives us the following diagram
\begin{equation}
\label{hypcompateq}
\xymatrix{
\phi\otimes \bigotimes_v \sigma_v\ar[rr]^{\CalD(\tau)}_{\sim}&&{}^{\boldsymbol \sigma} \phi_0\\
\phi\otimes \bigotimes_v \emptyset
\ar[u]^{\sim}&& \ar[ll]_{id \otimes \bigotimes_v r_{\emptyset}}^{\sim}\phi\ar[u]_{\boldsymbol \sigma}
}
\end{equation}
where the right morphisms are given by the new isomorphisms, the top morphism is invertible by construction and
the bottom morphism is the unit constraint.

\subsubsection{$\F^{hyp}$-$\opcat$}
\label{hypopsec}
An element $\D\in \F^{hyp}$-$\opcat$ corresponds to the data of functors from $Iso(\F\downarrow\F)\to \C$ together with morphisms (\ref{hypercompeq}) which are associative and satisfy the
equality induced by (\ref{hypcompateq}), i.e. the following diagram commutes:
\begin{equation}
\label{isoactioneq}
\xymatrix{
\CalD(\phi)\otimes \bigotimes_v \CalD(\sigma_v)\ar[rr]^{\CalD(\tau)}_{\sim}&&\CalD({}^ {\boldsymbol \sigma}\phi)\\
\CalD( \phi)\otimes \bigotimes_v \unit  \ar[u]^{\sim}&& \ar[ll]_{id \otimes \bigotimes_v r^{-1}_{\unit}}^{\sim}\D(\phi)\ar[u]_{\CalD( \boldsymbol \sigma)}
}
\end{equation}

\begin{ex}
The paradigmatic examples are hyper--operads in the sense of \cite{GKmodular}. Here $\FF=\modular$ and $\FF^{hyp}$  is the Feynman category for hyper--operads.
\end{ex}

\subsubsection{Relation of $\F^{\prime op}$-$\opcat$ and $\F^{hyp}$-$\opcat$}
Any $\D\in \F^{hyp}$-$\opcat$ can be viewed as a universal $\F'$-$\opcat$ in the following way.
Let $\D'$ be  a symmetric monoidal functor from $\FF^{\prime op}$ and $\D$ such a functor from $\F^{hyp}$.

Looking at isomorphisms only, the two categories $Iso(\F')$ and $Iso(\F^{hyp})$ coincide and give the same data.  On objects there is a 1--1 correspondence between functors $\D'$ and $\D$ --- $\D(\phi)=\D'(\phi)$ and
the correspondence on isomorphism comes from the following: isomorphisms in $\FF^{\prime op}$ these give rise to isomorphism $\D'(id)=\unit\to \D(\sigma)$ coming from the square (\ref{moreq}) with top and right arrow $\sigma$ and the other two arrows being $id$.

In $\FF^{\prime op}$ we then have to consider type I morphisms. These  give maps
$\D'(\psi,id)\in Hom(\D(\phi), \D(\phi'))$ where $\phi'=\phi\circ\psi$.
which have to be compatible with composition.

\begin{equation}
    \xymatrix{
    \vdots&\vdots&\vdots\\
    X''\ar[r]^{\phi''}\ar[d]_{\psi'}&Y\ar@{=}[d]&\D(\phi'') \\
      X'\ar[r]^{\phi'}\ar[d]_{\psi'}&Y\ar@{=}[d]& \D(\phi')\ar[u]\\
        X''\ar[r]^{\phi}\ar[d]_{\phi}&Y\ar@{=}[d]&\D(\phi)\ar[u]\\
          Y\ar[r]^{id}&Y&\unit\ar[u]\\
    }
\end{equation}

In particular, picking the top arrow in the square (\ref{moreq}) to be $\psi$, we get a map $(\psi,id)$ from $id$ to $\psi$ and hence
$\D(\psi,id):\unit\to \D(\psi)$, that is an element of $\D(\psi)$. After picking these, we have to pick elements in $Hom(\D(\phi),\D(\phi'))$ which are compatible with composition and isomorphisms. Such a collection of elements defines the functor $\D'$.

Such a choice can be made automatically, if one has morphisms in
$$Hom(\D(\psi),Hom(\D(\phi),\D(\phi')))\simeq
Hom(\D(\psi)\otimes \D(\phi),\D(\phi'))$$ which are consistent with composition and compatible with isomorphisms. A functor from $\F^{hyp}$ provides exactly this type of universal information. That is, we can get the collection by choosing
a coherent set of elements for a generating set of morphisms and then use the structure maps of $\D$ to choose all the morphisms for $\D'$.

\subsubsection{A reduced version $\FF^{hyp,rd}$}  One may define $\FF^{hyp,rd}$, a Feynman subcategory of $\FF^{hyp}$ which is equivalent to it by letting $\F^{hyp, rd}$ and $\V^{hyp, rd}$ be the respective subcategories whose objects are morphisms that do not contains isomorphisms in their decomposition. In view of the isomorphisms $\emptyset\to \sigma$ this is clearly an equivalent subcategory.  In particular the respective categories of $\opcat$ and $\op{M}ods$ are equivalent.

The  morphisms are  described by rooted forests of  trees whose vertices are decorated by the $\phi_v$ as above --none of which is an isomorphism--, with the additional decoration of an isomorphism
per edge and tail. Alternatively one can think of the decoration as a black 2-valent vertex.
Indeed, using maps from $\emptyset \to \sigma$, we can introduce as many isomorphisms as we wish. These give rise to 2--valent vertices, which we mark black. All other vertices remain labeled by $\phi_v$.  If there are sequences of such black vertices, the corresponding morphism is isomorphic to the morphism resulting from composing the given sequence of these isomorphisms.

\begin{ex}
For $\FF^{hyp,rd}_{surj}=
\operads_0$, the Feynman category whose morphisms are trees with at least trivalent vertices (or identities) and whose $\opcat$ are operads whose $\O(1)=\unit$. Indeed the basic non--isomorphism
 morphisms are the surjections $\bar n\to \bar 1$, which we can think of as rooted corollas. Since for any two singleton sets there is a unique isomorphism between them, we can suppress the black vertices in the edges. The remaining information is that of the tails, which is exactly the map $\phi^F$ in the morphism of graphs.
\end{ex}
\begin{ex}
For the trivial Feynman category, we obtain back the trivial Feynman category, since the trees all collapse to a tree with one black vertex.
\end{ex}

\section{Indexed Enriched Feynman categories,  (odd) twists and Hopf algebras}
\label{enrichedsec}

In this chapter, we will introduce the indexed enriched version of Feynman categories.
This is needed in order to define the transforms of the next section.
These are given as dg-objects and we need non--Cartesian enrichment.

We show that there is a natural way to do this using enrichment indexed by an $\F^+$--$\oper$.
This allows us to recover the theory of twists of \cite{GKmodular} for the special case $\FF=\modular$ and generalize it to arbitrary Feynman categories.

A guiding motivation are the graph based examples.
For these ---much like in the case of simplicial (co)homology---
this allows us to pass from orientations to Abelian groups, where an orientation change induces a minus sign.

In its full generality, besides being a useful
extension of the notion of Feynman categories,  the construction also makes it possible to treat algebras over
$\CO\in\F^{hyp}$--$\opcat$ as $\opcat$ for an $\CO$ based--enriched version of $\FF$. The prime examples being algebras over operads.


We end the section with an application by presenting bi-algbera and Hopf structures defined by Feynman categories enriched over $\Ab$. This is a generalization of the Hopf--algebra  of Connes--Kreimer \cite{CK} as suggested by Kreimer. The details of this are in \cite{GKT}.


We now also  fix that the coproduct in the enrichment category $\CalE$, which we will denote by $\oplus$,  is distributive with respect to the monoidal structure $\otimes$ in both variables:
 $(X\oplus Y)\otimes Z\simeq (X\otimes Z) \oplus (Y\otimes Z)$ and similarly in the other variable.

\subsection{Enrichment functors}

The cleanest way to define an indexed enrichment is using 2--categories. We will disentangle the definition below. First, we can consider any category $\F$ to be a 2--category with the two morphisms generated by triangles of composable morphisms. If $\CalE$ is a monoidal category, let $\underline{\CalE}$ be the corresponding 2--category  with one object. I.e.\ the 1-morphisms of $\underline{\CalE}$ are the objects of $\CalE$ with the composition being $\otimes$, the monoidal structure of $\CalE$. The 2--morphisms are then the 2--morphisms of $\CalE$, their horizontal composition being $\otimes$ and their vertical composition being $\circ$.

\begin{df}
\label{enrichmentfunctordef}
Let $\FF$ be a Feynman category. An enrichment functor is a lax 2--functor $\D:\F\to \underline{\CalE}$ with the following properties
\begin{enumerate}
\item $\D$ is strict on compositions with isomorphisms.
\item $\D(\sigma)=\unit_{\CalE}$ for any isomorphism.
\item $\D$ is monoidal, that is $\D(\phi \otimes_{\F} \psi)=\D(\phi)\otimes_{\CalE}\D(\psi)$
\end{enumerate}
\end{df}

What this means is that for any morphism $\phi$ we have an object $\D(\phi)\in \CalE$ and
for any two composable morphisms $\phi$ and $\psi$ there is a morphism $\D(\psi)\otimes \D(\phi)\stackrel{\D(\circ)}{\to}\D(\phi\circ \psi)$. Condition (1) states that this morphism is an isomorphism whenever
 $\phi$ or $\sigma$ is an isomorphism. It follows that any $\D(\sigma)$ is invertible for an isomorphism $\sigma$.
 (2) then fixes that this invertible element is $\unit_{\CalE}$.

\begin{prop} An enrichment functor $\D$ for a Feynman category $\F$ corresponds 1--1 to
 $\tilde\D \in \F^{hyp}$-$\opcat_{\CalE}$. Thus these are  equivalent concepts.
\end{prop}
\begin{proof}
We define $\tilde \D(\phi):=\D(\phi)$.
Decomposing the morphism $\phi$, we get the composition morphisms
\begin{equation}
\label{twistingdata}
\D(\psi)\otimes \bigotimes_v \D(\phi_v) \to \D(\phi \circ \psi)
\end{equation}
The conditions (1)--(3) yield  an action of isomorphisms in $(\F\downarrow \F)$ given exactly by the diagram (\ref{isoactioneq}) and hence functors $\tilde\D$ from $Iso(\F\downarrow\F)$ which are automatically compatible.
According to \S\ref{hypopsec} this data fixes $\tilde \D$.
The other direction of the construction is similar.
\end{proof}

\subsubsection{Indexed enrichment}
Given a monoidal category $\F$ considered as a 2--category and lax 2--functor $\D$ to $\underline{\CalE}$ as above, we define an enriched monoidal category $\F_{\D}$ as follows. The objects of $\F_{\D}$ are those of $\F$. The morphisms are given as

\begin{equation}
Hom_{\F_{\CalD}}(X,Y):=\bigoplus_{\phi\in Hom_{\F}(X,Y)}\D(\phi)
\end{equation}
The composition is given by
\begin{multline}
Hom_{\F_{\CalD}}(X,Y)\otimes Hom_{\F_{\CalD}}(Y,Z) =
\bigoplus_{\phi\in Hom_{\F}(X,Y)}\D(\phi)\otimes \bigoplus_{\psi\in Hom_{\F}(Y,Z)} \D(\psi)\\
\simeq
\bigoplus_{(\phi,\psi)\in Hom_{\F}(X,Y)\times Hom_{\F}(Y,Z)} \D(\phi)\otimes \D(\psi)
\stackrel{\bigoplus \D(\circ)}{\longrightarrow} \bigoplus_{\chi\in Hom_{\F}(X,Z)}\D(\chi)=Hom_{\F_{\D}}(X,Z)
\end{multline}
The image lies in the components $\chi=\psi\circ\phi$. Using this construction on $\V$, pulling back $\D$ via $\imath$, we obtain $\V_{\D}=\V_{\CalE}$. The functor $\imath$ then is naturally upgraded to an enriched functor $\imath_{\CalE}:\V_{\D}\to \F_{\D}$.

\begin{df}
\label{enrichedfeydef}
Let $\FF$ be a Feynman category and let $\D$ be an enrichment functor.
We call $\FF_{\D}:=(\V_{\CalE},\F_{\D},\imath_{\CalE})$ a
Feynman category enriched
over $\CalE$ indexed by $\D$.
\end{df}

\begin{thm}
\label{enrichedtriplethm}
$\FF_{\D}$ is a weak Feynman category.  The forgetful functor from $\F_{\D}$-$\opcat$ to $\V_{\CalE}$-$\smodcat$ has a left adjoint and more generally push-forwards among indexed enriched Feynman categories exist. Finally there is an equivalence of
categories between algebras over the triple (aka.\ monad) $GF$ and $\F_{\D}$--$\opcat$.
\end{thm}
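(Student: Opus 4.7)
The plan is to verify the three assertions in order, with all of them following the blueprint of the unenriched arguments already developed in the excerpt. For the claim that $\FF_\D$ is a weak Feynman category, I need to check (i'), (ii') and (iii'). Since $\V_\CalE$ and $\F_\D$ have the same underlying objects as $\V$ and $\F$, and $\imath_\CalE$ agrees with $\imath$ on objects, condition (i') transfers directly from (i) for $\FF$, and (iii') follows from (iii) because $Hom_{\F_\D}(X,Y)$ is an $\CalE$-coproduct indexed by the essentially small set $Hom_\F(X,Y)$. The main work is (ii'): for $X$ and $Y \simeq \bigotimes_{v \in V}\imath(\ast_v)$, I would expand
\begin{equation*}
Hom_{\F_\D}(X,Y) \;=\; \bigoplus_{\phi \in Hom_\F(X,Y)} \D(\phi),
\end{equation*}
use the hereditary condition (ii) for $\FF$ to re-index the sum over partitioned decompositions $X \simeq \bigotimes_v X_v$ with $\phi \simeq \bigotimes_v \phi_v$, invoke the monoidality of $\D$ to replace $\D(\phi)$ by $\bigotimes_v \D(\phi_v)$, and finally use the distributivity of $\oplus$ over $\otimes$ in $\CalE$ to rearrange the result into
\begin{equation*}
\bigoplus_{(X_v)}\bigotimes_v Hom_{\F_\D}(X_v,\imath(\ast_v)),
\end{equation*}
which is the enriched hereditary isomorphism.

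For the second assertion I would follow the proof of Theorem \ref{pushthm} in the enriched setting. Given a morphism $f$ between indexed enriched Feynman categories, define the push-forward by the $\CalE$-enriched left Kan extension
\begin{equation*}
f_*\CO(X') \;=\; \colim_{(f\downarrow X')} \CO\circ P,
\end{equation*}
using cocompleteness of $\CalC$ and the assumption that $\otimes$ preserves colimits in each variable. That $f_*\CO$ is strong monoidal reduces to the hereditary condition (ii') just established, by the same factorization of the comma category into a product indexed by $v$ as in the original proof. The adjunction $(f_*,f^*)$ is formal, and its symmetric monoidal upgrade follows from Kelly's theorem as in Theorem \ref{patternthm}. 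Applied to the inclusion $(\V_\CalE,\V_\CalE^\otimes,\imath_\CalE^\otimes)\to\FF_\D$ one obtains the adjoint $F = \free$ to the forgetful functor $G = \forget$.

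For the final monadicity claim I would transport Corollary \ref{triplecor} and Theorem \ref{triplethm} verbatim. The triple $\T = GF$ admits the explicit description
\begin{equation*}
\T\Phi(\ast) \;=\; \colim_{(\F_\D\downarrow \ast)} \Phi\circ s,
\end{equation*}
with multiplication given by composition. A $\T$-algebra structure on $\Phi$ is then the same as a coherent family of maps $\D(\phi)\otimes\Phi(X)\to\Phi(\ast)$ for $\phi\in Hom_\F(X,\ast)$, equivariant under the isomorphism action of $\V_\CalE$ and associative with respect to composition; this is precisely the data of an $\CalE$-enriched strong symmetric monoidal functor $\F_\D\to\CalC$ extending $\Phi$, i.e.\ an $\F_\D$-$\op$. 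The mutually inverse constructions between $\T$-algebras and $\FF_\D$-$\opcat$ are exactly the ones written down in the unenriched case.

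The main obstacle will be the careful bookkeeping in step (ii'): checking that the distributivity isomorphism of $\oplus$ over $\otimes$ in $\CalE$ is compatible both with the composition in $\F_\D$ (formed via $\D(\circ)$) and with the coherence maps of $\D$, so that the hereditary decomposition is natural and the resulting monoidal structure on push-forwards is well defined. Everything else, including cocompleteness arguments and the two adjunction morphisms $\sigma,\tau$ used to establish the monadicity equivalence, transfers mechanically once (ii') is in place.
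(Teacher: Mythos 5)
Your proposal is correct and follows essentially the same route as the paper: the paper likewise disposes of (i') and (iii') immediately and concentrates on (ii'), verifying it by combining distributivity of $\oplus$ over $\otimes$, the monoidality of $\D$, and the hereditary condition for the underlying $\FF$, after which the free/forgetful adjunction, push-forwards, and monadicity are declared to follow as in Theorems \ref{patternthm}, \ref{pushthm} and \ref{triplethm}. The only cosmetic difference is that you phrase the (ii') check via the partition-indexed direct-sum decomposition while the paper computes the Day-convolution coend directly; these formulations are equivalent by the paper's own Lemma \ref{altlem}.
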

\begin{proof} This is a generalization of the arguments of \cite{GKmodular}.
(i') holds by construction and (iii) for $\FF$ implies (iii') for $\FF_{\D}$. It remains to check (ii').
For this, we notice that
\begin{multline}
\int^{Z,Z'}Hom_{\F_{\D}}(\imath^{\otimes}_{\CalE}(Z),X)\otimes_{\CalE} Hom_{\F_{\D}}(\imath^{\otimes}_{\CalE}(Z'),Y)\otimes_{\CalE}Hom_{\V_{\CalE}}(W,Z\otimes Z')\\
=\int^{Z,Z'}\left[\bigoplus_{\phi \in Hom_{\F}(\imath^{\otimes}(Z),X)}\CalD(\phi)\right] \otimes_{\CalE}
\left[\bigoplus_{\phi \in Hom_{\F}(\imath^{\otimes}(Z'),Y)}\CalD(\psi)\right] \otimes_{\CalE}
\left[\bigoplus_{\phi \in Hom_{\V}(W, \imath^{\otimes}(Z)\otimes \imath^{\otimes}(Z'))}\CalD(\sigma)\right]
\\=\int^{Z,Z'}
\bigoplus_{(\phi,\psi,\sigma)\in Hom_{\F}(\imath^{\otimes}_{ }(Z),X)\times_{ } Hom_{\F}(\imath^{\otimes}_{ }(Z'),Y)\times_{ }Hom_{\V_{ }}(W,Z\otimes Z')}\D(\phi\otimes\psi\otimes \sigma)
\\=\bigoplus_{\chi\in Hom_{\F}(W,X\otimes Y)} D(\chi)=Hom_{\F_{\D}}(W,X\otimes Y)
\end{multline}
where we have used the conditions (1)-(3) on $\D$ to obtain the first equalities.  Then we
used  (ii') for $\F$ to perform the co--end. This replaces  the discrete category over which the inner ordinary colimit runs by a colimit over  representatives.
Then the analogue of Theorem \ref{freethm}, Theorem \ref{pushthm} and Theorem \ref{triplethm} follow readily.
\end{proof}

\begin{prop}
If $\CalC$ is tensored over $\CalE$ then the triple $\T_{\D}$  on $\V_{\D}$-$\smodcat_{\CalC}$
encoding $\F_{\D}$-$\opcat_{\CalC}$ satisfies
$\T_{\D}=\T\otimes \D:=\colim_{Iso(\imath^{\otimes} \downarrow \ast)} \D\otimes \O\circ s$, where $\T$ is the triple encoding $\F$-$\opcat_{\CalC}$.
\end{prop}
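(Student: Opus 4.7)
The plan is to reduce the computation of $\T_{\D}$ to the non-enriched formula from Corollary \ref{triplecor} by unpacking the Hom-sets of $\F_{\D}$, which by construction split as direct sums $\bigoplus_{\phi} \D(\phi)$ indexed by morphisms of the underlying Feynman category $\F$. Because $\CalC$ is tensored over $\CalE$ and because $\otimes$ is assumed to distribute over $\oplus$ (from the standing assumption in \S\ref{enrichedsec}), every operation used to build the triple can be ``pulled through'' these direct sums, leaving a colimit over the iso-comma category $Iso(\imath^{\otimes}\downarrow \ast)$ of $\F$ with $\D$ appearing as a twisting coefficient.

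First I would reproduce the enriched analogue of the construction preceding Corollary \ref{triplecor}: the free functor $F_{\D}$ in the enriched setting is the enriched left Kan extension of a $\V_{\CalE}$-module $\O$ along $\imath^{\otimes}_{\CalE}\colon \V_{\CalE}^{\otimes}\to \F_{\D}$, so that, working at a vertex $\ast\in\asts$,
\begin{equation}
\T_{\D}\O(\ast)\;=\;G_{\D}F_{\D}\O(\ast)\;=\;\int^{Y\in \V^{\otimes}_{\CalE}}Hom_{\F_{\D}}(\imath^{\otimes}_{\CalE}(Y),\ast)\otimes \O(Y).
\end{equation}
Substituting $Hom_{\F_{\D}}(\imath^{\otimes}_{\CalE}(Y),\ast)=\bigoplus_{\phi\in Hom_{\F}(\imath^{\otimes}(Y),\ast)}\D(\phi)$ and using distributivity of $\otimes$ over $\oplus$ together with the fact that the coend (being a colimit) commutes with direct sums in the $Y$-variable yields
\begin{equation}
\T_{\D}\O(\ast)\;\simeq\;\int^{Y}\bigoplus_{\phi\in Hom_{\F}(\imath^{\otimes}(Y),\ast)}\D(\phi)\otimes \O(Y).
\end{equation}

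Next I would deciphier the coend exactly as in the proof of Lemma \ref{altlem}: since $\V^{\otimes}$ is a groupoid, the $Y$-coend reduces to a colimit indexed by $Iso(\imath^{\otimes}\downarrow\ast)$, identifying pairs $(Y,\phi)$ modulo isomorphisms of $Y$ and producing
\begin{equation}
\T_{\D}\O(\ast)\;\simeq\;\colim_{Iso(\imath^{\otimes}\downarrow\ast)}\D\otimes \O\circ s,
\end{equation}
which is the asserted formula. Unwinding shows that with $\D\equiv \unit_{\CalE}$ we recover Corollary \ref{triplecor}, while the monadic multiplication on $\T_{\D}$ is transported from that of $\T$ using the composition morphisms (\ref{twistingdata}) for $\D$, so that its description as ``composition in $\F$ twisted by $\D$'' is automatic.

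The main technical obstacle is justifying the reduction of the enriched coend to an ordinary colimit over the iso-comma category in a way that is compatible with the $\D$-twist. The subtlety is that $\V_{\CalE}$ is not a \emph{discrete} groupoid in the enriched sense, so one must verify that the identifications imposed by the coend match precisely the identifications along morphisms of $Iso(\imath^{\otimes}\downarrow \ast)$; this is exactly where conditions (1)--(3) on the enrichment functor $\D$ (strictness on isomorphisms together with $\D(\sigma)=\unit_{\CalE}$) enter, since they guarantee that the $\D(\phi)$-summands transform equivariantly under the $\V$-automorphisms of $Y$ and hence descend correctly to the iso-comma category.
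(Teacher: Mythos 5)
Your proposal is correct and follows essentially the same route as the paper: both express $\T_{\D}$ at a vertex as the coend $\int^{X}Hom_{\F_{\D}}(\imath_{\D}^{\otimes}X,\ast)\otimes\O(X)$, substitute the defining decomposition $Hom_{\F_{\D}}=\bigoplus_{\phi}\D(\phi)$, and consolidate the coend into the colimit over $Iso(\imath^{\otimes}\downarrow\ast)$ with $\D$ as twisting coefficient. Your added remarks on distributivity and on conditions (1)--(3) guaranteeing equivariance under $\V$-automorphisms make explicit what the paper compresses into ``$\D$ is considered as a functor by restricting it to the subcategory with fixed target and isomorphism of the source.''
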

\begin{proof}
We have to show that the triple $\T$ for the Feynman category given by $\D$ has the form given in \cite{GKmodular}.
We denoted $G\O$ simply by $\O$ and as above let $B$ be the base functor $B:\F_{\D}\to \F$ which is identity on objects.
Then,
\begin{multline*}\T_{\D}(\ast)=\CalE(\imath_{\D}^{\otimes}  -, \ast)=\int^{X\in \imath^{\otimes}_{\D}\V^{\otimes}} \CalE(\imath_{\D}^{\otimes} X,\ast)\otimes \O(X)
\\
=\int^{X\in \imath^{\otimes}_{\D}\V^{\otimes}} \bigoplus_{\phi\in (B(X),B(\ast))}\D(\phi)\otimes \O(B(X))
=
 \colim_{Iso(\imath^{\otimes} \downarrow \ast)} \D\otimes \O\circ s\\
 \end{multline*}
where in the last line we consolidated the colimits, and $\D$ is considered as a functor by restricting it to the subcategory
with fixed target and isomorphism of the source; see also below.
\end{proof}

\begin{rmk}
This can be rephrased without the assumption of being tensored over $\CalE$ in terms of weighted colimits as follows:
Consider  $Hom_{\mathcal B}(X, Y)$ as a discrete sub-category of $Iso({\mathcal B}\downarrow {\mathcal B})$ with the trivial
 indexing $\I\colon Hom_{\mathcal B}(X, Y)^{op}\to \CalE$
given by $\phi\mapsto \unit_{\CalE}$.  Then the definition postulates functors
 $\D_{X,Y}\colon Hom_{B(\F)}(B(X),B(Y))\to \CalE$ such that $Hom_{\F}(X,Y)=\I\star \D$, where $Hom_{B(\F)}(B(X),B(Y))$ is the full subcategory of $(\F\downarrow \F)$ with objects $B(X)$ and $B(Y)$ and we use the notation of \cite{kellybook}.
 The formula above then corresponds to:
$$
Lan_{i^{\otimes}}\O= \CalE(\imath_{\D}^{\otimes}  -, -)\star \O= (\I \star \D)\star \O=\I\star(\D\star\O)
$$
\end{rmk}

\subsubsection{$\V$--twists}
Given a functor $\fL\colon\V\to Pic(\CalE)$, that is the full subcategory of $\otimes$-invertible elements of $\CalE$, we can define a twist of a Feynman category indexed by $\D$ by setting the new twist-system to be $\D_{\fL}(\phi)=
\fL(t(\phi))^{-1}\otimes \D(\phi)\otimes \fL(s(\phi))$.

For the composition $\phi\circ\psi$ we just use $\fL(t(\psi))^{-1}\otimes \fL(s(\phi))=\fL(t(\psi))^{-1}\otimes \fL(t(\psi))\stackrel{\sim}{\to}\unit$ and the unit morphisms in $\CalE$. Associativity and compatibility with isomorphisms is clear.

\begin{prop}
If $\CalC$ is tensored over $\CalE$ there is an equivalence of categories between $\F_{\D}$--$\opcat_{\C}$ and
$\F_{\D_{\fL}}$--$\opcat_{\C}$ given by tensoring with $\fL$. That is, $\CO(-)\mapsto \fL^{-1}\otimes\CO(-)$.
\end{prop}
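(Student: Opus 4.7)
The plan is to exhibit an explicit equivalence $T\colon \F_{\D}\text{-}\opcat_{\C} \to \F_{\D_{\fL}}\text{-}\opcat_{\C}$ sending $\CO\mapsto \fL^{-1}\otimes \CO$, together with a quasi--inverse $T'\colon \CO'\mapsto \fL\otimes \CO'$, where the two functors $T,T'$ are manifestly mutually inverse on the nose once they are shown to be well defined. The substance of the proof is therefore in checking that $T(\CO)$ is again a strong monoidal enriched functor out of $\F_{\D_\fL}$.

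First I extend $\fL$ from $\V$ to a function on objects of $\F$ by picking a basis and setting $\fL(X):=\bigotimes_{v\in I}\fL(\ast_v)$ whenever $X\simeq\bigotimes_{v\in I}\imath(\ast_v)$; this is well defined up to canonical isomorphism by the change--of--basis diagram in the ``Essential remarks'' of Definition \ref{feynmandef}, and on $\unit_\F$ it is $\unit_\CalE$. Because $\CalC$ is tensored over $\CalE$ and $\fL$ lands in $Pic(\CalE)$, the assignment
\begin{equation*}
T(\CO)(X) := \fL(X)^{-1}\otimes \CO(X)
\end{equation*}
makes sense, and the strong monoidal structure of $\CO$ together with the monoidal definition of $\fL$ gives canonical isomorphisms $T(\CO)(X\otimes Y)\simeq T(\CO)(X)\otimes T(\CO)(Y)$ and $T(\CO)(\unit_\F)\simeq \unit_\C$.

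Next I define $T(\CO)$ on morphisms. Using that $\CO$ is an enriched functor one has structure maps $\D(\phi)\otimes \CO(s(\phi))\to \CO(t(\phi))$ in $\C$. Tensoring on the left with $\fL(t(\phi))^{-1}$ and inserting the canonical isomorphism $\fL(s(\phi))\otimes \fL(s(\phi))^{-1}\simeq\unit_\CalE$ produces
\begin{equation*}
\D_{\fL}(\phi)\otimes T(\CO)(s(\phi)) = \fL(t(\phi))^{-1}\otimes \D(\phi)\otimes \fL(s(\phi))\otimes \fL(s(\phi))^{-1}\otimes \CO(s(\phi)) \longrightarrow T(\CO)(t(\phi)).
\end{equation*}
Functoriality under composition follows because the ``middle'' cancellation $\fL(s(\phi))^{-1}\otimes\fL(s(\phi))\simeq\unit$ used in defining the twisted composition on $\D_\fL$ matches exactly the cancellation just used here; associativity and unitality for $T(\CO)$ reduce to those for $\CO$ plus the coherence of the unit isomorphisms of the invertible objects $\fL(\ast_v)$. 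Compatibility with isomorphisms (axiom (\ref{isoactioneq}) for $\FF^{hyp}$) is automatic, since $\D_\fL$ and $\D$ agree on isomorphisms up to the invertible twist and both send them to $\unit_\CalE$.

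Finally, $T'(\CO') := \fL\otimes \CO'$ is constructed symmetrically, using the identification $\D_{\fL}{}_{\fL^{-1}}=\D$, and it is immediate from the cancellation $\fL\otimes \fL^{-1}\simeq\unit$ that $T'T$ and $TT'$ are naturally isomorphic (indeed equal, after strictification) to the identity. The only real obstacle is bookkeeping: one must verify that the unit/associativity coherences used to cancel the $\fL$'s against the $\fL^{-1}$'s are compatible with the composition law (\ref{twistingdata}) defining $\D_\fL$, but this is exactly how $\D_\fL$ was set up, so the check is forced and routine. Hence $T$ is an equivalence of categories as claimed.
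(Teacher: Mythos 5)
Your proposal is correct and follows the same route as the paper's (much terser) proof: send $\CO(X)$ to $\fL(X)^{-1}\otimes\CO(X)$ and observe that the induced structure maps are exactly those of $\D_{\fL}$, with the inverse given by tensoring with $\fL$. Your extra bookkeeping — extending $\fL$ monoidally via a choice of basis and matching the cancellation $\fL(s(\phi))\otimes\fL(s(\phi))^{-1}\simeq\unit$ against the twisted composition law — is precisely what the paper leaves implicit.
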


\begin{proof}
We see that $\CO(X)\mapsto\fL(X)^{-1}\otimes \CO(X)$ and hence for $\phi\colon X\to Y$,
$\CO_{\D}(\phi)\mapsto
 \fL(Y)^{-1}\otimes \D(\phi)\otimes \fL(X)=\CO_{\D_{\fL}}(\phi)$.

\end{proof}

\begin{ex}
We recover the original definitions of hyperoperads and twisted modular operads
 of \cite{GKmodular} if we use $\CalE=\CalC=\gVect$
and $\FF=\modular$.

In particular, since  any morphism $\phi$ is determined (up to isomorphism) by the $\gh_v(\phi)$
we get the form (equivariant with respect to isomorphism)
\begin{equation}
\label{GKeq}
\bigotimes_v \D(\gh_v)\otimes \D(\gh_0)\to \D(\gh_1)
\end{equation}
just like above, the functor is determined on the one--comma generators, restricts to isomorphisms,
is associative and unital.

Indeed $\O\in \modular_{\D}$--$\opcat$ is an algebra over the $\D$-twisted modular operad in the sense of \cite{GKmodular}.
\end{ex}
\begin{ex}
If we start with $\Surj:=\FF_{surj}$ then $\FF^{hyp,rd}=\operads_0$, that is the Feynman category for operads with trivial $\O(1)$ and hence an indexed enrichment is equivalent to a choice of such an operad $\O$. As we show below, these are exactly all enriched indexed Feynman categories
with trivial $\V_{\CalE}$. The $\F_{surj,\CO}$--$\opcat$ are just algebras over operads with reduced $\O(1)$.
\end{ex}

We will return to this example in subsection $\ref{moreexamples}$.  Motivated by this example we define:

\begin{df}
Let $\FF$ be a Feynman category and $\FF^{hyp,rd}$ its reduced hyper category, $\O$ an
$\FF^{hyp,rd}$-$\oper$ and $\D_{\O}$ the corresponding enrichment functor. Then we define an $\O$-algebra to be a $\FF_{\D_{\O}}$-$\oper$.
\end{df}
\subsection{Indexed Feynman $\Ab$-Categories:  Orientations and Odd $\opcat$}\label{odd42}
Besides the free construction there are standard constructions for the oriented
and ordered versions which give an indexing for an  $\Ab$ enrichment. These are obtained from the free construction by a relative construction.

\subsubsection{Oriented/ordered versions}
In the edge oriented examples of Feynman categories enriched over $\GG$ we define the
associated indexed $\Ab$-structure as follows:
Use the quotient category of the  free $\Ab$-construction above
with the relations $\sigma\sim -\bar\sigma$.
Here $\sigma$ is an orientation and $\bar\sigma$ is the opposite orientation.

Alternatively this means that one takes $\Z[Hom(X,Y)]\otimes_{\Z/2\Z} \Z/2\Z$, with $\Z/2\Z$ acting
by orientation reversal, as the enriched $Hom$. In the ordered version we can take
$\bigoplus_k (\Z[Hom(X,\ast)_k])\otimes_{\SS_{k}} \Z/2\Z$ where $Hom(X,\ast)_k$ is the degree $k$ part, that is those
$\phi$  with $|E_{\gh(\phi)}|=k$ and $\SS_{k}$ acting by permuting the edges and the action on $\Z/2\Z$ is the sign
representation. Both the ordered and the oriented construction lead to the same Feynman Ab-category.

Here the relevant functor $\D$ on the underlying non-edge oriented category is $\D(\phi)=sign(E_{\gh(\phi)})$,
the sign representation on the set of edges of the ghost graph.

This works the same way in $\Vect$ and $\gVect$. These constructions factor through the smaller Ab-enrichment.

\begin{df}
Given a Feynman category $\FF$ indexed over $\GG$, we define the odd version $\FF^{odd}$
to be the indexed Feynman $\Ab$-category resulting from the above operation on $\FF^{or}$ or equivalently on $\FF^{ord}$.
\end{df}

This allows us in one fell swoop to define odd versions for any of the graph examples
such as odd operads, odd PROPs, odd modular operads, odd NC modular operads, etc. that were treated in \cite{KWZ},
which are responsible for Lie brackets and BV operators, etc..  This realization allows us to define a Feynman transform/cobar construction in great generality.
See \S \ref{feytranspar}.
\subsection{Examples}\label{moreexamples}
We will discuss two types of examples. First
the algebra type examples given in \S\ref{algsec} can be generalized to algebras over operads.
We can also consider Lavwere theories, which amounts to adding degeneracies.

Both constructions are reminiscent of the original definition of a PROP \cite{PROPref}.
We consider $\V$ a trivial category, that is a category with one object $1$, enriched over $\C$, i.e.\ $Hom_{\V}(1,1)=\unit_{\C}$.
Then  let $\V^{\otimes}$ be the  free symmetric category and let $\bar\V^{\otimes}$ be its strict version. It has  objects $n:=1^{\otimes n}$ and each of these has an $\Sn$ action.  Let $\imath$ be the functor from $\V^{\otimes}\to\bar\V^{\otimes}$, and $\F$ a category with $Iso(\F)=\bar\V^{\otimes}$.

\begin{rmk}
The possible category structures with underlying $\V^{\otimes}$ from above as an underlying category are by their original
definition PROPs $P(n,m)=Hom(n,m)$, see \cite{PROPref}.
Additionally imposing the conditions of a Feynman category,
we restrict to only those PROPs which are actually generated by operads with $\O(1)$ having only constants as units (i.e. just a factor of $\unit$ corresponding to $id_1$) as we explain in the next sections.
\end{rmk}

\subsubsection{Trivial $\V$: aka algebras over operads}
\label{operfeypar}
To give a detailed example, we consider the possible weak Feynman categories $(\V,\F,\imath)$ with $\V$ a trivial category over $\C$ such that $\bar \V^{\otimes}\to \F$ is essentially surjective. Given such a category, if we set
$\O_{\F}(n):=Hom_{\F}(n,1)$ then
this is a (unital) operad. Indeed, composition $\gamma:Hom_{\F}(n,k)\otimes Hom_{\F}(k,1)\to Hom_{\F}(n,1)$ gives the structure of a May operad due to the hereditary condition and the identity in $Hom_{\F}(1,1)$ gives a unit. The $\Sn$ action is given by composing with $Aut(n)\simeq \Sn$.  If $\O(1)\neq\unit$ then the colimits defining the Kan extensions will not
be monoidal.
Thus we see that the invertible elements in $\O(1)$ are only the constants.

Vice-versa given an operad $\CO$ in $\CalC$ with trivial $\O(1)$, that is $\CO\in \operads$-$\opcat_{\CalC}$, let $\V$ be the trivial category enriched over $\C$ and
$\F=\bar\V^{\otimes}$ which is additionally one--comma generated by $Hom_{\F_{\CO}}(n,1):=\O(n)$ and the composition given by the operadic composition
$\gamma\colon\O(k)\otimes \O(n_1)\otimes\cdots \O(n_k) \to \O(\sum_{i=1}^k n_k)$.
Notice that we need the $\Sn$ action on $\O(n)$, in order to accommodate the action of precomposing with an isomorphism.
$\FF_{\CO}:=(\V,\F_{\CO},\imath)$ is a Feynman category and more precisely the Feynman category $\FSurj_{\CO}$.

Now $\F$-$\opcat$ are exactly algebras over the operad $\O$. Explicitly, let $\rho$ be a monoidal functor, then
it is fixed by $\rho(1)=X$ to  $\rho(n)=X^{\otimes n}$. For the morphisms we get $\rho:\O(n)\to Hom_{\CalC}(X^{\otimes n},X)$.
Since the functor is symmetric monoidal, the $\Sn$ action on $\O(n)$ is compatible with the permutation action on the factors of $X^{\otimes n}$.
We can also disregard the $\Sn$ action by no longer using the symmetric monoidal structure and obtain algebras over non--$\Sigma$ operads.

If  we consider a lax 2--functor $\D$, we see that we can also get operads which are a) non--unital and b) have non--trivial $\O(1)$. Likewise, we obtain non--trivial $\O(1)$ from
considering $\Surj^+$.

\begin{ex}\mbox{}

\begin{itemize}
\item Associative algebras in $\CalC$.  $\F(n,1)=\Sn=Aut(\unit^{\otimes n})$
 and the composition is given by the inclusion of products of symmetric groups. We will call the Feynman category $\Fass$. Notice that this is  defined without enrichment or equivalently with the trivial enrichment.
\item Commutative algebras in $\CalC$. In this case,
$\F(n,1)=\unit$ with the trivial $\SS_n$ action.
\item Lie and pre-Lie algebras. Notice that these are only defined for categories which are enriched over $\Ab$.
We will call the Feynman categories $\Flie$ and $\Fprelie$.
\end{itemize}

\end{ex}

\subsubsection{Graph insertion}
\label{graphinsertionsec}
Other examples that are useful as indexing Feynman categories are those coming from
graph insertions. The standard one is given by trivial $\V$, $Iso(\F)=\bar\V^{\otimes}$ as above
and the underlying operad given by $\O(n)$ graphs without tails and $n$-labeled vertices up to isomorphism.
The $\Sn$ action permutes the labels. The composition is given by the insertion composition for unlabeled
graphs --- see \ref{nolabcompsec}.
This example can be altered/modified by choosing different types of graphs, putting restrictions on the grafting
or the labeled vertices. For example the Feynman category $\Fprelie$ is the one obtained by choosing
rooted trees \cite{CL} with the operad structure given by insertion.


\subsection{A Connes--Kreimer style bi-algebra/Hopf algebra structure}
\label{hopfsec}
The following observation is essentially due to D. Kreimer. Applications of this theory and further details can be found in \cite{GKT}. Here we will give a brief survey.
\label{CKsec}
Consider a non--symmetric Feynman category $\FF$.
Let $\FF$ be enriched over $\mathcal{A}b$ and assume
that $\F$ is skeletal. Consider the direct sum of $Hom_\F(X,Y)$, denoted $B=\coprod_{X,Y\in \F} Hom(X,Y)$. Assume that $\F$ is composition finite, that is given any morphism $\phi$ there are only finitely many pairs $(\psi,\phi_0)$ s.t.\ $\phi=\psi\circ \phi_0$.
\begin{prop}\cite{GKT}
 $B$ has the structure of a bi-algebra, with multiplication given by the monoidal structure $\otimes$ of $\F$ and
the coproduct given by
\begin{equation}
 \Delta(\phi)=\sum_{(\psi,\phi_0):\phi=\psi\circ \phi_0}\phi_0 \otimes \psi
\end{equation}
where the sum is over all decompositions.
Furthermore defining $\eps$ by setting $\eps(id_X)=1$ and $\eps(\phi)=0$ otherwise gives a co--unit. If $\F$ is strict monoidal $\id_{\unit_\F}$ is a unit for the multiplication.
 If we use the grading which assigns to $\phi:X\to Y$ the degree (length) $l(\phi)=|X|-|Y|$, then the bialgebra is graded.
\end{prop}

\begin{proof} {\it (abbreviated)} \
 The fact that the coproduct is co-associative follows from associativity of the composition.  The fact that it is compatible with the product follows from the hereditary property (ii). In particular writing out the bi--algebra equation explicitly, each turns into a sum over diagrams, where the first is a sum over diagrams on the left  in \eqref{diagramseq} and the second is a sum over diagrams on the right  in \eqref{diagramseq}.

 \begin{equation}
 \label{diagramseq}
\xymatrix{X'\otimes X''\ar[rr]^{\phi'\otimes \phi''}\ar[dr]_{\phi_0'\otimes \phi_0''}&&Y'\otimes Y''\\
&Z'\otimes Z''\ar[ur]_{\phi'_1\otimes \phi''_1}&\\
}
\ \ \ \ \hfill \ \ \ \
\xymatrix{X'\otimes X''\ar[rr]^{\phi'\otimes \phi''}\ar[dr]_{\phi_0}&&Y'\otimes Y''\\
&Z\ar[ur]_{\phi_1}&\\
}
\end{equation}

 That these two sums coincide then can be seen from Lemma \ref{iterativelem}. The unit and counit are verified in a straightforward fashion.
\end{proof}

Note that the one--comma generators will be generators for the Bi--=algebra.
The general elements will be products of these. This is reflected in the fact that the Hopf algebra of Connes and Kreimer is a Hopf algebra of forests and not of trees. The trees  correspond to the one--comma generators, see below.

If $\FF$ was not enriched over $\Ab$, we can use the free enrichment \S\ref{freeenrichedpar}. This is equivalent to regarding the free Abelian group on the morphisms.

\subsubsection{Bi--algebra on coinvariants}
In the case that $\F$ is symmetric, and not necessarily skeletal, one has to be careful due to the symmetric structure.
Let
 $B^{iso}=colim_{Iso(\F \downarrow \F)}Hom(-,-)$. The means that $\sigma\circ f\circ \sigma'\sim f$
 for any two isomorphisms $\sigma,\sigma'$.

Let $\C$ be the ideal generated by elements $f-g$ with $f\sim g$. Then one can show, \cite{GKT} that
\begin{equation}
\Delta(\C)\subset B\otimes \C + \C\otimes B
\end{equation}
and hence the coproduct and product descend to $B/\C$.

Notice that when we mod out by isomorphisms,  as the commutativity constraints now
act as identities, the product becomes commutative and the tensor products are replaced by the symmetric product. For instance, if we decompose $\psi=\bigotimes_v\phi_v$,
then we obtain that the class $[\psi]$ is actually the symmetric product $\bigodot_v[\phi_v]$. There are still residual automorphisms even in the skeletal case given by the stabilizer groups of the chosen decomposition. For instance, for any $\ast\in \V$ the object $\imath(\ast)^{\otimes n}$ will have automorphism group  given by the wreath product $Aut(*)\wr \SS_n$.

Secondly,  one has to be careful with the automorphism groups as $Aut(Z)\otimes Aut(Z')\subset Aut(Z\otimes Z')$ may be a proper subgroup in the symmetric case. This means that the diagrams above are only in 1--1 correspondence after passing to coinvariants.

The short version to fix this problem is to consider ``channels'' of a decomposition, see \cite{GKT} for more details. For this call two decompositions of $\phi$ given by
$(\psi,\phi_0)$ and  $(\hat\psi,\hat\phi_0)$ equivalent if $[\phi_0]=[\hat\phi_0]$
and $[\psi]=[\hat\phi]$. This implies that there are
 isomorphisms $\sigma,\sigma',\sigma''$  such that $\phi=\psi\circ\phi_0=\sigma\hat\psi\sigma'\hat\phi_0\sigma''$.

\begin{equation}
 \Delta([\phi])=\sum_{[(\psi,\phi_0)]:[\phi]=[\psi\circ \phi_0]}[\phi_0] \otimes [\psi]
\end{equation}
where the sum is over a system of representatives of equivalence  classes of decompositions.

\begin{prop}\cite{GKT}
 Together with the symmetric product as multiplication, the coproduct above gives $B^{iso}$ the structure of a bi-algebra. One can also define a counit, and the unit is the image of $id_\unit$. It inherits the grading from $B$.
\end{prop}
\begin{proof}
 This follows from a calculation. It can also be viewed as the quotient by the co--ideal, see \cite{GKT}.
 The grading descends since isomorphisms have $l(\sigma)=0$.
\end{proof}
\begin{rmk}
With hindsight, we discovered that the co-algebra structure can be traced back to at least \cite{JR}. There one can also find the idea to quotient out by co-ideals, say those generated by the equivalence under isomorphisms, although some modifications are necessary.
\end{rmk}
\subsubsection{Hopf algebra stucture}
Usually the bi-algebras above are not connected as all $[id_X]$ are group--like.
One can take another quotient by the graded coideal generated by ${\mathfrak C}=|Aut(X)|[id_X]-|Aut(Y)|[id_Y]\subset B$ in the non--symmetric case and ${\mathfrak C}=[id_X]-[id_Y]\subset B^{iso}$ in the symmetric case.
\begin{df}
We call a Feynman category  almost connected if $H=B^{iso}/{\mathfrak C}$ is connected in the symmetric case and if $H=B/{\mathfrak{C}}$ is connected in the non--symmetric case.
\end{df}

Thus, an almost connected Feynman category yields a graded connected bialgebra $H$, which is a Hopf algebra.

\begin{ex}
If we are in the properly graded Feynman category situation, see \S\ref{grfeysec}, then there is a  Hopf algebra structure on
$H$ since all non--isomorphism have degree greater that $0$.
\end{ex}
\subsubsection{Hopf algebra on Graphs}
If we look at Feynman categories indexed over $\GG$ we see that the decomposition is basically in terms of graphs and
subgraphs. But we have to be a little careful, see \S \ref{ggpar}. This nicely illustrates
the difference between the two constructions above. For instance the $\phi_v$ are not just the subgraphs $\gh_v(\phi)$
but also carry a labeling of the vertices via their source and target maps.
Forgetting this forces one to work with symmetric products.

\begin{prop}
 Considering ghost graph morphisms for $\GG^{ctd}$, $\Delta$ induces a bi-algebra structure on graphs. Furthermore dividing by the coideal $\mathfrak{C}$, and by grading with the number of edges, this bi-algebra is connected and hence a Hopf algebra with the unique antipode.
\end{prop}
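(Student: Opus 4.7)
The plan is to specialize the general bialgebra construction of the preceding proposition to the Feynman category $\GG^{ctd}$ and then match the resulting coproduct with the one of Connes--Kreimer. Composition--finiteness of $\GG^{ctd}$ holds because a ghost graph has only finitely many partitions into connected subgraphs by contraction.

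First I would unpack what morphisms and their decompositions look like in $\GG^{ctd}$. A morphism $\phi\colon X\to Y$ has associated connected ghost graphs $\gh_v(\phi)$ for $v\in V(Y)$, and an isomorphism class $[\phi]$ is (after forgetting the labelling recorded by source/target maps and passing to symmetric products as in the previous proposition) the same datum as the isomorphism class of the disjoint union of the $\gh_v(\phi)$. A decomposition $\phi=\psi\circ\phi_0$ factors through some intermediate aggregate $Z$; by the hereditary condition together with \S\ref{grsecondsec}, this corresponds precisely to choosing, for each $v\in V(Y)$, a partition of $\gh_v(\phi)$ into a family of connected subgraphs that are contracted by $\phi_0$ (these are the ghost graphs of $\phi_0$, sitting at the vertices of $Z$), the complementary morphism $\psi\colon Z\to Y$ then having ghost graphs obtained by contracting each chosen subgraph to a vertex. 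Thus a decomposition class $[(\phi_0,\psi)]$ of $[\phi]$ is exactly the data of an admissible (disjoint union of connected) subgraph $\gamma\subset\gh(\phi)$ together with its quotient $\gh(\phi)/\gamma$.

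Under this dictionary the coproduct of the previous proposition reads
\begin{equation}
\Delta([\gh(\phi)])=\sum_{\gamma\subset\gh(\phi)}[\gamma]\otimes[\gh(\phi)/\gamma],
\end{equation}
which is precisely the Connes--Kreimer coproduct on (the appropriate class of) graphs, multiplication being disjoint union. Bialgebra compatibility and coassociativity follow already from the previous proposition; they are simply the hereditary axiom and associativity of composition transported through the dictionary.

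Finally, grade by $|E(\gh(\phi))|$. This is additive under disjoint union and respected by $\Delta$ because the edges of $\gh(\phi)$ are the disjoint union of those of $\gamma$ and those of $\gh(\phi)/\gamma$. The only morphisms with no edges are those whose ghost graphs are aggregates of corollas, i.e.\ isomorphisms, whose classes all collapse to the unit under the quotient by isomorphisms; hence the degree--zero component is spanned by $1$, so the bialgebra is connected. A standard argument then produces a unique antipode by the recursion $S(x)=-x-\sum S(x')x''$ on the reduced coproduct, making the bialgebra Hopf. The main point requiring care is the bookkeeping in the first step, namely that quotienting by isomorphisms and passing to symmetric products exactly erases the extra source/target labelling of the $\phi_v$ so that classes of morphisms become isomorphism classes of graphs; once this is settled the rest of the identification is formal.
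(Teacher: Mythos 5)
Your proposal is correct and follows essentially the same route as the paper: both arguments reduce to the bookkeeping of how much data beyond the ghost graph a morphism in $\GG^{ctd}$ carries (the restriction of $\phi^F$ to its image and the ordering of vertices in $\V^{\otimes}$, with $\phi_V$ automatic by connectedness), kill it by fixing identity lifts and passing to symmetric products, and then read off the Connes--Kreimer coproduct, with connectedness in the edge grading following because the only degree-zero morphisms are isomorphisms. The dictionary you set up between decomposition classes $[(\phi_0,\psi)]$ and pairs (admissible subgraph $\gamma$, quotient $\gh(\phi)/\gamma$) is exactly the content of the paper's appeal to \S\ref{ggpar}.
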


\begin{proof}
 Indeed as discussed in \S\ref{ggpar}, the extra data needed to get from $\gh(\phi)$ to $\phi$, knowing the source and target, are the isomorphisms of $\phi^F$ restricted to its image. Since we are connected, the morphism $\phi_V$
is automatic. One lift for the restriction of $\phi^F$ is to fix strict identity
for the isomorphism. This, however, still leaves the order of the vertices. Recall that in $\V^{\otimes}$
the order matters. We can kill this by formally using symmetric products of graphs as done in Connes--Kreimer.
Indeed we then lift a symmetric product to a morphism by using identity for the isomorphism, use $\Delta$ as above and
project back to the symmetrized ghost graphs. The monoidal structure then becomes the symmetric product.
Since we kill isomorphisms this way, the quotient bi-algebra graded by the number of edges is indeed connected.

\end{proof}

\begin{rmk} \mbox{}
\begin{enumerate}
\item Instead of the condition of being connected, we could have imposed the condition of being one particle irreducible (1-PI),
which is also hereditary, see \cite{GKT}. This is a typical condition in physics.
 \item
Notice that the above Proposition easily translates to the indexed case.
\item If we use $\FF_{\O}$, with $\O$ the operad of trees with labeled leaves and
gluing at leaves, we arrive precisely at the Hopf algebra of rooted trees  of Connes--Kreimer \cite{CK} in the version with tails. For the version without tails, see below.
This can be seen as coming naturally from the construction above to the Feynman category of $\operads^{pl}$ or $\operads$, see \cite{GKT}.
\end{enumerate}

\end{rmk}

\subsubsection{Other variants: Connes--Kreimer algebra of trees}
There are actually several variants of this construction. 

The paradigm to understand this situation is the Connes--Kreimer Hopf algebras over trees.
There are planar and non--planar versions. We have treated the non--planar version. For the planar version one has to start with a non-symmetric Feynman category.

A second variation is that  there are versions for trees with leaves and without for the Hopf algebra. So far, we have only dealt with the version with leaves. Here one has to distinguish between labeled leaves and unlabeled leaves. The version with labeled leaves corresponds to quotienting $B$ by a co-ideal. In order to forget the labels, one can use coinvariants and this is the construction via $B^{iso}$ in general.
Finally to get rid of leaves one needs the opposite of the foiliation operator of \cite{del}. This is given by taking a colimit over the semi--simplicial structure given by removing flags, see \S\ref{simppar}. This yields the Connes--Kreimer Hopf algebra of rooted trees (forests).

More generally, in the case of graphs, or Feynman categories indexed over graphs, there is usually also the additional structure of forgetting tails. This can be formalized further, see \cite{GKT}. If this additional structure is present, one can take  the colimit to obtain a bi-- and Hopf--algebra of graphs without tails. In the cases above  the result coincides with
 the Hopf algebras of Connes--Kreimer, see \cite{GKT} for details.

\section{Feynman categories given by generators and relations}
\label{genrelpar}
The monadicity theorem gives two ways of defining $\F$-$\opcat$. Often there is a third way, given by generators and relations. For example, operads can be defined by the $\circ_i$ operations and $\SS_n$ actions along with associativity and compatibility relations. In this paragraph, we consider the general setup for this.
Besides being of separate interest, this type of presentation is what we need to define ordered/oriented/odd versions of Feynman categories that
are essential for the definition of (co)bar/Feynman transforms.
\subsection{Structure of $\GG$}
\label{graphstrucsec}
We will first study $\GG$ as an archetypical example. It is basically generated by four types of operations:
simple edge contractions, simple  loop contraction, simple mergers and  isomorphisms.

\subsubsection{Generators}

There are useful numerical invariants for $\GG$:
Let $\deg(\ast_v)=\wt(\ast_v)=|F(\ast_v)|$ be the degree and weight of $\ast_v$
 and if $X\simeq \amalg_w \ast_w$ set $\wt(X)=\sum_w(\wt(*_w)+1)$
and $\deg(X)=\sum_w \deg(\ast_w)$, which is just the number of flags.
The degree and weight of $\phi\colon X\to Y$ are then defined as
\begin{equation}
\deg(\phi)=\frac{1}{2} (\deg(X)-\deg(Y)) \quad \wt(\phi)=\wt(X)-\wt(Y)
 \end{equation}
It is clear that the degree and weight are additive under composition.
Note that the degree is actually an integer and $\deg(\phi)=|E(\gh(\phi))|$.

\begin{prop}
\label{GGstructurelem}
All morphisms $\phi$ from the comma category $(\Agg \downarrow \Crl)$ can be factored into
morphisms of the following four types, which we call simple.
\begin{enumerate}
\item {\em Simple edge contraction.} The complement of the image $\phi^F$ is given by two flags $s,t$,
which form a unique ghost edge,
and the two flags are not adjacent to the same
vertex. This has degree $1$ and
weight $3$. We will denote this by $\scirct$.

\item {\em Simple loop contraction.} As above, but  the two flags of the ghost edge
are adjacent to the same vertex, this is called a simple loop contraction
and it has degree $1$ and weight $2$. We will denote this by $\circ_{st}$.

\item {\em Simple merger.} This is a merger in which $\phi_V$ only identifies
two vertices $v$ and $w$. Its degree is $0$ and the weight is $1$. We will denote this by $\mge{v}{w}$.

\item {\em Isomorphism}. The pure isomorphisms are of degree and weight $0$.
\end{enumerate}
 This factorization nor its length are in general unique, but
there is a minimal number $|\phi|$ of morphisms a factorization contains.
If $\phi$ is an isomorphism this number is $1$ and if it is as above and not an isomorphism this number is:
\begin{equation}
|\phi|=\sum_{v\in V}( |E(\gh_v)|+|\pi_0(\gh_v)|-1)
 \end{equation}
In addition:
\begin{enumerate}
\item
Any such minimal factorization is obtained by first contracting all ghost edges of $\phi$ step by step and
then performing mergers step by step and then performing an isomorphism.
\item Any such minimal factorization is uniquely determined by an order on the ghost edges of $\phi$ and
an order on the components of $\gh(\phi)$.
\end{enumerate}

\end{prop}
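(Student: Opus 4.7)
The plan is to use the additive invariants $\deg$ and $\wt$ as the main tool. A direct check of the formulas shows that both are additive under composition of morphisms in $\Agg$, and on the four simple morphism types one computes the claimed values: an edge contraction has $(\deg,\wt)=(1,3)$ since it removes two flags and identifies two vertices into one; a loop contraction has $(1,2)$ since it removes two flags without identifying vertices; a simple merger has $(0,1)$; and any isomorphism has $(0,0)$.

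For the existence of a factorization (items 1 and 6), I would first use the decomposition of (ii) in Definition \ref{feynmandef} to reduce to the case $Y=\ast\in \V$, in which $\gh(\phi)=\gh_v(\phi)$. Pick a spanning forest $T$ of $\gh(\phi)$ together with an order on its edges; contracting them in sequence yields $|V(\gh(\phi))|-|\pi_0(\gh(\phi))|$ successive simple edge contractions, since $T$ being a forest guarantees that at each step the two endpoints of the next edge still lie in distinct intermediate vertices. The remaining $|E(\gh(\phi))|-|V(\gh(\phi))|+|\pi_0(\gh(\phi))|$ edges of $\gh(\phi)$ have become loops and are contracted one by one as simple loop contractions; the $|\pi_0(\gh(\phi))|$ intermediate corollas are then collapsed via $|\pi_0(\gh(\phi))|-1$ simple mergers into a single corolla, and finally a relabeling isomorphism (absorbable into the last simple operation) matches the flag labels of~$\ast$.

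For the length formula (items 4 and 5), consider any factorization into simple morphisms with $e$ edge contractions, $\ell$ loop contractions, and $m$ mergers. Additivity of $\deg$ yields $e+\ell=\deg(\phi)=|E(\gh(\phi))|$, while additivity of $\wt$ yields $3e+2\ell+m=\wt(\phi)$. A direct computation using $\wt(X)=|F(X)|+|V(X)|$ gives
\begin{equation*}
\wt(\phi)-2\deg(\phi)=|V(X)|-|V(Y)|,
\end{equation*}
so $e+m=|V(X)|-|V(Y)|$ and the total non-isomorphism length equals $|E(\gh(\phi))|+m$. I then bound $m$ from below by tracking the partition of $V(X)$ induced by the quotient to the intermediate aggregate: initially discrete, terminally equal to $\phi_V$. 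An edge contraction or merger unions two blocks while loop contractions and isomorphisms leave the partition unchanged; a merger joining two blocks destined for distinct vertices of $Y$ cannot be undone, so every merger and every edge contraction stays internal to some $\gh_v$. Writing $e_v,m_v$ for the corresponding counts, we have $e_v+m_v=|V(\gh_v)|-1$, while the edges processed as non-loop contractions in $\gh_v$ must form an acyclic subgraph of $\gh_v$, whence $e_v\leq |V(\gh_v)|-|\pi_0(\gh_v)|$ and consequently $m_v\geq|\pi_0(\gh_v)|-1$. Summing yields $|\phi|\geq\sum_v(|E(\gh_v)|+|\pi_0(\gh_v)|-1)$, matching the upper bound from the explicit construction.

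For (7), equality in the bounds above forces $e_v=|V(\gh_v)|-|\pi_0(\gh_v)|$ and $m_v=|\pi_0(\gh_v)|-1$, so that in a minimum-length factorization the edges of each $\gh_v$ processed as edge contractions are exactly a spanning forest and the mergers are exactly those combining components of $\gh_v$. Using commutation relations -- operations with disjoint vertex/flag supports commute, and a merger between two components of $\gh_v$ commutes past any contraction internal to one of those components -- any minimum factorization normalizes into the canonical form of (6). An order on $E(\gh(\phi))$ then determines the contraction sequence uniquely (each edge being automatically classified as edge or loop contraction by the current block structure), and an order on $\pi_0(\gh(\phi))$ determines the merger sequence uniquely (each component being merged into the first-listed component of its $\gh_v$). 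The main obstacle I anticipate is the uniqueness in (7): one must eliminate the ambiguity coming from intermediate automorphisms (each simple operation admits a built-in precomposition or postcomposition with an isomorphism of its source or target), showing that after fixing the two orders the internal isomorphism data is forced by the hereditary decomposition of Definition \ref{feynmandef}(ii) together with the requirement that the composite equal $\phi$ on the nose.
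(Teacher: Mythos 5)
Your proof is correct and follows essentially the same strategy as the paper's: existence via contracting the ghost edges and then merging the components, and minimality via the additivity of the degree and weight invariants, which reduces the problem to maximizing the number of non-loop edge contractions. You additionally supply the partition-tracking and acyclicity bookkeeping (the bound $e_v\leq |V(\gh_v)|-|\pi_0(\gh_v)|$, hence $m_v\geq |\pi_0(\gh_v)|-1$) that the paper leaves implicit, and you correctly flag the residual isomorphism ambiguity in item (7), which the paper also glosses over.
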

\begin{proof}
Given a morphism $\phi$ as above, we choose an order for the vertices $v\in V$,
an order for the components of $\gh_v:=\gh_v(\phi)$ and finally
an order for the edges in each component. This gives an order on all edges
and all components of the disjoint union of the $\gh_v$.
Then a factorization is given
by contracting the edges in the given order and then merging the components
in their order. Note that we need one less merger than components of $\gh_v$ since
we are left with one component $\ast_v$. Note during the last operation, we can simultaneously
perform the needed isomorphism, unless there is only an isomorphism.
This establishes the existence of factorizations and an upper bound.
The non--uniqueness is clear by the choices made.
Also note that one can replace a non--loop edge
contraction by a merger and a loop contraction.
Loop contractions and mergers commute as classes, but commutation
with  non--loop contractions is delicate.

However, since the weight is fixed, we obtain the least number of factors
precisely if we have the maximal number of non--loop edge contractions.
This is guaranteed by the given order and hence indeed this is the lower bound.

If we use a merger and contraction in lieu of a contraction, we only get a longer word.
Thus we see that in order to have a minimal factorization,
we have to contract the ghost edges, the choice here is an order of these edges,
and then merge the components  (now contracted to a vertex), where  now the choice is again an order of the mergers.
\end{proof}

\subsubsection{Relations}
\label{relsec}
All relations among morphisms in  $\GG$ are homogeneous in both weight and degree. We will not go into
the details here, since they follow directly from the description in the appendix. There are
the following types.

\begin{enumerate}
\item {\em Isomorphisms}. Isomorphisms commute with any $\phi$ in the following sense.
For any $\phi$ and any isomorphism $\sigma$ there are unique $\phi'$ and $\sigma'$ with $\gh(\phi\circ \sigma)=\gh(\phi')$
such
that
\begin{equation}
\phi\circ \sigma =\sigma'\circ \phi'
\end{equation}

\item {\em Simple edge/loop contractions}. All edge contractions commute in the following sense:
If two edges do not form a cycle, then the simple edge contractions commute on the nose
$\scirct \ccirc{s'}{t'} = \ccirc{s'}{t'} \scirct$.
The same is true if one is a simple loop contraction and the other a simple edge contraction:
$\scirct\circ_{s't'}=\circ_{s't'}\scirct$.
If there are two edges forming a cycle, this means that $\scirct \circ_{s't'}=\ccirc{s'}{t'} \circ_{st}$.
This is pictorially represented in Figure \ref{squarefig}.

\begin{figure}
    \centering
    \includegraphics[scale=.20]{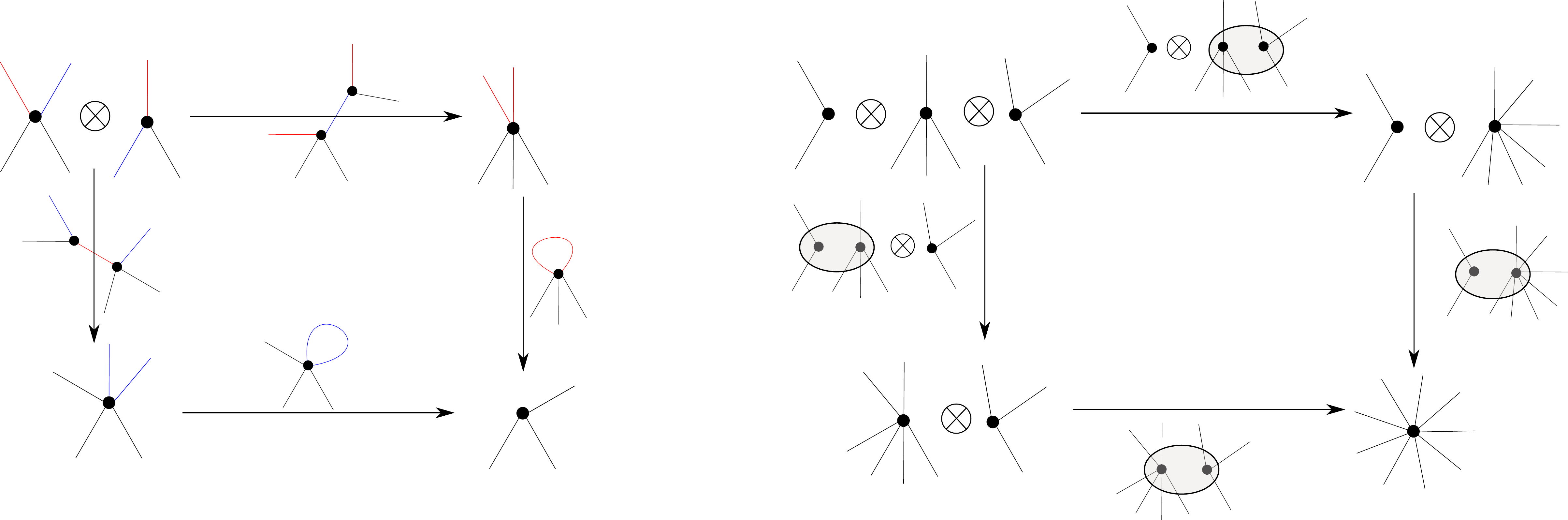}
    \caption{Squares representing commuting edge contractions and commuting mergers. The ghost graphs are shown. The shaded region is for illustrative purposes only, to indicate the merger}
    \label{squarefig}
\end{figure}

\item {\em Simple mergers.} Mergers commute amongst themselves $\mge{v}{w}\mge{v'}{w'}$ $=\mge{v'}{w'}\mge{v}{w}$.
If $\{\del(s),\del(t)\}\neq\{v,w\}$ then
\begin{equation}
\label{mergereq}
\scirct\mge{v}{w} = \mge{v}{w}\scirct, \quad \circ_{st}\mge{v}{w}=\mge{v}{w}\circ_{st}
\end{equation}

If $\del(s)=v$ and $\del(t)=w$ then for a simple edge contraction, we have the following relation
\begin{equation}
\label{triangleeq}
\scirct = \circ_{st}\mge{v}{w}
\end{equation}
This is pictorially represented in Figure \ref{trianglefig}.
\end{enumerate}

\begin{figure}
    \centering
 \includegraphics[scale=.25]{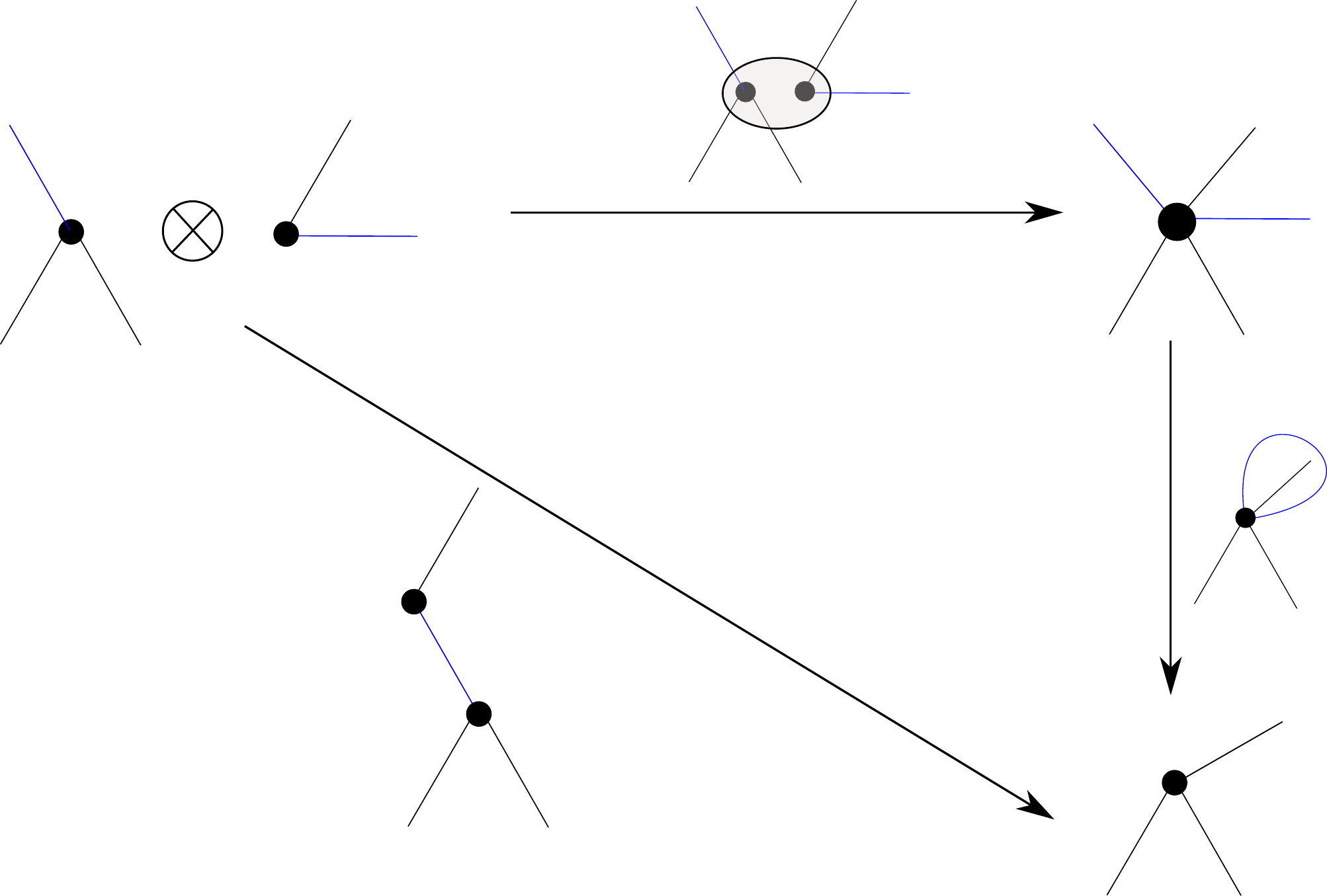}
    \caption{A triangle representing commutation between edge contraction and a merger followed by a loop contraction. The ghost graphs are shown. The shaded region is for illustrative purposes only, to indicate the merger.}
    \label{trianglefig}
\end{figure}

\subsubsection{$\GG^{or}$ and $\GG^{odd}$} Notice that in the case of words in a standard  viz.\ minimal form, two words
differ only by permutations on edges and components, see Proposition \ref{GGstructurelem}.
Let $Or(\phi)$ be the set of standard words for $\phi$ and
$Odd(\phi)$ be the Abelian subgroup of $Hom(W(\phi),\Z/2\Z)$ for whose elements  a switch in order of the edges
induces a minus sign and is invariant with respect to switches of orders of components.
For the composition we use the same technique as above. We then get the natural and twisted versions
discussed previously in \S\ref{orderedversionspar}.

\subsubsection{$\FF^{ord}$, $\FF^{or}$ and $\FF^{odd}$ in the case $\FF$ is indexed over $\GG$}
We can adapt the situation above for Feynman categories indexed over $\GG$ as in section $\ref{odd42}$.

\subsection{Odd versions for Feynman categories with ordered presentations}
The aim of this section is to find a more abstract setting for the constructions above. In particular, we want
to generalize $\FF^{ord}$ and $\FF^{odd}$ for Feynman categories not necessarily indexed over $\GG$.
We say that in a Feynman category a set of morphisms $\Phi\subset (\F\downarrow \imath(\V))$ one--comma generates
if any morphism $\psi\in Hom_{\F}(X,\ast)$, with $\ast\in \imath(\V)$  factors as
$\psi=\sigma \phi_0\dots\phi_k \sigma'$ with the $\phi_i\in\Phi'$, $\sigma\in Aut(X)$ and $\sigma'\in Aut(\ast)$,
where $\Phi'$ is the trivial extension of $\Phi$ by identities. That is the set of morphisms of the type
$p'\circ \phi\otimes id \odo id \circ p$ with $\phi\in \Phi$ and $p,p'$ any commutativity/associativity isomorphisms.
We say that $\phi$ is $n$-ary  if $\phi\in (\imath(\V)^{\otimes n}\downarrow\V)$.

We say that $\Phi$ is of crossed type if, for any $\phi\in\Phi, \phi\in Hom_{\F}(X,\ast)$ and for any
$\sigma\in Iso\F(X,X)$ there is a commutative square
\begin{equation}
\xymatrix{
X\ar[d]^{\sigma}\ar[r]^{\phi'}&\ast \ar[d]^{\sigma'}\\
X\ar[r]^{\phi}&\ast
}
\end{equation}
with  $\phi'\in \Phi,\sigma'\in Iso(\F)$ being unique.

For example:
\begin{lem}
\label{genlem}
In $\GG$, let $\Psi$ consist of the morphisms $\ccirc{i}{j}:\ast_{[n]}\amalg \ast_{[m]}
\to \ast_{[n+m]}$, $i\in [n],j\in[m]$, $\circ_{ii'}:\ast_{[n]}
\to \ast_{[n-2]},i\neq i' \in [n]$ and $\mge{n}{m}:\ast_{[n]}\amalg \ast_{[m]}\to \ast_{[n+m+1]}$, which is given by the merger
and enumerating the flags of $\ast_{[n]}$ before those of $\ast_{[m]}$ from $0$ up to $n+m+1$.

Let $\Phi$ consist of the morphisms $\ccirc{0}{0}:\ast_{[n]}\amalg \ast_{[m]}
\to \ast_{[n+m]}$, $\circ_{01}:\ast_{[n]}
\to \ast_{[n-2]}$ and $\mge{n}{m}:\ast_{[n]}\amalg \ast_{[m]}\to \ast_{[n+m+1]}$, which is given by the merger as above.

Then both $\Phi$ and $\Psi$
 one--comma generate and $\Phi$ is of crossed type.
\end{lem}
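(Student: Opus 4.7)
The plan is to invoke Proposition~\ref{GGstructurelem} as the main engine: it decomposes every morphism in $(\Agg\downarrow\Crl)$ into simple edge contractions, simple loop contractions, simple mergers, and isomorphisms. To verify one--comma generation by $\Phi$ (and therefore, a fortiori, by the larger $\Psi\supseteq\Phi$), it suffices to exhibit every simple morphism of the four types as an element of the trivial extension $\Phi'$; that is, as $\sigma\circ\phi\circ\sigma''$ with $\phi\in\Phi$ and $\sigma,\sigma''$ isomorphisms.

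The device I would use is that $Aut(\ast_{[n]})\cong\SS_{n+1}$ acts transitively on the flag set $[n]$, and on ordered pairs of distinct flags. Given an arbitrary simple edge contraction $\scirct\colon\ast_S\amalg\ast_T\to\ast_U$, one first chooses bijections $S\cong[n]$ and $T\cong[m]$ that send $s\mapsto 0$ and $t\mapsto 0$; these are source-side isomorphisms which, together with a suitable target isomorphism $U\cong[n+m]$, realize the given contraction as $\ccirc{0}{0}$. The same device handles simple loop contractions (sending the loop pair to $(0,1)$, hence reducing to $\circ_{01}$) and simple mergers (bijecting the flags of each vertex to $[n]$, respectively $[m]$, so that the merger becomes $\mge{n}{m}$ with its stipulated enumeration). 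This establishes one--comma generation both for $\Phi$ and for $\Psi$.

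For the crossed type property of $\Phi$, I would proceed case-by-case on the three generator types, using the explicit description of morphisms and their composition in $\GG$ from \S\ref{graphstrucsec} and Appendix~A. Given $\phi\in\Phi$ with source $X$ and an automorphism $\sigma\in Iso(\F)(X,X)$, the composition $\phi\circ\sigma$ is of the same simple type as $\phi$, so the candidate $\phi'\in\Phi$ is forced by the fact that $\Phi$ contains essentially one representative per prescribed $(\text{source},\text{target})$ pair; the isomorphism $\sigma'\in Aut(\ast)$ is then determined by the requirement that the square commute, which reduces to matching the flag identifications on the non--contracted flags of the target. Uniqueness of $(\phi',\sigma')$ then follows from the faithfulness of the defining triple $(\phi^F,\phi_V,i_\phi)$ in recovering a morphism of $\GG$. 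The hardest part will be the bookkeeping in the merger case, where the vertex identification $\phi_V$ interacts non-trivially with the flag permutation; in the edge-- and loop--contraction cases the verification reduces essentially to pushing the source permutation through the contraction and absorbing the residual relabeling into $Aut(\ast)$.
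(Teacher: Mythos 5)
Your first part---reducing one--comma generation to Proposition \ref{GGstructurelem} and then conjugating each simple edge contraction, loop contraction and merger into $\ccirc{0}{0}$, $\circ_{01}$ and $\mge{n}{m}$ by relabelling flags---is exactly the paper's argument and is fine, modulo the same elision the paper makes: the conjugating isomorphisms sit between consecutive simple factors and still have to be pushed out to the two ends using the commutation relation of \S\ref{graphstrucsec} before the factorization literally has the form $\sigma\,\phi_0\cdots\phi_k\,\sigma'$ with $\phi_i\in\Phi'$.

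The crossed--type part has a genuine gap in the contraction cases, and your assessment of where the difficulty lies is inverted. You claim that $\phi'$ is forced to be the unique element of $\Phi$ with the given source and target and that $\sigma'\in Aut(\ast)$ is then determined by commutativity. Take $\phi=\circ_{01}\colon\ast_{[n]}\to\ast_{[n-2]}$ and $\sigma\in Aut(\ast_{[n]})$, and let $s_0,s_1$ be the flags that $\sigma$ matches with $0$ and $1$. The composite $\phi\circ\sigma$ has ghost edge $\{s_0,s_1\}$, whereas $\sigma'\circ\circ_{01}$ has ghost edge $\{0,1\}$ for \emph{every} $\sigma'\in Aut(\ast_{[n-2]})$: a target automorphism only relabels the surviving flags and cannot change which flags of the source get contracted. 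So for generic $\sigma$ there is no commuting square with $\phi'=\circ_{01}$, and the same failure occurs for $\ccirc{0}{0}$. The one case where your literal argument does work is the merger, since there $\phi^F$ is a bijection and the source automorphism can be transported wholesale to the target---so the merger is the easy case, not the hard one. The correct resolution, and what the paper's own proof actually writes down, is to take $\phi'$ to be the contraction $\circ_{s_0 s_1}$ of the transported flag pair, with $\sigma'$ the induced relabelling of $[n]\setminus\{s_0,s_1\}$; that is, $\phi'$ is an $Iso(\F)$--conjugate of the generator (an element of $\Psi$, in effect), and the content of ``crossed'' is the existence and uniqueness of this decomposition $\phi\circ\sigma=\sigma'\circ\phi'$---which is precisely what produces the $\SS_n$--equivariance (\ref{snequieq}). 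As written, your argument asserts a commutativity that fails.
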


\begin{proof}
Indeed, any  simple edge contraction is conjugate by elements in $Iso(\F)$ to the first type of morphism, any simple loop
contraction to a morphism of the second type and finally any simple merger is conjugate to the third type.
Since any morphism can be factored into these three types of morphisms (after extending by identities) the first statement follows.

The fact that $\Phi$ is of crossed type is straightforward. We will deal with the second case. Fix  $\ast_S$ and
an iso $\sigma:\ast_{[n]}\to \ast_{[n]}$, Let $s_0$ and $s_1$ be the flags mapped to $0$ and $1$ and set
$S'=S\setminus \{0,1\}$ then $\phi'=\ccirc{s_0}{s_1}$ and $\sigma'$ is the restriction of $\sigma$ to $S'$ (or strictly speaking
the restriction of $\sigma$ to $\ast_{S'}$ followed by the renumbering as before).

\end{proof}
By restriction, we get that
\begin{cor}
 In $\operads$ the morphisms $\circ_1$ generate.  In $\modular$ the morphisms $\circ_1$ and $\circ_{01}$ generate
(cf.\cite{GKmodular,schw}, where $\circ_{nn+1}$ was chosen).
\end{cor}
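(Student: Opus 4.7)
My plan is to derive the corollary by restriction from Lemma~\ref{genlem}, tracking which of the three one-comma generators in $\Phi = \{\ccirc{0}{0}, \circ_{01}, \mge{n}{m}\}$ can actually occur as morphisms of the subcategories $\operads$ and $\modular$ of $\GG$. The key input is Proposition~\ref{GGstructurelem}(6)--(7), which tells us that every morphism $\phi$ of $\GG$ admits a minimal factorization obtained by first contracting the edges of $\gh(\phi)$ in some chosen order and then performing the mergers; together with the crossed-type property from Lemma~\ref{genlem}, each simple factor can be conjugated by isomorphisms into one of the three elements of $\Phi$.

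For $\operads$, a morphism $\phi$ is characterized by the requirement that each $\gh(\phi_v)$ be a rooted tree. Along the minimal factorization above, each intermediate morphism has ghost graph a subforest of $\gh(\phi)$, hence again a disjoint union of rooted trees, so the factorization stays inside $\operads$. Simple loop contractions $\circ_{01}$ would introduce a cycle in a tree and are therefore excluded, while simple mergers $\mge{n}{m}$ would fuse two vertices each carrying a root flag into one vertex carrying two root flags, which is not permitted in $\Crl^{rt}$; consequently only the simple edge contraction $\ccirc{0}{0}$ survives. Conjugating by the $\SS_n$-action on inputs via the crossed-type property identifies any such surviving generator with the classical $\circ_1$.

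For $\modular$, the defining condition is that each $\gh(\phi_v)$ be connected (the genus labels are tracked separately and are automatically preserved by contractions and loops). Each simple edge or loop contraction preserves connectedness of the partial ghost graph, so the minimal factorization stays inside $\modular$, and both $\ccirc{0}{0}$ and $\circ_{01}$ remain valid generators. A simple merger $\mge{v}{w}$, by contrast, yields a ghost graph with two vertices and no edge joining them, i.e.\ a disconnected ghost graph, and is therefore excluded. Up to conjugation by isomorphisms this gives exactly the $\circ_1$ and $\circ_{01}$ of the corollary statement.

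The main point requiring care is the verification that the minimal factorization of Proposition~\ref{GGstructurelem} of a morphism in the subcategory really stays inside the subcategory at every intermediate step; a priori it lives in $\GG$, and one must rule out stepping outside $\operads$ or $\modular$ along the way. In both cases this reduces to the elementary observation that contracting an edge of a rooted tree produces a rooted tree, and contracting an edge or loop of a connected graph produces a connected graph, together with the fact that no mergers are needed in the factorization since the ghost graphs in both subcategories are already sufficiently connected.
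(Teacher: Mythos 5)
Your proposal is correct and follows the paper's own route: the paper derives this corollary from Lemma~\ref{genlem} with the single phrase ``by restriction,'' and your argument is precisely that restriction, spelled out. The extra verifications you supply — that the minimal factorization of Proposition~\ref{GGstructurelem} never leaves $\operads$ or $\modular$ (contracting an edge of a rooted tree yields a rooted tree, contracting an edge or loop of a connected graph preserves connectedness), and that loop contractions and mergers are excluded in the respective cases — are exactly the details the paper leaves implicit, and they check out.
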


In particular we find the $\Sn$ equivariance of the $\circ_i$ products equivalent to the property of the restriction of
$\Phi$ being crossed.
\begin{equation}
\label{snequieq}
\sigma_n(a)\circ_{\sigma_n(i)} \sigma_n(b)=(\sigma_n\circ_i\sigma_m) a \circ_i b
\end{equation}

\subsubsection{$\GG^{ord}$}

There are actually two ordered versions one could consider. The first is the naive one,
where a morphism is a pair of a morphism and a word in the chosen set of generators. The second
is a pair of a word and a word in a standard form.
For the second case, we can and will for instance
choose that, given $\Phi$, we only use words of minimal length, e.g. using Lemma \ref{GGstructurelem} with all the
isomorphisms at the end.

To compose two such minimal words, we use the relations explained above to move all the mergers and isomorphisms
to the right and to obtain the order as described in \S\ref{ordsec}.

For the Feynman category of $\modular$, $\FF^{odd}$ is  exactly the Feynman category for $\K$--twisted modular operads and
for twisted modular operads the indexing adds the $\K$ twist. The ``classical'' versions of $\FF^{odd}$ are
considered in depth in \cite{KWZ}.

\subsubsection{$\FF^{ord}$ for a choice of a one--comma generating set}
Let $\Phi$ be a one--comma generating set, then $\FF^{ord}$ is the indexed version over $\FF$ where
the morphisms are morphisms of $\F$ together with a decomposition into elements of $\Phi$ and isomorphisms.
If the generating set is crossed, then we can only retain the decomposition into elements of $\Phi$, the isomorphism
at the end being fixed.

\subsubsection{$\FF^{or}$, $\FF^{odd}$ for an ordered presentation}
Note that if we give a category by generators and relations, the relations can be depicted as polygons with oriented edges, which
form two simple edge paths to which every path belongs and which start and end at the same vertices.
If two edges are marked by the same morphism we will identify them and also their vertices. Thus we have
chains of polygons whose sides are indexed by different morphisms. We can of course combine relations by gluing along sides,
extending them by morphisms on both sides and composing these.
We call a polygon decomposable if it is the combination of two polygon relations, an extension or gluing of two relations.
A non-decomposable polygon is called simple.

\begin{df}\label{opdef}
 We call a presentation of a category ordered if for each relation, i.e. polygon chain
 as above, we can assign a value $+1,-1$ which is multiplicative under decomposition into two polygons.
Relations involving  isomorphisms are fixed to be $+1$, that is composition with isomorphisms and composition of isomorphisms.
\end{df}

It is clear that such an order is fixed on simple polygons, if they exist.

\begin{df}
 Given an ordered presentation with one--comma generators and isomorphisms,
we define  $\FF^{or}$ to be the Feynman category with unchanged $\V$ and objects of $\F$,
but with morphisms being a pair of a morphism and a class of representation, where two representations
are equivalent if they form part of a relation with value $1$.
The choice that isomorphisms have only $+1$ relations
guarantees that $\V^{or}=\V$ is well defined and $\FF^{or}$ is still a Feynman category.

Likewise we define $\FF^{odd}$, where now $\F^{odd}$ is  the free Abelian construction, quotiented out by the relations with
the given sign. That is, each morphism is a morphism of $\F$ plus a presentation modulo changing
the presentation and simultaneously changing the sign.
\end{df}

\subsubsection{Example: quadratic relations}
The first example is if we have quadratic equations among the elements of $\Phi$. In this case, we can associate
to each square not involving isomorphisms the sign $-1$.
All relations, after canceling isomorphisms, have polygons which are cubical. That is, a relation
involving $n$ generators can be obtained from a $n$--cube, by comparing the $n!$ ways of representing the diagonal
by edge morphisms. It is now clear that the induced signs are compatible. The total sign is $(-1)^f$ where $f$ is the number
of  flips across the diagonal.

\subsubsection{Example: multi--linear--quadratic relations}
A more complex example is if we have homogeneous relations
that are quadratic and linear (or higher order) and we can (as in $\GG$) search for certain standard forms. Here we would need something
like the minimal composition length. In the case of $\GG$ we used the grading in which contractions were odd and mergers are even.

The general theory is best expressed in 2--categories, see below.

\begin{df}\label{aaadef}
A set of one--comma generators $\Phi$ is $k$ at most quadratic,
if the set of generators splits into $k$ disjoint sets, such that
\begin{enumerate}
 \item every morphism has an expression in these generators of the form $\Phi_1\circ\dots \circ\Phi_k$
where the $\Phi_k$ are expressed in generators of the $k$--th set.
\item Relations among the sets separately are quadratic. And relations among the sets $\Phi_i$ and $\Phi_j$
are triangular for $i>j$. That is of the type $\phi_j\phi_i=\phi'_j$.
\end{enumerate}
\end{df}

\begin{prop}
 In the situation above, the presentation can be ordered by assigning $-1$ to each quadratic relation and $+1$ to
each triangular relation.
\end{prop}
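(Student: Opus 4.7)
The plan is to verify that assigning $-1$ to each quadratic simple polygon and $+1$ to each triangular simple polygon extends multiplicatively to a well-defined sign on every polygon chain, in the sense of Definition \ref{opdef}. Concretely, two decompositions of the same polygon into simple polygons must yield the same product of signs; equivalently, every ``syzygy''---a closed loop in the graph of words connected by elementary relation-moves---must have total sign $+1$. So the proof reduces to identifying the fundamental syzygies in the presentation and checking the sign on each.

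My first step is to extract a normal form. Condition (1) of the hypothesis says every morphism admits an expression $\Phi_1 \circ \cdots \circ \Phi_k$ with each block $\Phi_j$ a word in the $j$-th generating set. The triangular relations $\phi_j \phi_i = \phi'_j$ with $i>j$ let one rewrite any word into this standard form by successively absorbing higher-index generators into lower-index blocks; each such step is a simple triangular relation and carries sign $+1$, so reduction to normal form is sign-neutral. It therefore suffices to compare two expressions that are already in standard form, where the only remaining relations act within a single block.

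Within a single block $\Phi_j$ the only applicable relations are quadratic, so the situation reduces to the purely quadratic case treated above: two quadratic reduction sequences between the same pair of words span a polytopal region whose $2$-faces are the simple quadratic squares. The standard cubical/hexagonal syzygy argument (every such loop has an even number of simple quadratic faces, because it is a boundary of a higher-dimensional cubical cell) shows that the accumulated sign around any such loop is $(-1)^{\text{even}} = +1$. Hence the sign is well-defined on pairs of expressions that lie in a common block.

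The main obstacle, and the step I expect to require the most care, is controlling the \emph{mixed} syzygies, i.e.\ loops that alternate between intra-block quadratic relations and cross-block triangular absorptions. I would analyze the elementary such configuration: given a subword $\phi_i\, \phi'_i\, \phi_j$ with $i>j$, one either (a) first applies the quadratic relation swapping $\phi_i$ and $\phi'_i$ and then absorbs both successively via two triangular relations, or (b) first absorbs $\phi_i$ using a triangular relation and then performs the resulting move, producing a closed loop whose faces are exactly one quadratic and the remaining triangular. Using the uniqueness built into ``triangular'' (the output $\phi'_j$ is determined by the inputs), one checks that both sides of such a loop contain the same parity of quadratic faces, so the total sign is $+1$. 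Once this finite catalogue of elementary syzygies---pure cubical quadratic and mixed triangular/quadratic---is verified, standard confluence applied to the normal form implies that every higher syzygy decomposes into these, so the prescribed sign assignment extends consistently and defines an order on the presentation.
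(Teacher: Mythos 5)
Your proposal is correct and follows essentially the same route as the paper: use the triangular relations (each carrying sign $+1$) to reduce every word to a minimal/standard form, and then settle the remaining ambiguities, which are purely quadratic, by the cubical argument of the preceding example, where a closed loop of square moves corresponds to an even number of adjacent transpositions and hence total sign $+1$. Your explicit check of the mixed triangular--quadratic syzygies is a welcome addition that the paper's two-sentence proof leaves implicit, but it does not change the underlying strategy.
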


\begin{proof}
 Using the triangular relations, we see that there is a well defined minimal word length and
we need to define the signs in relations only involving these. The relations there are now diagonal paths
on cubes stuck together along the diagonal, and the same reasoning as above applies.
\end{proof}

The archetypical example of course is $\GG$ where $\Phi_1$ is generated by simple edge/loop contractions and $\Phi_2$ is
generated by simple mergers.

\subsubsection{2--category construction for $\FF^{odd}$ with an ordered presentation}
\label{2catsec}
We can also consider the assignment of $+1,-1$ as the additional structure of 2--morphisms for a presentation $(\Phi,{\mathcal R})$
of a category. More precisely, given a presentation of a category, we can write the category as the category obtained from
the 2--category whose 1--morphisms are the free category in generators and whose 2--morphisms are the relations generated by
$\mathcal R$ which are treated as identity 2--morphisms.
The category is then obtained by reducing to a 1--category with morphisms being isomorphism classes under 2--morphisms.

In case we are enriched over $\Ab$, we can extend this to  2--morphisms that are $\pm id$.  We can then take the $\Ab$ category quotient.
As additional data we specify the 2--morphisms to be $\pm id$ for each 2--morphism in $\mathcal R$.
Such data is admissible if it is compatible with horizontal
and vertical compositions, that is if every relation 2--morphism has a well defined sign.

An {\em ordered presentation} is then a presentation together with an admissible assignment of signs.

\begin{rmk}
This consideration then easily gives rise to further generalizations, where  for instance 2--morphisms take values
in any Abelian group and there is a (double) groupoid morphism induced by horizontal and vertical composition etc..
We will not use this in the following as we are mainly interested in differentials arising from this situation and this
is tied to introducing only signs and not other characters or in physics parlance other statistics such as parafermions.
\end{rmk}

\section{Universal Operations}
\label{universalsec}
In this section, given a Feynman category, we construct a new Feynman category of universal operations. This is what conceptually explains the constructions of Gerstenhaber and the algebraic half of Deligne's conjecture by rendering them as the outcome of a calculable construction. This of course now translates to all contexts.

\begin{assump}
In this section, we assume that $\V$ is essentially small.
\end{assump}

\subsection{Cocompletion and the universal Feynman category}\label{cocompsec}  Given a Feynman category $\FF$ and an $\CO\in\F$-$\opcat_{\CalC}$, with $\CalC$ a cocomplete monoidal category, there
are natural operations on $\colim_{\V}\CO$. The example par excellence being the generalized
Deligne conjecture, which states that for an operad with multiplication in $\dgVect$
 there is a Gerstenhaber up to homotopy structure on $\bigoplus_n \O(n)$. The {\it ur}--operation
of this type is the structure of  pre-Lie algebra on $\bigoplus_{n}\CO(n)$ found
by Gerstenhaber. Here we show that this is a  universal feature. Any Feynman category $\FF$ gives rise to a new Feynman category $\FFV$ of coinvariants. This new Feynman category is of operadic
type, i.e.\ its underlying $\VV$ is trivial. If $\C$ is cocomplete then it naturally acts on the appropriate
colimit in $\C$, that is the colimit of any $\CO \in \F$-$\opcat_{\C}$ forms an element of
$\FV$-$\opcat_{\C}$.

A Feynman category  $\F$ need not be cocomplete itself. Of course if, as previously,
we map to a cocomplete category, this map factors through its cocompletion $\hat\F$.
Recall that $\hat\F$ is the category of accessible functors  $\hat F\subset[\F^{op},\Set]$, and in the case that $\F$ is small,
$\hat\F$ is the category of all presheaves $Fun(\F^{op},\Set)$. This is true as well in the enriched
case, where $\Set$ is replaced by $\CalE$. Here one assumes that the underlying category $\CalE_0$ of $\CalE$
is cocomplete, \cite{day} --- see also Remark \ref{cocompletermk} below.

Since $\F$ was monoidal its category of presheaves is also
 monoidal with the Day convolution product. For two presheaves $F$ and $G$

$$
F\day G=\int^{X,Y \in \F}F(X)\otimes G(Y)\otimes Hom_{\F}(-,X\otimes Y)
$$

We denote the Yoneda embedding to the cocompletion by $y:\F\to \hat\F$.

\begin{lem}
The Day convolution of $\hat \F$ preserves colimits in each variable if $\otimes$ does in $\CalE$.
\end{lem}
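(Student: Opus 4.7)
The plan is essentially formal: reduce the question to a pointwise computation in the presheaf category and then use that the tensor product in $\CalE$ preserves colimits in each variable, together with the fact that coends (being colimits) commute with colimits.

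First, I would recall that colimits in $\hat\F \subset [\F^{op},\CalE]$ are computed pointwise, i.e., for a diagram $(F_i)$ in $\hat\F$ we have $(\colim_i F_i)(X) = \colim_i F_i(X)$ in $\CalE$. This holds in the $\Set$-valued (small $\F$) case and extends to the enriched case since $\CalE_0$ is assumed cocomplete. Fix a presheaf $G \in \hat\F$ and a diagram $(F_i)$ in $\hat\F$; I want to show $(\colim_i F_i) \day G \simeq \colim_i (F_i \day G)$, and symmetrically for the other variable.

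Next, I would evaluate both sides at an arbitrary object $Z \in \F$ and compute:
\begin{equation*}
((\colim_i F_i) \day G)(Z) = \int^{X,Y} (\colim_i F_i)(X) \otimes G(Y) \otimes Hom_\F(Z, X \otimes Y).
\end{equation*}
Applying pointwise evaluation inside the coend gives $(\colim_i F_i)(X) = \colim_i F_i(X)$. Then, since $\otimes$ in $\CalE$ preserves colimits in each variable by hypothesis, we can pull the colimit outside the threefold tensor:
\begin{equation*}
\colim_i F_i(X) \otimes G(Y) \otimes Hom_\F(Z, X \otimes Y) \simeq \colim_i \bigl(F_i(X) \otimes G(Y) \otimes Hom_\F(Z, X \otimes Y)\bigr).
\end{equation*}

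Finally, the coend itself is a colimit, and colimits commute with colimits, so we can interchange $\colim_i$ with $\int^{X,Y}$ to obtain
\begin{equation*}
\colim_i \int^{X,Y} F_i(X) \otimes G(Y) \otimes Hom_\F(Z, X \otimes Y) = \colim_i (F_i \day G)(Z).
\end{equation*}
Since $Z$ was arbitrary and colimits in $\hat\F$ are pointwise, this yields the desired isomorphism $(\colim_i F_i) \day G \simeq \colim_i (F_i \day G)$. The argument for the second variable is identical by the symmetry of the coend formula. The only subtle point, and the place where one has to be slightly careful, is the passage to the enriched/accessible setting: one needs $\hat\F$ to be closed under the relevant colimits and the pointwise description to remain valid, both of which follow from the standing assumption that $\CalE_0$ is cocomplete and that $\otimes$ preserves colimits in each variable.
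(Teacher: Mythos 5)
Your proof is correct and follows essentially the same route as the paper: both arguments expand the Day convolution as a coend, use the hypothesis that $\otimes$ preserves colimits in each variable to pull the colimit through the tensor, and then invoke commutation of colimits (co-Fubini for coends) to finish. Your version is slightly more explicit about pointwise evaluation in $\hat\F$, but the mathematical content is identical.
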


\begin{proof}
\begin{eqnarray*}
\int^{Z}F(Z)\day\int^{Z'}G(Z')&=&
\int^{X,Y\in\F}\int^Z F(Z)\times \int^{Z'}G(Z')\times Hom_{\F}(-,X\otimes Y)\\
& =&
\int^{X,Y\in \F}\int^{Z,Z'\in F} F(Z)\times G(Z') \times Hom_{\F}(-,X\otimes Y)\\
& =&
\int^{Z,Z'\in \F}\int^{X,Y\in F} F(Z)\times G(Z') \times Hom_{\F}(-,X\otimes Y)\\
&=&\int^{Z,Z'\in \F} F(Z)\day G(Z')
\end{eqnarray*}
\end{proof}

\begin{df}
We define the symmetric monoidal category $\FV$ of coinvariants of a Feynman category to be the full symmetric monoidal subcategory
of $([\F^{op},\Set],\day)$ generated by $1:=\colim_{\V}\,(y\circ\imath)$.
\end{df}

\begin{lem}
\label{mainfvlem}
If $\otimes$ in $\CalE$ preserves colimits in each variable, which we have assumed, then
$\FV$ is a subcategory of $\hat\F$. In particular $n:=1^{\day n}=\colim_{\V^{\times n}}(y\circ \imath^{\otimes n})$
and $ Hom_{\FV}(n,m):= \lim_{\V^{\times n}}\colim_{\V^{\times m}}
 Hom_{\F} ((\imath)^{\otimes n},(\imath)^{\otimes m})$.
\end{lem}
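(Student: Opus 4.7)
The plan is to use two ingredients: (a) the Yoneda embedding $y\colon \F\to [\F^{op},\Set]$ is strong symmetric monoidal with respect to $\day$, so that $y(X\otimes Y)\cong y(X)\day y(Y)$; and (b) the previous lemma that $\day$ preserves colimits in each variable.

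First, combining (a) and (b) yields
\begin{align*}
1\day 1 &= \Big(\colim_{\V}(y\circ\imath)\Big)\day \Big(\colim_{\V}(y\circ\imath)\Big) \\
&\cong \colim_{\V\times\V}\big((y\circ\imath)\day(y\circ\imath)\big) \\
&\cong \colim_{\V\times\V}(y\circ\imath^{\otimes 2}),
\end{align*}
and iterating the same argument gives $n:=1^{\day n}\cong \colim_{\V^{\times n}}(y\circ\imath^{\otimes n})$, which is the first claimed formula.

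Next, to see $n\in\hat\F$: representable presheaves are accessible, and $\V^{\times n}$ is essentially small (using axiom (iii) of Definition \ref{feynmandef} together with $Iso(\F)\simeq \V^{\otimes}$, which confines $\V$ up to equivalence to a small groupoid), so the colimit above is an essentially small colimit of accessible functors, hence accessible. Therefore $\FV$ sits inside $\hat\F$ as a full symmetric monoidal subcategory.

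Finally, for the hom formula one applies in turn: hom out of a colimit is a limit of homs; Yoneda in the form $Hom_{[\F^{op},\Set]}(y(X),F)\cong F(X)$; and that colimits of presheaves are computed pointwise. Concretely
\begin{align*}
Hom_{\FV}(n,m) &\cong \lim_{\V^{\times n}} Hom_{[\F^{op},\Set]}(y\circ\imath^{\otimes n},\,m) \\
&\cong \lim_{\V^{\times n}} m(\imath^{\otimes n}(-)) \\
&\cong \lim_{\V^{\times n}}\colim_{\V^{\times m}} Hom_{\F}(\imath^{\otimes n}(-),\,\imath^{\otimes m}(-)).
\end{align*}
The main point requiring care is the essential smallness invoked in the second paragraph; once this is in place, everything else reduces to standard presheaf manipulations together with the Day convolution lemma just proved.
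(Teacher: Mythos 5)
Your proof is correct and follows essentially the same route as the paper: the paper computes $1\day 1$ by writing the Day convolution as an iterated coend and applying co-Fubini — which is precisely the colimit-preservation lemma you cite — then invokes strong monoidality of the Yoneda embedding, and it disposes of the hom formula by the same lim-of-homs / pointwise-colimit argument. The only quibble is in your smallness remark: axiom (iii) bounds the comma categories $(\F\downarrow\ast)$, not the size of $\V$ itself, so the essential smallness of $\V$ should be taken as a standing (implicit) hypothesis rather than deduced from (iii).
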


\begin{proof} We will prove the case for $2$, the general case being analogous.
Using the coend formula for the colimits and co--Fubini:
\begin{eqnarray*}
1\day 1&=&\int^{Z,Z'}\int^X Hom_{\F}(Z,X)\times \int^Y Hom_{\F}(Z',Y)\times Hom_{\F}(-,Z\times Z')\\
&=& \int^{Z,Z'}\int^{X,Y} Hom_{\F}(Z,X)\times Hom_{\F}(Z',Y)\times Hom_{\F}(-,Z\times Z')\\
&=&\int^{X,Y}\int^{Z,Z'} Hom_{\F}(Z,X)\times Hom_{\F}(Z',Y)\times Hom_{\F}(-,Z\times Z')\\
&=& \colim_{\V\times \V} \, y\circ \imath\day
y\circ \imath\\
&=& \colim_{\V\times \V} \, y\circ (\imath\otimes
 \imath)=2
\end{eqnarray*}
and the penultimate equation follows, since $y$ is strong monoidal. The general case follows
in the same fashion.
The last statement follows from the fact that colimits in the first variable pull out as
limits, and in the second variable the colimit comes from the fact that in $\hat \F$ the colimits are computed pointwise.
\end{proof}

The lemma above lets us characterize the morphisms  in $\FV$ in a practical manner. We first reduce to a
skeleton $sk(\V)$ of $\V$, that is pick representatives $[\ast_i], i\in I$ for each of the equivalence classes, and $X_{\bf j}, {\bf j}=(j_1,\dots, j_n)
\in J_n\subset I^n$
of representatives of $\imath(\ast_{i_1})\odo\imath(\ast_{i_n})$. For $\CalE$, we
denote the product by $\prod$, the coproduct by $\coprod$,
the tensor structure by $\otimes$, coinvariants by a subscript and invariants by a superscript.
\begin{lem}
\label{decomplem}
\begin{equation}
\label{decompeq}
 Hom_{\FV}(n,m)=
\prod_{{\bf j}\in J_n}
\coprod_{{\bf k}\in J_m}
\bigotimes_{i=1}^m
Hom_{\F}(X_{\bf j},\imath(\ast_{k_i}))_{Aut(\ast_{k_i})}^{Aut(\ast_{j_1})\times \dots \times Aut(\ast_{j_n})}
\end{equation}

\end{lem}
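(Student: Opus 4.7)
The plan is to unwind the formula from Lemma \ref{mainfvlem} in three steps: reduce to the skeleton, convert (co)limits over a groupoid into (co)invariants, and then apply the hereditary condition to split the target.

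First, I would invoke Lemma \ref{mainfvlem} to get
$$Hom_{\FV}(n,m) = \lim_{\V^{\times n}} \colim_{\V^{\times m}} Hom_\F(\imath^{\otimes n}(\,\cdot\,),\imath^{\otimes m}(\,\cdot\,))$$
and replace $\V$ by the chosen skeleton $sk(\V)=\{[\ast_i]:i\in I\}$ fixed in the statement; this is legitimate since equivalences of categories preserve both limits and colimits. Because $\V^{\times n}$ and $\V^{\times m}$ are groupoids, their sets of isomorphism classes of objects are $J_n$ and $J_m$ respectively, and the limit/colimit decompose along connected components: the limit becomes a product of invariants, the colimit a coproduct of coinvariants. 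Under the monoidal equivalence $\V^{\otimes} \simeq Iso(\F)$ the automorphism group of the chosen representative $X_{\bf j}$ contains the product $Aut(\ast_{j_1}) \times \cdots \times Aut(\ast_{j_n})$, and similarly for ${\bf k}$. This step gives
$$Hom_{\FV}(n,m) \cong \prod_{{\bf j}\in J_n} \coprod_{{\bf k}\in J_m} Hom_\F\!\bigl(X_{\bf j},\, \imath(\ast_{k_1})\otimes\cdots\otimes\imath(\ast_{k_m})\bigr)^{\prod_l Aut(\ast_{j_l})}_{\prod_i Aut(\ast_{k_i})}.$$

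Next I would apply the hereditary condition in its Day-convolution form from Lemma \ref{altlem}, namely the identity
$$Hom_\F(\imath^\otimes(\,\cdot\,),\, X\otimes Y) \;\cong\; \imath^{\otimes\wedge} Hom_\F(\,\cdot\,,X)\,\day\,\imath^{\otimes\wedge} Hom_\F(\,\cdot\,,Y),$$
iterated $m$ times with $X=\imath(\ast_{k_i})$. Evaluating the resulting $m$-fold Day convolution on $X_{\bf j}$ and using that $\V^\otimes$ is a groupoid to compute the coend unpacks as a sum, over partitions of the multi-index $\bf j$ into $m$ ordered blocks ${\bf j}_1,\dots,{\bf j}_m$, of $\bigotimes_i Hom_\F(X_{{\bf j}_i},\imath(\ast_{k_i}))$; this is exactly the tensor factorization displayed in (\ref{decompeq}), with the $Aut(\ast_{k_i})$ acting componentwise (hence the inner coinvariants) and the $Aut(\ast_{j_l})$ acting by permuting and transforming partitions (hence the outer invariants after summing over partitions).

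The main obstacle I foresee is the bookkeeping in this last step: one has to check that the $\prod_l Aut(\ast_{j_l})$-action on $Hom_\F(X_{\bf j}, \bigotimes_i \imath(\ast_{k_i}))$ acts through the partition-summands in a way compatible with the coproduct, and that the $\prod_i Aut(\ast_{k_i})$-action restricts to the diagonal action on the tensor product factor-by-factor. Both are consequences of the strong monoidality of $\imath^{\otimes}$ and the equivariant nature of the decomposition provided by condition (ii), but care is required to ensure that taking invariants and coinvariants commutes with the sum over partitions. Once this is verified, the displayed formula follows directly.
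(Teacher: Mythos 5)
Your proposal is correct and follows essentially the same route as the paper, whose entire proof is the remark that the claim ``follows by unraveling the definitions and using the fact that $\FF$ is a Feynman category'' --- i.e., precisely the combination you spell out of Lemma \ref{mainfvlem}, the decomposition of (co)limits over groupoids into products/coproducts of (co)invariants over components, and the hereditary condition in its Day-convolution form. The only caveat is notational: your derivation produces tensor factors $Hom_{\F}(X_{{\bf j}_i},\imath(\ast_{k_i}))$ indexed by partitions of the source, whereas the display (\ref{decompeq}) literally repeats the full source $X_{\bf j}$ in every factor; this is an imprecision in the paper's formula rather than a gap in your argument.
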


\begin{proof}
 This follows by unraveling the definitions and using the fact that $\FF$ is a Feynman category.
\end{proof}

\subsection{Enriched Versions} This construction readily generalizes to both enriched settings, Cartesian or indexed enriched. In this case, we have to replace the colimit by the appropriate
indexed colimit as in the previous paragraph.
In particular when index-enriched over $\Ab$ then this becomes simply:
\begin{equation}\label{nateq}
 Hom_{\FV}(n,m)=
\prod_{{\bf j}\in J_n}
\bigoplus_{{\bf k}\in J_m}
\bigotimes_{i=1}^m
Hom_{\F}(X_{\bf j},\imath(\ast_{k_i}))_{Aut(\ast_{k_i})}^{Aut(\ast_{j_1})\times \dots \times Aut(\ast_{j_n})}
\end{equation}
In this case, each morphism to $1$ has components $[\phi_{X_{\bf j},\ast_i}]$
of classes of coinvariants of morphisms $\phi_{X_{\bf j},i}\in Hom_{\F}(X_{\bf j},\imath(\ast_i))^{Aut(\ast_{j_1})\times \dots \times Aut(\ast_{j_n})}
\subset Hom_{\F}(X_{\bf j},\imath(\ast_i))$ and any morphism has components $[\phi_{X_{\bf j},X_{\bf k}}]$
which are tensor products of these.
By abuse of language we will call a choice of representatives in (\ref{decompeq}) ``components''; also in the general case.

Let $\VV$ be the  subcategory of $\FV$ given by
 the object $1:=\colim_{\V} \imath$ with only the identity morphism and $\iV$ be the inclusion.

\begin{thm}
\label{universalthm}
The category $\FFV=(\VV,\FV,\iV)$ is a Feynman category. This  holds analogously in the enriched
and enriched indexed case.
\end{thm}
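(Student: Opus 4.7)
The strategy is to exploit the explicit description of $\FV$ afforded by Lemma~\ref{mainfvlem} and Lemma~\ref{decomplem}: up to equivalence, $\FV$ has objects $n=1^{\day n}$, the monoidal structure is the obvious addition $n\day m=n+m$, and $\Hom_{\FV}(n,m)$ is given by the product/coproduct/tensor formula (\ref{decompeq}). Since $\VV$ has one object and only identity morphisms, $\VV^{\otimes}$ is the free strict symmetric monoidal category on a point: its objects are the $n$ and $\Hom_{\VV^{\otimes}}(n,n)=\SS_n$ with no morphisms between objects of different length. The plan is then to check each axiom of Definition~\ref{commadef} against these presentations.

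The size condition (iii) is addressed first, since it is essentially inherited: the comma category $(\FV\downarrow 1)$ consists, up to equivalence, of the components $[\phi_{X_{\bf j},\ast_i}]$ appearing in (\ref{decompeq}) with $m=1$, and each of these is assembled from $\Hom_\F(X_{\bf j},\imath(\ast_i))$ together with a finite group action. Essential smallness for $(\F\downarrow\imath(\ast))$ in $\FF$ therefore transports directly to $(\FV\downarrow 1)$ in $\FFV$.

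Next comes the isomorphism condition (i). The induced monoidal functor $\iV^{\otimes}\colon \VV^{\otimes}\to\FV$ is essentially surjective by construction, and strict monoidal because $n\day m=n+m$ in $\FV$ by the computation of Lemma~\ref{mainfvlem}. The symmetric monoidal structure of $\hat\F$ already places $\SS_n$ inside $\Aut_{\FV}(n)$, so faithfulness of $\iV^{\otimes}$ on morphisms is immediate. The content is to show there are no other isomorphisms. For this I would argue that the formula~(\ref{decompeq}) forces any isomorphism $n\to m$ to have $n=m$ by counting tensor factors (any inverse in $\hat\F$ exhibits a matching decomposition of the identity under $\day$), and then that the automorphisms of $n$ coming from invertible elements of the product/coproduct/tensor formula are precisely those arising from the ambient symmetric monoidal structure on presheaves, i.e.~from $\SS_n$. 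This last step, reconstructing invertibility componentwise from~(\ref{decompeq}) using the fact that $\V$ is a groupoid and that the only ``structural'' invertible elements are the commutativity isomorphisms, is the main obstacle; everything else is bookkeeping.

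Condition (ii) is then immediate from (\ref{decompeq}): every morphism $\phi\in\Hom_{\FV}(n,m)$ is literally a tensor product of $m$ morphisms $\phi_i$ into $1$, indexed by the second coproduct/tensor in the formula. Combined with (i), which lets us decompose sources into $1\day\cdots\day 1$, this gives the required commuting square exhibiting $\phi$ as an element of $\iV^{\otimes}(\Iso(\FV\downarrow\VV)^{\otimes})$; uniqueness of this decomposition up to isomorphism of comma-category objects reduces, via the formula, to the corresponding uniqueness for $\FF$ itself. Finally, in the enriched and indexed enriched settings the same argument applies verbatim with the coend colimits replaced by their enriched counterparts (and (\ref{nateq}) replacing (\ref{decompeq}) in the $\Ab$-linear indexed case); the only preservation property used is that $\otimes$ commutes with colimits in each variable, which is a standing assumption.
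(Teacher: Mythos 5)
Your proposal is correct and follows essentially the same route as the paper: both rely on the explicit description of morphisms in $\FV$ from Lemmas \ref{mainfvlem} and \ref{decomplem}, identify the crux as showing that the only isomorphisms are the permutations (argued by noting that any extra isomorphism would force its components to be isomorphisms, which are absorbed by the coinvariants/choice of representative since condition (i) holds for $\F$), and then read off (ii) from the componentwise decomposition formula and (iii) from the corresponding property of $\FF$. The enriched and indexed cases are handled identically in both arguments, by replacing the coends with their enriched counterparts.
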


\begin{proof}
First notice that
$\VV$ is indeed a groupoid and by construction all elements of $\FV$ are even equal to tensors of $1$, so that
the inclusion is essentially surjective. The isomorphisms from $\VV^{\otimes}$ (up to equivalence)
are the commutativity constraints and identities, and these are preserved yielding that the functor to $Iso(\FV$) is faithful. To show that it is full, we check
that these are the only isomorphisms.

If there would be any extra isomorphism in $\FV$, then the components $\phi_{X_{\bf j},X_{\bf k}}$
of the limit/colimit would need to be isomorphisms.
But these are taken care of by the colimit/choice of representative,
 since (i) holds for $\F$, leaving only the identity and the  permutations.
The condition (ii) holds by Lemma \ref{decomplem}, and the condition (iii) holds because it did in $\FF$.

The enriched versions and indexed enriched version follow in an analogous fashion, by making the necessary replacements, analogously using the constructions of \S4. In the Cartesian case this is straightforward, and in the indexed enriched case one has to insert the indexing functor $\D$ into the formulas.
\end{proof}

\begin{rmk}
\label{cocompletermk}
We could have alternatively  defined $\FV$ abstractly to have objects given by the natural numbers with $+$ as
a tensor product and morphisms given by the formula of Lemma \ref{mainfvlem}. The onus would then be to show
that this is indeed a monoidal category.
\end{rmk}

\subsection{Cocompletion for $\opcat$}
Let $\CO\in \F$-$\opcat_{\C}$  where $\C$ is cocomplete.  By its universal property, $\O$ factors through the cocompletion $\O=y\circ \hat \O$; $\hat \O$ being the cocontinuous extension.
Since $\F_\V$ is a subcategory of $\hat \F$,
we can restrict $\hat\O$ to $\FV$. This is just the pull back under the inclusion.
We call the resulting functor $\O_{\V}$ and this is then an element of $\FV$-$\opcat_{\C}$.

\subsection{Generators and weak generators}
\label{weaksec}
From the theorem above, we see that the morphisms in $\FV$ are one--comma
generated by the morphisms $\phi\in Hom_{\FV}(n,1)$, as they should be.
But even if $\F$ is generated in low arity, this might not be the case for $\FV$.
In fact this is seldom the case.

If $\CalE$ is Abelian,
we say $\FFV$ is weakly generated by morphisms $\phi\in \Phi$ if the summands of the components $[\phi_{X_{\bf j},i}]$
generate the morphisms of $\FFV$. Here different summands are indexed by different isomorphism classes of morphisms.

\begin{ex}
In the case of operads, see below \S\ref{preliesec}, the pre--Lie operation would generate the symmetric brace operations, while it weakly generates all the brace operations, which can be seen to be all the operations of $\operads_\V$ if one is enriched over a linear category.
\end{ex}

\subsection{Feynman categories indexed over $\GG$}
We will now treat the standard examples, those indexed over $\GG$,
in some more detail. We assume that we are enriched at least over $\Ab$.
There are several levels of Feynman categories which are obtained from $\FF$.
All of them are of operadic type. The first is $\FFV$, this is described by insertion operads for
graphs with tails. Forgetting tails, using the operator $\kill$,\footnote{See \S\ref{insertionsec}.}  $\FV$ is weakly indexed
over $\FV^{nt}$ which is based on graphs without tails and insertion. Here by weakly indexed,
we mean that $\kill(\phi\circ\psi)$ is a summand of $\kill(\phi)\circ\kill(\psi)$.

More importantly, there is a {\it bona fide} inclusion
of Feynman categories, given by $\leaf$ and the components of the section weakly generate.

Analyzing $\FV^{nt}$, we see that it is usually quite complex and many times not generated by its binary morphisms.
However, one can often find  sub--Feynman categories that correspond
to known operads which again weakly generate $\FV$ after applying $\leaf$.  Examples for $\FV^{nt}$ are the  operads used in \cite{KS,CL,del,cyclic,KSchw,Willwacher}.  The morphisms are given by the relevant type of trees and the objects of $\asts$ are the (white) vertices.

Before giving a tabular theorem, we will analyze the classical cases of an operad and less classically an
operad with multiplication in detail.  We will refrain from adding the additional technical definitions needed to give the most general framework in which the transition $\FV\leadsto \FV^{nt}$ can be done; although this is certainly possible.  We do wish to point out that $\FV^{nt}$ is canonical.

\subsection{Gerstenhaber's construction and its generalizations in terms of Feynman categories}

\subsubsection{Operads and pre-Lie in the Feynman category setup}
\label{preliesec}
Let us compute the standard example which is the Feynman category of
operads $\operads_{\V}$.

Let us first look at the unenriched case. In this situation, we can calculate the morphisms.
Let $1=\colim y\circ \imath=\colim_n \colim_{\Sn} y(\ast_{n})$. It is easy to see that
the outside colimit lets us consider the components $\lim_{\SS_{n_1}\times \dots\times \SS_{n_m}} \colim_{\SS_k}
Hom(\ast_{n_1} \amalg\dots\amalg \ast_{n_m},\ast_k)$. As a sample calculation let us consider a 2--ary morphism.
This will lift to a morphism $\circ_i$. Now acting on the left by $\Sn\times \SS_m$, and using equation (\ref{snequieq}) we
see that $\circ_i$ after acting on it by any element
$\SS_{n}\times \SS_m$ is equivalent under the action on the right by $\SS_{n+m-1}$ to $\circ_j$.
Thus there are no morphisms unless we consider enrichment over $\mathcal{A}b$.

In that case, however, by the same reasoning
the coefficients in the lift $\phi=\sum_i a_i \circ_i$ must all coincide.

\begin{prop}
In the case of enrichment over $\mathcal{A}b$ the components of the one--comma generating morphisms of $\operads_{\V}$
are in 1--1 correspondence with $\mathbb{S}_k$ coinvariant ghost trees of $\phi\in Hom(\ast_{n_1} \amalg \dots\amalg \ast_{n_m},\ast_k)$.
Furthermore $\FFV^{nt}=\Fprelie$ and under the inclusion $\Fprelie\hookrightarrow \operads_{\V}$ the components of $\Fprelie$ generate.
\end{prop}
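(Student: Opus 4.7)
The plan is to establish the three assertions in turn by direct computation from Lemma \ref{decomplem} in its indexed-enriched form (\ref{nateq}), together with the fact from \S\ref{grsecondsec} that morphisms of $\opd$ are classified up to isomorphism by their ghost rooted trees, and with the composition rule in $\opd$ described as insertion of trees.

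First, for the component description, I would specialise (\ref{nateq}) to $\F=\opd$, $\V=\Crl^{rt}$ and $m=1$, fixing a basis $X_{\mathbf{j}}\simeq \ast_{n_1+}\amalg\cdots\amalg \ast_{n_m+}$ with $\mathbf{j}\in J_n$. A morphism in $Hom_{\opd}(X_{\mathbf{j}},\imath(\ast_{k+}))$ is, up to the isomorphism group of its target, a rooted tree $\gh$ with $m$ vertices of arities $n_1,\dots,n_m$, with $k$ leaves and with a bijection of these leaves with $\bar k$. Passing to $Aut(\ast_{k+})=\mathbb{S}_k$-coinvariants discards the leaf labelling, yielding exactly the $\mathbb{S}_k$-coinvariant ghost trees of the statement. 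Each such component must then be simultaneously invariant under the source groups $\mathbb{S}_{n_1}\times\cdots\times\mathbb{S}_{n_m}$, which in the $\Ab$-enrichment forces one to take the natural sum over these orbits, explaining the expression $\sum_i\circ_i$ that appears already in the discussion preceding the proposition.

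Second, for the identification $\FV^{nt}=\Fprelie$, I would apply the tail-killing operator $\kill$ of \S\ref{weaksec} to the description above. The $\kill$-image of a rooted ghost tree is the underlying rooted tree with $m$ ordered vertices and no flags, which is precisely the combinatorial datum indexing the arity-$m$ operations of the pre-Lie operad (cf.\ \S\ref{graphinsertionsec} and \cite{CL}). Composition in $\FV^{nt}$ is induced from composition in $\opd$; by \S\ref{grsecondsec} the latter is given by inserting a rooted ghost tree at a vertex, and after $\kill$ this becomes precisely the tree-insertion composition defining $\Fprelie$. A final check that the $\mathbb{S}_m$-action induced by permuting the source labels matches the natural $\mathbb{S}_m$-action on rooted trees produces the desired isomorphism of Feynman categories.

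Third, for the generation statement, I would observe that the $\leaf$ operator reverses $\kill$ on the level of morphism classes: starting from a rooted tree $T\in\Fprelie$ one recovers every rooted ghost tree with the same underlying $T$ by reattaching tails at each vertex. In the $\Ab$-enriched $\operads_{\V}$ this reattachment is implemented by combining the image of $T$ under the inclusion $\Fprelie\hookrightarrow \operads_\V$ with the forced $\prod\mathbb{S}_{n_i}$-symmetrisation, while the $\mathbb{S}_k$-coinvariance on the target side matches by the first assertion. This is exactly weak generation in the sense of \S\ref{weaksec}. The main obstacle is the careful bookkeeping needed to verify that the relation (\ref{snequieq}), combined with the invariance/coinvariance conditions, leaves no extra components beyond those produced by inserting $\Fprelie$-trees; once this is pinned down, the statement reproduces and in fact generalises Gerstenhaber's classical observation that $\bigoplus_n \O(n)$ carries a natural pre-Lie structure generated by $\sum_i\circ_i$.
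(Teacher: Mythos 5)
Your argument follows the paper's proof essentially step for step: the components are read off from the lim/colim formula and identified with $\SS_k$-coinvariant ghost trees, with the source-group invariance forcing equal coefficients on isomorphic trees (the point the paper attributes to the generating set $\Phi$ being of crossed type), after which $\kill$ lands in the rooted-tree insertion operad, the identification $\FFV^{nt}=\Fprelie$ is cited from \cite{CL}, and $\leaf$ handles the generation claim. The only divergence is in the last step and is minor: the paper argues that the binary pre-Lie product alone already produces every tree as a summand under iteration (since $\operads$ is generated by the $\circ_i$), whereas you observe directly that $\leaf$ applied to each tree of $\Fprelie$ exhausts all components; both suffice.
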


\begin{proof}
Let us pick an $m$-ary morphism. Using the lim colim formula, means that we lift to an invariant morphism
and consider it up to coinvariants of the target. Picking a lift, we can consider the components in
$Hom(\ast_{n_1} \amalg \dots \amalg \ast_{n_m},\ast_k)^{\SS_{n_1}\times \dots\times \SS_{n_m}}_{\SS_k}$ as classes
of invariant maps. Now due to the description of morphisms in $\GG$ we see that a class of a morphism
up to the $\SS_k$ corresponds in a 1--1 fashion to a ghost tree. Since $\Phi$ is
of crossed type, generating as above the invariance up to $\SS_k$ action under the
 $\SS_{n_1}\times \dots\times \SS_{n_m}$ action means that the coefficients of all rooted trees
that are isomorphic must be equal. That is we can identify a component with a class of a ghost tree
under $\SS_k$ invariance, which means that the flags are unlabelled.

If we apply $\kill$, we end up with rooted trees without tails. These define a Feynman category under the insertion
operad structure, see e.g.\ \ref{graphinsertionsec}. Then $\leaf$ gives a functor that is identity on objects
and sends a tree to the direct sum of components. Here each summand is considered to have source $\amalg_v\ast_{F_{v}}$.

The fact that $\FFV^{nt}=\Fprelie$ follow from \cite{CL} and \cite{del}.
The last statement amounts to the fact
that $\operads$ is generated by 2--ary operations, and hence in the iterated pre-Lie structure every possible tree appears.
\end{proof}

The proposition lets us recover the results of \cite{KapMan}
\begin{cor}
For any Abelian category $\C$ and any operad $\O$ there is a pre-Lie and, by taking the commutator, a Lie structure
on $\bigoplus_n \O(n)_{\Sn}= \colim(\op{O})$.
\end{cor}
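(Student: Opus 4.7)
The strategy is to extract this as a direct consequence of the preceding proposition, composed with the cocompletion construction for $\opcat$. First, I would fix $\op{O}\in\operads$--$\opcat_\C$. Since $\C$ is Abelian, hence cocomplete, the cocompletion discussion for $\opcat$ applies: $\op{O}$ extends uniquely to a cocontinuous symmetric monoidal $\hat{\op{O}}\colon \widehat{\opd}\to\C$, and restriction along the inclusion $\operads_\V\hookrightarrow\widehat{\opd}$ produces $\op{O}_\V\in\operads_\V$--$\opcat_\C$. Evaluated on the generator $1=\colim_\V(y\circ\imath)$, this yields $\op{O}_\V(1)=\colim(\op{O})$. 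In the Abelian-enriched setting the colimit of a functor from the groupoid $\V=\Crl^{rt}$ is a direct sum over isomorphism classes of coinvariants by automorphism groups, so $\op{O}_\V(1)=\bigoplus_n \op{O}(n)_{\Sn}$.

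Next, I would invoke the preceding proposition, which identifies $\operads_\V^{nt}$ with $\Fprelie$ and exhibits an inclusion $\Fprelie\hookrightarrow\operads_\V$ of Feynman categories (enriched over $\mathcal{A}b$) whose components generate. Pulling back $\op{O}_\V$ along this inclusion yields an element of $\Fprelie$--$\opcat_\C$. By the description of $\Fprelie$ as the Feynman category whose $\opcat$ in $\C$ are precisely pre-Lie algebras in $\C$ (recorded in the algebra-type examples of Section 2), this is a pre-Lie algebra structure on the underlying object $\bigoplus_n \op{O}(n)_{\Sn}$.

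Finally, the Lie algebra assertion follows by taking the commutator of the pre-Lie product, which is Gerstenhaber's classical observation and holds in any symmetric monoidal Abelian category: right-symmetry of the pre-Lie associator forces vanishing of the Jacobiator for the commutator. This step is purely formal once the pre-Lie structure is in hand, though it could equally well be encoded categorically by a morphism of Feynman categories $\Flie\to\Fprelie$ realizing the commutator, whose pullback then produces the Lie structure.

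The bulk of the content has already been absorbed into the preceding proposition; what remains is bookkeeping. The one mild point to verify is that the composite pullback through $\Fprelie\hookrightarrow\operads_\V\hookrightarrow\widehat{\operads}$ of the monoidal functor $\hat{\op{O}}$ is genuinely a $\Fprelie$-oper (i.e.\ strong symmetric monoidal), which is guaranteed by the fact that the inclusion $\Fprelie\hookrightarrow\operads_\V$ is a morphism of Feynman categories in the enriched sense and that the Day convolution on $\widehat{\operads}$ preserves colimits in each variable, as shown earlier in the section.
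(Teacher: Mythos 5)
Your proposal is correct and follows essentially the same route as the paper, whose entire proof is the single sentence ``Just pull back with respect to the inclusion $\FV^{nt}\to \FV$ above''; you have simply unpacked the intermediate steps (cocompletion, evaluation at the generator $1$, identification of the colimit over $\Crl^{rt}$ with $\bigoplus_n\O(n)_{\Sn}$, and the commutator trick) that the paper leaves implicit. The only minor caveat is that an Abelian category need not be cocomplete, so the existence of the countable direct sum is an implicit hypothesis here — but the paper makes the same tacit assumption.
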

\begin{proof}
 Just pull back with respect to the inclusion $\FV^{nt}\to \FV$ above.
\end{proof}

\subsubsection{Non-$\Sigma$ Operads and the classical bracket}
For the Feynman category of non-$\Sigma$ operads, we see that any conjugate of any morphism
is a component of a universal morphism,
since here $\V$ is discrete. A more interesting question is given by the inclusions:
$$
\xymatrix
{
\operads^{pl}\ar[r]\ar[d]&\hat{\operads}^{pl}\ar[d]&\ar[l]\ar[d]\operads^{pl}_{\Crl^{pl,rt}}\\
\operads\ar[r]&\hat\operads&\ar[l]\operads_{\Crl^{rt}}
}
$$
and is: can we lift the universal operations, that is find a pre-image of the pre-Lie structure in $\operads^{pl}_{\Crl^{pl,dir}}$?  Practically this is the question, can we lift the bracket from the coinvariants to the direct sum $\bigoplus \O(n)$?
The answer is known to be ``yes'' and  indeed, we can, by using Gerstenhaber's classical formula \cite{G} without signs as in \cite{KapMan}.

\subsubsection{The cyclic case and the non-$\Sigma$ version}
In the cyclic case, there is a similar story; see \cite{KWZ}.  Here the universal operations of $\Cyclic_{\Crl}^{nt}$ include an (odd) Lie algebra which is not a commutator.  This is explained in detail in \cite{KWZ} and has predecessors \cite{Ginzburg,LeBruyn}.  Proceeding as above, we can calculate that there is a universal operation for each (now non-rooted) tree with unlabelled leaves.  The lifting interpretation applies to show the above bracket lifts to $C_{n+1}$ invariants.  One can further inquire if there is a lift for the inclusion $\operads\to \CCyclic$. Again the answer is ``yes'' as given in \cite{KWZ}.

\subsubsection{Operads with an ($A_{\infty}$)-multiplication}
In the case of operads with associative multiplication, the universal operations given by the $nt$ version
are given by the operad of rooted bi-partite trees as defined in \cite{del}. 
The foliage operator again provides a lift to the non-$\Sigma$ case. For an $A_{\infty}$-multiplication, the relevant operad is the one of \cite{KS}. In all these examples, the situation is analogous to that of operads, these operads include into the universal
operations via the foliage operator and their components generate.

The differential is an additional structure and may be viewed as follows.  The Maurer-Cartan functor associated to the natural Lie bracket mentioned above is represented in dg-Vect by the operad $\op{A}_\infty$.  This allows one to create/twist a differential using the multiplication.  The operad of dg natural operations associated to this twisted complex is then the result of applying a formal procedure (operadic twisting \cite{Willwacher}) to the operad of (non-dg) natural operations discussed above.

Describing the homotopy type of the dg natural operations for Lie structures associated to other Feynman categories is an interesting question.  The example of cyclic operads was studied in \cite{Ward} which showed these natural operations have the homotopy type of the operad of punctured Riemann spheres.

\subsubsection{Di-operads}
For di-operads, we can do a similar calculation. It is similar to the operad case, but the result is slightly different.
The $\FV^{nt}$ Feynman category is that of directed graphs without loops, and again there is weak generation under $\leaf$.
This Feynman category is no longer generated by its 2--ary part. The subcategory one--comma generated by it is the
Feynman category for
Lie admissible algebras \cite{Fiore}. However, since every graph is an iterate of inserting one edge, this subcategory still
weakly generates.

\subsubsection{Properads} For properads, there is an inclusion of Lie--admissible into the universal operations,
which was found in \cite{MerkVal}. Here the Lie--admissible structure is not equal to the 2--ary generated one,
but does weakly
generate.

\subsubsection{PROPs} Similarly for PROPs, in \cite{KapMan} a structure of associative algebra was found,
which gives rise to an inclusion of Feynman categories $\Fass\to \props_{\Crl^{dir}}$. Again the components do
weakly generate.


\subsection{Collecting results}

We can now rephrase the results of \cite{G,CL,KapMan,Bar,MerkVal,Fiore,KWZ} in this language.  Note that it is sometimes necessary to consider the odd versions of these structures in order to find the correct signs. This is explained in detail in \cite{KWZ}.
\begin{thm}
The  Table \ref{univtable} holds where $\operads$, $\operads_{mult}$, etc are the Feynman categories for operads, operads with multiplication, etc..
With the exception of PROPs and properads the weakly generating suboperad is the suboperad generated  by
the binary morphisms. In the last two cases, one sums over all binary operations for the inclusion.

\begin{table}
\label{universaltable}
\begin{tabular}{lllll}
$\FF$&Feynman category&$\FF_{\V}$ , $\FF_{\V}^{nt}$&weakly gen. subcat.&\\
\hline
$\operads$&operads&rooted trees&  pre-Lie\\
$\operads^{odd}$&odd operads&rooted trees  + orientation of set of edges& odd pre-Lie\\
$\operads^{pl}$&non-$\Sigma$ operads &planar rooted trees & all $\circ_i$ operations\\
$\operads_{mult}$&operads with mult.&b/w rooted trees&pre-Lie + mult.\\
$\CCyclic$&cyclic operads&trees& comm. mult.&\\
$\CCyclic^{odd}$&odd cyclic operads &trees + orientation of set of edges& odd Lie&\\
$\GG^{odd}$&unmarked modular&connected graphs &BV&\\
$\modular^{odd}$&$\K$--modular&connected + orientation on set of edges & odd dg Lie \\
&&+ genus marking&\\
$\modular^{nc,odd}$&nc $\K$-modular& orientation on set of edges & BV\\
&&+ genus marking&\\
$\dioperads$&dioperads&connected directed graphs w/o directed &Lie-admissible\\
&&loops or parallel edges&\\
$\props$&PROPs&directed graphs w/o directed loops&associative\\
$\properads$&properads&connected directed graphs w/o directed loops&Lie-admissible\\
$\props^{\circlearrowleft,ctd, odd}$&odd wheeled&connected directed graphs w/o parallel edges &odd dg\\
& properads &+ orientations of edges&Lie-admissible\\
$\props^{\circlearrowleft, odd}$& odd wheeled props & directed graphs w/o parallel edges &BV\\
&&+ orientations of edges&\\
\end{tabular}
\caption{\label{univtable}Here $\FF_{\V}$ and $\FF_{\V}^{nt}$ are
given as $\F_{\O}$ for the insertion operad. The former for the type of graph with unlabeled tails and
the latter for the version with no tails.}
\end{table}

\end{thm}
\subsection{Dual construction $\F^{\V}$}
\label{FhatVpar}
Dual to the Feynman category $\F_\V$ which acts on $colim_\V(\op{Q})$ for any $\op{Q}\in\fopsc$, one can define a Feynman category $\F^\V$ which acts on $lim_\V(\op{Q})$ for any $\op{Q}\in\fopsc$.  In particular if we are index--enriched over $\Ab$, we have:

\begin{equation}
 Hom_{\F^\V}(n,1):=\prod_{v\in \V}\bigoplus_{|X|=n}Hom_{\F}(X,\imath(\ast_{v}))_{Aut(X)}^{Aut(\ast_v)}
\end{equation}
which is suitably dual to the morphisms of $\F_\V$; compare equation $\ref{nateq}$.  To specify the action on $lim_\V(\op{Q})$, it suffices to describe the action on a target factor, which is given by projecting $lim_\V(Q)^{\tensor n}\to \tensor Q(v_i)\cong Q(X)$ and then composing with the given morphism $X\to \imath(\ast_v)$.
\subsection{Infinitesimal automorphism group, graph complex and $\mathfrak{grt}$}
Another nice example is given by $\Gra^{odd}$. This is the odd version of a Feynman category with trivial $\V$. The morphisms of this Feynman category are the elements of a graph operad, that is graphs/automorphisms with numbered vertices and substitution at the vertices.
There are several versions depending on whether one allows symmetries and flags, which are easy modifications. The most natural being $\GG$ itself. The odd version $\Gra^{odd}$ is then given by graphs together with an orientation of the vertices.

$\Gra^{\V}$ has a subspace $GC_2$ consisting of graphs with at least trivalent vertices, which is Kontsevich's graph complex.  The universal operations are generated by the one vertex loop graph, which gives rise to a BV operator $\Delta$ (if one allows disconnected graphs) and the two vertex graph with one edge which gives to a Lie algebra bracket and a differential as discussed in general above.
It is a theorem of \cite{Willwacher} that $H^0(GC_2)=\mathfrak{grt}$ and
if one extends coefficients to $k[[\hbar]]$ and adds the BV operator $D=d+\hbar \Delta$ then also
$H^0(GC_2[[\hbar]])=\mathfrak{grt}$ \cite{MerkulovWillwacher}.
Now one can pull back or push--forward this construction with morphisms of Feynman categories.

\subsection{Universal operations in iterated Feynman categories}
Let us consider $\FF_{\V'}^{\prime op}$. Here any morphism $\psi\in Hom_{\F}(Y,X)$
is a component of a universal morphism in $Hom_{\F'_{\V'}}^{op}$ given by precomposition
with $\psi$. It is precisely a component with source any morphism $\phi_0\colon X\to X_0$
and target $\psi\circ \phi_0$, so that this is inside the slice $\FF|_{[X_0]}^{\prime op}$.
In particular, this is also true for the slices $\FF|_{[\imath (\ast)]}^{\prime op}$ for any $\ast\in \V$,
that is we obtain components in $Hom_{\F'_{\V'}}(1,1)^{op}$.
If we are enriched over $\Ab$, then we can simply sum over these components and obtain an endomorphism
$d_{\psi}\in Hom_{\F_{\V'}^{\prime op}}(n,n)$, if $|X|=n$.  This only depends on the isomorphism class of $\psi$. We will in particular be interested in $Hom_{\F_{\V'}^{\prime op}}(1,1)$.

\subsubsection{A universal morphism in the case of generators}
\label{unidiffsec}
If we have a set of morphisms $\Phi$, such as a (sub)set of one--comma generators,
 we can consider $d_{\Phi}=\sum_{\phi\in \Phi}d_{\phi}$.
Here $d_{\phi}$ acts on $\chi$ with $s(\chi)=\bigotimes_v \imath(\ast_v)$ as
$\sum_v id\odo id \otimes \phi\otimes id \odo id\circ \chi$ where
$\phi$ is in the $v$--th position if $t(\phi)\simeq \ast_v$ and $0$ else. Hence $d_{\Phi}$ gives a morphism on $H=\colim_{\F} Hom(-,-)$ and also gives a morphism in any $Hom_{\F_{\V'}^{\prime op}}(n,n)$, in particular in $Hom_{\F_{\V'}^{\prime op}}(1,1)$.  In the case that this morphism respects the slices $\FFV|_{[\imath(\ast)]}$, and so can be restricted to these, we make the following definition:

\begin{df}\label{resolvingdf}
We call a set $\Phi$ {\em resolving} if $d_{\Phi}^2=0$ as a morphism in $Hom_{\F_{\V'}^{\prime op}}(1,1)$.
\end{df}

\subsubsection{Differential in the odd case}
The morphism $d_{\Phi}$ is particularly interesting in the case of an ordered presentation in the form of Definition \ref{aaadef}, when we are dealing with $\FF^{odd}$.  In particular in this case we may assign $-1$ to each quadratic relation.

Thus, if we suppose that for any two elements $\phi, \phi^\prime \in \Phi^1$, composible in the sense of the following square, that there exists a unique $\tilde \phi,\tilde \phi'\in \Phi^1$ such that the following square relation can be completed:

\begin{equation}
 \xymatrix{
X_1\otimes X_{2}\ar[r]^{\phi\otimes id}\ar@{.>}[d]_{id\otimes \tilde \phi'}&\ast_1\otimes X_2\ar[d]^{\phi'}\\
X_1\otimes \ast_2\ar@{.>}[r]^{\tilde \phi}&\ast''}
\end{equation}
then it will be the case that if we apply $d_{\Phi^1}$ twice, we obtain pairwise summands that differ by the order of two elements that anti--commute, and hence a differential.  I.e. $\Phi^1$ is a resolving set.

\begin{ex}
 The prime example comes from Feynman categories indexed over $\GG$. For instance for connected graphs, $\Phi=\Phi^1$ is a set of edge insertions considered in Lemma \ref{genlem}.  An example where $\Phi^1\neq \Phi$ is the non-connected case.  Here $\Phi^1$ are the edge insertions above, while $\Phi$ also contains the mergers $\mge{n}{m}$.
\end{ex}

 \begin{rmk}
 We will revisit the construction of these differentials in Section $\ref{feytranspar}$, where they will provide a generalization of the usual edge insertion differentials for trees and graphs with oriented edges appearing in (co)bar constructions.  In particular, after Lemma \ref{FGlem}, one gets co-differentials $d:=d_{\Phi^1}$ on any $\imath_*\imath^*\O(\imath(\ast))$, where $\imath$ is the inclusion of $\F$ into $\F^{odd}$.  Things turn out to be more interesting if there is an additional duality allowing one to turn the co-differentials into differentials.  This setting will permit the definition of the Feynman transform in the next section.
\end{rmk}

\section{Feynman transform, the (co)bar construction and Master Equations}
\label{feytranspar}

In this section we study and generalize a family of constructions known alternately as the bar/cobar construction or the Feynman transform.  These constructions have been exhibited for particular Feynman categories in \cite{GinzKap}, \cite{GKcyclic},\cite{GKmodular},  \cite{Gan}, \cite{Vallette}, \cite{wheeledprops} and have two important properties:
\begin{enumerate}
\item  For well chosen $\C$, double iteration gives a functorial cofibrant replacement thus (often) describing a duality (by single application) on their respective categories of $\fopsc$.
\item  These constructions produce objects which represent solutions to the Maurer-Cartan/quantum master equation in associated dg Lie and BV algebras.
\end{enumerate}

The necessary input for the Feynman transform and the (co)bar construction is a Feynman category along with an ordered presentation, which permits the corresponding odd notion, and a resolving subset, which permits the construction of the differential (c.f. above).  In practice, one way to establish these structures is to assign an appropriate notion of degree to the morphisms of $\FF$.  To this end we introduce the notion of a graded Feynman category as a basis for applying the Feynman transform and the (co)bar construction.  The output of these constructions is a quasi-free object (and even cofibrant for well chosen $\C$; see Section $\ref{htsec}$) whose algebras are solutions to a general master equation that generalizes the usual Maurer-Cartan/quantum master equations mentioned above in property $(2)$. In particular, we recover the equations studied in depth in \cite{KWZ}.

Considering property $(1)$ in this generalized context is more subtle.  In particular a double iteration may be a partial or a full resolution depending on the nature of the associated degree function, see Theorem $\ref{resthm}$ and Corollary $\ref{cofcor}$.

\subsection{Preliminaries}

\subsubsection{Quasi-free $\opers$}  Recall that given a Feynman category $(\V,\F,\imath)$ an $\oper$ $\op{O}$ is free if it is in the image of $\imath_\ast\colon\vmodsc\to\fopsc$.  Informally, we say an $\oper$ is quasi-free if it is free after forgetting the differential.  To make this precise, suppose that $\op{C}$ is an additive category and let $Kom(\op{C})$ be the category of complexes on $\op{C}$ and let $\op{C}^\Z$ be the category of $\Z$ indexed sequences of objects/morphisms of $\op{C}$.  Then there are standard inclusion/forgetful functors:
\begin{equation}\label{qfeq}
\op{C}\leftrightarrows \op{C}^\Z\leftrightarrows Kom(\op{C})
\end{equation}
It is apparent that the functors in $\ref{qfeq}$ are strict symmetric monoidal functors between symmetric monoidal categories, and this allows us to move between their respective categories of $\fops$.

\begin{df}\label{qfdef1}  Let $\op{C}$ be an additive category.  An $\oper$ $\op{O}\in \fops_{Kom(\op{C})}$ is quasi-free if the push-forward of $\op{O}$ along the functor $Kom(\op{C})\to\op{C}^\Z$ is free in $\fops_{\op{C}^\Z}$.
\end{df}

\subsubsection{Dualizing}

For a groupoid $\V$ there is an equivalence, even isomorphism, of categories $\V$ and $\V^{op}$.
The functor is just given by identity on objects and inversion on morphisms.
This means that there is a functorial isomorphism between $Fun(\V,\CalC)$ and $Fun(\V^{op},\CalC)$.
Combining it with the usual isomorphism $Fun(\V^{op},\CalC)$ and $Fun(\V,\CalC)$ we get
an isomorphism between $Fun(\V,\CalC)$ and $Fun(\V,\CalC^{op})$.  More generally for an additive category $\C$ we get an isomorphism between $Fun(\V,Kom(\C))$ and $Fun(\V,Kom(\C^{op}))$ using the opposite grading and opposite morphisms to interpolate between $Kom(\C)$ and $Kom(\C^{op})$ and the aforementioned isomorphisms on morphisms of complexes.  We denote the image of a functor $\O$ under this isomorphism by $\O^{op}$ (in both directions).

\begin{df}
 A duality for $\C$ is a contravariant functor $\vee\colon \C\to \C$ such that $\vee\vee\colon \C\to \C$ is equivalent to the identity.  Note that a duality for $\C$ induces a equivalence $Kom(\C)\cong Kom(\C^{op})$ by taking the opposite complex grading when passing to the opposite.  These equivalences will also be denoted by $\vee$.
\end{df}

\subsection{Graded Feynman categories}
\label{grfeysec}
\begin{definition}  A degree function on a Feynman category $\FF$ is a map $\text{deg}\colon Mor(\F)\to \N_0$, such that:
\begin{itemize}

\item $\text{deg}(\phi\circ\psi)=\text{deg}(\phi)+\text{deg}(\psi)$
\item $\text{deg}(\phi\tensor\psi)=\text{deg}(\phi)+\text{deg}(\psi)$
\item Every morphism is generated under composition and monoidal product by those of degree $0$ and $1$.
\end{itemize}
\begin{flushleft}
In addition a degree function is called proper if
\end{flushleft}
\begin{itemize}
\item $\text{deg}(\phi)=0 \Leftrightarrow \phi \text{ is an isomorphism}$.
\end{itemize}
\end{definition}

Given a graded Feynman category $\FF$ admitting a degree function and objects $A,B \in \F$ we define $C_n(A,B)$ to be the set of sequences of $m$ composible morphisms for $m\geq n$, such that exactly $n$ of the morphisms have nonzero degree and such that the source of the composition is $A$ and the target of the composition is $B$, modulo the equivalence relation given by
\begin{equation}
\left[A\to\dots \to X_{i-1}\stackrel{f}\longrightarrow X_i \stackrel{g}\longrightarrow X_{i+1}\to\dots\to B\right] \sim \left[A\to\dots\to X_{i-1}\stackrel{g\circ f}\longrightarrow X_{i+1}\to\dots\to B\right]
\end{equation}
if $f$ or $g$ is of degree $0$.  Note that there is a natural equivalence relation on the set $C_n(A,B)$ given by equating sequences whose composition is equivalent.  We call the equivalences classes of this relation composition classes.  Finally, we define $C^+_n(A,B)\subset C_n(A,B)$ to be the subset whose constituent morphisms have degree $\leq 1$.

\begin{definition}\label{gradeddef}\label{cubicaldef}  A graded Feynman category $\FF$ is a Feynman category with a degree function such that there is a free $\SS_n$ action on the set $C_n(A,B)$ for every $A$ and $B$ in $\F$ for which composition defines an isomorphism of sets
\begin{equation*}
C^+_n(A,B)_{\SS_n}\stackrel{\cong}\longrightarrow Hom_{\F}(A,B)
\end{equation*}
which sends composition of sequences to composition of morphisms.  Explicitly this means:
\begin{itemize}
\item  The $\SS_n$ action preserves composition class.
\item  If $\phi$ is a sequence with no morphisms of degree $>1$, the $\SS_n$ action is transitive on the representatives of the composition class of $\phi$.
\item  The $\SS_n$ action respects composition of sequences, considering $\SS_n\times\SS_m\subset \SS_{n+m}$.
\end{itemize}

A graded Feynman category is called cubical if the underlying degree function is proper.
\end{definition}

Let us make several remarks about graded Feynman categories.  First, note that graded Feynman categories are common; the Feynman categories for non-unital operads, cyclic operads, properads, and modular operads are all graded, in fact cubical.  The unital and non-connected versions are no longer cubical but are still graded Feynman categories, see Example \ref{cubicalex}.

Second, if $\FF$ is graded, then for $\op{O}\in\vmodsc$ we may regard the free $\oper$ $F(\op{O})$ as being graded, i.e. $F(\op{O})\in \fops_{\C^\Z}$, with the grading induced from that of $Mor(\F)$.  This grading will support a differential in the (co)bar construction and the Feynman transform.

Third, any graded Feynman category $\FF$ can be given an ordered presentation as follows.  Since $\FF$ is graded, any polygonal relation can be expressed via two classes of chains of morphisms $[f],[g]\in C_n(A,B)$ for some $n$, with no morphisms in the chain of degree $>1$.  Now, again since $\FF$ is graded, there is a unique $\sigma\in \SS_n$ such that $\sigma([f])=[g]$.  We define the value of the polygonal relation to be $(-1)^{|\sigma|}$.  Note that this value is multiplicative since the symmetric group action respects composition by assumption, and so determines an ordered presentation.  As a consequence, in the $\Ab$ enriched context, we can consider the corresponding odd version of a graded Feynman category which takes the twisted $\SS_n$ representation on morphism chains of degree $n$.  Note the degree function on $\FF$ allows us to define the degree of morphisms in $\F^{odd}$, and there is a correspondence between the degree $1$ morphisms in $\F$ and the degree $1$ morphisms in $\F^{odd}$.  Also, any graded Feynman category has a resolving subset of the generators by taking $\Phi^1$ to be the one--comma generators of degree $1$.

Finally, we remark that the cubicality condition may be viewed as a symmetric Segal condition in that it says that relative to degree $0$ morphisms, the degree $n$ morphisms are given by coinvariants of chains of n degree 1 morphisms.  This will guide our intuition in Section \ref{Wsec}.

\begin{ex}
\label{cubicalex}
Let us study $\GG$ in particular. If we allow three types of generators, edge contractions, loop contractions and mergers, we can assign degree $1$ to edge contractions and loop contractions. The relation \eqref{triangleeq}, will force mergers to have degree $0$. Thus the degree function is not proper.

\begin{figure}
    \centering
    \includegraphics[scale=.35]{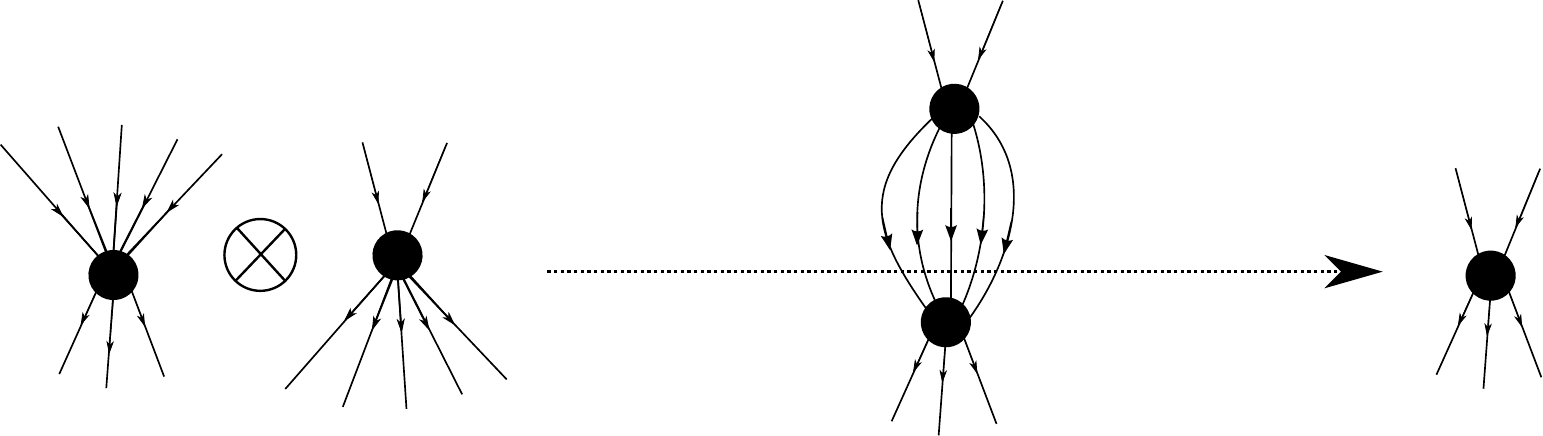}
    \caption{A $k=5$-multi-edge contraction.}
    \label{multiedge}
\end{figure}

Another interesting example are PROPs. A presentation is given by generators $k$--multi-edge contractions (Figure $\ref{multiedge}$) and mergers. Since there are no loop contractions, all relations are quadratic. Everything commutes. This means that we can assign degree $1$ to all the morphisms. However, in this version, the set of generators is not resolving, see Figure $\ref{propdiagram}$.

\begin{figure}
    \centering
    \includegraphics[scale=.35]{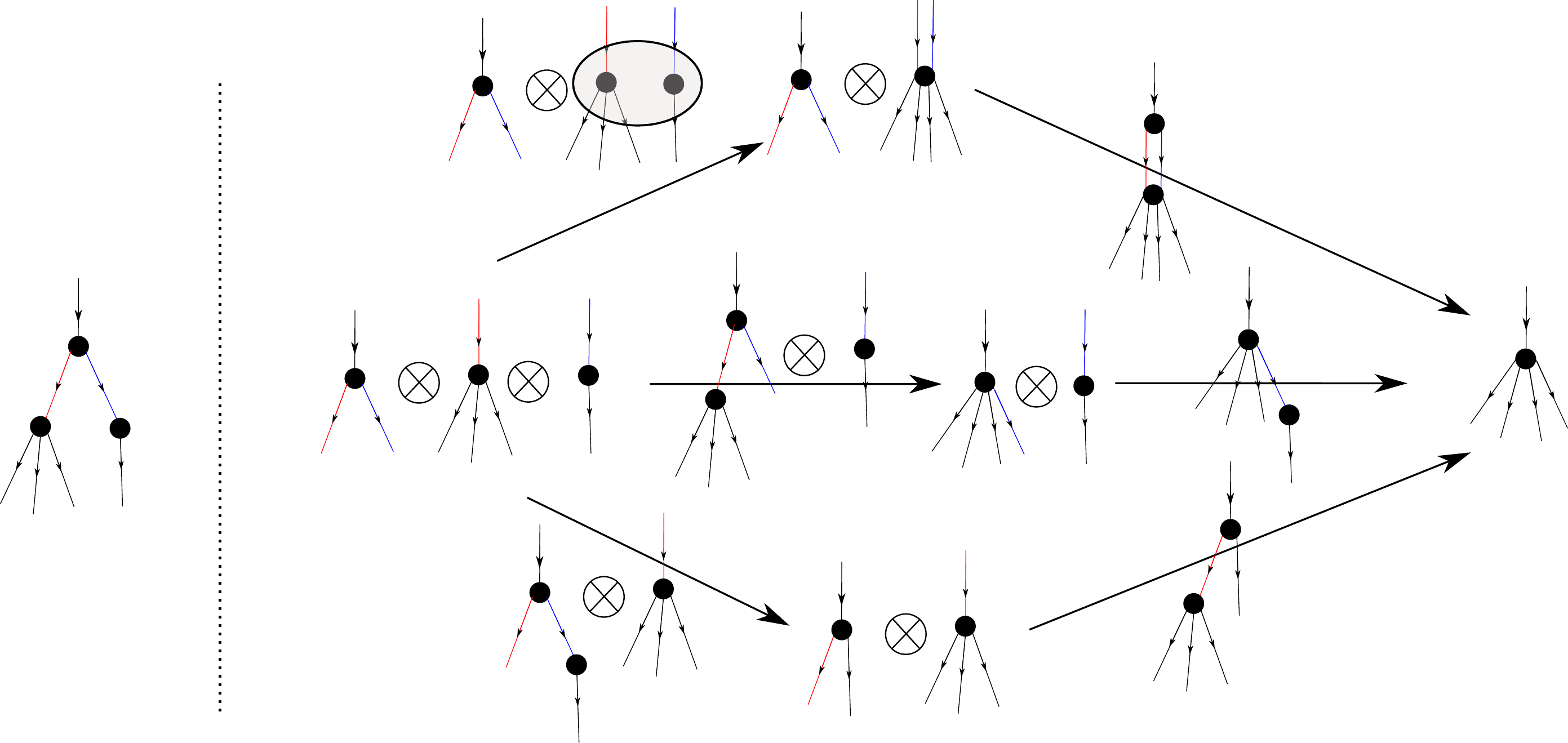}
    \caption{Giving (multi)-edge contractions and mergers degree $1$ is not cubical; the graph on the left has three consequent decompositions (pictured right).
    The ghost graphs are shown. The shaded region is for illustrative purposes only, to indicate the merger.  Here the (colored) edges are multi--edges.}
    \label{propdiagram}
\end{figure}

A second presentation is given by assigning degree $k$ to each $k$--multi--edge contraction. The same diagram induces a relation, which then forces mergers to be of degree $0$ and hence the degree function is not proper. For properads and wheeled properads, there are no problems.
\end{ex}

\subsection{The differential}
\label{diffsec}
In this subsection we will give a description of what will be the differentials in the (co)bar construction and the Feynman transform using the universality of the involved colimits.

\begin{definition}  Let $Y$ be an object of $\F$ and define $e_Y$ to be the category whose objects are degree $1$ morphisms $X\to Y$ and whose morphisms are isomorphisms between the sources.  A graded Feynman category is said to be of finite type if the category $e_Y$ has finitely many isomorphism classes of objects for each object $Y$.
\end{definition}
We now assume that $\FF$ is an $\Ab$ enriched graded Feynman category of finite type and that $\C$ is an additive category which is complete and cocomplete.

Fix $\op{O}\in\fops_{\C}$ and $\ast_v\in \V$.  First define $\Bar:=\ds\colim_{(\F^{odd}\downarrow \imath_{\FF^{odd}}(\ast_v))}(\imath\circ \op{O})^{op}\circ s$.  Next define a functor $L\colon Iso(\F^{odd}\downarrow \imath_{\FF^{odd}}(\ast_v)) \to \C^{op}$ by $\phi \mapsto lim_{e_{s(\phi)}}\imath(\op{O})^{op}\circ s$ and then define $\mathsf{A}:=\colim(L)$.  We will construct a square zero operator on $\Bar$ as a composite $\Bar \to \mathsf{A} \to \Bar$.

To construct these maps first observe that, using the $\F$-$\oper$ structure of $\op{O}$, $\op{O}^{op}(Y)$ admits an obvious cone over the functor $\op{O}^{op}\circ s \colon e_Y\to\op{C}^{op}$ and hence there is a map $\op{O}^{op}(Y)\to L(Y)$ for every object $Y\in\F$.

Next note that $\mathsf{A}$ admits a cocone over the functor $(\F^{odd}\downarrow \imath_{\FF^{odd}}(\ast_v))\colon(\imath\circ \op{O})^{op}\circ s\to \C^{op}$, for if $\phi\colon Y\to \imath_{\FF^{odd}}(\ast_v)$, then as a colimit $\mathsf{A}$ induces a map $L(Y)\to \mathsf{A}$, and hence a map $\op{O}^{op}(Y)\to \mathsf{A}$, by composition.  Thus by the universality of $\Bar$ we get a map $\Bar \to \mathsf{A}$.

Notice that in the composition $\op{O}^{op}(Y)\to L(Y)\to \mathsf{A}$ the first morphism came from something even and the second morphism came from something odd, so the picture to keep in mind in the $\GG$ indexed case is that a term in the image of the map $\Bar \to \mathsf{A}$ is an even graph inside an odd graph.

For the second morphism we show that $\Bar$ admits a cocone over the functor $L$, using the finiteness assumption.  Again let $\phi\colon Y\to \imath_{\FF^{odd}}(\ast_v)$.  As mentioned above the degree $1$ maps in $\F$ correspond to the degree $1$ maps in $\F^{odd}$, and such a map $\rho\colon X\to Y$ induces a map $X\to \ast_v$ and hence a sequence $L(Y)\to \op{O}^{op}(X) \to \Bar$.  Moreover the composite of this sequence is invariant under the action of $Mor(e_Y)$, (recall $e_Y$ is a groupoid), and so each isomorphism class $[\rho]$ in $e_Y$ gives us a map $L(Y) \to \Bar$.  Since $e_Y$ has finitely many isomorphism classes, and since $\C$ is additive, we may sum these maps to give a natural map $L(Y) \to \Bar$, and hence a cocone of $\Bar$ under $L$, whence a map $\mathsf{A}\to \Bar$.

\begin{definition}  We define $d_{\Phi^1}\colon \Bar \to \Bar$ via the composition $\Bar\to\mathsf{A}\to\Bar$ defined above.
\end{definition}

\begin{lemma} $d_{\Phi^1}^2=0$
\end{lemma}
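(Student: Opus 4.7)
The plan is to unpack $d_{\Phi^1}^2$ into a sum over pairs of composable degree-one morphisms, and then to exhibit an involution on this indexing set that acts with sign $-1$ via the odd structure, forcing the sum to vanish. This follows the same pattern as Proposition \ref{diffprop}, but adapted to the colimit/limit formulation.

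First I would chase the definitions to give an explicit description of $d_{\Phi^1}^2$ restricted to a single summand. Fix an object $\phi\colon Y\to \imath_{\FF^{odd}}(\ast_v)$ representing a cone element of $\Bar$, and consider the contribution of $\O^{op}(Y)$. Applying $d_{\Phi^1}$ once, the composite $\Bar\to\mathsf{A}\to\Bar$ produces, over isomorphism classes $[\rho_1]$ of degree-one morphisms $\rho_1\colon X_1\to Y$, contributions lying in the summand of $\Bar$ indexed by $\phi\circ\rho_1$ (viewed as an odd morphism of degree one on the right). Applying $d_{\Phi^1}$ a second time, the contribution on $\O^{op}(X_1)$ is then summed over isomorphism classes $[\rho_2]$ of degree-one morphisms $\rho_2\colon X_2\to X_1$, landing in the summand indexed by the chain $\phi\circ\rho_1\circ\rho_2$. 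Thus $d_{\Phi^1}^2$ is a sum indexed by (iso-classes of) length-two chains $X_2\xrightarrow{\rho_2}X_1\xrightarrow{\rho_1}Y$ of degree-one morphisms with given endpoints in the slice category.

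Next I would invoke the graded Feynman category structure (Definition \ref{gradeddef}). Such a chain of two degree-one morphisms represents a class in $C_2(X_2,Y)$ on which $\SS_2$ acts freely, preserving composition class, and transitively on representatives since no factor has degree $>1$. Therefore the nontrivial element of $\SS_2$ pairs our chain $(\rho_1,\rho_2)$ with another chain $X_2\xrightarrow{\rho_2'}X_1'\xrightarrow{\rho_1'}Y$ having the same underlying composite in $\F$; equivalently, this is the partner guaranteed by the associated ordered presentation (and it is precisely the situation encoded by the commutation square in Proposition \ref{diffprop}). Passing to $\F^{odd}$, the value of the relation identifying these two chains is $-1$ by the construction of the ordered presentation from the $\SS_n$-action.

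The final step is to verify that the two contributions indexed by $(\rho_1,\rho_2)$ and $(\rho_1',\rho_2')$ land in the same summand of $\Bar$, namely that indexed by the common composite viewed as an odd morphism, and that they differ by the sign $-1$ carried by the swap. Both maps $\Bar\to\mathsf{A}$ and $\mathsf{A}\to\Bar$ were defined via even cones in $\op{O}^{op}$, so the only sign in play is the odd-twist sign coming from reordering two degree-one morphisms in $\F^{odd}$. The two contributions thus cancel in pairs, showing $d_{\Phi^1}^2=0$.

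The main obstacle will be bookkeeping: making precise that the free $\SS_2$-orbit on $C_2$ matches isomorphism classes in the iterated sum (so no stabilizer issues halve the pairing), and that the universal maps constructed via the colimits $\Bar$ and $\mathsf{A}$ are actually $\SS_2$-equivariant in the correct odd sense. The finite type hypothesis ensures the sums are well defined, and the fact that every length-two chain of degree-one morphisms has a unique $\SS_2$-partner (by freeness) ensures the pairing is a genuine involution, so the cancellation is complete.
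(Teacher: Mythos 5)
Your proposal is correct and follows essentially the same route as the paper: expand $d_{\Phi^1}^2$ as a sum over chains of two degree-one morphisms with a fixed composite, and use the free, composition-class-preserving $\SS_2$-action of the graded structure, which acts by $-1$ in $\F^{odd}$, to cancel the contributions in pairs. The paper's proof is just a terser version of the same pairing argument.
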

\begin{proof}  For any morphism $Y\to \ast_v$, the induced composite $\op{O}^{op}(Y)\to \Bar\stackrel{d^2}\to \Bar$ is a sum over the odd degree $2$ morphisms with target $Y$.  Since $\FF$ is graded, the odd degree $2$ morphisms are indexed in pairs over the even degree $2$ morphisms and each pair adds to $0$, hence the claim.
\end{proof}

\begin{remark}  One can define a square zero operator in an analogous way for the cobar construction, and we will also refer to this operator as $d_{\Phi^1}$. It should be noted however that the fact that $d_{\Phi^1}$ squares to zero in the bar case comes from the fact that we took a free odd construction, whereas the fact that it squares to zero in the cobar case comes from the fact that we started with an odd $\oper$.
\end{remark}

\begin{remark}  The above construction of $d_{\Phi^1}$ also works if we replace the categories $\C$ and $\C^{op}$ with the categories $Kom(\C)$ and $Kom(\C^{op})$.  In this case the operator $d_{\Phi^1}$ has degree $1$ with respect to the induced grading on the free $\oper$ mentioned above.  In particular we will employ the total differential $d_{\op{O}^{op}}+d_{\Phi^1}$.
\end{remark}

\subsection{The (Co)bar construction and the Feynman transform}
In this section, we will define the named transformations in the case that we have an ordered presentation and a resolving subset,
e.g.\ if we are indexed over $\GG$.

\begin{df}\label{ftdef}
Let $\FF$ be a Feynman category enriched over $\Ab$ and with an ordered presentation and let $\FF^{odd}$ be its corresponding odd version.
Furthermore let $\Phi^1$ be a resolving subset of one--comma generators and let $\op{C}$ be an additive category.  Then:
\begin{enumerate}
\item  The bar construction is a functor
\begin{equation*}
\Bar \colon \fops_{Kom(\C)}\to \foddops_{Kom(\C^{op})}
\end{equation*}
defined by
\begin{equation*}
\Bar(\O):=\imath_{\FF^{odd} \; *}(\imath_{\FF}^*(\O))^{op}
\end{equation*}
together with the differential $d_{\op{O}^{op}}+d_{\Phi^1}$.

\item  The cobar construction is a functor
\begin{equation*}
\Cobar \colon \foddops_{Kom(\C^{op})}\to \fops_{Kom(\C)}
\end{equation*}
defined by
\begin{equation*}
\Cobar(\op{O}):=\imath_{\FF \; *}(\imath^\ast_{\FF^{odd}}(\op{O}))^{op}
\end{equation*}
together with the co-differential $d_{\op{O}^{op}}+d_{\Phi^1}$.

\item Assume there is a duality equivalence $\vee\colon \CalC\to \CalC^{op}$.
The Feynman transform is a pair of functors, both denoted $\FT$,
\begin{equation*}
\FT\colon \fops_{Kom(\C)}  \leftrightarrows \foddops_{Kom(\C)}\colon \FT
\end{equation*}
defined by
\begin{equation*}
 \FT(\O):=\begin{cases} \vee\circ \Bar(\O) & \text{ if } \O \in \fops_{Kom(\C)} \\ \vee\circ \Cobar(\O) & \text{ if } \O \in \foddops_{Kom(\C)}
\end{cases}
\end{equation*}
\end{enumerate}
\end{df}

Several remarks about this definition are in order.  First, for certain Feynman categories there is an equivalence of categories between $\fopsc$ and $\foddops_\C$.  For example this is the case for $\operads$ and $\CCyclic$, but not for $\modular$.  The general construction of the bar and cobar construction above agrees with the examples in the literature up to this equivalence.  These equivalences are given by various operadic suspensions and degree shifts, and were studied in detail in \cite{KWZ}.

Second, note that there are natural transformations $\Cobar\Bar\Rightarrow id$ and $id \Rightarrow \Bar\Cobar$ coming from the naturality of the left Kan extensions involved.  Moreover:
\begin{lemma}
\label{adjunctionlem}
The bar and cobar construction form an adjunction:
\begin{equation*}
\adj{\Cobar}{\foddops_{Kom(\op{C}^{op})}}{\fops_{Kom(\op{C})}}{\Bar}
\end{equation*}
\end{lemma}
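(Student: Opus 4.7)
The plan is to derive the adjunction by stringing together three already-available adjunctions and then verifying that the added (co)differentials are transported to each other under the resulting bijection of hom-sets. Throughout the argument the underlying graded objects will be handled first (where everything is formal) and the differentials will be dealt with at the end.

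First I would forget differentials and work with functors into $\op{C}^\Z$. Both constructions are built from three basic pieces: a pull-back $\imath^{\ast}$, the involutive isomorphism $(-)^{op}$ of \S7.1 which exchanges $\vmods_{Kom(\C)}$ with $\vmods_{Kom(\C^{op})}$ (and the corresponding $\opcat$ categories), and a left Kan extension $\imath_{\ast}$. By Theorem \ref{pushthm} (applied to both $\FF$ and $\FF^{odd}$) we have adjunctions $\imath_{\FF*}\dashv \imath_{\FF}^{\ast}$ and $\imath_{\FF^{odd}*}\dashv \imath_{\FF^{odd}}^{\ast}$, and the functor $(-)^{op}$ is its own (two-sided) adjoint because it is an isomorphism of categories. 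Chaining these yields
\begin{align*}
Hom(\Cobar(\op{P}),\op{O}) &= Hom\bigl(\imath_{\FF*}(\imath_{\FF^{odd}}^{\ast}\op{P})^{op},\,\op{O}\bigr)\\
 &\cong Hom\bigl((\imath_{\FF^{odd}}^{\ast}\op{P})^{op},\,\imath_{\FF}^{\ast}\op{O}\bigr)\\
 &\cong Hom\bigl(\imath_{\FF^{odd}}^{\ast}\op{P},\,(\imath_{\FF}^{\ast}\op{O})^{op}\bigr)\\
 &\cong Hom\bigl(\op{P},\,\imath_{\FF^{odd}*}(\imath_{\FF}^{\ast}\op{O})^{op}\bigr) \;=\; Hom(\op{P},\Bar(\op{O})),
\end{align*}
natural in $\op{P}$ and $\op{O}$. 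This settles the adjunction at the level of graded (differential-forgetting) $\opcat$.

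Second I would promote the bijection to the dg level. A morphism of graded $\F$-$\oper$s $f\colon \Cobar(\op{P})\to \op{O}$ is a chain map iff it intertwines $d_{\op{P}^{op}}+d_{\Phi^{1}}$ on the source with $d_{\op{O}}$ on the target; similarly on the bar side. The ``internal'' pieces $d_{\op{P}^{op}}$ and $d_{\op{O}}$ are transported to each other under the three adjunctions above, because $\imath_{\FF*}$, $\imath_{\FF^{odd}*}$, $\imath_{\FF}^{\ast}$, $\imath_{\FF^{odd}}^{\ast}$ and $(-)^{op}$ are all dg functors on the underlying sequences and their unit/counit are dg natural transformations. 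Thus one reduces the claim to compatibility of the external pieces $d_{\Phi^{1}}$ on the two sides.

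The main obstacle, which I would treat carefully, is precisely this compatibility of $d_{\Phi^{1}}$'s. Unwinding the bijection, a degree-$0$ $\F$-$\oper$ map $f\colon \Cobar(\op{P})\to \op{O}$ is determined by its restriction $\bar f\colon \imath_{\FF^{odd}}^{\ast}\op{P}\to (\imath_{\FF}^{\ast}\op{O})^{op}$ to vertices (equivalently $\bar f\colon \op{P}\to \Bar(\op{O})$ by the Kan extension). Using the explicit description of $d_{\Phi^{1}}$ via the cocones of \S7.3, a direct check on the generators in $\op{P}(\ast_{v})$ shows that the ``bar'' differential on $\Bar(\op{O})$ evaluated on $\bar f$ corresponds term-by-term to the ``cobar'' codifferential on $\Cobar(\op{P})$ composed with $f$: in each case the contribution comes from summing over degree-$1$ morphisms $X\to \imath(\ast_{v})$ and applying the structure map of $\op{O}$ (respectively the structure map of $\op{P}$ read backwards via $(-)^{op}$). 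This is exactly the matching of the two appearances of $\Phi^{1}$ in Definition \ref{ftdef}, so $f$ is a chain map iff $\bar f$ is, completing the proof.
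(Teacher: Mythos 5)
Your proof is correct and follows essentially the same route as the paper: the adjunction is obtained by composing the two Kan-extension adjunctions of Theorem \ref{pushthm} (for $\FF$ and for $\FF^{odd}$) with the self-inverse equivalence $(-)^{op}$, so that $\Bar$ is the composite of right adjoints and $\Cobar$ the composite of left adjoints. Your additional second step — checking that the resulting bijection of graded hom-sets matches the two $d_{\Phi^{1}}$ contributions and hence restricts to dg morphisms — is a point the paper's proof passes over silently, and your sketch of it is the right argument.
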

\begin{proof}
This follows from Theorem $\ref{pushthm}$.  In particular this theorem tell us that there is an adjunction induced by $\imath_{\FF^{odd}}$ between $\overline{\op{V}}^\tensor\text{-}\opcat_{Kom(\C^{op})}$ and $\foddops_{Kom(\C^{op})}$.  Composing this adjunction with the equivalence of categories $\overline{\op{V}}^\tensor\text{-}\opcat_{Kom(\C^{op})}\cong \overline{\op{V}}^\tensor\text{-}\opcat_{Kom(\C)}$ we still have an adjunction, switching left and right.  Call this adjunction `A'.  Also, this theorem tells us that we have an adjunction induced by $\imath$ between $\overline{\op{V}}^\tensor\text{-}\opcat_{Kom(\C^{op})}$ and $\fops_{Kom(\C^{op})}$, call this adjunction `B'.  Now by definition the bar construction is the composite of the right adjoint of A after the right adjoint of B, and the cobar construction is the composite of the left adjoint of B after the left adjoint of A, whence the claim.
\end{proof}

In light of the previous lemma and the terminology ``bar/cobar''  it is natural to ask if $\Cobar\Bar\Rightarrow id$ gives a resolution.  The answer in general depends on the nature of $\FF$.

\begin{theorem}\label{resthm}  Let $\FF$ be a cubical Feynman category and $\op{O}\in \fops_{Kom(\C)}$.  Then the counit $\Cobar\Bar(\op{O})\to\op{O}$ of the above adjunction is a levelwise quasi-isomorphism.
\end{theorem}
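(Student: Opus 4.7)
The plan is to compute the counit map levelwise. Fix $\ast_v \in \V$ and unpack $\Cobar\Bar(\op{O})(\ast_v)$ using the explicit formulas for push-forward from the previous sections. By Theorem \ref{pushthm} and the description of $d_{\Phi^1}$ above, this value is a colimit (taken over $Iso(\F\downarrow \ast_v)$ of a pull-back from $Iso(\F^{odd}\downarrow -)$) of the values $\op{O}(s(\phi_0))$ attached to composable chains $\phi = \phi_k\circ\cdots\circ\phi_1\circ\phi_0$ whose composite lands in $\ast_v$. Using the quadratic hypothesis (proper degree function: $\deg(\phi)=0$ iff $\phi$ is an isomorphism) together with Definition \ref{gradeddef}, every morphism has a canonical decomposition into degree-1 generators up to the $\SS_n$ action, and the equivalence relation defining $C_n(A,\ast_v)$ becomes rigid. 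Hence the complex $\Cobar\Bar(\op{O})(\ast_v)$ can be identified with a direct sum, indexed by isomorphism classes of chains $[\phi_1,\ldots,\phi_k]$ of degree-1 one-comma generators composing to a morphism $X\to \ast_v$, of $\op{O}(X)$ tensored with an orientation line. The total differential is $d_{\op{O}} + d_{\Bar} + d_{\Cobar}$, where $d_{\Bar}$ and $d_{\Cobar}$ are the two halves of $d_{\Phi^1}$: one splitting a generator into two, the other composing two adjacent generators.

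Next I would filter this complex by the number $k$ of degree-1 generators appearing in the chain. The associated spectral sequence has $E_0$-page given by the internal differential $d_{\op{O}}$ on each summand, so $E_1$ is the same complex but with $\op{O}$ replaced by its homology $H_*(\op{O})$; it therefore suffices to prove the statement when $\op{O}$ has zero internal differential. Under this reduction the remaining differential $d_{\Bar}+d_{\Cobar}$ is, summand-by-summand, the differential of a normalized bar-cobar complex for a chain of composable one-comma morphisms. The counit map sends the length-one chains (with a single generator being an isomorphism, hence identified via the quadratic condition with $\op{O}(\ast_v)$ itself) to $\op{O}(\ast_v)$ and kills all longer chains.

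The heart of the argument is then to exhibit a contracting homotopy $h$ on the kernel of the counit. I would define $h$ on a chain $[\phi_1,\ldots,\phi_k]\tensor x$ with $x\in \op{O}(s(\phi_1))$ by inserting a formal identity at the outermost (or innermost) position; the standard bar-cobar calculation shows that $(d_{\Bar}+d_{\Cobar})h + h(d_{\Bar}+d_{\Cobar}) = \mathrm{id}$ modulo the image of the counit. The verification amounts to the usual cancellation between the splitting of the inserted identity and the composition of two adjacent factors, together with the observation that all mismatched pairs cancel because the symmetric group action in Definition \ref{gradeddef} produces precisely the signs used in the ordered presentation.

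The hard part is checking that $h$ descends to the colimit, that is, that it is equivariant under isomorphisms of sources and targets and compatible with the quotient identifying degree-0 morphisms with identities. This is where the quadraticity assumption is essential: without it, the insertion of a formal identity would have to be traded for the insertion of some nontrivial degree-0 morphism, and the symmetric-group-transitivity condition in Definition \ref{gradeddef} would no longer suffice to make $h$ well defined. Granting this well-definedness, the spectral sequence converges (each filtration degree is a finite sum by the finite-type assumption implicit in the setup) and the statement follows.
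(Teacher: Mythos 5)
Your setup (unpacking $\Cobar\Bar(\op{O})(\ast_v)$ as a colimit over triangles/cut chains, splitting over the source, and filtering to reduce to zero internal differential) matches the paper's, and your spectral-sequence justification of that reduction is a reasonable expansion of what the paper leaves implicit. But the heart of your argument --- the contracting homotopy --- has a genuine gap, in fact two. First, ``inserting a formal identity'' is a no-op in this complex: for a quadratic Feynman category the degree function is proper, so degree-$0$ morphisms are exactly the isomorphisms, and these are already collapsed in the definition of $C_n(A,B)$. The complex is indexed by where the cut sits in a decomposition of $\phi\colon X\to\ast_v$ into degree-$1$ generators, and inserting an identity does not move the cut; the operator you describe is literally zero. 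The homotopy you actually need is the ``cone off a vertex'' homotopy that transfers one distinguished degree-$1$ generator across the cut. Second --- and this is the point you explicitly defer --- that homotopy requires choosing a distinguished generator in each composition class, and such a choice is in general not equivariant under the $\SS_n$-action of Definition \ref{gradeddef} nor under isomorphisms of sources, so it need not descend to the colimit. The augmented chain complex of a simplex is acyclic but does not admit an equivariant contracting homotopy over $\Z$, so ``granting this well-definedness'' is precisely granting the theorem.

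The paper's proof sidesteps the need for any equivariant homotopy: after factoring out $\op{O}(X)$ (colimits commute with $\tensor$ in each variable), the remaining purely combinatorial complex splits over composition classes, and for a class of degree $n$ the $n!$ decompositions into degree-$1$ morphisms assemble the intermediate objects $Y_S$, $S\subseteq\{1,\dots,n\}$, into a semi-simplicial set whose chain complex is the augmented chain complex of an $(n-1)$-simplex --- acyclic by inspection, with no compatibility conditions to check. Acyclicity of the tensor product with $\op{O}(X)$ then follows. If you want to salvage your route, you would need to either prove the descent of a chosen homotopy (e.g.\ by averaging in characteristic $0$, which is weaker than the stated result) or, better, pass to the split combinatorial model first, at which point you are reproducing the paper's argument.
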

\begin{proof}  We follow the proof in the case of classic operads given in \cite{GinzKap} Theorem 3.2.16.  Fix $\ast_v\in \imath(\V)$.  Assume that the internal differential is zero, and thus it is enough to show that the fibers in $\Cobar\Bar(\op{O})(\ast_v)$ are acyclic.

First, note that $\Cobar\Bar(\op{O})(\ast_v)$ is a colimit of $\op{O}\circ s$ over isomorphism classes of triangles:
\begin{equation*}
\xymatrix{X \ar[dr]_\phi \ar[rr] && Y \ar[dl] \\ & \ast_v &}
\end{equation*}
and it is enough to show that the colimit over such triangles is acyclic with respect to the differential taking
\begin{equation*}
\xymatrix{X \ar[dr]_\phi \ar[rr] && Y \ar[dl] \\ & \ast_v &} \mapsto \ds\sum_{\substack{\rho \colon Y\to Y^\prime \\ \text{deg}(\rho)=1}} \xymatrix{X \ar[dr]_\phi \ar[rr] && Y^\prime \ar[dl] \\ & \ast_v &}
\end{equation*}

Note that the complex associated to all such triangles splits over the isomorphism class of the source $X$, and as such we may restrict our attention to the subcomplex $\colim_{X\to Y\to \ast_v}\op{O}(X)$. Using the fact that colimits commute with $\tensor$ in each variable, we may factor out $\op{O}(X)$ to write said complex as a tensor product of $\op{O}(X)$ with a purely combinatorial complex.
	
We now use the necessary assumption that $\FF$ is cubical.  Since $\FF$ is cubical, isomorphism classes of such triangles with source $X$ are exactly the set $\coprod_{n\leq 2} C_n(X,\ast_v)$.  The fact that our Feynman category is graded gives this set the natural structure of a semi-simplicial set which recovers the differential.  We will show that this complex is acyclic.  In particular we will show that $\coprod_{n\leq 2} C_n(X,\ast_v)$ splits over composition class as a coproduct of semi-simplicial sets with associated chain complex equal to the augmented chain complex of a simplex of degree $1$ less than the degree of the composition class of the morphism.

Let $\phi\colon X \to \ast_v$ of degree $n$.  Define $\Delta_\phi(m)$ to be generated by the set of triangles
\begin{equation*}
 \xymatrix{X \ar[dr]_\phi \ar[rr] && Y \ar[dl]^\psi \\ & \ast_v &}
\end{equation*}
such that $\text{deg}(\psi)= m+1$.  In particular $\Delta_\phi(m)=0$ if $m\geq n$.  Since $\FF$ is graded, the morphism $\phi$ can be written as a composition of degree $1$ morphisms in exactly $n!$ distinct ways up to isomorphism.  Pick such a decomposition of $\phi$ into degree $1$ morphisms, labeled as:
\begin{equation*}
 X=:Y_{\emptyset} \stackrel{e^{id}_1}\longrightarrow Y_{\{1\}} \stackrel{e^{id}_2}\longrightarrow Y_{\{1,2\}} \stackrel{e^{id}_3}\longrightarrow\dots \stackrel{e^{id}_n}\longrightarrow Y_{\{1\cdc n\}}:=\ast_v
\end{equation*}
For $\sigma\in\SS_n$, label the action by $\sigma$ on the above sequence as:
\begin{equation}\label{deg1dec}
X=Y_{\emptyset} \stackrel{e^{\sigma}_{\sigma(1)}}\longrightarrow Y_{\sigma(\{1\})} \stackrel{e^{\sigma}_{\sigma(2)}}\longrightarrow Y_{\sigma(\{1,2\})} \stackrel{e^{\sigma}_{\sigma(3)}}\longrightarrow\dots \stackrel{e^{\sigma}_{\sigma(n)}}\longrightarrow Y_{\sigma(\{1\cdc n\})}=\ast_v
\end{equation}

Considering all such sequences simultaneously, given a set $S\subset \{1\cdc n\}$ of size $n-r$, there are exactly $r$ morphisms emanating from $Y_S$ and this set of morphisms has a total order induced by the order on $\{1\cdc n\}\setminus S$.  Define $r$ face maps by sending a sequence $X\to Y_S\to \ast_v$ to $X\to Y_{S\cup j}\to \ast_v$ for $j \in \{1\cdc n\}\setminus S$, along with the appropriate compositions of morphisms $Y_S\to Y_{S\cup j}$.  This gives $\Delta_\phi$ the structure of a semi-simplicial set whose chain complex is the augmented chain complex of an $n-1$ simplex.

We thus conclude that the combinatorial chain complex associated to $\coprod_{n\leq 2} C_n(X,\ast_v)$ is acyclic, from which the claim follows.
\end{proof}

\begin{remark}  The previous theorem can be used to show that when $\op{C}= \dgvect_k$, under the above hypotheses, the counit of the adjunction is a cofibrant replacement in a model category structure on $\fopsc$, see Corollary $\ref{cofcor}$.
\end{remark}

\subsection{A general master equation.}  In this subsection let $\op{C}$ be the category $\dgvect_k$ over a field of characteristic $0$.  Since the Feynman transform is quasi-free, a map from the underlying $\V$-module has only one obstruction to inducing a map from the Feynman transform, namely that the induced map respects the differentials on the respective sides.    In \cite{KWZ}, the following tabular theorem was compiled which states that in various studied cases this obstruction is measured by associated master equations.

\begin{theorem}\label{methm}(\cite{Bar},\cite{MerkVal},\cite{wheeledprops},\cite{KWZ})  Let $\op{O}\in \fopsc$ and $\op{P}\in \foddops_\op{C}$ for an $\F$ represented in Table $\ref{Ftable}$.  Then there is a bijective correspondence:
\begin{equation*}
Hom(\FT(\op{P}),\op{O})\cong ME(lim(\op{P} \tensor\op{O}))
\end{equation*}
\end{theorem}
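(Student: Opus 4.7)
The plan is to exploit the quasi-free nature of $\FT(\op{P})$. By Definition \ref{ftdef}, $\FT(\op{P})$ is obtained, after forgetting differentials, as $\vee\circ\imath_{\FF^{odd}\;*}(\imath^*_{\FF}\op{P})^{op}$ (or the analogous pushforward in the other direction). Hence by the adjunction $(\imath_\ast,\imath^\ast)$ of Theorem \ref{pushthm}, restricted to the underlying $\V$-module categories and combined with the duality $\vee$, a morphism $\FT(\op{P})\to\op{O}$ of \emph{graded} (i.e.\ non-differential) $\F$-$\opers$ is the same data as a morphism of $\V$-modules from the generating $\V$-module of $\FT(\op{P})$ into $\op{O}$. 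Using $\vee\vee\simeq id$ and the hom-tensor adjunction, this set of $\V$-module maps is canonically identified with an element of $\lim_{\V}(\op{P}\otimes\op{O})$; here the limit structure is the dual construction $\F^{\V}$ of \S\ref{FhatVpar}, which is exactly what receives the universal operations.

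Thus the entire content of the statement reduces to identifying which elements $\alpha\in \lim(\op{P}\otimes\op{O})$ give rise to chain maps. The differential on $\FT(\op{P})$ is the sum $d_{\op{O}^{op}}+d_{\Phi^1}$. The first summand records the internal differentials of $\op{P}$ and $\op{O}$, and the condition that $\alpha$ intertwine them is the ``linear'' part of the master equation (a Maurer--Cartan style $d\alpha=0$ condition after transporting through $\vee$). The second summand $d_{\Phi^1}$ was constructed in \S7.3 from the odd resolving subset of degree-one generators; under the identification of $\V$-module maps with $\lim(\op{P}\otimes\op{O})$, the compatibility of $\alpha$ with $d_{\Phi^1}$ translates into a polynomial expression in $\alpha$ applied to the universal operations on $\lim(\op{P}\otimes\op{O})$ of \S\ref{FhatVpar}. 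I would then match this polynomial, case by case on the rows of Table \ref{Ftable}, with the master equation operator $ME$ listed there (Maurer--Cartan for operads/cyclic operads, quantum master equation for modular operads, etc.).

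The key step, and the main obstacle, is the translation of $d_{\Phi^1}$-compatibility into the relevant master equation. One first reduces to values on one-comma generators in $\FF^{odd}$; there $d_{\Phi^1}$ inserts exactly one odd generator from $\Phi^1$, so the induced equation on $\alpha$ has at most quadratic contributions from the usual edge-contraction/merger generators described in \S\ref{graphstrucsec} (plus the merger producing the BV operator in the non-connected case). These are precisely the operations that \S6 identified as the universal binary/loop operations on $\lim(\op{P}\otimes\op{O})$, i.e.\ the pre-Lie/Lie bracket, the BV operator $\Delta$, and---together with the internal differential---the dg/BV-algebra structure recorded in Table \ref{univtable}. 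Writing the condition $\alpha\circ d_{\Phi^1}=\text{(composition in }\op{O}\text{)}\circ\alpha$ in these terms yields $d\alpha+\tfrac12[\alpha,\alpha]=0$ or $d\alpha+\hbar\Delta\alpha+\tfrac12\{\alpha,\alpha\}=0$, depending on whether mergers are present.

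Finally, one checks this correspondence is bijective: the map from $\Hom(\FT(\op{P}),\op{O})$ to solutions of $ME$ is injective because $\FT(\op{P})$ is quasi-free and the underlying $\V$-module map determines the whole $\oper$ map; it is surjective because, given an $\alpha$ satisfying $ME$, the reverse of the above translation produces a chain map. Naturality in $\op{O}$ and $\op{P}$ is automatic from the adjunctions used. The verification for each specific row of Table \ref{Ftable} amounts to matching the generators of $\Phi^1$ with the binary/loop universal operations listed in the corresponding row of Table \ref{univtable}, which was the content of \S6.
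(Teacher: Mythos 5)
Your proposal is correct and follows essentially the same route the paper takes: the paper states Theorem \ref{methm} as a compilation from the cited literature and then proves the general form (Theorem \ref{methm2}) by exactly your strategy --- use quasi-freeness and the adjunction of Theorem \ref{pushthm} to identify graded maps $\FT(\op{P})\to\op{O}$ with $\V$-module maps $\op{P}^{\vee}\to\op{O}$, hence with elements of $\lim_{\V}(\op{P}\tensor\op{O})$ via $Hom_{Aut(v)}(\op{P}(v)^{\ast},\op{O}(v))\cong \op{P}(v)\tensor^{Aut(v)}\op{O}(v)$, and then translate compatibility with $d_{\Phi^1}$ into the master equation through the convolution/$\F^{\V}$-action of \S\ref{FhatVpar}. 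The only residual work in both treatments is the row-by-row identification of the resulting universal operations with the brackets and BV operators of Table \ref{univtable}, which the paper delegates to the cited references.
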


\begin{table}[htb] \renewcommand{\arraystretch}{1.2}
\centering
\begin{tabular}{p{2.2cm}||l|p{1.9cm}|l}
Name of  $\fopsc$ & lim$(\op{P}\tensor\op{O})$ & Algebraic Structure & Master Equation (ME)\\ \hline\hline
operad \cite{GinzKap},\cite{GJ}& $\ds\prod_n(\op{P}(n)\tensor\op{O}(n))^{S_n}$ & odd pre-Lie & $d(-)+ -\circ- =0$  \\ \hline
cyclic operad \cite{GKcyclic} & $\ds\prod_n(\op{P}(n)\tensor\op{O}(n))^{S^+_n}$ & odd Lie & $d(-)+ \frac{1}{2}[-,-] =0$  \\ \hline
modular operad \cite{GKmodular} & $\ds\prod_{(n,g)}(\op{P}(n,g)\tensor\op{O}(n,g))^{S_n^+}$ & odd Lie + $\Delta$& $d(-)+ \frac{1}{2}[-,-]+\Delta(-) =0$  \\ \hline
properad  \cite{Vallette}& $\ds\prod_{(n,m)}(\op{P}(n,m)\tensor\op{O}(n,m))^{S_n\times S_m}$ & odd Lie-admissible & $d(-)+ -\circ- =0$  \\ \hline
wheeled properad \cite{wheeledprops} & $\ds\prod_{(n,m)}(\op{P}(n,m)\tensor\op{O}(n,m))^{S_n\times S_m}$ & odd Lie ad. + $\Delta$ & $d(-)+ -\circ- +\Delta(-) =0$  \\ \hline
wheeled prop \cite{KWZ} & $\ds\prod_{(n,m)}(\op{P}(n,m)\tensor\op{O}(n,m))^{S_n\times S_m}$ & dgBV & $d(-)+ \frac{1}{2}[-,-] +\Delta(-) =0$  \\
\end{tabular}
\caption{The $\opers$  in column 1 have a notion of Feynman transform.  A citation for the Feynman transform or closely related construction is also given in column 1.  In columns 2 and 3, we suppose $\op{P}\in \foddops_{\op{C}}$ and $\op{O}\in \fopsc$ and give the colimit of the product in column 2 and the algebraic structure that this dg vector space necessarily has.  Note that starting with $\op{P}$ odd results in the Lie brackets being odd.  The final column gives the master equation relevant to Theorem $\ref{methm}$.  Since the Lie brackets are odd, solutions to these master equations are of degree $0$.}
\label{Ftable}
\end{table}
Since we now have described the Feynman transform as a general construction, we can give the general version of the tabular Theorem $\ref{methm}$.

Fix a Feynman category $\FF$ which permits the Feynman transform (as in Definition $\ref{ftdef}$).  By definition, the differential $d$ specifies a map
\begin{equation*}
Hom_\F(Y,\imath(\ast_v))^{Aut(\ast_v)}_{Aut(Y)}\to \coprod_{X\stackrel{1}\to Y} Hom_\F(X, \imath(\ast_v))^{Aut(\ast_v)}_{Aut(X)}
\end{equation*}
and by pushing forward the orbit of the identity map of $\imath(\ast_v)$, when $Y=\imath(\ast_v)$, we get a distinguished element on the right hand side, which we can interpret as $Hom_{\F^{\V}}(1,1)$; cf.\ \ref{FhatVpar}.
Similarly for each $n$ the set $Hom_{\F^\V}(n,1)$ has a distinguished element by pushing forward the identity as in the above procedure for all $v\in \V$.  The action then specifies an $n$-cochain in $Hom(lim_\V(\op{Q})^{\tensor n}, lim_\V(\op{Q}))$.  Call this cochain $\Psi_{\op{Q},n}$.

\begin{definition}
\label{MEdef}
For a Feynman category $\FF$ admitting the Feynman transform and for $\op{Q}\in\fopsc$ we define the formal master equation of $\FF$ with respect to $\op{Q}$ to be the completed cochain $\Psi_{\op{Q}}:= \prod \Psi_{\op{Q},n}$.  If there is an $N$ such that $\Psi_{\op{Q},n}=0$ for $n>N$, then we define the master equation of $\FF$ with respect to $\op{Q}$ to be the finite sum:
\begin{equation*}
d_{\op{Q}}+\ds\sum_{n}\Psi_{\op{Q},n} = 0
\end{equation*}
We say $\alpha\in lim(\op{Q})$ is a solution to the master equation if $d_\op{Q}(\alpha)+\sum_{n}\Psi_{\op{Q},n}(\alpha^{\tensor n}) = 0$, and we denote the set of such solutions as $ME(lim(\op{Q}))$.
\end{definition}

With the groundwork now laid, the tabular Theorem $\ref{methm}$ can be stated in a general form:

\begin{theorem}\label{methm2}  Let $\op{O}\in \fopsc$ and $\op{P}\in \foddops_\op{C}$ for an $\F$ admitting a Feynman transform and master equation.  Then there is a bijective correspondence:
\begin{equation*}
Hom(\FT(\op{P}),\op{O})\cong ME(lim(\op{P} \tensor\op{O}))
\end{equation*}
\end{theorem}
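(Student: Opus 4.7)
The plan is to reduce the Hom-set on the left to a set of $\V$-module maps with an extra closedness condition, and then to recognize that condition as the master equation on the right.

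First, since $\FT(\P) = \vee\circ\Cobar(\P) = \vee\circ\imath_{\FF\,*}(\imath^*_{\FF^{odd}}(\P))^{op}$ is quasi-free as an $\F$-$\oper$ in the sense of Definition \ref{qfdef1}, I would apply the adjunction of Theorem \ref{pushthm} to identify
\begin{equation*}
Hom_{\fopsc}(\FT(\P),\O) \;\cong\; \bigl\{f\in Hom_{\vmodsc}\bigl(\vee(\imath^*_{\FF^{odd}}(\P)^{op}),\,\imath^*\O\bigr)\;:\; f \text{ intertwines the differentials}\bigr\}.
\end{equation*}
The differential on $\FT(\P)$ restricted to generators is the sum of the internal differential $d_{\P^{op}}$ (coming from $\P$ and dualized) and the Kan-extension differential $d_{\Phi^1}$ produced by the resolving subset $\Phi^1$ of degree-one morphisms in $\FF^{odd}$. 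On the target side only the internal differential $d_\O$ acts.

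Second, I would pass from Hom to a pairing. Using the duality $\vee\colon\C\to\C^{op}$ together with the fact that $\V$ is a groupoid, we have for each $\ast_v\in\V$ a natural identification
\begin{equation*}
Hom_\C(\vee\P(\ast_v)^{op},\O(\ast_v))\;\cong\; \P(\ast_v)\otimes\O(\ast_v),
\end{equation*}
compatible with the $Aut(\ast_v)$-action. Taking the product over $\ast_v\in \V$ and invariants, and observing that, in the finite-type situation, the restriction $\imath^*_{\FF^{odd}}(\P)$ inherits the same $\V$-module structure (up to the sign/parity shift that is absorbed by $\vee$), I obtain an identification of the space of $\V$-module maps above with the underlying graded vector space of $\lim_{\V}(\P\otimes \O)$. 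An element $\alpha$ on the right corresponds to a $\V$-module map $f_\alpha$ on the left.

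Third, I would translate the differential condition into the master equation, which is the heart of the argument. The internal-differential part of ``$f_\alpha$ intertwines differentials'' becomes $d_{\P\otimes\O}(\alpha)=0$, i.e.\ the $d$-term in the master equation. The remaining condition is the match between $d_{\Phi^1}$ on the source and the compositions in $\O$ on the target. Here I would use the very definition of $d_{\Phi^1}$ from \S7.3 together with the construction of the cochains $\Psi_{\O,n}\in Hom(\lim_\V(\O)^{\otimes n},\lim_\V(\O))$: both are built by pushing forward the identity $id_{\imath(\ast_v)}$ along the degree-one morphisms with target $\imath(\ast_v)$, and then decomposing by the hereditary condition of Definition \ref{commadef}(ii) and the one-comma decomposition of morphisms in $\F_\V^{\V}$ (cf.\ \S\ref{FhatVpar}). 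Transporting through the pairing of Step 2, the equation $f_\alpha\circ(d_{\P^{op}}+d_{\Phi^1})=d_\O\circ f_\alpha$ becomes precisely $d_\O(\alpha)+\sum_n \Psi_{\O,n}(\alpha^{\otimes n})=0$, i.e.\ $\alpha\in ME(\lim(\P\otimes\O))$. Bijectivity is then automatic from the bijectivity at each of the two preceding steps.

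The main obstacle will be the very last identification. Tracking the signs and the combinatorics is nontrivial: the $d_{\Phi^1}$ differential inserts a single degree-one morphism, which after transport through the pairing must be grouped into $n$-ary contributions (one for every choice of how many ``new'' vertices appear on the target side of the inserted generator), and these groupings need to match $\Psi_{\O,n}$ on the nose. The case-by-case verifications of Theorem \ref{methm} tabulated from \cite{Bar,MerkVal,wheeledprops,KWZ} give me confidence that the combinatorics work; the content of the general statement is that this matching is a formal consequence of the defining property of $\Psi_{\O,n}$ (pushing forward identities along degree-one morphisms) together with the hereditary condition, and does not require a graph-by-graph check. Once this step is carried out and the $Aut(\ast_v)$-equivariance is tracked (which is automatic since both sides are built from $\V$-functors), the theorem follows.
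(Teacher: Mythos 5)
Your proposal follows essentially the same route as the paper's proof: forget the differential, use the free/forgetful adjunction together with the pairing $Hom_{Aut(v)}(\op{P}(v)^\ast,\op{O}(v))\cong \op{P}(v)\tensor^{Aut(v)}\op{O}(v)$ to identify underlying maps with elements of $lim_\V(\op{P}\tensor\op{O})$, and then match the condition of intertwining $d_{\op{P}^{op}}+d_{\Phi^1}$ with the master equation by recognizing that both $d_{\Phi^1}$ and the cochains $\Psi_{\op{Q},n}$ arise from pushing forward identities along degree-one morphisms and acting by convolution. The step you flag as the main obstacle is handled in the paper exactly as you anticipate, by observing that the convolution $\op{P}^\ast(v)\to\op{P}^\ast(X)\to\op{O}(X)\to\op{O}(v)$ induced by each degree-one $\gamma_i$ coincides with the $\gamma_i$-action of $\F^\V$ on $\bar\eta$.
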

\begin{proof}  First, there is a forgetful map $Hom_{dg}(\FT(\op{P}),\op{O})\to Hom_{gr}(F(\op{P}^\ast),\op{O})$ given by forgetting the differential.  By adjunction this specifies a map in $Hom_{\vmodsc }(\op{P}^\ast,\op{O})$, which in turn specifies an element of $lim_V(\op{P}\tensor\op{O})$ via the natural isomorphism\\ $$Hom_{Aut(v)}(\op{P}(v)^\ast,\op{O}(v))\cong \op{P}(v)\tensor^{Aut(v)}\op{O}(v)$$

Let $\eta\in Hom_{gr}(F(\op{P}^\ast),\op{O})$ and let $\bar{\eta}$ be the corresponding element in $lim(\op{P}\tensor \op{O})$.  It remains to show that $\eta$ induces a dg map if and only if $\bar{\eta}\in ME(lim(\op{P}\tensor\op{O}))$.  To see this fix $v=\imath(\ast_v)$.  Then the pushforward of $id_v$ as above via the differential produces finitely many isomorphism classes of degree $1$ morphisms $X\to v$ which we label $\{\gamma_i\}_{i\in I}$.  Now each map $\gamma_i$ contributes to $\Psi_{\op{Q}, |X|}$ via the map $Hom(\op{P}^\ast(X), \op{O}(X))\to Hom(\op{P}^\ast(v), \op{O}(v))$ given by convolution:
\begin{equation}
\op{P}^\ast(v)\stackrel{\gamma_i^\ast}\to\op{P}^\ast(X)\cong\tensor\op{P}^\ast(v_j) \stackrel{\tensor\eta_{v_j}}\to \tensor\op{O}(v_j) \cong\op{O}(X)\stackrel{\gamma_i}\to \op{O}(v)
\end{equation}
and the fact that $\F^\V$ acts by first projecting tells us that this convolution is equal to the $\gamma_i$ action on $\bar{\eta}$, from which the claim follows.

\end{proof}

\begin{remark} Interpreting master equation solutions as morphisms suggests a notion of homotopy equivalence of such solutions: namely two solutions are homotopic if the associated morphisms are homotopic.  However, in order to make sense of the notion of homotopy classes of morphisms we need to study the homotopical algebra of $\fopsc$.  This is done in Section $\ref{htsec}$; see Theorem $\ref{hcthm}$.
\end{remark}

\section{Homotopy theory of $\fopsc$.}\label{htsec}  In this section we give conditions on a symmetric monoidal category and model category $\op{C}$ which permit the construction of a model category structure on $\fopsc$.  In so doing we generalize prior work done in particular Feynman categories including operads \cite{spit}, \cite{hinich}, \cite{BM1}, props \cite{Fresse}, properads \cite{MerkVal2}, and colored props \cite{JY}.

After establishing the main theorem (Theorem $\ref{modelthm}$) we consider several implications.  For example our perspective of $\opcat$ as symmetric monoidal functors allows for consideration of the relationships between these model categories under the adjunctions induced by either morphisms of Feynman categories or adjunctions of the base categories.  As another example we show that, in a dg context, applying the bar construction/Feynman transform returns a cofibrant $\F$-$\oper$, and as a result the bar-cobar construction/double Feynman transform gives a functorial cofibrant replacement in the above model structure when $\FF$ is  cubical.  The fact that the bar construction/Feynman transform is cofibrant gives us, via the general theory, the notion of homotopy classes of maps from the bar construction/Feynman transform.  As seen above, such maps are given by Maurer-Cartan elements in a certain dg Lie algebra \cite{KWZ}.  There is then a notion of homotopy equivalence on both sides of this correspondence and we show these notions coincide.

Throughout this section we let $\C$ denote a category which is a model category and a closed symmetric monoidal category.  We do not assume any compatibility between the model and monoidal structures $\textit{a priori}$, but we will make such assumptions as necessary.  Throughout this section we also assume that $\V$ is a small category, an assumption which will be needed to access the known results on transfer of model structures across adjunctions.

\subsection{Preliminaries}  We will begin the section by establishing some preliminaries needed to prove this section's main result, Theorem $\ref{modelthm}$.

\subsubsection{Reduction}  In order to endow the category $\opcat$ over a Feynman category $\FF$ with a model structure it will be enough to consider what we call the reduction of $\FF$.  Recall (cf. Lemma $\ref{redlem}$) that the category  $\F$--$\opers$ over a Feynman category is equivalent to the category  $\tilde\F$-$\opers$ of its reduction.  In what follows we will establish a model structure in the context of strict Feynman categories, and the following lemma tells us that this is sufficient to establish a model structure on the categories $\fops$ over all Feynman categories.

\begin{lemma}  Let $C$ be a model category and let $\adj{F}{C}{D}{G}$ be an equivalence of categories.  Then $D$ is a model category by defining $\phi$ to be a weak equivalence/fibration/cofibration if and only if $G(\phi)$ is.
\end{lemma}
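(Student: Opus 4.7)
The plan is to verify each of the model category axioms for $D$ by transferring along the equivalence, using repeatedly the fact that an equivalence $\adj{F}{C}{D}{G}$ preserves and reflects isomorphisms, limits, colimits, commutative squares, and retracts. Let me denote by $\eta\colon \mathrm{id}_D \Rightarrow FG$ and $\epsilon\colon GF \Rightarrow \mathrm{id}_C$ the natural isomorphisms exhibiting the equivalence.

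First, I would verify the easy axioms. Since $C$ is complete and cocomplete and $G$ is an equivalence, $D$ inherits (co)completeness: given a diagram in $D$, apply $G$, form the (co)limit in $C$, and transport back along $F$. The 2-out-of-3 axiom for weak equivalences in $D$ follows immediately from applying $G$ and using 2-out-of-3 in $C$, because $G$ is a functor. For closure under retracts: if $\phi$ is a retract of $\psi$ in $D$, then $G(\phi)$ is a retract of $G(\psi)$ in $C$, so the defined classes in $D$ are each closed under retracts.

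Next, the lifting axiom. Given a commutative square in $D$ with left map a (trivial) cofibration $i$ and right map a (trivial) fibration $p$ (in the defined sense), apply $G$ to obtain a square in $C$ whose left and right maps are a (trivial) cofibration and a (trivial) fibration respectively. Solve the lifting problem in $C$ to obtain a diagonal $\ell\colon G(B) \to G(X)$. Then $F(\ell)\colon FG(B) \to FG(X)$ combined with the components of $\eta$ at the four objects of the original square yields the required diagonal in $D$. Commutativity of the two resulting triangles in $D$ follows from naturality of $\eta$ together with commutativity in $C$, using that $G$ reflects commutative diagrams (being faithful).

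The main work is the factorization axiom. Given $\phi\colon X \to Y$ in $D$, apply $G$ and factor $G(\phi) = p \circ i$ in $C$ where $i\colon G(X) \to Z$ is a (trivial) cofibration and $p\colon Z \to G(Y)$ a (trivial) fibration. Applying $F$ and splicing in components of $\eta$, form the composite
\begin{equation*}
X \xrightarrow{\eta_X} FG(X) \xrightarrow{F(i)} F(Z) \xrightarrow{F(p)} FG(Y) \xrightarrow{\eta_Y^{-1}} Y.
\end{equation*}
I would set $\tilde i = F(i)\circ \eta_X$ and $\tilde p = \eta_Y^{-1}\circ F(p)$; naturality of $\eta$ and the original factorization give $\tilde p\circ \tilde i = \phi$. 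To check $\tilde i$ is a (trivial) cofibration in $D$, apply $G$: using $\epsilon$ one sees $G(\tilde i)$ is isomorphic (in the arrow category of $C$) to $i$, and the (trivial) cofibrations in $C$ are stable under isomorphism (an easy consequence of retract closure and 2-out-of-3). Similarly $G(\tilde p)$ is isomorphic to $p$.

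The only mild subtlety, and what I would call the principal obstacle, is to confirm that the classes so defined are actually isomorphism-invariant in $C$ in a way that lets us conclude from ``$G(\tilde\phi) \cong \phi_0$ in the arrow category'' that $G(\tilde\phi)$ lies in the same class as $\phi_0$. This is standard but worth making explicit, and once it is in hand every axiom reduces to the corresponding axiom in $C$ via $G$.
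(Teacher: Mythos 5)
Your proposal is correct and follows essentially the same route as the paper: a direct verification of the five model category axioms by applying $G$, solving in $C$, and transporting back. The only cosmetic difference is in the transport-back step, where you use the quasi-inverse $F$ together with the natural isomorphisms $\eta,\epsilon$ (e.g.\ building the factorization in $D$ with middle object $F(Z)$ and checking membership of the classes via $G(\tilde i)\cong i$ in the arrow category), whereas the paper invokes fullness, faithfulness and essential surjectivity of $G$ directly (correcting the middle object of the factorization inside $C$ using that isomorphisms are both acyclic cofibrations and acyclic fibrations); both hinge on the same isomorphism-invariance of the distinguished classes that you rightly flag.
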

\begin{proof}
We prove this by direct verification of the axioms as enumerated in \cite{Hir}.

\textbf{M1:  }(limit axiom)  The functor $F$ is both a left adjoint and a right adjoint of $G$.  Thus it preserves all colimits and limits.  In particular $D$ is complete and cocomplete since $C$ is.

\textbf{M2:  }(two out of three axiom)  Let $f,g\in Mor(D)$ be composible with two of $f,g,fg$ weak equivalences.  Then the corresponding two of the three of $G(f),G(g),G(fg)=G(f)G(g)$ are weak equivalences in $C$, and hence so is the third.  Thus by definition of the weak equivalences in $D$, the third of the list $f,g,fg$ is also a weak equivalence.

\textbf{M3:  }(retract axiom)  Suppose $f$ is a retract of $g$ in $D$.  This means that there is a commutative diagram in $D$:
\begin{equation*}
\xymatrix{d_1 \ar[r] \ar[d]^f & d_2 \ar[r] \ar[d]^g & d_1 \ar[d]^f \\ d_3 \ar[r] & d_4 \ar[r] & d_3}
\end{equation*}
such that the top and bottom lines are the identity.  Applying $G$ to the diagram we see that $G(f)$ is a retract of $G(g)$.  If $g$ is a weak equivalence/fibration/cofibration then so is $G(g)$, then so is $G(f)$ by $\textbf{M3}$ in $C$, then so is $f$ by definition.  Hence $\textbf{M3}$ holds in $D$.

\textbf{M4:  }(lifting axiom)  Consider the following diagram in $D$
\begin{equation*}
\xymatrix{d_1 \ar[r] \ar[d]^i & d_2 \ar[d]^p \\ d_3 \ar[r] & d_4}
\end{equation*}
Suppose that either $i$ is a cofibration and $p$ is an acyclic fibration, or that $i$ is an acyclic cofibration and $p$ is a fibration.  Applying $G$ to the diagram we get a commutative diagram in $C$ which admits a lift due to $\textbf{M4}$ in $D$.  This lift is in the image of $G$ since the functor $G$ is full.  Moreover, the fact that the lift commutes in $C$ means that the preimage commutes in $D$ since $G$ is faithful.  Thus $\textbf{M4}$ holds in $D$.

\textbf{M5:  }(factorization axiom)  Let $f$ be a morphism in $D$.  Then $G(f)$ can be written as a cofibration followed by an acyclic fibration, say $G(f)=rs$.  Let $c$ be the source of $r$ and the target of $s$.  Since $G$ is essentially surjective, there exists a $c^\prime$ in the image of $G$ such that $c\cong c^\prime$.  Notice that by the lifting characterizations of acyclic fibrations and acyclic cofibrations (in the model category $C$), every isomorphism in $C$ is both an acyclic fibration and an acyclic cofibration.  Thus by composing with these isomorphisms, $G(f)$ can be written as a cofibration followed by an acyclic fibration as $G(f)=r^\prime s^\prime$ where the source of $r^\prime$ and the target of $s^\prime$ are $c^\prime$. Then since $c^\prime$ is in the image of $G$, and since $G$ is full and faithful, this factorization is in the image of $G$, hence $f$ factors as a cofibration followed by an acyclic fibration.  Similarly $f$ factors as an acyclic cofibration followed by a fibration.
\end{proof}

\subsubsection{Limits and colimits in $\fopsc$.}
Let $\FF$ be a strict Feynman category.

We make the following definition for convenience.
\begin{definition}  Given a Feynman category $(\V,\F,\imath)$ we define $\V_{id}$ to be the category with the same objects as $\V$ and only identity morphisms.  A $\V$ sequence in $\op{C}$ is defined to be a functor $\V_{id}\to \op{C}$.  The category of such is denoted $\vseq$.
\end{definition}

\begin{lemma}\label{limitslemma1}  The forgetful functor $\vmods_\C\to\vseq$ creates all limits and colimits.
\end{lemma}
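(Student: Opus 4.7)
The plan is to reduce the statement to the standard fact that for any small category $D$ and any (co)complete category $\C$, the forgetful functor $\mathrm{Fun}(D,\C) \to \prod_{d \in \mathrm{ob}(D)} \C$ creates all (co)limits that exist in $\C$. Since $\C$ is a model category it is complete and cocomplete (axiom \textbf{M1}), so all small (co)limits exist. Note that $\vseq$ is by definition the functor category on the discrete category $\V_{id}$, which is the same as the product $\prod_{v \in \V} \C$; hence (co)limits in $\vseq$ exist and are computed componentwise.

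First I would fix a small diagram $D\colon I \to \vmods_\C$ and assume given a (co)limit $L = (L_v)_{v\in \V}$ in $\vseq$ of the underlying $\V$-sequence diagram $U \circ D$; by componentwise computation, each $L_v$ is a (co)limit in $\C$ of the diagram $i \mapsto D_i(v)$, with canonical structure maps $\lambda^i_v$. Next I would lift $L$ to a $\V$-module by defining, for each morphism $f\colon v \to w$ in $\V$, a morphism $L_f\colon L_v \to L_w$ via the universal property: the family $\{D_i(f) \circ \lambda^i_v\}_{i\in I}$ (in the colimit case; dually for limits) forms a co-cone over $i \mapsto D_i(v)$, because each $D_i$ is a functor and the $\lambda^i_v$ are compatible. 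Functoriality of $f \mapsto L_f$ (preservation of identities and compositions) follows from the uniqueness clause of the universal property applied to the two obvious co-cones.

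Then I would check that this $L \in \vmods_\C$, together with the structure maps $\lambda^i$, is a (co)limit of $D$ in $\vmods_\C$: the maps $\lambda^i\colon D_i \to L$ are natural transformations by construction of $L_f$, and the universal property for $\vmods_\C$ follows pointwise from the universal property in $\C$ (any competing cocone yields competing cocones at each $v$, producing unique maps $L_v \to M_v$ which assemble to a natural transformation by the same universal-property argument).

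The only remaining point is uniqueness of the lift: any $\V$-module structure on $(L_v)$ making all $\lambda^i$ natural must satisfy $L_f \circ \lambda^i_v = \lambda^i_w \circ D_i(f)$ for every $i$, but this system of equations characterizes $L_f$ uniquely by the universal property, so the lift is forced. This is precisely the ``creates (co)limits'' condition, and no step uses anything beyond the definitions and that $\C$ is (co)complete; the groupoid hypothesis on $\V$ is not needed. I do not anticipate a serious obstacle, since this is essentially the pointwise-computation principle for (co)limits in functor categories; the only care needed is bookkeeping of the universal-property arguments verifying functoriality of $f \mapsto L_f$.
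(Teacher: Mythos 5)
Your proof is correct and follows essentially the same route as the paper's: take the pointwise (co)limit in $\vseq$, use the universal property together with the maps $D_i(f)$ to induce the $\V$-module structure on it, and then check it is (co)limiting in $\vmodsc$ because it is so after applying the forgetful functor. Your additional remarks on functoriality of $f\mapsto L_f$ and uniqueness of the lift fill in details the paper leaves implicit, and your observation that the groupoid hypothesis on $\V$ is not needed is accurate.
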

\begin{proof}  Let $H\colon \vmodsc\to\vseq$ be the forgetful functor.  Then let $\alpha\colon J\to \vmodsc$ be a functor such that $colim(H\circ\alpha)$ exists.  Define $L:=colim(H\circ\alpha)$.  Then we will show that $L$ is naturally a $\V$-module.  Let $\psi\colon v\to w$ be a morphism in $\V$.  Then $L(w)$ naturally forms a cocone over the functor $H\alpha(-)(v)\colon J\to C$, and the colimit of this functor is $L(v)$.  As such we get a map  $L(v)\to L(w)$ which we define to be $L(\psi)$.  Then $L$ naturally forms a cocone over $\alpha$, which is limiting since its image under $H$ is.  A similar argument show that $H$ creates limits.
\end{proof}

\begin{lemma}\label{limitslemma2}  The forgetful functor $\fopsc\to\vmods_\C$ creates all limits, filtered colimits, and reflexive coequalizers.
\end{lemma}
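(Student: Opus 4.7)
The plan is to reduce to the strict case via Lemma \ref{redlem} and then, for a diagram $\alpha \colon J \to \fopsc$, to extend the (co)limit of $HG\alpha$ in $\vseq$ (guaranteed by Lemma \ref{limitslemma1}) to a strong symmetric monoidal functor $\F \to \C$. The three classes of (co)limits interact differently with the tensor product of $\C$, and this dictates two somewhat different extension procedures: one based on the universal property (for limits) and one based on the pointwise formula (for the colimits).

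For limits, let $\CO_j := \alpha(j)$ and $L := \lim_j G\CO_j$ in $\vmodsc$. Since $\FF$ is strict, every object of $\F$ is literally an aggregate $X = \bigotimes_{v \in I} \imath(\ast_v)$, and I would define $\tilde L(X) := \bigotimes_v L(\ast_v)$. For a morphism $\phi \colon X \to \imath(\ast)$ in $(\F\downarrow\V)$, the family of composites
\[
\bigotimes_v L(\ast_v) \longrightarrow \bigotimes_v \CO_j(\ast_v) \xrightarrow{\sim} \CO_j(X) \xrightarrow{\CO_j(\phi)} \CO_j(\ast)
\]
is compatible as $j$ varies and so lifts uniquely through $L(\ast) = \lim_j \CO_j(\ast)$, defining $\tilde L(\phi)$. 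Axiom (ii) then extends $\tilde L$ unambiguously to all of $\F$, with monoidal constraints supplied componentwise. A cone $\CO \to \CO_j$ in $\fopsc$ restricts to a cone in $\vmodsc$, yielding $H\CO \to L$, whose unique lift to a strong monoidal natural transformation $\CO \to \tilde L$ is obtained on each $X = \bigotimes_v \imath(\ast_v)$ as $\CO(X) \xrightarrow{\sim} \bigotimes_v \CO(\ast_v) \to \bigotimes_v L(\ast_v) = \tilde L(X)$, with naturality on morphisms following from the same universality argument.

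For filtered colimits and reflexive coequalizers I would instead use the pointwise formula $\tilde L(X) := \colim_j \CO_j(X)$, which exists by bicompleteness of $\C$. The key fact is that the closed symmetric monoidal structure on $\C$ makes $\otimes$ preserve colimits in each variable, while the diagonal $J \to J \times J$ is final whenever $J$ is sifted; both filtered categories and the indexing category of a reflexive coequalizer are sifted. Hence
\[
\tilde L(X \otimes Y) = \colim_j \bigl(\CO_j(X) \otimes \CO_j(Y)\bigr) \cong \bigl(\colim_j \CO_j(X)\bigr) \otimes \bigl(\colim_j \CO_j(Y)\bigr) = \tilde L(X) \otimes \tilde L(Y),
\]
so $\tilde L$ is automatically strong monoidal; its restriction to $\V$ agrees with $L$ and the pointwise construction immediately provides the universal property in $\fopsc$.

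The main obstacle, and the reason the statement omits general colimits, is precisely this interaction with $\otimes$. For limits one sidesteps the failure of $\otimes$ to preserve them by invoking the universal property on morphisms; for colimits the needed compatibility $(\colim A_j)\otimes(\colim B_j)\cong \colim(A_j\otimes B_j)$ requires the diagonal $J\to J\times J$ to be final, restricting the permissible indexing categories to sifted ones. General colimits of $\F$-$\opers$ must instead be built via the push-forward/Kan extension of Theorem \ref{pushthm}, and in particular do not descend from $\vmodsc$ along the forgetful functor.
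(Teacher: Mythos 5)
Your proposal is correct and follows essentially the same route as the paper: for limits the $\F$-$\oper$ structure on the levelwise limit is induced via the universal property, and for filtered colimits and reflexive coequalizers the key point is that $\otimes$ commutes with these (sifted) colimits because it preserves colimits in each variable — exactly the fact the paper cites from Fresse/Rezk. The only cosmetic difference is that you phrase this via finality of the diagonal $J\to J\times J$ and define the colimit pointwise on all of $\F$ rather than extending monoidally from $\V$, which amounts to the same thing.
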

\begin{proof}
Let $G\colon \fopsc\to\vmodsc$ be this forgetful functor.  First we consider limits. Consider a diagram
\begin{equation*}
J\stackrel{\alpha}\longrightarrow \fopsc \stackrel{G}\longrightarrow \vmodsc
\end{equation*}
such that the limit $lim(G\circ\alpha)$ exists in $\vmodsc$.  Call this limit $L$.  Now $L$ is $\textit{a priori}$ a $\V$-module, but it has a natural $\F$-$\oper$ structure as follows.  First extend $L$ to $Iso(\F)$ strict monoidally.  Let $\lambda\colon \tensor_{i=1}^n v_i\to v$ be a generating morphism in $\F$.  Then for every morphism $a\stackrel{f}\to b$ in $J$ we have the diagram:
\begin{equation*}
\xymatrix{\ds\tensor_{i=1}^n \alpha(a)(v_i) \ar[rr]^{\alpha(a)(\lambda)} && \alpha(a)(v) \ar[rr]^{\alpha(f)(v)} && \alpha(b)(v) && \tensor_{i=1}^n \alpha(b)(v_i) \ar[ll]_{\alpha(b)(\lambda)} \\ &&& L(v) \ar[ur] \ar[ul] &&& \\ &&& \tensor_{i=1}^nL(v_i) \ar[uurrr] \ar[uulll] \ar@{.>}[u]^{\exists  !} &&& }
\end{equation*}
where the diagonal arrows come from the cone morphisms in $\vmodsc$.  Using the universality of the limit the diagram gives us the morphism $\tensor_{i=1}^nL(v_i) \to L(v)$ which we define to be $L(\lambda)$.  This makes $L$ an $\F$-$\oper$, after extending monoidally, and makes the cone maps into morphisms in $\fopsc$.  Thus $L$ admits a cone over $\alpha$.  Finally, the fact that this cone is limiting follows from the fact that the corresponding cone over $G\circ\alpha$ is.  Hence $G$ creates limits.

Now let $\beta\colon K\to \fopsc$ be a functor such that $colim(G\circ\beta)$ exists and such that $K$ is either a filtered category or a reflexive coequalizing category.  Define $C$ to be this colimit.  Now $C$ is $\textit{a priori}$ a $\V$-module, and we will show it is naturally an $\F$-$\oper$.  First extend $C$ monoidally to $Iso(\F)$.  On morphisms it is enough to define the image by $C$ of the generating morphisms.  Let $\psi\colon \tensor_{i=1}^n v_i\to v_0$ be such a morphism (so $v_i\in \V$).  Notice that for each $v\in\V$, $colim(\beta(-)(v))\cong colim(G\circ\beta)(v)$, and hence $\times_{i}colim(\beta(-)(v_i))$ exists in $\C^{\times n}$ and is isomorphic to the colimit of the functor
\begin{equation*}
K\stackrel{\Delta}\to K^{\times n}\stackrel{\times\beta(-)(v_i)}\longrightarrow \op{C}^{\times n}
\end{equation*}
which for some $x\in K$ sends $x$ to $\times_i \beta(x)(v_i)$.  Since $\op{C}$ is monoidally closed, $\tensor$ preserves colimits in each variable separately, and since the category $K$ is supposed to be either filtered or reflexive coequalizing, we have that $\tensor$ preserves colimits in all variables (see \cite{Fressebook} section 1.2 or \cite{Rezk} lemma 2.3.2), that is:
\begin{equation*}
\tensor_{i}colim(\beta(-)(v_i))\cong colim(\tensor_i \beta(-)(v_i))
\end{equation*}
and using $\psi$ to construct a cocone, the universality of this colimit produces a map to $colim(\beta(-))(v_0)$.  Thus,
\begin{multline*}
C(\tensor_i v_i)= \tensor_{i}colim(G\circ\beta)(v_i)\cong \tensor_{i}colim(\beta(-)(v_i))\\
\cong colim(\tensor_i \beta(-)(v_i))\to colim(\beta(-)(v_0)) \cong colim(G\circ\beta)(v_0)=C(v_0)
\end{multline*}
gives the image of $\psi$.  Extending monoidally $C$ may be viewed as an object in $\fopsc$ and it is immediate from the construction that the cocone maps for $C$ in $\vmodsc$ lift to maps in $\fopsc$.

It thus remains to show that as a cocone over $\beta$ in $\fopsc$, $C$ is universal.  If $Z$ is any cocone over $\beta$ in $\fopsc$ then by universality in $\vmodsc$ there is a unique morphism of underlying $\V$-modules $C\to Z$.  Using this morphism of $\V$-modules and given a generating morphism $\phi\colon X\to v$, it is possible to write $Z(v)$ as a cocone over $\beta(-)(X)$ in two $\textit{a priori}$ distinct ways, but the uniqueness of the morphism $C(X)\to Z(v)$ given by the universality of $C(X)$ in $\C$ ensures that these two morphisms are in fact the same.  As such this morphism $C\to Z$ lifts to a morphism in $\fopsc$, completing the proof.
\end{proof}

\begin{lemma}\label{limitslemma3}  Let $\op{C}$ be a category with all small limits and colimits.  Then the categories $\vseq$, $\vmodsc$, and $\fopsc$ have all small limits and colimits.
\end{lemma}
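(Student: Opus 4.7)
The plan is to proceed upward along the chain of forgetful functors $\fopsc \to \vmodsc \to \vseq$, invoking Lemmas \ref{limitslemma1} and \ref{limitslemma2} in turn. Since $\V_{id}$ is a discrete category, $\vseq = \mathrm{Fun}(\V_{id},\C)$ is simply an $\mathrm{ob}(\V)$-indexed product of copies of $\C$, so all small limits and colimits exist and are computed componentwise. Then by Lemma \ref{limitslemma1} the forgetful functor $\vmodsc \to \vseq$ creates both limits and colimits, so $\vmodsc$ is complete and cocomplete as well.

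For $\fopsc$, limits are immediate from Lemma \ref{limitslemma2}: the forgetful functor $G\colon \fopsc \to \vmodsc$ creates all small limits, so these exist in $\fopsc$ with the $\F$-$\oper$ structure reconstructed as in the proof of that lemma. The only real content lies in constructing general colimits, for which Lemma \ref{limitslemma2} supplies only filtered colimits and reflexive coequalizers. Here I would invoke the monadic presentation $\fopsc \simeq \vmodsc^{\T}$ furnished by Theorems \ref{patternthm} and \ref{triplethm} together with Corollary \ref{triplecor}, which exhibits $\T = GF$ as a colimit of tensor products in $\C$. Under our standing hypothesis that $\otimes$ preserves colimits in each variable, $\T$ therefore preserves reflexive coequalizers (and in fact all filtered colimits), which is the key input for constructing arbitrary colimits of $\T$-algebras.

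The main obstacle, and the only step requiring verification rather than citation of earlier results in the paper, is the construction of the colimit itself. My plan is to use the canonical presentation of each value $D(j) = (GD(j),\alpha_j)$ of a small diagram $D\colon J \to \fopsc$ as a reflexive coequalizer of free algebras,
\begin{equation*}
F\T GD(j) \;\rightrightarrows\; FGD(j) \longrightarrow D(j),
\end{equation*}
with parallel arrows given by $F\alpha_j$ and by the counit $\eps_{FGD(j)}\colon FGFGD(j) \to FGD(j)$ of the free/forgetful adjunction, and with common section $F\eta_{GD(j)}$ supplied by the unit. Assembling these presentations over $j\in J$ and using that $F$ is a left adjoint and therefore preserves colimits, the outer colimits in $\vmodsc$ (which exist by the previous paragraph) yield a reflexive pair in $\fopsc$
\begin{equation*}
F\bigl(\mathrm{colim}_J \,\T G D\bigr) \;\rightrightarrows\; F\bigl(\mathrm{colim}_J \,G D\bigr),
\end{equation*}
whose coequalizer exists in $\fopsc$ by Lemma \ref{limitslemma2}. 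The final bookkeeping identifies this coequalizer with $\mathrm{colim}_J D$, with the verification of its universal property resting precisely on the preservation of reflexive coequalizers by $\T$; this is a standard monadic argument, sometimes stated as the theorem that a monadic category over a cocomplete base is itself cocomplete whenever the monad preserves reflexive coequalizers.
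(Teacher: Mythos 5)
Your proposal is correct and follows essentially the same route as the paper: after handling $\vseq$ and $\vmodsc$ via Lemmas \ref{limitslemma1} and \ref{limitslemma2}, the paper likewise constructs a general colimit in $\fopsc$ as the reflexive coequalizer of $F(\mathrm{colim}_J\, GFG D)\rightrightarrows F(\mathrm{colim}_J\, GD)$ --- literally your pair $F(\mathrm{colim}_J \T GD)\rightrightarrows F(\mathrm{colim}_J GD)$, derived there via hom-functors and the Yoneda lemma rather than your direct unit/counit description --- whose coequalizer exists by Lemma \ref{limitslemma2}. The only quibble is that preservation of reflexive coequalizers by $\T$ is not what powers the final universal-property verification (that step is the formal Linton-style argument using the canonical presentation); it is rather what underlies Lemma \ref{limitslemma2} itself, so your argument is complete as stated.
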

\begin{proof}  Since $\op{C}$ has all small limits and colimits, the category $\vseq$ has all small limits and colimits levelwise.  Thus by Lemma $\ref{limitslemma1}$ the category $\vmodsc$ has all small limits and colimits.  Then applying Lemma $\ref{limitslemma2}$ we see that the category $\fopsc$ has all small limits as well.  Note in addition that by Lemma $\ref{limitslemma2}$ $\fopsc$ also has all reflexive coequalizers.  It thus remains to show that $\fopsc$ has all small colimits.  In order to show this we generalize an argument of Rezk in (\cite{Rezk} proposition 2.3.5).

As above, let $F$ and $G$ be the free and forgetful functors respectively between $\fopsc$ and $\vmodsc$.  Let $\alpha\colon J\to \fopsc$ be a functor from a small category $J$.  For a given $\F$-$\oper$ $\op{P}$ we define two functors $\Phi,\Psi\colon J^{op}\to \vmodsc$ as follows.
\begin{eqnarray*}
\Phi:=Hom_{\vmodsc}(GFG\alpha(-),G(\op{P})) & \text{ and } & \Psi:=Hom_{\vmodsc}(G\alpha(-),G(\op{P}))
\end{eqnarray*}
There exist two natural transformations $\Psi\rightrightarrows\Phi$ defined as follows:
\begin{enumerate}
\item  precompose $G\alpha(-)\to\op{P}$ with the natural transformation $G(FG)\alpha(-)\to G\alpha(-)$
\item  take the image of $G\alpha(-)\to G(\op{P})$ by $GF$ and postcompose with the natural map $FG(\op{P})\to\op{P}$.
\end{enumerate}
These two natural transformations give us the following:
\begin{eqnarray*}
lim(\Psi) & \rightrightarrows & lim(\Phi) \\
lim \left(Hom_{\vmodsc} ( G\alpha(-), G(\op{P}) )\right)  & \rightrightarrows & lim \left(Hom_{\vmodsc}( GFG\alpha(-), G(\op{P}) )\right) \\
Hom_{\vmodsc} \left( colim(G\alpha), G(\op{P}) \right)  & \rightrightarrows &  Hom_{\vmodsc} \left( colim(GFG\alpha), G(\op{P}) \right) \\
Hom_{\fopsc} \left( F(colim(G\alpha)), \op{P} \right)  & \rightrightarrows &  Hom_{\fopsc} \left( F(colim(GFG\alpha)), \op{P} \right) \\
\end{eqnarray*}
Now since the above pair of morphisms is natural in $\op{P}$, we can apply the Yoneda embedding theorem to see that these maps are given by maps of the sources, i.e. maps
\begin{equation}\label{refcoeq}
F(colim(GFG\alpha))\rightrightarrows F(colim(G\alpha))
\end{equation}
and these maps have an obvious candidate for a section, namely the map induced by the natural transformation $id\to GF$.  Checking this we see that this does give us a section, as an immediate consequence of the fact that the composites $G\to GFG\to G$ and $F\to FGF \to F$ are the identity.  Thus line $\ref{refcoeq}$ can be represented as a reflexive coequalizing diagram.  Define $Q$ to be its coequalizer, which exists by Lemma $\ref{limitslemma2}$.

To complete the proof we will show that $Q$ is the colimit of $\alpha$ in $\fopsc$.  First notice that $G(Q)$ admits a cocone over $G\circ \alpha$ by the composite $colim(G\circ \alpha) \to GF(colim(G\circ \alpha)) \to G(Q)$.  Call these cocone morphisms $\lambda$.  By adjointness, diagram $\ref{refcoeq}$ gives us morphisms
\begin{equation}\label{refcoeq2}
colim(GFG\alpha)\rightrightarrows GFcolim(G\alpha)\to G(Q)
\end{equation}
Examining the natural transformations above we see the two compositions can be described by the following diagram:
\begin{equation*}
\xymatrix{GFG\alpha(-)\ar[r] \ar[d]_{GF(\lambda)} & G\alpha(-) \ar[d]^{\lambda}\\ GFG(Q) \ar[r] & GQ }
\end{equation*}
which commutes since $Q$ is a coequalizer.  The fact that this diagram commutes tells us that the $\lambda$ are actually morphisms of $\F$-$\opers$, and in particular the cocone $G(Q)$ over $G\circ\alpha(-)$ lifts to a cocone $Q$ over $\alpha$.  The fact that this cocone is limiting follows immediately from the universality of the coequalizer.  Thus the colimit $colim(\alpha)$ exists in $\fopsc$, completing the proof of the lemma.
\end{proof}

\subsubsection{Monoidal model categories}  We assume the basics of model category theory and refer to \cite{Hir} and \cite{Hovey} thoroughout.  We make several recollections here for future use.

\begin{definition}  A category which is both symmetric monoidal and a model category satisfies the pushout product axiom (PPA) if for any pair of cofibrations $f_1\colon X_1\hookrightarrow Y_1$ and $f_2\colon X_2\hookrightarrow Y_2$ the induced map from the pushout
\begin{equation*}
X_1\tensor Y_2 \ds\coprod_{X_1\tensor X_2} X_2\tensor Y_1 \to Y_1\tensor Y_2
\end{equation*}
is a cofibration which is acyclic if $f_1$ or $f_2$ is.
\end{definition}

\begin{lemma}\label{idlevel}  Let $\op{D}$ be a model category and let $\op{E}$ be a small category whose only morphisms are identity morphisms.  Then the category of functors $\mathbf{Fun}(\op{E},\op{D})$ carries a model category structure where a morphism (natural transformation) $\phi\colon\op{P}\to\op{Q}$ is a weak equivalence/fibration/cofibration if and only if $\phi\colon\op{P}(X)\to\op{Q}(X)$ is for each $X\in\op{E}$.  Moreover if $\op{D}$ is cofibrantly generated so is $\mathbf{Fun}(\op{E},\op{D})$.
\end{lemma}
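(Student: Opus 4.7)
The plan is to observe that since $\op{E}$ has only identity morphisms, a functor $\op{E}\to\op{D}$ is just an $\text{Obj}(\op{E})$-indexed family of objects of $\op{D}$, and a natural transformation is just an $\text{Obj}(\op{E})$-indexed family of morphisms. Hence $\mathbf{Fun}(\op{E},\op{D})$ is the product category $\op{D}^{\text{Obj}(\op{E})}$, and both limits and colimits are computed levelwise. The claim is therefore an instance of the general fact that an arbitrary product of model categories is a model category under the componentwise structure, and our job is just to check the axioms in this simple setting.

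First I would verify axioms \textbf{M1}--\textbf{M5} componentwise. \textbf{M1} (completeness/cocompleteness) is immediate from the levelwise construction of (co)limits. \textbf{M2} (two-out-of-three) and \textbf{M3} (retracts) follow because both conditions are characterized as levelwise conditions, each of which holds in $\op{D}$. For \textbf{M4} (lifting), one uses that a commutative square in $\mathbf{Fun}(\op{E},\op{D})$ decomposes into a disjoint family of commutative squares in $\op{D}$ indexed by $X\in\op{E}$; each of these admits a lift in $\op{D}$ by assumption, and the assembled family of lifts is a natural transformation because every morphism in $\op{E}$ is an identity (there are no naturality squares to check beyond the obvious ones). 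For \textbf{M5} (factorization), a morphism $\phi$ factors in each component as $\phi_X = p_X\circ i_X$ with the required (acyclic) cofibration and (acyclic) fibration properties, and the assembled morphisms $p,i$ are again natural for the same trivial reason.

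For the cofibrant generation statement, suppose $\op{D}$ has generating cofibrations $I$ and generating acyclic cofibrations $J$. For each $X\in\op{E}$ and each morphism $f\colon A\to B$ in $\op{D}$, define the morphism $F_X(f)\colon F_X(A)\to F_X(B)$ in $\mathbf{Fun}(\op{E},\op{D})$ where $F_X(C)$ is the functor whose value at $X$ is $C$ and whose value at any $Y\ne X$ is the initial object of $\op{D}$ (and $F_X(f)$ is $f$ at $X$ and the identity elsewhere). I would then take as generating sets $I_{\op{E}} := \{F_X(i) : X\in\op{E}, i\in I\}$ and $J_{\op{E}} := \{F_X(j): X\in\op{E}, j\in J\}$. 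The verification that these satisfy Quillen's small object argument hypotheses reduces, via the adjunction $F_X\dashv \mathrm{ev}_X$ (evaluation at $X$), to the corresponding smallness statements in $\op{D}$, since $\mathrm{ev}_X$ preserves all colimits (they are computed levelwise). The characterization of fibrations and acyclic fibrations as those morphisms having the right lifting property with respect to $J_{\op{E}}$ and $I_{\op{E}}$ respectively follows from the adjunction and the analogous characterization in $\op{D}$.

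The only subtle point—and the one place I would pause—is the handling of the initial object in the definition of $F_X$. If $\op{D}$ lacks a well-behaved initial object, one should instead work with the left adjoint to evaluation at $X$, which exists by general category theory (since $\op{E}$ is small and $\op{D}$ is cocomplete) and is given exactly by the formula above. This is routine but worth stating explicitly; once set up, the model-theoretic verification is mechanical.
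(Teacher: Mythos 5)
Your proof is correct and is precisely the standard argument; the paper itself does not write out a proof but simply cites Hirschhorn (Propositions 7.1.7 and 11.1.10), which establish the product model structure and its cofibrant generation by exactly the levelwise verification and the $F_X\dashv\mathrm{ev}_X$ adjunction you describe. Your worry about the initial object is moot since a model category is cocomplete by axiom \textbf{M1}, but your fallback via the left adjoint to evaluation is the right general formulation anyway.
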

\begin{proof}  See e.g. \cite{Hir} proposition 7.1.7 and proposition 11.1.10.
\end{proof}

\subsubsection{Transfer Principle}  The technique that we will use to endow the category of $\F$-$\opers$ in $\op{C}$ with a model structure is to transfer the model structure across adjunctions from $\V$-sequences to $\V$-modules and then to $\F$-$\opers$.

\begin{theorem}\label{transferthm1} (\cite{Hir} Theorem 11.3.2)  Let $\op{C}$ be a cofibrantly generated model category with generating cofibrations $I$ and generating acyclic cofibrations $J$.  Let $\op{D}$ be a category which is complete and cocomplete and let $\adj{F}{\op{C}}{\op{D}}{G}$ be an adjunction.  Further suppose that
\begin{enumerate}
\item  Both $F(I)$ and $F(J)$ permit the small object argument and
\item $G$ takes relative $F(J)$-cell complexes to weak equivalences.
\end{enumerate}
Then there is a cofibrantly generated model category structure on $\op{D}$ in which $F(I)$ is the set of generating cofibrations, $F(J)$ is the set of generating acyclic cofibrations, and a map is a weak equivalence (resp. fibration) if and only if its image by $G$ is.
\end{theorem}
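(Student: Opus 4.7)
The statement quoted is a verbatim recollection of Hirschhorn's Theorem 11.3.2, so strictly the ``proof'' is a citation. Still, let me describe how I would reconstruct the classical transfer argument from scratch, since the same template governs every application of it later in the paper.

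The plan is to define on $\op{D}$ the model structure whose weak equivalences and fibrations are created by $G$, and whose cofibrations are forced by the lifting property against acyclic fibrations. First I would observe that the 2--out--of--3 axiom and the retract axiom are immediate, since both are preserved under application (and reflection along) the functor $G$, exactly as in the preliminary reduction lemma earlier in the section. The limit axiom holds by hypothesis.

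For the factorization and lifting axioms the heart of the matter is the small object argument. Using hypothesis (1), I would apply Quillen's small object argument to the set $F(I)$ to factor any morphism of $\op{D}$ as a relative $F(I)$--cell complex followed by a map having the right lifting property against $F(I)$; similarly with $F(J)$. The adjunction $F\dashv G$ converts lifting problems in $\op{D}$ against $F(I)$ (resp. $F(J)$) into lifting problems in $\op{C}$ against $I$ (resp. $J$), and by the cofibrant generation of $\op{C}$ these characterise acyclic fibrations (resp. fibrations) as defined on $\op{D}$. This gives one half of each factorization axiom and one half of the lifting axiom immediately. The second half of the lifting axiom (cofibrations lift against acyclic fibrations) is then formal: cofibrations are defined by a left lifting property.

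The main obstacle, and indeed the entire non-formal content of the theorem, is verifying that the $F(J)$--cofibration/acyclic fibration factorization produced by the small object argument is an acyclic cofibration/fibration factorization, i.e.\ that the left factor is a weak equivalence. This is precisely hypothesis (2): if $G$ sends relative $F(J)$--cell complexes to weak equivalences, then by the 2--out--of--3 property already established, any relative $F(J)$--cell complex is an acyclic cofibration in $\op{D}$, and hence retracts of such are too. Combined with the lifting analysis, this identifies the class of trivial cofibrations with those maps having the left lifting property against fibrations, closing the axioms. The harder work in the sequel will lie not in this transfer step, but in checking hypothesis (2) in the concrete adjunctions $\vseq\rightleftarrows\vmodsc\rightleftarrows\fopsc$ given by Lemmas \ref{limitslemma1}--\ref{limitslemma3}, under appropriate compatibility assumptions (such as the PPA) relating the monoidal and model structures on $\op{C}$.
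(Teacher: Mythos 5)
The paper gives no proof of this statement at all — it is quoted verbatim from Hirschhorn (Theorem 11.3.2) and used as a black box — so your reconstruction of the standard transfer argument is exactly the intended content, and it is correct: the adjunction converts $F(I)$- and $F(J)$-lifting problems into $I$- and $J$-lifting problems, the small object argument supplies the factorizations, and hypothesis (2) together with the retract argument identifies relative $F(J)$-cell complexes (and their retracts) with acyclic cofibrations. The only cosmetic slip is attributing the weak-equivalence property of relative $F(J)$-cell complexes to 2-out-of-3, when it is immediate from hypothesis (2) and the definition of weak equivalences in $\op{D}$; 2-out-of-3 is instead what you need in the retract step showing that an arbitrary acyclic cofibration lifts against fibrations.
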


In order to apply Theorem $\ref{transferthm1}$ in our cases of interest we will reformulate the hypotheses to derive the following corollary.  This reformulation is inspired by \cite{SS}, \cite{BM1} and \cite{Fresse}.

\begin{corollary}\label{transferprinciple}  Let $\op{C}$ be a cofibrantly generated model category with generating cofibrations $I$ and generating acyclic cofibrations $J$.  Let $\op{D}$ be a category which is complete and cocomplete and let $\adj{F}{\op{C}}{\op{D}}{G}$ be an adjunction.  Further suppose the following:
\begin{enumerate}
\item[(i)]  All objects of $\op{C}$ are small,
\item[(ii)] $G$ preserves filtered colimits,
\item[(iii)]  $\op{D}$ has a fibrant replacement functor,
\item[(iv)]  $\op{D}$ has functorial path objects for fibrant objects.
\end{enumerate}
Then there is a cofibrantly generated model category structure on $\op{D}$ in which $F(I)$ is the set of generating cofibrations, $F(J)$ is the set of generating acyclic cofibrations, and a map is a weak equivalence (resp. fibration) if and only if its image by $G$ is.
\end{corollary}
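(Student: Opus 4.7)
The strategy is to verify directly the two hypotheses (1) and (2) of Theorem \ref{transferthm1}, from which the conclusion follows.

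To verify (1), the plan is to combine the adjunction $F\dashv G$ with conditions (i) and (ii). For $X$ a domain of a generator in $I$ or $J$ and $\{Y_\alpha\}$ a $\lambda$-sequence in $\op{D}$, the chain of isomorphisms
\begin{equation*}
Hom_\op{D}(F(X), \colim_\alpha Y_\alpha) \cong Hom_\op{C}(X, G\,\colim_\alpha Y_\alpha) \cong Hom_\op{C}(X, \colim_\alpha G(Y_\alpha)) \cong \colim_\alpha Hom_\op{D}(F(X), Y_\alpha)
\end{equation*}
(in which the second isomorphism uses (ii) and the third uses (i)) shows that $F(X)$ is small in $\op{D}$, so $F(I)$ and $F(J)$ both permit the small object argument.

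To verify (2), the plan is to apply Quillen's classical path object argument. First observe that any relative $F(J)$-cell complex $i\colon A\to B$ has the left lifting property against every morphism $p$ in $\op{D}$ with $G(p)$ a fibration in $\op{C}$: by adjunction this holds for each generator $F(j)$, $j\in J$, and the class of morphisms with this LLP is stable under pushouts and transfinite composition. To show $G(i)$ is a weak equivalence, use (iii) to take a fibrant replacement $\rho\colon B\to RB$ in $\op{D}$ and (iv) to obtain a factorization $RB\xrightarrow{\sigma}Path(RB)\xrightarrow{(d_0,d_1)} RB\times RB$ of the diagonal as a weak equivalence followed by a fibration. Applying the right adjoint $G$, which preserves products, yields a path object for $G(RB)$ in $\op{C}$; in particular $G(d_0)$ and $G(d_1)$ are weak equivalences, each admitting $G(\sigma)$ as a section. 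The commutative square
\begin{equation*}
\xymatrix{A \ar[r]^{\sigma\rho i}\ar[d]_i & Path(RB)\ar[d]^{(d_0,d_1)} \\ B\ar[r]_{(\rho,\rho)} & RB\times RB}
\end{equation*}
then admits a lift $h\colon B\to Path(RB)$ by the LLP established above, and a standard chain of two-out-of-three applications using the weak equivalences $G(\sigma), G(\rho), G(d_0), G(d_1)$ together with $G(h)$ shows that $G(i)$ is a weak equivalence in $\op{C}$. This is precisely Quillen's path object argument as used in the analogous operadic transfer theorems of Berger--Moerdijk, Schwede--Shipley, and Fresse.

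The main obstacle will be the careful execution of the path object argument, namely the bookkeeping of the auxiliary lifts and the two-out-of-three chase required to conclude that $G(i)$ itself (rather than merely some related morphism) is a weak equivalence in $\op{C}$. The verification of (1) is essentially formal from the adjunction combined with (i) and (ii); the real subtlety lies in (2), where the path object structure in $\op{D}$ must be transported across $G$ and recombined with the LLP of $i$ in order to yield acyclicity of relative $F(J)$-cell complexes after applying the forgetful functor.
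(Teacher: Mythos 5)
Your verification of hypothesis (1) is correct and is essentially the paper's argument: condition (ii) lets $G$ pass through the (filtered) colimit of a $\lambda$-sequence, and condition (i) together with the adjunction then shows every $F(X)$ is small in $\op{D}$, so $F(I)$ and $F(J)$ permit the small object argument. Your reduction of hypothesis (2) to the statement that relative $F(J)$-cell complexes have the LLP against all maps $p$ with $G(p)$ a fibration is also correct and matches the paper.

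The gap is in the lifting square you propose for the path object argument. With bottom map $(\rho,\rho)\colon B\to RB\times RB$ and top map $\sigma\rho i$, the square admits the lift $h=\sigma\rho$ for trivial reasons, independently of any lifting property of $i$; the resulting identities $d_0h=\rho=d_1h$ only record that $G(\rho)$ is homotopic to itself. The relation $G(h)G(i)=G(\sigma\rho)G(i)$ compares two maps out of $G(A)$ that are equal by construction, so no two-out-of-three chase starting from it can yield that $G(i)$ is a weak equivalence. What the argument actually requires is a \emph{second}, genuinely different map $B\to RB$ that agrees with $\rho_B$ only after precomposing with $i$. Concretely, as in the paper's proof: use the fibrant replacement of the \emph{source}, lift $i$ against the fibration $RA\to\ast$ to obtain $\lambda\colon B\to RA$ with $\lambda i=\rho_A$, and take the bottom of the lifting square to be $(\rho_B,\,R(i)\circ\lambda)$, which agrees with the diagonal on $A$ by functoriality of $R$. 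The lift then exhibits a right homotopy between $G(\rho_B)$ and $G(R(i))G(\lambda)$, so $G(R(i))G(\lambda)$ is a weak equivalence; combined with $G(\lambda)G(i)=G(\rho_A)$ being a weak equivalence, $G(\lambda)$ becomes invertible in the homotopy category (it has a left and a right inverse there), hence is a weak equivalence, and only then does two-out-of-three give that $G(i)$ is one. Your setup omits both the fibrant replacement of $A$ and the auxiliary lift $\lambda$, and without them the square you wrote carries no information.
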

\begin{proof}  Define in $\op{D}$ a class of weak equivalences and fibrations as in the statement, and define a class of cofibrations as morphisms having the left lifting property with respect to acyclic fibrations. It is enough to show that the conditions $(i-iv)$ in the statement imply conditions $(1)$ and $(2)$ of Theorem $\ref{transferthm1}$.  In particular we show that $(i),(ii)$ imply $(1)$ and $(iii),(iv)$ imply $(2)$.

First, since $G$ preserves filtered colimits, $F$ preserves small objects.  Since all objects in $\op{C}$ are small, all objects of the form $F(A)$ are small in $\op{D}$.  In particular, given a generating cofibration (resp. acyclic cofibration) $F(A)\to F(B)$ in $F(I)$ (resp. $F(J)$), $F(A)$ is small in $\op{D}$, and so in particular is small relative to the subcategory of $F(I)$-cell (resp. $F(J)$-cell) complexes.  Thus $F(I)$ (resp. $F(J)$) permits the small object argument.

Second, let $\phi$ be a relative $F(J)$-cell complex.  Then $\phi$ is an $F(J)$-cofibration (\cite{Hir} proposition 10.5.10) in $\op{D}$ and hence has the LLP with respect to $F(J)$-injectives.  One quickly sees that every fibration in $F(J)$ is an $F(J)$-injective, and thus $\phi$ is a cofibration in $\op{D}$.    Let $R$ denote the fibrant replacement functor in $\op{D}$ and $P$ denote the path objects for fibrants.  Let $\lambda\colon Y\to RX$ denote the resulting lift of $\phi$ and consider the diagram:
\begin{equation*}
\xymatrix{X \ar[r]^{\phi} \ar[d]^{\phi} & Y  \ar[r]^{R_Y} & R(Y)\ar[r]^{\sim} & P(RY) \ar@{>>}[d] \\ \ar@{.>}[urrr] Y\ar[rrr]_{(R_Y,R(\phi)\circ \lambda)} &&& RY\prod RY}
\end{equation*}
Since $R$ is functorial, the diagram commutes.  Note that since $G$ preserves limits, weak equivalences, and fibrations it also preserves path objects and thus $G(R_Y)$ is right homotopic to $G(R(\phi)\circ \lambda)$ in $\op{C}$.  Since $G(R_Y)$ is a weak equivalence it follows (\cite{Hir} proposition 7.7.6) that $G(R(\phi)\circ \lambda)$ is a weak equivalence.  It follows that $G(R(\phi))$ and $G(\lambda)$ induce an isomorphism in the homotopy category of $\op{C}$ and are thus weak equivalences.  It follows from the 2-out-of-3 axiom that $G(\phi)$ is a weak equivalence which completes the proof.
\end{proof}

\begin{remark}  The smallness requirement (condition (i)) does not hold in the category of topological spaces, but we will circumvent this problem following \cite{Fresse}, see Example $\ref{tsex}$.
\end{remark}

\subsubsection{Strictification}  In our arguments below we would like to compose an $\FF$-$\oper$ with a lax monoidal functor.  In general however, this composition would no longer be in $\fopsc$, and thus we introduce the following notation.

\begin{definition}\label{fctransfer}  Let $\op{C}$ and $\op{D}$ be symmetric monoidal categories, let $\FF$ be a strict Feynman category, and let $\gamma\colon\op{C}\to\op{D}$ be a lax symmetric monoidal functor.  We define a functor $\hat{\gamma}\colon\fopsc\to\fdops$ as follows.  For $\op{O}\in\fopsc$ we define $\hat{\gamma}(\op{O})$ restricted to $\V$ to be the composition $\gamma\circ\op{O}$.  We then extend strict monoidally to all objects,
\begin{equation*}
\hat{\gamma}(\op{O})(X):=\bigotimes_{i\in \Lambda}\gamma(\op{O}(v_i))  \ \text{ for } X=\ds\tensor_{i\in\Lambda} v_i \\
\end{equation*}
Finally for morphisms we define the image of generating morphisms by composing with the monoidal structure maps for the symmetric monoidal functor $\gamma$, that is for $\phi\colon X\to v$ we have,
\begin{equation*}
\hat{\gamma}(\phi)\colon \tensor_{i\in \Lambda}\gamma(\op{O}(v_i))\to \gamma(\tensor_{i\in \Lambda}\op{O}(v_i))\stackrel{\gamma(\phi)}\to \gamma(\op{O}(v))
\end{equation*}
and then extend strict monoidally to all morphisms.
\end{definition}

\subsubsection{$\tensor$-coherent path objects}

\begin{definition}\label{pathobjectdef}  We say that $\op{C}$ has functorial path objects for fibrant objects if there is a functor $P\colon \op{C}^{\text{fib}}\to\op{C}$, and, for each object $A$ in $\op{C}$, factorizations of the diagonal $\Delta_A=\psi_A\circ\phi_A$ as a weak equivalence followed by a fibration, such that the following diagram commutes:
\begin{equation*}
\xymatrix{A \ar[rr]_{\sim}^{\phi_A} \ar[d]^{f} && P(A) \ar@{>>}[rr]^{\psi_A} \ar[d]^{P(f)} && \ar[d]^{f\prod f} A\prod A \\ B \ar[rr]_{\sim}^{\phi_B} && P(B) \ar@{>>}[rr]^{\psi_B}  && B\prod B }
\end{equation*}
\end{definition}

Recall the notion of a symmetric monoidal natural transformation between symmetric monoidal functors.  In particular $\eta$ being a symmetric monoidal natural transformation requires:
\begin{equation*}
\xymatrix{F(A)\tensor F(B) \ar[d] \ar[r]^{\eta_A\tensor \eta_B} & G(A)\tensor G(B) \ar[d]\\ F(A\tensor B) \ar[r]^{\eta_{A\tensor B}}& G(A\tensor B)}
\end{equation*}

We will make use of the following example of a symmetric monoidal natural transformation:

\begin{example}\label{diagex}
Let $\op{C}$ be symmetric monoidal category with products.  Define a symmetric monoidal functor $\Delta_\op{C}\colon \op{C}\to\op{C}$ by taking $\Delta_\op{C}(X):=X\prod X$, the product on morphisms and the symmetric structure given by
\begin{equation*}
(\pi_1\tensor \pi_1) \prod (\pi_2\tensor \pi_2)\colon(A\prod A) \tensor (B \prod B) \to (A\tensor B)\prod (A\tensor B)
\end{equation*}
\end{example}
There is a canonical natural transformation $\text{id}_\op{C}\Rightarrow\Delta_\op{C}$, which we call the diagonal transformation, given by the diagonal maps $X\to X\small\prod X$, and it is easily checked that this natural transformation is symmetric monoidal.

\begin{definition}\label{podef}
Let $\op{C}$ be a symmetric monoidal category and a model category.  We say $\op{C}$ has $\tensor$-coherent path objects if there is a symmetric monoidal functor $P\colon \op{C}\to\op{C}$ along with symmetric monoidal natural transformations
\begin{equation*}
\text{id}_\op{C}\stackrel{\phi}\Rightarrow P\stackrel{\psi}\Rightarrow \Delta_\op{C}
\end{equation*}
which factor the diagonal natural transformation $id_\op{C}\Rightarrow\Delta_\op{C}$ (see Example $\ref{diagex}$) and such that $\phi_A$ is a weak equivalence and $\psi_A$ is a fibration for every $A\in\op{C}$.  Furthermore, we say $\op{C}$ has $\tensor$-coherent path objects for fibrant objects if there is a symmetric monoidal functor $P\colon \op{C}_\tensor^{\text{fib}}\to\op{C}$ (where $\op{C}_\tensor^{\text{fib}}:=(\op{C}^{\text{fib}})_\tensor$) along with symmetric monoidal natural transformations
\begin{equation*}
\text{id}_{\op{C}_\tensor^{\text{fib}}}\stackrel{\phi}\Rightarrow P\stackrel{\psi}\Rightarrow \Delta_{\op{C}}
\end{equation*}
which factor the diagonal natural transformation $\text{id}_{\op{C}_\tensor^{\text{fib}}}\Rightarrow\Delta_{\op{C}}$ (interpreted as a natural transformation of functors $\op{C}^{\text{fib}}_\tensor\to\op{C}$) and such that $\phi_A$ is a weak equivalence and $\psi_A$ is a fibration for every $A\in\op{C}^{\text{fib}}$.
\end{definition}

Note that the symmetric monoidal functor $P$ need not be strong monoidal.  In this lax case, composition with $P$ does not directly induce an endofunctor of $\fopsc$.  However, using the strictificaion procedure mentioned above we can get an endofunctor $\hat{P}$, which will be the path object functor for $\fopsc$.

\subsection{The Model Structure}

\begin{theorem}\label{modelthm}  Let $\FF$ be a Feynman category and let $\op{C}$ be a cofibrantly generated model category and a closed symmetric monoidal category having the following additional properties:
\begin{enumerate}
\item  All objects of $\op{C}$ are small.
\item  $\op{C}$ has a symmetric monoidal fibrant replacement functor.
\item  $\op{C}$ has $\tensor$-coherent path objects for fibrant objects.
\end{enumerate}
Then $\fopsc$ is a model category where a morphism $\phi\colon \op{O}\to\op{Q}$ of $\F$-$\opers$ is a weak equivalence (resp. fibration) if and only if $\phi\colon \op{O}(v)\to\op{Q}(v)$ is a weak equivalence (resp. fibration) in $\op{C}$ for every $v\in \V$.
\end{theorem}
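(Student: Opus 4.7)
The plan is to apply the transfer principle (Corollary~\ref{transferprinciple}) to the composite adjunction
\[
\vseq \;\leftrightarrows\; \vmodsc \;\leftrightarrows\; \fopsc,
\]
where the left adjoints are left Kan extensions and the right adjoints are the forgetful functors. By Lemma~\ref{redlem} it suffices to treat the reduction of $\FF$, so we may assume $\FF$ is strict. Lemma~\ref{idlevel} equips $\vseq$ with a cofibrantly generated model structure whose weak equivalences and fibrations are defined objectwise. Let $I$ and $J$ be the corresponding generating (acyclic) cofibrations, obtained as disjoint unions over $\V$ of the generators of $\C$. We must then verify conditions (i)--(iv) of Corollary~\ref{transferprinciple} for the composite adjunction.

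Conditions (i) and (ii) are the routine ones: (i) holds because $\vseq$ inherits objectwise smallness from $\C$; (ii) follows because by Lemma~\ref{limitslemma1} and Lemma~\ref{limitslemma2} the forgetful functor $\fopsc \to \vseq$ creates filtered colimits (for $\vmodsc \to \vseq$) and preserves filtered colimits (for $\fopsc \to \vmodsc$, since filtered colimits in $\fopsc$ are created at the level of $\V$-modules). Lemma~\ref{limitslemma3} additionally supplies the completeness and cocompleteness needed in the hypothesis.

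The main work is conditions (iii) and (iv), and this is where the hypotheses on $\C$ together with the strictification functor $\hat{(-)}$ of Definition~\ref{fctransfer} are used. Given a symmetric monoidal fibrant replacement functor $R\colon \C \to \C$ and its attendant weak equivalence $\eta\colon \mathrm{id}_\C \Rightarrow R$, the lax symmetric monoidal structure on $R$ lets us define the fibrant replacement on $\fopsc$ as $\op{O}\mapsto \hat{R}(\op{O})$; naturality of $\eta$ as a monoidal transformation gives a map $\op{O}\to\hat{R}(\op{O})$ in $\fopsc$ which is a levelwise weak equivalence, while $\hat{R}(\op{O})$ is levelwise fibrant, hence fibrant in the transferred structure. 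For (iv), the $\tensor$-coherent path object functor $P\colon \C^{\mathrm{fib}}_\tensor \to \C$ of Definition~\ref{podef} is again lax symmetric monoidal, so $\hat{P}$ is an endofunctor of $(\fopsc)^{\mathrm{fib}}$; the symmetric monoidal natural transformations $\mathrm{id}\Rightarrow P \Rightarrow \Delta_\C$ yield corresponding morphisms $\op{O}\to\hat{P}(\op{O})\to \op{O}\times\op{O}$ in $\fopsc$ which factor the diagonal as a levelwise weak equivalence followed by a levelwise fibration. Functoriality of these constructions is immediate from the functoriality of $R$, $P$ and $\hat{(-)}$.

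The hard part is precisely verifying that $\hat{R}$ and $\hat{P}$ land in $\fopsc$ rather than in the category of lax monoidal functors: this is where the \emph{symmetric monoidal} nature of $R$ and of $P$ (together with the fact that the natural transformations involved are monoidal) is essential, so that the images of the generating morphisms $\phi\colon X\to v$ compose correctly through the lax structure maps. Once (i)--(iv) are established, Corollary~\ref{transferprinciple} produces the transferred cofibrantly generated model structure on $\fopsc$ with the claimed characterization of weak equivalences and fibrations.
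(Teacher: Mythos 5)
Your proposal is correct and follows essentially the same route as the paper: a transfer along the composite adjunction $\vseq\leftrightarrows\vmodsc\leftrightarrows\fopsc$ starting from the levelwise structure of Lemma~\ref{idlevel}, with the fibrant replacement and path objects supplied by the strictifications $\hat{R}$ and $\hat{P}$ and the monoidality of the natural transformations doing the work in conditions (iii) and (iv). The only detail you elide is that the objectwise smallness claim for $\vseq$ requires choosing a cardinal exceeding both the smallness bound in $\C$ and the cardinality of $\V$ so that the product over $\V$ commutes with the relevant filtered colimits, but this is routine and does not affect the argument.
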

\begin{proof}  Let $R$ denote the symmetric monoidal fibrant replacement functor for $\op{C}$ and let $P$ denote the path object functor for fibrant objects in $\op{C}$.  The proof of this theorem will be achieved via:
\begin{equation*}
\text{Lemma }\ref{idlevel}\Rightarrow \left\{\begin{tabular}{c}model structure\\on $\vseq$ \end{tabular}  \right\} \stackrel{\substack{\text{transfer}\\ \text{principle}}} \longrightarrow \left\{ \begin{tabular}{c}model structure\\on $\fopsc$\end{tabular}  \right\}
\end{equation*}
That is, since by Lemma $\ref{idlevel}$ the category of $\V$-sequences in $\op{C}$ has a levelwise cofibrantly generated model structure, the proof will amount to a verification that the hypotheses for the transfer principle (Corollary $\ref{transferprinciple}$) are satisfied with respect to the composite adjunction
\begin{equation*}
\vseq\leftrightarrows \vmodsc\leftrightarrows \fopsc
\end{equation*}
First by Lemma $\ref{limitslemma3}$ the category $\fopsc$ is complete and cocomplete.  Next we show that all objects of $\vseq$ are small.  Pick a cardinal $\kappa_1$ such that all objects of $\op{C}$ are $\kappa_1$-small (such a cardinal exists; see \cite{Hir} Lemma 10.4.6).  Then pick a cardinal $\kappa$ which is greater than both $\kappa_1$ and the cardinality of $\V$.  Let $X\to Z$ be the transfinite composition in $\vseq$ of a $\lambda$-sequence $X=X_0\to X_1\to..$ for any regular cardinal $\lambda\geq\kappa$.  Then for any $\V$-sequence $A$ we have
\begin{equation*}
\colim_{\beta<\lambda}Hom_{\vseq}(A,X_\beta) = \ds\colim_{\beta<\lambda}\prod_{v\in \V}Hom_\C(A(v),X_\beta(v))\cong  \prod_{v\in\V}colim_{\beta<\lambda}Hom_\C(A(v),X_\beta(v))
\end{equation*}
using the fact that $\lambda\geq\kappa>card(\V)$ to interchange the product and the colimit.  Now the fact that $A(v)$ is $\kappa$-small for each $v$ tells us that
\begin{equation*}
\prod_{v\in V}\colim_{\beta<\lambda}Hom_\C(A(v),X_\beta(v))\cong \prod_{v\in \V}Hom_\C(A(v),\colim_{\beta<\lambda}X_\beta(v))= Hom_{\vseq}(A,Z)
\end{equation*}
and hence $A$ is small.  Thus condition (i) is satisfied.  By Lemmas $\ref{limitslemma1}$ and $\ref{limitslemma2}$ the constituent forgetful functors both preserve filtered colimits.  Thus their composite does and condition (ii) of the transfer principle is satisfied.

For condition (iii) of the transfer principle, given $\op{O}\in\fopsc$ there is a functorial fibrant replacement given by $\hat{R}(\op{O})$, where $\hat{R}$ is as in Definition $\ref{fctransfer}$.  Indeed $\hat{R}(\op{O})(v)=R\circ\op{O}(v)$ is fibrant for each $v\in\V$.  Thus it remains to show that condition (iv) of the transfer principle is satisfied, so we now suppose $\op{O}$ to be a fibrant $\F$-$\oper$ and take $\hat{P}(\op{O})$ to be our candidate for a path object functor for fibrant $\F$-$\opers$.  Since $\op{O}$ is a fibrant $\F$-$\oper$, $\op{O}(v)$ is a fibrant object in $\op{C}$ for every $v\in\V$.  As such there are maps in $\op{C}$
\begin{equation*}
\op{O}(v)\stackrel{\phi_v}\longrightarrow P(\op{O}(v))=\hat{P}(\op{O})(v)\stackrel{\psi_v}\longrightarrow \op{O}(v)\prod \op{O}(v)
\end{equation*}
for each $v\in\V$ such that $\phi_v$ is a weak equivalence and $\psi_v$ is a fibration.  Since weak equivalences and fibrations in $\fopsc$ are induced levelwise, we can show that $\hat{P}(\op{O})$ gives us functorial path objects in $\fopsc$ if we can show that the levelwise diagrams above fit together to give a diagram in $\fopsc$.  To see this it is enough to see that these levelwise diagrams are compatible for a generating morphism $\alpha\colon X=\tensor_{i\in\Lambda}v_i\to v$, which follows from commutativity of the following diagram:
\begin{equation*}
\xymatrix{\op{O}(X)\ar[d]^{=} \ar[rr]^{\tensor_i \phi_{v_i}} && \hat{P}(\op{O})(X) \ar[d] \ar[rr]^{\tensor_i \psi_{v_i}} \ar@{.>}[drr] &&\ar[d]^{(\tensor_i\pi_1)\prod(\tensor_i\pi_2)} \tensor_i (\op{O}(v_i)\prod \op{O}(v_i)) \\ \op{O}(X) \ar[rr]^{\phi_X} \ar[d]^{\op{O}(\alpha)}&& P(\op{O}(X))\ar[rr]^{\psi_X} \ar[d]^{P(\alpha)} && \op{O}(X)\prod\op{O}(X) \ar[d]^{\alpha\prod\alpha} \\ \op{O}(v)\ar[rr]_{\phi_v}^\sim && \hat{P}(\op{O})(v) \ar@{>>}[rr]_{\psi_v} && \op{O}(v)\prod\op{O}(v)}
\end{equation*}
Note the two bottom squares commute by the assumption that $\phi$ and $\psi$ are natural transformations and the top two squares commute by the assumption that these natural transformations are symmetric monoidal.  Composing to the dotted arrow we get a diagram of $\F$-$\opers$
\begin{equation*}
\op{O}\stackrel{\sim}\to\hat{P}(\op{O})\twoheadrightarrow\op{O}\prod\op{O}
\end{equation*}
which is a weak equivalence followed by a fibration, because it is levelwise for each $v\in\V$.  Thus $\fopsc$ has functorial path objects for fibrant objects, which permits this application of the transfer principle and hence completes the proof.
\end{proof}

\begin{remark}\label{vmodmodelrmk}  Note that since the category of $\V$-modules is equivalent to the category of $Iso(\F)$-$\opers$, the above theorem also gives $\vmodsc$ a transferred model structure when the condition on $\op{C}$ are met.
\end{remark}

\subsubsection{Existence of $\tensor$-coherent path objects}

Before considering examples of categories which satisfy the conditions of Theorem $\ref{modelthm}$ we will consider how, in practice, one can establish the existence of $\tensor$-coherent path objects.  These conditions were inspired by \cite{BM1}.

\begin{definition}  We say $J$ is an interval in $\op{C}$ if there is a factorization of the folding map of $I$ as
\begin{equation*}
I\sqcup I \hookrightarrow J \stackrel{\sim}\to I
\end{equation*}
where $I$ is the monoidal unit, and $\sqcup$ is the coproduct in $\C$ (we will denote the product in $\C$ by $\sqcap$).  We say that $J$ is a cocommutative interval if $J$ is an interval with a cocommutative coassociative counital comultiplication $J\to J\tensor J$, with counit map as above $J \stackrel{\sim}\to I$.
\end{definition}

Note that if $\op{C}$ is Cartesian closed then $\op{C}$ necessarily has a cocommutative interval via the diagonal for any factorization of the folding map.

\begin{lemma}\label{polem1}  Suppose that $\op{C}$ satisfies PPA, that the monoidal unit is cofibrant, and that $\op{C}$ has a cocommutative interval $J$.  Then $P:=hom(J,-)$ defines $\tensor$-coherent path objects for fibrant objects in $\op{C}$.
\end{lemma}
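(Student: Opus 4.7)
The plan is to construct $P := \inthom(J,-)$ as a lax symmetric monoidal functor, exhibit the two natural transformations via the given counit and cofibration defining $J$, and then verify the homotopical properties using PPA and Ken Brown's lemma.

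First, I would use the cocommutative coassociative counital comultiplication $c\colon J\to J\tensor J$ to make $P$ lax symmetric monoidal: the structure map
\[
P(A)\tensor P(B)=\inthom(J,A)\tensor\inthom(J,B)\longrightarrow\inthom(J\tensor J,\,A\tensor B)\stackrel{c^{\ast}}{\longrightarrow}\inthom(J,A\tensor B)=P(A\tensor B)
\]
satisfies associativity, unitality and symmetry because $c$ does. The natural transformations of Definition~\ref{podef} are then defined by pulling back along the two canonical maps describing the interval: the counit $\varepsilon\colon J\stackrel{\sim}{\to} I$ yields $\phi_A\colon A\cong\inthom(I,A)\to\inthom(J,A)=P(A)$, while the cofibration $e\colon I\sqcup I\hookrightarrow J$ yields $\psi_A\colon P(A)=\inthom(J,A)\to\inthom(I\sqcup I,A)\cong A\prod A$. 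The factorization $\psi_A\circ\phi_A=\Delta_A$ is just $\inthom(-,A)$ applied to the defining factorization $I\sqcup I\hookrightarrow J\stackrel{\sim}{\to} I$ of the fold map. That $\phi$ is a symmetric monoidal natural transformation is the counitality of $c$ (the composite $J\to J\tensor J\to I\tensor I\cong I$ equals $\varepsilon$). For $\psi$, one checks by a Yoneda-style diagram chase that the required square reduces to the compatibility of $e\tensor e\colon(I\sqcup I)\tensor(I\sqcup I)\to J\tensor J$ with $c$ and the canonical isomorphism $(I\sqcup I)\tensor(I\sqcup I)\cong I^{\sqcup 4}$; cocommutativity of $c$ is exactly what matches this with the monoidal structure on $\Delta_{\op{C}}$ from Example~\ref{diagex}.

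For the homotopical properties, note that $I$ is cofibrant by hypothesis, so $I\sqcup I$ is cofibrant, and since $I\sqcup I\hookrightarrow J$ is a cofibration, $J$ is cofibrant as well. Now PPA, applied to the trivial fibration $A\to\ast$ (or rather its standard reformulation in closed monoidal model categories via internal hom, often called SM7), says that for fibrant $A$ the functor $\inthom(-,A)\colon\op{C}^{op}\to\op{C}$ sends cofibrations to fibrations and acyclic cofibrations to acyclic fibrations. Hence $\psi_A=\inthom(e,A)$ is a fibration whenever $A$ is fibrant. Applying Ken Brown's lemma to the functor $\inthom(-,A)\colon(\op{C}^{\text{cof}})^{op}\to\op{C}$, which by the above sends acyclic cofibrations between cofibrants to weak equivalences, we conclude that it sends arbitrary weak equivalences between cofibrants to weak equivalences; since $\varepsilon\colon J\to I$ is such a weak equivalence, $\phi_A=\inthom(\varepsilon,A)$ is a weak equivalence for fibrant $A$.

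The step I expect to be the most delicate is the verification that $\psi$ is a \emph{symmetric monoidal} natural transformation: the non-trivial monoidal structure on $\Delta_{\op{C}}$ uses the shuffle $(\pi_1\tensor\pi_1)\prod(\pi_2\tensor\pi_2)$, and matching this against the $c$-induced structure on $P$ requires simultaneously cocommutativity of $c$ and the fact that $e\colon I\sqcup I\hookrightarrow J$ is a coalgebra map (where $I\sqcup I$ is endowed with its canonical coalgebra structure as a copower of the monoidal unit). Everything else is either formal from adjunction or a direct application of PPA/Ken Brown.
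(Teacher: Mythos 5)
Your proof is correct and follows essentially the same route as the paper's: lax monoidal structure on $hom(J,-)$ via precomposition with the comultiplication, the two transformations from $J\stackrel{\sim}{\to}I$ and $I\sqcup I\hookrightarrow J$, the identification $hom(I\sqcup I,X)\cong X\sqcap X$, and the (acyclic) fibration/weak-equivalence statements from the adjoint form of PPA together with cofibrancy of $I$, $I\sqcup I$ and $J$ (the paper cites Berger--Moerdijk for the step you derive from Ken Brown's lemma). The one place you go beyond the paper is the monoidality of $\psi$, which the paper dismisses as ``straightforward'': you correctly isolate that the square for $\psi$ amounts to $c\circ e=(e\tensor e)\circ\delta$, i.e.\ that the endpoint inclusions are grouplike for the comultiplication. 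This is not a formal consequence of the paper's definition of a cocommutative interval (cocommutativity, coassociativity and counitality alone do not force it), so strictly speaking it is an implicit extra compatibility; it holds automatically in the Cartesian closed case, which is where the lemma is actually applied, but it is worth flagging as you did.
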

\begin{proof}  First note $J$ having a cocommutative, coassociative multiplication makes $P\colon\op{C}\to\op{C}$ a symmetric monoidal functor by precomposition $J\to J\tensor J$ (and unit axiom by $I\cong Hom(I,I)$).  Second notice that since $\op{C}$ is closed symmetric monoidal, $X\tensor -$ is a left adjoint and so commutes with colimits, and so in particular $X\sqcup X\cong X\tensor (I\sqcup I)$ and hence $hom(I,X)\sqcap hom(I,X)\cong hom(I\sqcup I,X)$.

Note that if $X$ is a fibrant object we may apply the contravariant functor $hom(-,X)$ to the factorization above to get a path object for $X$ which is functorial:
\begin{equation*}
X\cong hom(I,X) \stackrel{\sim}\to hom(J,X)\twoheadrightarrow hom(I\sqcup I,X)\cong X\sqcap X
\end{equation*}
The fact that $hom(I,X) \stackrel{\sim}\to hom(J,X)$ is a weak equivalence follows from the PPA and the fact that $I$, hence $J$, is cofibrant and $X$ is fibrant (see \cite{BM1} lemma 2.3).  The fact that $hom(J,X)\twoheadrightarrow hom(I\sqcup I,X)$ is a fibration follows from the PPA by taking the exponential transpose, and the fact that $I\sqcup I\hookrightarrow J$ is a cofibration (see \cite{Fresse} p.14).

It is then straight forward to check that the induced natural transformations are symmetric monoidal.
\end{proof}

We can formulate adjoint conditions as well:

\begin{definition}  We say $K$ is a cointerval in $\op{C}$ if $K\in\op{C}^\text{op}$ is an interval.  That is, $K$ is a cointerval if there is a factorization of the diagonal map of $I$ as a weak equivalence followed by a fibration
\begin{equation*}
I \stackrel{\sim}\to K \twoheadrightarrow  I\sqcap I
\end{equation*}
where $I$ is the monoidal unit.  We say that $K$ is a commutative cointerval if $K$ is a cointerval with a commutative associative unital multiplication $K\tensor K\to K$ with the unit map as above $I \stackrel{\sim}\to K$.
\end{definition}

\begin{lemma}\label{polem3}  Suppose that $\op{C}$ is not Cartesian closed and that the monoidal product is distributive with respect to the categorical product.  Suppose also that $\op{C}$ satisfies the following conditions:
\begin{enumerate}
\item $I$ is fibrant.
\item $\op{C}$ has a commutative cointerval $K$.
\item For any fibrant object $X$, the functor $-\tensor X$ preserves weak equivalences between fibrant objects and preserves fibrations.
\end{enumerate}
Then $P:=-\tensor K$ defines $\tensor$-coherent path objects for fibrant objects in $\op{C}$.
\end{lemma}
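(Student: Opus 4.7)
The plan is to verify that $P := -\tensor K$ admits the structure of a symmetric monoidal functor $\C \to \C$, and that the natural maps built from the factorization $I \stackrel{\sim}{\to} K \twoheadrightarrow I \sqcap I$ give symmetric monoidal natural transformations satisfying the conditions of Definition \ref{podef}. The commutative monoid structure on $K$ will make $P$ monoidal, and distributivity together with condition $(3)$ will supply the required homotopical properties.

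First I would construct the symmetric monoidal structure on $P$. The lax coherence maps $P(X) \tensor P(Y) = X\tensor K \tensor Y \tensor K \to X \tensor Y \tensor K = P(X\tensor Y)$ are built from the commutativity constraints together with the commutative multiplication $\mu\colon K\tensor K \to K$; associativity, commutativity and unitality of $\mu$ guarantee that these maps satisfy the coherence axioms. The unit map $I \to P(I) = I \tensor K \cong K$ is the unit $I \stackrel{\sim}{\to} K$ of the cointerval. Next I would define the natural transformations: $\phi_X \colon X \cong X \tensor I \to X\tensor K = P(X)$ is obtained from the unit $I \stackrel{\sim}{\to} K$, while $\psi_X \colon P(X) = X\tensor K \to X\tensor (I\sqcap I) \cong (X\tensor I)\sqcap(X\tensor I) \cong X\sqcap X$ uses the fibration $K \twoheadrightarrow I\sqcap I$ followed by the distributivity isomorphism. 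The composite $\psi_X \circ \phi_X$ is the diagonal of $X$ because the composite $I \to K \to I\sqcap I$ is the diagonal of $I$ and distributivity is natural.

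I would then check $\phi$ and $\psi$ are symmetric monoidal natural transformations. For $\phi$, this reduces to the fact that $I\to K$ is a map of commutative monoids (taking $I$ as the trivial monoid), which is immediate since $I \stackrel{\sim}{\to} K$ is by hypothesis the unit of the monoid structure on $K$. For $\psi$, I would use that $K \twoheadrightarrow I\sqcap I$ is compatible with the multiplication (where $I\sqcap I$ carries the diagonal/pair monoid structure), together with naturality of the distributivity isomorphism; the symmetric monoidal property of $\Delta_{\C}$ is exactly given by the formula in Example \ref{diagex}, so compatibility with the coherence maps of $P$ and $\Delta_{\C}$ follows by diagram chase.

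Finally, I would verify that $\phi_X$ is a weak equivalence and $\psi_X$ is a fibration for every fibrant $X$. Observe first that $K$ is itself fibrant: the pullback $I \sqcap I$ of the fibrant object $I$ (condition $(1)$) along itself is fibrant, and $K \twoheadrightarrow I\sqcap I$ is a fibration. Hence $I \stackrel{\sim}{\to} K$ is a weak equivalence between fibrant objects, so by condition $(3)$ the map $\phi_X = X\tensor(I\to K)$ is a weak equivalence. Similarly condition $(3)$ says $-\tensor X$ preserves fibrations, so $X\tensor K \to X\tensor(I\sqcap I)$ is a fibration, and composing with the distributivity isomorphism shows $\psi_X$ is a fibration. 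The main technical point, which I would treat carefully, is ensuring that all the structure maps above assemble into bona fide symmetric monoidal natural transformations; everything else is a direct consequence of the axioms on $K$ and of hypothesis $(3)$.
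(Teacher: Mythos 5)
Your proposal is correct and follows the same route as the paper, which simply tensors the cointerval sequence $I\stackrel{\sim}{\to}K\twoheadrightarrow I\sqcap I$ with $X$ and leaves the remaining verifications to the reader. You have merely spelled out those details (the monoidal structure on $P:=-\tensor K$ via the commutative multiplication on $K$, the role of distributivity in identifying $X\tensor(I\sqcap I)$ with $X\sqcap X$, and the use of conditions (1) and (3) to see that $\phi_X$ is a weak equivalence and $\psi_X$ a fibration), all of which are accurate.
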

\begin{proof}  Tensoring with the cointerval sequence we have, for any fibrant object $X$,
\begin{equation*}
X\cong X\tensor I \stackrel{\sim}\to X\tensor K\twoheadrightarrow X\tensor(I\sqcap I)\cong X\sqcap X
\end{equation*}
and the remaining details are easily checked.
\end{proof}

\subsubsection{Examples}
We will now consider a short list of the primary examples and nonexamples of categories which satisfy the conditions of Theorem $\ref{modelthm}$ that we wish to consider.

\begin{example}  (Simplicial Sets)  The category of simplicial sets is a Cartesian closed model category with injections as cofibrations and realization weak equivalences as weak equivalences (e.g. \cite{Hovey} section 3.2).  All simplicial sets are small (\cite{Hovey} Lemma 3.1.1). There is a fibrant replacement functor given by taking the singular complex of the realization.  All simplicial sets are cofibrant; in particular the monoidal unit is cofibrant, thus Lemma $\ref{polem1}$ applies and simplicial valued $\FF$-$\opers$ form a model category.
\end{example}

\begin{example}  (Vector Spaces in characteristic $0$)  Let $k$ be a field of characteristic zero and let $\dgvect_k$ be the category of differential graded $k$ vector spaces.  We consider $\dgvect_k$ to be a model category with weak equivalences / fibrations / cofibrations given by quasi-isomorphisms / surjections / injections.  Note that all objects in this model category are fibrant, so the identity gives a symmetric monoidal fibrant replacement functor.  Let $k[t,dt]$ be the unique commutative dga of polynomials in the variables $t$ and $dt$ which satisfies $d(t)=dt$ and $d(dt)=0$.  Consider the sequence
\begin{equation*}
k\stackrel{\phi}\longrightarrow k[t,dt]\stackrel{\psi}\longrightarrow k\oplus k
\end{equation*}
where $\phi(a)=a$ and $\psi(f(t)+g(t,dt)dt)=f(0)\oplus f(1)$.  Note that the composite $\psi\circ\phi$ is the diagonal.  Further notice that $\psi$ is surjective and that $\phi$ is a quasi-isomorphism, since $k[t,dt]$ is acyclic in characteristic $0$.  Thus $K=k[t,dt]$ is a commutative cointerval object.  Notice that the other conditions of Lemma $\ref{polem3}$ are clearly satisfied, thus Theorem $\ref{modelthm}$ applies and linear $\F$-$\opers$ in characteristic $0$ form a model category.
\end{example}

\begin{remark}\label{porem}
By the above work, given an $\F$-$\oper$ $\op{O}$ in $\dgvect_k$ (characteristic $0$) we get a factorization of the diagonal in the category of $\F$-$\opers$:
\begin{equation*}
\op{O}\stackrel{\sim}\to \op{O}[t,dt]\twoheadrightarrow \op{O}\oplus \op{O}
\end{equation*}
where $\op{O}[t,dt]$ is the $\F$-$\oper$ given by composition of the symmetric monoidal functors:
\begin{equation*}
\F\stackrel{\op{O}}\longrightarrow \op{C}\stackrel{-\tensor k[t,dt]}\longrightarrow \op{C}
\end{equation*}
In particular for an object $v\in\V$ we have $\op{O}[t,dt](v)=\op{O}(v)\tensor k[t,dt]$ and for a general object $X\in\F$ with $X\cong\tensor_iv_i$ we define $\op{O}[t,dt](X)= \tensor_i(\op{O}(v_i)\tensor k[t,dt])$.  Then for a generating morphism $\phi\colon X\to v$ we have
\begin{equation*}
\op{O}[t,dt](X)\cong \tensor_{i\in I}\op{O}(v_i)\tensor k[t,dt]^{\tensor I}\stackrel{\phi\tensor\mu}\longrightarrow \op{O}(v)\tensor k[t,dt]=\op{O}[t,dt](v)
\end{equation*}
where $\mu$ is the commutative associative multiplication in $k[t,dt]$.
\end{remark}

It is important to notice that the characteristic zero assumption in the previous example is necessary.  In particular the path object given above requires characteristic zero for the map $\phi$ to be a weak equivalence.  More generally, we have the following lemma.
\begin{lemma}  Let $\op{C}$ be the category of differential graded vector spaces in a field of characteristic $p>0$.  Then $\op{C}$ does not have $\tensor$-coherent path objects for fibrant objects.
\end{lemma}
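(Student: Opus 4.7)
The plan is to derive a contradiction by examining the structure forced on $P(k)$, where $k$ is the monoidal unit of $\op{C} = \dgvect_k$. First I would observe that in $\dgvect_k$ every object is fibrant, so $P$ is defined on all of $\op{C}_\tensor$. The symmetric monoidality of $P$ endows $P(k)$ with the structure of a commutative unital dga: the lax structure map $P(k) \tensor P(k) \to P(k\tensor k) = P(k)$ is an associative multiplication, commutative because the braiding $\tau_{k,k}$ is the identity, and the unit map $k \to P(k)$ (which coincides with the component $\phi_k$ of the unit natural transformation) supplies the algebra unit. Correspondingly, since $\Delta_{\op{C}}$ carries the componentwise product on $k \oplus k$, symmetric monoidality of $\psi$ makes $\psi_k \colon P(k) \twoheadrightarrow k \oplus k$ a surjective homomorphism of commutative dgas factoring $\phi_k$ through the diagonal $k \to k \oplus k$.

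Writing $\epsilon_0, \epsilon_1 \colon P(k) \to k$ for the two components of $\psi_k$, the surjectivity of $\psi_k$ in degree $0$ lets me choose $t \in P(k)^0$ with $\epsilon_0(t) = 0$ and $\epsilon_1(t) = 1$. The key computation is in characteristic $p$: by the graded Leibniz rule and degree-$0$ commutativity of $t$ with itself,
\begin{equation*}
d(t^p) \;=\; \sum_{i=0}^{p-1} t^{i}\,(dt)\,t^{p-1-i} \;=\; p\, t^{p-1}\, dt \;=\; 0,
\end{equation*}
so $t^p$ is a cocycle in degree $0$.

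Since $\phi_k$ is a quasi-isomorphism, $H^0(P(k)) \cong k$, generated by the class of $1 = \phi_k(1)$. Hence there exist $c \in k$ and $b \in P(k)^{-1}$ with $t^p = c\cdot 1 + db$. Applying the chain map $\epsilon_i$, the boundary term dies because $\epsilon_i(b) \in k^{-1} = 0$ forces $\epsilon_i(db) = d(\epsilon_i(b)) = 0$, and $\epsilon_i(1) = 1$ because $\epsilon_i \circ \phi_k = \id_k$ by the factorization axiom for the natural transformations. Thus $\epsilon_i(t^p) = c$ for both $i$. On the other hand $\epsilon_i$ is an algebra map, so $\epsilon_0(t^p) = 0^p = 0$ and $\epsilon_1(t^p) = 1^p = 1$, forcing $c = 0$ and $c = 1$ simultaneously: contradiction.

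The main obstacle is not analytic but rather the careful bookkeeping to verify that every symmetric-monoidality and naturality axiom on $(P,\phi,\psi)$ translates into the concrete algebraic data used above: that $P(k)$ really is a commutative dga whose unit is $\phi_k(1)$, that $\psi_k$ is an algebra map into the product algebra $k \oplus k$ (rather than merely a chain map), and that the factorization condition on natural transformations yields $\epsilon_i \circ \phi_k = \id_k$. Once these compatibilities are unwound from the symmetric-monoidal structure of $\Delta_{\op{C}}$, the Frobenius-based cocycle argument producing the contradiction is immediate and uses only the positive characteristic in the single step $p\, t^{p-1}\, dt = 0$.
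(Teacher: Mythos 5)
Your proof is correct and follows essentially the same route as the paper's: both exploit that in characteristic $p$ a degree-zero lift $t$ of an off-diagonal element has $d(t^p)=p\,t^{p-1}dt=0$, so that $t^p$ is a cocycle whose image under $\psi$ cannot lie on the diagonal in homology, contradicting $H_*(P(A))\cong H_*(A)$. The only difference is that you specialize to $A=k$ (where the commutative dga structure on $P(k)$ comes directly from the lax monoidal unit and multiplication of $P$), whereas the paper runs the same argument for an arbitrary commutative dga $A$ containing an even cocycle $x$ with $[x]\neq 0\neq[x^p]$; your version is a clean minimal instance of the same obstruction.
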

\begin{proof}  By contradiction suppose $P(-)$ gives us $\tensor$-coherent path objects in $\op{C}$.  Choose a commutative dga $A$ which contains an element $x$ of even degree which represents a nonzero class in homology and such that $x^p$ represents a nonzero class in homology.  Suppose we have a factorization of the diagonal as:
\begin{equation*}
\xymatrix{A \ar[r]^{\sim} & P(A) \ar@{>>}[r]^{g} & A\oplus A}
\end{equation*}
Then $g$ is surjective so there is a $y\in P(A)$ such that $g(y)=(x,0)$.  Then, since $A$ is commutative and since $y$ is of even degree we have $d(y^p)=py^{p-1}=0$.  Tracing through Definition $\ref{pathobjectdef}$ we see that $g(y^p)=(x^p,0)$.  Now taking homology of the above diagram we have:
\begin{equation*}
H_\ast(A) \cong H_\ast(P(A)) \stackrel{g_\ast}\longrightarrow H_\ast(A)\oplus H_\ast(A)
\end{equation*}
On the one hand the composite is the diagonal, but on the other hand the element $([x^p],[0])$ is in the image of the composite.  This contradiction proves the lemma.
\end{proof}
\begin{rmk}
A similar argument can be made for e.g. chain complexes in $\mathbb{Z}$ modules.
\end{rmk}

\begin{example}\label{tsex}(Topological Spaces)  Let $\Top$ be the category of compactly generated spaces with the Quillen model structure: weak equivalences given by weak homotopy equivalences and fibrations given by Serre fibrations.  Then $\Top$ is a cofibrantly generated symmetric monoidal model category (see e.g. \cite{Hovey}) having all objects fibrant.  Since $\Top$ is Cartesian closed and since the monoidal unit, a point, is cofibrant, Lemma $\ref{polem1}$ applies to endow $\Top$ with $\tensor$-coherent path objects.  Thus conditions (ii),(iii),(iv) of Corollary $\ref{transferprinciple}$ are satisfied.  However condition (i) is not satisfied and so we can not directly apply the result.  In order to circumvent this problem we follow \cite{Fresse} and use the fact that all objects in $\Top$ are small with respect to topological inclusions.  The details are given in the appendix, but we record the result here:
\end{example}

\begin{theorem}
\label{feymodthm}
Let $\op{C}$ be the category of topological spaces with the Quillen model structure.  The category $\fopsc$ has the structure of a cofibrantly generated model category in which the forgetful functor to $\vseq$ creates fibrations and weak equivalences.
\end{theorem}

See Appendix $\ref{appb}$.

\subsection{Quillen adjunctions from morphisms of Feynman categories}
\label{Quillensec}
We assume $\op{C}$ is a closed symmetric monoidal and model category satisfying the assumptions of Theorem $\ref{modelthm}$.  Let $\fr{E}$ and $\FF$ be Feynman categories and let $\alpha\colon \fr{E}\to\FF$ be a morphism between them.  Recall (Theorem $\ref{pushthm}$) this morphism induces an adjunction
\begin{equation*}
\alpha_\ast\colon\eops \leftrightarrows \fopsc\colon \alpha^\ast
\end{equation*}
where $\alpha^\ast(\op{A}):= \op{A}\circ\alpha$ is the right adjoint and $\alpha_\ast(\op{B}):=Lan_\alpha(\op{B})$ is the left adjoint.  In this section we will see that several prominent examples of such adjunctions are in fact Quillen adjunctions.

\begin{lemma}\label{qalem} Suppose $\alpha^\ast$ restricted to $\vfmods\to \vemods$ preserves fibrations and acyclic fibrations (see Remark $\ref{vmodmodelrmk})$.  Then the adjunction  $(\alpha_\ast, \alpha^\ast)$ is a Quillen adjunction.
\end{lemma}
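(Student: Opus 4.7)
The plan is to verify directly that $\alpha_R\colon\fopsc\to\eops$ preserves fibrations and acyclic fibrations, which is one of the standard equivalent formulations of being a Quillen adjunction. The key point is that the model structure on $\fopsc$ (respectively $\eops$) is transferred from $\vfmods$ (respectively $\vemods$) along the free--forgetful adjunction, and in the transferred structure a morphism is a fibration or weak equivalence exactly when it is so after applying the forgetful functor; see Theorem \ref{modelthm} and Remark \ref{vmodmodelrmk}.

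First I would establish the compatibility square
\begin{equation*}
\xymatrix{
\fopsc \ar[r]^{\alpha_R} \ar[d]_{U_\FF} & \eops \ar[d]^{U_\fr{E}} \\
\vfmods \ar[r]_{\alpha_R|} & \vemods
}
\end{equation*}
where $U_\FF, U_\fr{E}$ are the forgetful functors. Since $\alpha=(v,f)$ with $v\colon\V_\fr{E}\to\V_\FF$ and $f\colon\F_\fr{E}\to\F_\FF$ compatible with $\imath$, precomposing $\op{A}\colon\F_\FF\to\C$ with $f$ and then restricting to $\imath(\V_\fr{E})$ gives the same functor as first restricting $\op{A}$ to $\imath(\V_\FF)$ and then precomposing with $v$. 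Thus the square commutes (up to the natural identification of $\V$ with $\imath(\V)$).

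Next, given a fibration (respectively acyclic fibration) $\phi\colon\op{A}\to\op{B}$ in $\fopsc$, by the definition of the model structure $U_\FF(\phi)$ is a fibration (respectively acyclic fibration) in $\vfmods$. By the hypothesis on $\alpha_R|$, the morphism $\alpha_R|(U_\FF(\phi))$ is then a fibration (respectively acyclic fibration) in $\vemods$. Using the commuting square, this equals $U_\fr{E}(\alpha_R(\phi))$, so $\alpha_R(\phi)$ is a fibration (respectively acyclic fibration) in $\eops$ by definition. This gives the Quillen adjunction.

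No real obstacle should arise: once the square is set up the proof is definition chasing. The only slight subtlety is that the model structures are defined on objects of $\V$ (through $\imath$), but since $\imath^{\otimes}$ induces an equivalence with $Iso(\F)$ and fibrations/acyclic fibrations in the transferred structure are created levelwise by the forgetful functor to $\V$-sequences, the identification at the $\V$-level is unambiguous.
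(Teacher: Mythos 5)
Your proof is correct and follows essentially the same route as the paper: both reduce to checking that $\alpha_R$ is a right Quillen functor, and both deduce this from the commuting square with the forgetful functors together with the fact that fibrations and weak equivalences are created levelwise in the transferred model structures. No gaps.
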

\begin{proof}  To show this adjunction is a Quillen adjunction it is enough to show that $\alpha^\ast$ is a right Quillen functor (see e.g. \cite{Hovey} Lemma 1.3.4), i.e. that $\alpha^\ast$ preserves fibrations and acyclic fibrations (on the entire domain $\fopsc$).  This follows by the assumption of the lemma and the commutativity of the following diagram,
\begin{equation*}
\xymatrix{\eops  \ar[d]^{G_\fr{E}} & \ar[l]^{\alpha^\ast} \fopsc \ar[d]^{G_\FF} \\ \vemods  & \ar[l]^{\alpha^\ast} \vfmods  }
\end{equation*}
along with the fact that $G_\fr{E}$ and $G_\FF$ preserve and reflect weak equivalences and fibrations.
\end{proof}

Common examples where the conditions of the Lemma $\ref{qalem}$ are satisfied come from the standard adjunctions between various $\opcat$, several of which we now describe.
\begin{example}  Lemma $\ref{qalem}$ immediately implies that the forgetful/free adjunction between $\vmodsc$ and $\fopsc$ is a Quillen adjunction.
\end{example}

\begin{example}\label{qaex1}  Recall that $\CCyclic$ and $\modular$ denote the Feynman categories whose $\opers$ are cyclic and modular operads respectively and that there is a morphism $i\colon\CCyclic\to\modular$ by including as genus zero.  As discussed above, this morphism induces an adjunction between cyclic and modular operads
\begin{equation*}
i_\ast\colon\CCyclic\text{-}\op{O}ps_\op{C} \leftrightarrows  \modular\text{-}\op{O}ps_\op{C}\colon i^\ast
\end{equation*}
and the left adjoint is called the modular envelope of the cyclic operad.  The fact that the morphism of Feynman categories is inclusion means that $i^\ast$ restricted to the underlying $\V$-modules is given by forgetting, and since fibrations and weak equivalences are levelwise, $i^\ast$ restricted to the underlying $\V$-modules will preserve fibrations and weak equivalences.  Thus by Lemma $\ref{qalem}$ this adjunction is a Quillen adjunction.
\end{example}

\begin{example}\label{qaex2}  In the directed case we have various morphisms of Feynman categories given by inclusion.  For example $\operads\to\properads\to \props$ or $\operads\to\dioperads$, and, as discussed above, the induced adjunctions recover various free constructions in the literature, (see e.g. \cite{Vallette},\cite{MVor}).  Since restriction to the underlying $\V$-modules is the identity or inclusion in each example, Lemma \ref{qalem} applies and each associated adjunction is a Quillen adjunction.  In particular we have a Quillen adjunction between the categories of operads and props in such a model category.
\end{example}

\begin{example}  Consider the morphism $\dioperads\to\CCyclic$ given by forgetting the directed structure on the morphisms.  On the underlying $\V$-modules, the right adjoint is given by restriction of the $S_{n+m}$-module to an $S_{n,m}$-module and so Lemma \ref{qalem} applies to show that the adjunction between dioperads and cyclic operads is a Quillen adjunction.

\end{example}

\begin{remark}  In this section we have given conditions on an adjunction induced by a morphism of Feynman categories (source categories) to be a Quillen adjunction.  Similarly it is possible to give conditions on a symmetric monoidal adjunction of base categories such that the induced adjunction is a Quillen adjunction or Quillen equivalence.  In particular one can show that the categories of simplicial and topological $\F$-$\opers$ are Quillen equivalent.
\end{remark}

\subsection{Cofibrant objects}  We continue to assume $\op{C}$ is a closed monoidal category and a model category such that the conditions of Theorem $\ref{modelthm}$ are satisfied.  Let $\adj{F}{\fopsc}{\vmodsc}{G}$ be the free and forgetful adjunction.

\begin{proposition}\label{cofprop}  Let $\Phi\in\vmodsc$ be cofibrant.  Then $F(\Phi)\in\fopsc$ is cofibrant.
\end{proposition}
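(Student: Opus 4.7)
The plan is to deduce cofibrancy of $F(\Phi)$ from the fact that $F$ is a left Quillen functor, so I do not need to build cellular decompositions by hand. The starting observation is that the adjunction $\adj{F}{\vmodsc}{\fopsc}{G}$ fits into the factorization
\[
\vseq \leftrightarrows \vmodsc \leftrightarrows \fopsc
\]
of the composite adjunction used to transfer the model structure in Theorem \ref{modelthm}. By Remark \ref{vmodmodelrmk}, $\vmodsc$ itself carries a transferred model structure with fibrations and weak equivalences created by $\vmodsc \to \vseq$; by Theorem \ref{modelthm}, the analogous statement holds for $\fopsc$. In particular, a morphism in $\fopsc$ is a fibration (resp.\ acyclic fibration) exactly when its underlying map of $\V$-sequences is, which happens exactly when its underlying map of $\V$-modules is.

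Given this, I would first verify that $G\colon \fopsc \to \vmodsc$ preserves fibrations and acyclic fibrations. This is immediate from the previous paragraph, since both conditions on $\vmodsc$ and $\fopsc$ are detected levelwise on the underlying $\V$-sequence and $G$ is the identity on underlying $\V$-modules. This is precisely the hypothesis of Lemma \ref{qalem} applied to the free/forget adjunction (which was already noted as a Quillen adjunction in the example immediately following Lemma \ref{qalem}), so $(F,G)$ is a Quillen adjunction.

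The conclusion is then a general fact about Quillen adjunctions: a left Quillen functor preserves cofibrations with cofibrant domain, and in particular preserves cofibrant objects. Explicitly, the initial object of $\vmodsc$ is sent by $F$ to the initial object of $\fopsc$ (as $F$ is a left adjoint), and if $\Phi$ is cofibrant then the map from the initial $\V$-module to $\Phi$ is a cofibration, so applying $F$ gives a cofibration from the initial $\F$-$\oper$ to $F(\Phi)$, i.e.\ $F(\Phi)$ is cofibrant.

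The only step that requires any care is confirming that fibrations and acyclic fibrations in $\fopsc$ genuinely restrict to fibrations and acyclic fibrations in $\vmodsc$ (as opposed to just in $\vseq$); but this is automatic from the fact that both transferred structures use the same levelwise fibrations/weak equivalences inherited from $\C$. There is no real obstacle here, the result is a formal consequence of the way the model structure was set up.
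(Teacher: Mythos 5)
Your proposal is correct and matches the paper's own argument: the paper proves this by observing that $F$ is a left Quillen functor (established via Lemma \ref{qalem} in the example immediately following it) and hence preserves initial objects and cofibrations, so it preserves cofibrant objects. Your write-up simply makes explicit the verification that $G$ preserves fibrations and acyclic fibrations, which is the same levelwise-detection argument the paper relies on.
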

\begin{proof}  Since $F$ is a left Quillen functor it preserves cofibrations and initial objects, hence preserves cofibrant objects.
\end{proof}
For the remainder of this section we fix $\op{C}$ to be the category of dg vector spaces over a field of characteristic $0$.  Recall that an object in $\fopsc$ is quasi-free if its image under the forgetful functor to graded vector spaces is free.  We now show that quasi-free $\F$-$\opers$ are cofibrant.  The argument is a generalization of that for operads \cite{Fressebook} and properads \cite{MerkVal2}.

\begin{theorem}\label{cofthm}  Let $\op{Q}=(F(\Phi),\delta)$ be a quasi-free $\F$-$\oper$.  Furthermore assume that $\Phi$ admits an exhaustive filtration
\begin{equation*}
\Phi_0\subset\Phi_1\subset\Phi_2\subset\dots\subset \Phi
\end{equation*}
in the category of $\V$-modules (so that these inclusions are split injections of Aut$(v)$-modules) such that $\delta(\Phi_i)\subset F(\Phi_{i-1})$.  Then $\op{Q}$ is cofibrant in $\fopsc$.
\end{theorem}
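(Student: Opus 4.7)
The plan is to exhibit $\op{Q}$ as a transfinite composition of cofibrations starting from a cofibrant object, thereby concluding cofibrancy of $\op{Q}$ itself. Working with the given exhaustive filtration, I would first verify that the base step $F(\Phi_0)$ is cofibrant in $\fopsc$. Since $\delta(\Phi_0)\subset F(\Phi_{-1})=0$, we have $\delta|_{\Phi_0}=0$, so $F(\Phi_0)$ is literally free on $\Phi_0$. In characteristic $0$, Maschke's theorem makes every dg $Aut(v)$-module cofibrant in $\dgvect_k$, and it follows that $\Phi_0$ is a cofibrant $\V$-module, so Proposition \ref{cofprop} gives the cofibrancy of $F(\Phi_0)$.

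For the inductive step, I would show that each inclusion $F(\Phi_{i-1})\hookrightarrow F(\Phi_i)$, with differential restricted from $\delta$, is a cofibration of $\F$-opers. Using the split inclusions as $Aut(v)$-modules, set $M_i:=\Phi_i/\Phi_{i-1}$ so that $\Phi_i\cong\Phi_{i-1}\oplus M_i$ as graded $\V$-modules. Since $\delta(\Phi_i)\subset F(\Phi_{i-1})$ and $\delta(\Phi_{i-1})\subset F(\Phi_{i-2})\subset F(\Phi_{i-1})$, the differential $\delta$ descends to a degree $1$ map of $\V$-modules $\bar\delta\colon M_i\to F(\Phi_{i-1})$. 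Now use the standard cone $CM_i$ in $\vmodsc$, which comes with a canonical cofibration $\iota\colon M_i[-1]\hookrightarrow CM_i$ whose cokernel is $M_i$ and with $CM_i$ contractible. Because the free functor $F$ is a left Quillen functor (Quillen adjoint to the forgetful functor, cf. the paragraph following Proposition \ref{cofprop}), $F(\iota)$ is a cofibration in $\fopsc$. The pushout
\[
\xymatrix{
F(M_i[-1]) \ar[r]^-{F(\bar\delta)} \ar[d]_{F(\iota)} & F(\Phi_{i-1}) \ar[d] \\
F(CM_i) \ar[r] & P_i
}
\]
gives, on underlying graded $\V$-modules, $P_i\cong F(\Phi_{i-1}\oplus M_i)\cong F(\Phi_i)$, and a direct check shows that the differential produced by the pushout is precisely $\delta|_{F(\Phi_i)}$: on $F(\Phi_{i-1})$ it agrees with the restriction of $\delta$, while on $M_i$ the pushout construction forces the differential to equal $\bar\delta$, which is $\delta$ mod $\Phi_{i-1}$. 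Hence $F(\Phi_{i-1})\to F(\Phi_i)$ is a cofibration as a pushout of one.

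Finally, because the filtration is exhaustive, $\op{Q}=F(\Phi)=\colim_i F(\Phi_i)$ is a transfinite composition of cofibrations starting at the cofibrant object $F(\Phi_0)$, so cofibrations being closed under transfinite composition yields that $\op{Q}$ is cofibrant in $\fopsc$.

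The main obstacle is the identification of the pushout $P_i$ with $F(\Phi_i)$, together with the verification that the induced differential matches $\delta$. The free functor $F$ is computed via colimits over the comma categories $(\F\downarrow v)$, so one must check that these colimits distribute correctly over the pushout and that the derivation property of $\delta$ on the quasi-free $\F$-oper makes the obstruction to agreement vanish. A secondary subtlety, as in the operadic/properadic arguments of \cite{Fressebook,MerkVal2}, is confirming that $\bar\delta$ is genuinely $\V$-module linear and that the splittings of $\Phi_{i-1}\hookrightarrow\Phi_i$ respect the $Aut(v)$-action, both of which are afforded by the characteristic zero hypothesis through semi-simplicity of the group actions.
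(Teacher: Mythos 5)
Your proof is correct and is essentially the paper's argument: both induct over the filtration, realize each stage $F(\Phi_{i-1})\to F(\Phi_i)$ as a pushout of a cofibration obtained by freely attaching the new generators together with their prescribed images under $\delta$, and then conclude by transfinite composition. The only substantive difference is packaging: where you attach the cone $F(M_i[-1])\to F(CM_i)$ and invoke that $F$ is left Quillen, the paper first replaces $\Phi$ by a retract of a free $\V$-module $R(\Gamma)$ (using semisimplicity of $k[Aut(v)]$ in characteristic $0$) and then attaches explicit sphere/disk cells $FR(S^j)\to FR(D^j)$ --- that retract step is exactly what justifies your implicit claim that $M_i[-1]\hookrightarrow CM_i$ is a cofibration in the \emph{transferred} model structure on $\vmodsc$ (whose cofibrations are retracts of relative cell complexes built from free $Aut(v)$-cells), so you should make it explicit rather than deferring it to the closing remark about semisimplicity.
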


\begin{proof}  The proof follows as in $\textit{loc.cit}$.
The first step is to reduce the problem to the case of quasi-free objects built from free $\V$-modules; to this end we will show that $\Phi$ is a retract of a free $\V$-module.  Fix $v\in\V$ and let $A:=\Phi(v)$, $A_l:=\Phi_l(v)$ and $H:=Aut(v)$.  Then $A$ is an $H$-representation and we choose an isomorphism of representations $A_l\cong \oplus_{i\in J_l} A_l^i$ where each $A_l^i$ is irreducible.  The filtration allows us to consider $J_l\subset J_{l+1}$ and $A= \oplus_{i\in J} A^i$ where $J$ is the union of the $J_l$.  Then define maps
\begin{equation*}
A\stackrel{\epsilon}\to k[J]\tensor \text{Reg}(H) \stackrel{\eta}\to A
\end{equation*}
by taking
\begin{equation*}
\epsilon(A^i)=i\tensor A_i \ \text{   and   } \  \eta(j\tensor A_i)=\begin{cases} A_i \text{ if } i=j \\ 0 \text{  else} \end{cases}
\end{equation*}
where we have arbitrarily chosen for each $i\in J$ a copy of $A_i$ appearing as a summand of Reg$(H)$, the regular representation.  By construction the maps $\eta$ and $\epsilon$ are $H$-equivariant and the composition is the identity.  Define $\Gamma(v):=k[J]$ and then we have, by taking the above construction at each level, a retraction of $\V$-modules
\begin{equation*}
\Phi\to R(\Gamma)\to\Phi
\end{equation*}
where $R$ is the free functor $\vseq\to\vmods$.  Finally notice that the filtration on $\Phi$ induces a filtration on $\Gamma$ by $\Gamma_l(v):=k[J_l]$ and if we define $\delta^\prime \colon FR(\Gamma)\to FR(\Gamma)$ by $\delta^\prime= FR(\epsilon)\circ \delta\circ FR(\eta)$ then the filtration on $\Gamma$ satisfies $\delta^\prime(\Gamma_i)\subset FR(\Gamma_{i-1})$.

By the above construction we have a retraction of quasi-free $\F$-$\opers$
\begin{equation*}
\op{Q}=(F(\Phi),\delta)\to (FR(\Gamma),\delta^\prime) \to \op{Q}
\end{equation*}
and so if we can show that the quasi-free $\F$-$\oper$ $(FR(\Gamma),\delta^\prime)$ is cofibrant, it will follow that $\op{Q}$ is cofibrant.

Fix a natural number $j$ and define two $\V$-sequences $S^j$ and $D^j$ as follows.  For a fixed $j$ and $v\in\V$, choose a basis $\{e_b : b\in B\}$ for the vector space $\Gamma_{j+1}(v)\setminus\Gamma_j(v)$.  We then define
\begin{equation*}
S^j(v)=\ds\bigoplus_{b\in B} \Sigma^{|e_b|-1} k  \ \ \text{  and  } \ \ D^j(v)=\ds\bigoplus_{b\in B} (\Sigma^{|e_b|} k \oplus \Sigma^{|e_b|-1} k)
\end{equation*}
as vector spaces.  Define $x_b$ to be the generator of $S^j(v)$ corresponding to index $b$ and $y_b\oplus z_b$ to be the generator of $D^j(v)$ corresponding to index $b$.  Define differentials on these vector spaces by $d(x_b)=0$ and $d(y_b)=z_b$ and define the canonical inclusion $\text{in}_j\colon S^j(v)\to D^j(v)$ by $x_b\mapsto z_b$.

Taking this construction for all $v\in\V$ gives us $\text{in}_j\colon S^j\to D^j$ which is a cofibration in the category of $\V$-sequences.  As such $FR(\text{in}_j)$ is a cofibration.  Define a map $f\colon FR(S^j)\to (FR(\Gamma_j),\delta^\prime)$ to be the image under adjunction of the map $S^j(v) \to (FR(\Gamma_j)(v),\delta^\prime)$ taking $x_b\mapsto \delta^\prime(e_b)$.  We have the following pushout diagram in $\fopsc$:
\begin{equation*}
\xymatrix{ FR(S^j) \ar[dd]_{FR(\text{in}_j)} \ar[rr]^f && (FR(\Gamma_j), \delta^\prime) \ar@{.>}[dd] \\ && \\FR(D^j) \ar@{.>}[rr] && \ds\frac{(FR(\Gamma_j),\delta^\prime) \oplus FR(D^j)}{d(y_b)=\delta^\prime(e_b)}}
\end{equation*}
Then under the identification $e_b\mapsto y_b$, this pushout is isomorphic to $(FR(\Gamma_{j+1}), \delta^\prime)$.  Moreover, the morphism $(FR(\Gamma_{j}), \delta^\prime)\hookrightarrow(FR(\Gamma_{j+1}), \delta^\prime)$ is the pushout of the cofibration $FR(\text{in}_j)$ and hence a cofibration.  Taking the colimit over all $j$ we see that $(FR(\Gamma), \delta^\prime)$ is cofibrant, hence the proof.
\end{proof}

\begin{corollary}
\label{cofcor1}
The Feynman transform (Definition $\ref{ftdef}$) of a non-negatively graded dg $\F$-$\oper$ is cofibrant.
\end{corollary}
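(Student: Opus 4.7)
The plan is to reduce the claim to Theorem \ref{cofthm}, which asserts that a quasi-free $\FF^{odd}$-$\oper$ whose generating $\V$-module carries an appropriate exhaustive filtration is cofibrant. Two things must be verified for $\FT(\op{O})$: that it is quasi-free, and that the underlying $\V$-module $\Phi$ of generators admits an increasing exhaustive filtration $\Phi_0 \subset \Phi_1 \subset \cdots$ satisfying $\delta(\Phi_i) \subset F(\Phi_{i-1})$.

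For quasi-freeness, inspect Definition \ref{ftdef}: $\FT(\op{O})$ is built from the push-forward $\imath_{\FF^{odd} *}$ applied to the restriction/dualization of $\op{O}$, post-composed with the duality $\vee$. By the remark following Theorem \ref{pushthm}, the push-forward along the defining inclusion $\imath$ of a Feynman category coincides with the free functor. Hence, after forgetting the differential, $\FT(\op{O})$ is the free $\FF^{odd}$-$\oper$ on the $\V$-module $\Phi$ obtained from $\imath^*(\op{O})$ by dualization and any shifts inherent in $\vee$. This is exactly quasi-freeness in the sense of Definition \ref{qfdef1}.

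For the filtration, I would filter $\Phi$ by homological degree. Since $\op{O}$ is non-negatively graded, $\Phi$ is bounded on one side, so defining $\Phi_i(v)$ to be the summand of $\Phi(v)$ in homological degree at most $i$ gives an exhaustive increasing filtration by sub-$\V$-modules (the $\V$-action is levelwise in $v$ and preserves homological degree, since isomorphisms act with internal differential zero). The total differential splits as $\delta = d_{int} + d_{\Phi^1}$. The internal piece is induced by $\op{O}$'s differential and lowers homological degree by one, so $d_{int}(\Phi_i) \subset \Phi_{i-1}$. For the external piece $d_{\Phi^1}$, which by its construction prior to Definition \ref{ftdef} factors through the maps $\Bar \to \mathsf{A} \to \Bar$ and sends a generator to a sum indexed by degree-one morphisms ending at $v$ with terms given by products of generators coming from the decomposition of the source, one must verify $d_{\Phi^1}(\Phi_i) \subset F(\Phi_{i-1})$. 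Here non-negativity of the grading is used essentially to ensure that each factor produced by a decomposition has homological degree strictly less than the original generator.

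The main obstacle is precisely this last verification: tracking how the colimit defining $d_{\Phi^1}$ interacts with the homological grading. Without non-negativity, a decomposition of a generator could produce a factor of the same homological degree (paired with a factor of unbounded negative degree), violating the filtration condition; non-negativity forces all factors into strictly smaller filtration stages. Once this step is established, Theorem \ref{cofthm} applies directly and yields that $\FT(\op{O})$ is cofibrant in the model structure of Theorem \ref{modelthm}, completing the argument. As an immediate consequence, combined with Theorem \ref{resthm}, the double Feynman transform gives a functorial cofibrant replacement in the quadratic case.
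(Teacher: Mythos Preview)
Your proposal is essentially correct and follows exactly the route the paper intends: the corollary is stated immediately after Theorem~\ref{cofthm} without proof precisely because it is meant to be the direct application you describe---check quasi-freeness (which is immediate from the definition of $\FT$ as a push-forward along $\imath_{\FF^{odd}}$) and exhibit the required filtration on the generating $\V$-module by homological degree, using the one-sided boundedness that non-negativity of $\op{O}$ guarantees after dualization.

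One point worth sharpening: you should be explicit about the role of the suspension/twist implicit in passing to $\FF^{odd}$. The dual composition maps themselves preserve internal degree, so the inequality ``each factor has strictly smaller degree than the original generator'' only follows once the degree shift contributed by the odd structure (equivalently, the classical $s^{-1}$ in the cobar construction) is taken into account. Without that shift, a generator in degree $d$ could decompose into a product where one factor sits in degree $d$ and another in degree $0$, and the filtration condition $\delta(\Phi_i)\subset F(\Phi_{i-1})$ would fail by one. Your parenthetical about ``any shifts inherent in $\vee$'' and the odd structure is exactly where this enters, but making the shift explicit would close the argument cleanly. With that bookkeeping in place, both $d_{int}$ and $d_{\Phi^1}$ move generators into $F(\Phi_{i-1})$ and Theorem~\ref{cofthm} applies.
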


\begin{corollary}\label{cofcor}  The double Feynman transform of a non-negatively graded dg $\F$-$\oper$ in a cubical Feynman category is a cofibrant replacement.

\end{corollary}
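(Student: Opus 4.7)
The plan is to combine the two previous results. Since a cofibrant replacement requires both a weak equivalence to $\O$ and that the source is cofibrant, I would produce these two pieces separately: the weak equivalence from Theorem \ref{resthm} (applied to the quadratic hypothesis) and the cofibrancy from the previous corollary which states that the Feynman transform of a non-negatively graded dg $\F$-$\oper$ is cofibrant (a direct application of Theorem \ref{cofthm}).

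For the weak equivalence, unpack $\FT\FT(\O)$ using Definition \ref{ftdef}: the first application gives $\vee\circ\Bar(\O)\in\foddops_{Kom(\C)}$, and the second application (now on the odd side) gives $\vee\circ\Cobar(\vee\circ\Bar(\O))$. Since $\vee$ is a duality with $\vee\vee\simeq\mathrm{id}$ and $\Cobar$ is defined via $\imath_{\FF\, *}\circ\imath_{\FF^{odd}}^*\circ(-)^{op}$ (which is compatible with $\vee$ up to natural isomorphism of functors, as both the left Kan extension and the opposite-complex operation commute with the duality on $\dgvect_k$), we obtain a natural isomorphism $\FT\FT(\O)\cong\Cobar\Bar(\O)$. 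The counit $\Cobar\Bar(\O)\to\O$ of the adjunction is a levelwise quasi-isomorphism by Theorem \ref{resthm}, and since weak equivalences in $\fopsc$ are defined levelwise by Theorem \ref{modelthm}, this gives the required weak equivalence.

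For cofibrancy, I would apply the previous corollary twice. The first application gives that $\FT(\O)$ is cofibrant as an object of $\foddops_{Kom(\C)}$; what I actually need is to iterate, so the nontrivial check is that $\FT(\O)$ itself is non-negatively graded so the hypothesis of the previous corollary applies a second time. This follows because $\FF$ is quadratic: every non-isomorphism has strictly positive degree, so the bar construction $\Bar(\O)=\imath_{\FF^{odd}\,*}\imath_{\FF}^*(\O)^{op}$ is free on generators of non-negative total degree (with the $d_{\Phi^1}$ component of the differential strictly increasing the generator count while the internal $d_{\O^{op}}$ preserves the free $\oper$ structure). Pulling back by $\vee$ preserves this, since the degree of a generator is determined by the (bounded below) internal degrees of $\O$ together with the (non-negative) degree function on $\F$, and dualization on the indexing free module does not disturb this bound.

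The main obstacle is the second paragraph: making precise that $\FT\FT(\O)\to\O$ really agrees with the counit of the $\Cobar\dashv\Bar$ adjunction under the identifications coming from $\vee\vee\simeq\mathrm{id}$. This is a diagram chase involving the symmetric monoidal equivalence $Kom(\C)\cong Kom(\C^{op})$ and naturality of Kan extensions; once one verifies that $\Cobar\circ\vee\cong\vee\circ\Cobar$ on the appropriate subcategories (via the isomorphism on $Iso(\F)$ that defines $(-)^{op}$ combined with cocontinuity of $\imath_{\FF\,*}$), the rest is immediate. The second paragraph's filtration claim can be made rigorous by taking the filtration $\Phi_i$ of Theorem \ref{cofthm} to be the subspace generated by chains of length $\leq i$ in the composition-class description from Theorem \ref{resthm}, which is automatically preserved by $d_{\Phi^1}$ since that differential decreases chain length by one.
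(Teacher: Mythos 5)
This is correct and is precisely the argument the paper intends: the corollary is stated without proof, but the Remark following Theorem \ref{resthm} points to exactly this combination of Theorem \ref{resthm} (the weak equivalence, using quadraticity) with the preceding corollary to Theorem \ref{cofthm} (cofibrancy, using non-negative grading), together with the identification $\FT\FT(\O)\cong\Cobar\Bar(\O)$ coming from $\vee\vee\simeq\mathrm{id}$. One small correction to your closing sentence: filtering the generators of $\Cobar\Bar(\O)$ by the degree of the indexing morphism alone does not satisfy $\delta(\Phi_i)\subset F(\Phi_{i-1})$, because the internal differential $d_{\O}$ preserves that degree; you should filter by \emph{total} degree (internal degree of $\O$ plus degree of the indexing morphism), which is exactly where the non-negative grading hypothesis enters and which makes both routes in your third paragraph go through.
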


\subsection{Homotopy classes of maps and master equations}
In this section we work over the category $\op{C}=\dgvect_k$ where $k$ is a field of characteristic $0$.  By the above work we know that the Feynman transform produces a cofibrant object and that all $\F$-$\opers$ are fibrant in $\dgvect_k$.  Thus we can consider homotopy classes of maps in this context, whose definition we now recall.

\begin{definition}\label{homdef2}  Let $\op{M}$ be a cofibrant $\F$-$\oper$, let $\op{O}$ be any $\F$-$\oper$, and let  $\gamma_1,\gamma_2 \in Hom(\op{M},\op{O})$.  Then $\gamma_1$ and $\gamma_2$ are homotopic if there exists a path object $P(\op{O})$ of $\op{O}$ along with a lift of the map $\gamma_1\oplus\gamma_2$ such that the following diagram commutes:
\begin{equation}\label{podi}
\xymatrix{&& P(\op{O}) \ar@{>>}[d] && \op{O} \ar[ll]_{\sim} \ar[dll]^{\Delta} \\ \op{M} \ar@{.>}[rru] \ar[rr]^{\gamma_1\oplus\gamma_2} && \op{O}\oplus\op{O} &&}
\end{equation}
The set of homotopy classes of such maps is denoted $[\op{M},\op{O}]$.
\end{definition}

Thus we have a notion of homotopy equivalence of morphisms from the Feynman transform due to Definition $\ref{homdef2}$.  By the above work (Theorem $\ref{methm2}$) such morphisms can be encoded by solutions to certain master equations.  Often these master equations can be interpreted as the Maurer-Cartan equation in a certain dg Lie algebra, in which case there is a notion of homotopy equivalence of MC solutions which we recall below.  The main result of this subsection will be to show that the notions of homotopy equivalence on both sides of the bijection of Theorem $\ref{methm}$ coincide.

\subsubsection{MC simplicial set}  Let $\Omega_n$ be the commutative dga of polynomial differential forms on the simplex $\Delta^n$,
\begin{equation}
\Omega_n:=\ds\frac{k[t_0\cdc t_n,dt_0\cdc dt_n]}{(\sum t_i-1,\sum dt_i )}
\end{equation}
having $d(t_i):=dt_i$ and with $|t_i|=0$ and $|dt_i|=1$ .  The collection of these commutative dgas $\Omega_\bullet$ is a simplicial object with face and degeneracy maps,
\begin{eqnarray*}
\delta_i\colon\Omega_{n+1}\to\Omega_n \ \ i=0\cdc n+1 & \ \ \ \  & \sigma_j\colon\Omega_{n}\to\Omega_{n+1} \ \ j=0\cdc n \\
\delta_i(t_k) =\begin{cases} t_k & \text{if  } k<i \\ 0 & \text{if  } k=i \\t_{k-1} & \text{if  } k>i \end{cases}
&\ \ \ \  &  \sigma_j(t_k) =\begin{cases} t_k & \text{if  } k<j \\ t_k+t_{k+1} & \text{if  } k=j \\t_{k+1} & \text{if  } k>j \end{cases}
\end{eqnarray*}

Following \cite{Hinich2} and \cite{Get}, given a dgLa $\fr{g}$ we define a simplicial set
\begin{equation*}
MC_\bullet(\fr{g}):=MC(\fr{g}\tensor\Omega_\bullet)
\end{equation*}
where $\fr{g}\tensor\Omega_n$ is a dgLa by taking, for $g,g^\prime\in\fr{g}$ and $f,f^\prime\in\Omega_n$,
\begin{equation*}
d(g\tensor f)= dg\tensor f + (-1)^{|g|}g\tensor d(f) \ \text{      and      } \  [g\tensor f, g^\prime\tensor f^\prime]= [g,g^\prime]\tensor (-1)^{|g^\prime||f|}ff^\prime
\end{equation*}
and with face and degeneracy maps coming from the functorial image of $-\tensor\Omega_\bullet$.  The fact that the image of an MC element is an MC element follows from the fact that the face and degeneracies on $\Omega_\bullet$ are maps of commutative dgas.

In the event that $\fr{g}$ is nilpotent, the simplicial set $MC_\bullet(\fr{g})$ is a Kan complex \cite{Hinich2}, \cite{Get}, i.e. a fibrant simplicial set, and we can consider its homotopy groups $\pi_\ast(MC_\bullet(\fr{g}))$ (see e.g. \cite{W}).  We then define the notion of homotopy equivalence of MC elements by defining $\pi_0(MC_\bullet(\fr{g}))$ to be the set of homotopy classes of MC elements.  In particular
\begin{equation*}
\pi_0(MC_\bullet(\fr{g}))=MC_\bullet(\fr{g})/\sim
\end{equation*}
where
\begin{equation}\label{ereq}
s_0\sim s_1 \Leftrightarrow \ \exists \ s\in MC_1(\fr{g}) \text{ such that } \delta_0(s)=s_0 \text{ and } \delta_1(s)=s_1
\end{equation}
This equivalence relation coincides with the notion of gauge equivalence in the nilpotent case.  Without the nilpotence assumption it is not clear that $\sim$ and its higher dimensional brethren are transitive.  One common approach to this problem is to tensor the dgLas of interest with the maximal ideal in a local Artin ring.  In our context we could also consider truncated operads.  However we will show that $\sim$ defined in Equation $\ref{ereq}$ is an equivalence relation in the context of operadic Lie algebras, and thus $\pi_0$ is well defined in our contexts of interest.  Further study of this simplicial set in the context of generalized operadic Lie algebras and their possible higher homotopy groups is a potentially interesting future direction.

\subsubsection{Homotopy classes theorem}  We can now state and prove the result linking homotopy classes of maps to homotopy classes of MC elements.  The proof will make use of the following lemma.

\begin{lemma}\label{polem2} Let $\op{M}$ be a cofibrant $\F$-$\oper$ and let $\gamma_1,\gamma_2$ be maps $\op{M}\to\op{O}$ for some $\F$-$\oper$ $\op{O}$.  Then $\gamma_1\sim\gamma_2$ if and only if there exist a lift
\begin{equation*}
\xymatrix{&& \op{O}[t,dt] \ar@{>>}[d]^{\psi} && \op{O} \ar[ll]^{\sim}_\phi \ar[dll]^{\Delta} \\ \op{M} \ar@{.>}[rru] \ar[rr]^{\gamma_1\oplus\gamma_2} && \op{O}\oplus\op{O} &&}
\end{equation*}
where $\phi$ and $\psi$ are induced levelwise as in Remark $\ref{porem}$.  In particular $\psi=\{\psi_v\}_v$ is given levelwise by $\psi_v(f(t)+g(t,dt)dt)=(f(0),f(1))$.
\end{lemma}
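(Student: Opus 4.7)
The plan is to reduce this to the standard fact that right homotopy with respect to any path object coincides with right homotopy with respect to a fixed path object, provided the source is cofibrant and the target is fibrant. Here every object of $\dgvect_k$ is fibrant, and $\op{M}$ is cofibrant by hypothesis, so the machinery of Chapter 7 of Hirschhorn applies once we verify that $\op{O}[t,dt]$ is indeed a path object for $\op{O}$ in $\fopsc$.

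First I would record that the ``if'' direction is immediate from Definition \ref{homdef2}: by Remark \ref{porem} the functor $-\tensor k[t,dt]$ produces, after the strictification of Definition \ref{fctransfer}, a factorization $\op{O}\xrightarrow{\sim}\op{O}[t,dt]\twoheadrightarrow \op{O}\oplus\op{O}$ of the diagonal in $\fopsc$, hence $\op{O}[t,dt]$ is a legitimate path object for $\op{O}$ and any diagram of the stated form exhibits $\gamma_1\sim\gamma_2$.

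For the ``only if'' direction, suppose $\gamma_1\sim\gamma_2$ via some path object $P(\op{O})\twoheadrightarrow \op{O}\oplus\op{O}$ and a lift $H\colon\op{M}\to P(\op{O})$. The plan is to construct a comparison morphism $P(\op{O})\to\op{O}[t,dt]$ over $\op{O}\oplus\op{O}$ and under $\op{O}$, and then take its composition with $H$ to obtain the desired lift. To build this comparison map, consider the diagram
\begin{equation*}
\xymatrix{
\op{O}\ar[r]^-{\phi}\ar[d]_{\sim}& \op{O}[t,dt]\ar@{>>}[d]^{\psi}\\
P(\op{O})\ar[r]_-{\pi}\ar@{.>}[ur]& \op{O}\oplus\op{O}
}
\end{equation*}
where $\pi$ is the structure map of $P(\op{O})$ and the left vertical map is the weak equivalence $\op{O}\xrightarrow{\sim}P(\op{O})$ given by the path object axioms. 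Since $\op{O}\to P(\op{O})$ is an acyclic cofibration (one can factor it as such, or equivalently observe that $\op{O}$ is cofibrant so it is a cofibration, and it is a weak equivalence by definition of a path object) and $\psi$ is a fibration, the dotted lift exists by the model category axioms. The only delicate point is that in $\fopsc$ not every path object need come with its left map being a cofibration; if the chosen $P(\op{O})$ does not, I would first replace it using the factorization axiom, writing $\op{O}\to P(\op{O})$ as an acyclic cofibration followed by an acyclic fibration $Q\twoheadrightarrow P(\op{O})$, pulling $H$ back by the existence of a lift $\op{M}\to Q$ (since $\op{M}$ is cofibrant and the fibration is acyclic), and working with $Q$ in place of $P(\op{O})$.

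Composing the resulting comparison $P(\op{O})\to\op{O}[t,dt]$ with $H\colon\op{M}\to P(\op{O})$ produces a morphism $\op{M}\to\op{O}[t,dt]$ whose composition with $\psi$ is $\gamma_1\oplus\gamma_2$ and which, by construction, is compatible with the structure maps from $\op{O}$. The main obstacle is the verification that the comparison map actually lives in $\fopsc$ and is compatible with the strict-monoidal structure built into $\op{O}[t,dt]$ via Definition \ref{fctransfer}; this reduces to checking on generating morphisms $\phi\colon X\to v$ of $\F$, where it follows from the explicit description of $\op{O}[t,dt]$ in Remark \ref{porem} as $\op{O}\tensor k[t,dt]$ with the commutative multiplication of $k[t,dt]$ providing the compatibility, together with naturality of the lift produced above.
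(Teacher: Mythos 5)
Your proof is correct and follows essentially the same route as the paper: the paper likewise factors $\op{O}\xrightarrow{\sim}P(\op{O})$ as an acyclic cofibration followed by an acyclic fibration, lifts $\op{M}$ through the acyclic fibration using cofibrancy of $\op{M}$, and lifts against $\psi$ using the acyclic cofibration, combining the two lifts exactly as you do. Just drop the parenthetical claim that cofibrancy of $\op{O}$ makes $\op{O}\to P(\op{O})$ a cofibration (cofibrancy concerns the map from the initial object, not maps out of $\op{O}$) — your fallback via the factorization axiom is the argument that actually works and is precisely what the paper does; also note that the final compatibility check is automatic, since the lifts are produced inside the model category $\fopsc$ and are therefore already morphisms of $\F$-$\opers$.
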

\begin{proof}  The $\Leftarrow$ direction follows immediately from the definition of $\sim$ and the fact that $\phi$ and $\psi$ make $\op{O}[t,dt]$ a path object for $\op{O}$ by the above work.  The $\Rightarrow$ implication is as follows.  If $\gamma_1\sim\gamma_2$ then there is a $P(\op{O})$ as in diagram $\ref{podi}$.  Since the map $\op{O}\stackrel{\sim}\to P(\op{O})$ is a weak equivalence we may factor it as an acyclic cofibration followed by an acyclic fibration.  Calling the object in the center of this factorization $P^\prime(\op{O})$ we have the following commutative diagram:
\begin{equation*}
\xymatrix{ && && \op{O}\ar@{^{(}->}[dll]_{\sim} \ar[d] \ar@/^2pc/[dd]^\Delta \\ \ast \ar@{^{(}->}[d] \ar[rr] && P^\prime(\op{O}) \ar@{>>}[d]^{\sim} \ar@{.>}[rr]  && \op{O}[t,dt] \ar@{>>}[d]  \\ \op{M} \ar@/_1pc/[rrrr]_{\gamma_1\oplus\gamma_2} \ar@{.>}[urr] \ar[rr] && P(\op{O}) \ar[rr] && \op{O}\oplus\op{O} }
\end{equation*}
The two lifts in the diagram combine to give the lift $\op{M}\to\op{O}[t,dt]$ that we require.

\end{proof}

\begin{theorem}\label{hcthm} For entries in Table $\ref{Ftable}$,
\begin{equation*}
[\FT(\op{P}),\op{O}]\cong\pi_0(ME_\bullet(colim(\op{P} \tensor\op{O})))
\end{equation*}
\end{theorem}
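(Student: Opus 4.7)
The plan is to reduce the statement to Theorem \ref{methm2} applied to a path object, and then to match the resulting equivalence relations on both sides. The main steps are as follows.

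First, recall from Lemma \ref{polem2} that two morphisms $\gamma_1, \gamma_2\colon \FT(\op{P}) \to \op{O}$ are homotopic if and only if they admit a common lift to the path object $\op{O}[t,dt]$ along the fibration $\psi\colon \op{O}[t,dt] \twoheadrightarrow \op{O} \oplus \op{O}$ given levelwise by $f(t) + g(t,dt)dt \mapsto (f(0), f(1))$. Thus
\begin{equation*}
[\FT(\op{P}),\op{O}] \;=\; \bigl\{(\gamma_1,\gamma_2) \in Hom(\FT(\op{P}),\op{O})^{\times 2}\bigr\} \big/ \sim,
\end{equation*}
where $\gamma_1 \sim \gamma_2$ iff they factor through $\op{O}[t,dt]$ compatibly.

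Next, apply Theorem \ref{methm2} to the $\F$-$\oper$ $\op{O}[t,dt]$ to obtain a natural bijection
\begin{equation*}
Hom(\FT(\op{P}),\op{O}[t,dt]) \;\cong\; ME\bigl(\colim(\op{P}\otimes \op{O}[t,dt])\bigr).
\end{equation*}
The crucial identification is that $\colim(\op{P}\otimes \op{O}[t,dt]) \cong \colim(\op{P}\otimes \op{O})\otimes k[t,dt]$. This follows because tensoring with $k[t,dt]$ commutes with colimits (it is a free and in particular flat $k$-module) and because, as in Remark \ref{porem}, $\op{O}[t,dt]$ is obtained levelwise by tensoring with $k[t,dt]$ and the $\F$-$\oper$ structure uses only the multiplication on $k[t,dt]$, so the pairing with $\op{P}$ is unaffected. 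Identifying $k[t,dt]$ with $\Omega_1$ (via $t\leftrightarrow t_1$, $dt\leftrightarrow dt_1$) gives the compatibility of dg Lie structures, so the right-hand side is exactly $ME_1(\colim(\op{P}\otimes\op{O}))$. The same argument applied to $\op{O}$ itself gives $Hom(\FT(\op{P}),\op{O}) \cong ME_0(\colim(\op{P}\otimes\op{O}))$.

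Now the face maps $\delta_0,\delta_1\colon \Omega_1 \to \Omega_0 = k$ evaluate $f(t)+g(t,dt)dt$ at $t=0,1$, which under the bijection above corresponds exactly to the projections $\psi$ in Lemma \ref{polem2}. Therefore a lift $\op{M} \to \op{O}[t,dt]$ extending $\gamma_1\oplus \gamma_2$ corresponds to an element $s \in ME_1$ with $\delta_0(s) = \alpha_1$ and $\delta_1(s) = \alpha_2$, where $\alpha_i \in ME_0$ is the MC element attached to $\gamma_i$. Thus the equivalence relation from Lemma \ref{polem2} on $Hom(\FT(\op{P}),\op{O})$ is mapped bijectively under Theorem \ref{methm2} to the equivalence relation $\sim$ from \eqref{ereq} on $ME_0$.

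The hard step is to verify that the relation $\sim$ is actually an equivalence relation in this setting, since the relevant dgLa is not nilpotent so the general argument of \cite{Hinich2,Get} does not apply directly. Reflexivity comes from constant paths (pulling back along $\Omega_1 \to \Omega_0$), and symmetry from the involution $t \leftrightarrow 1-t$. For transitivity, the plan is to exploit the model-categorical side: given $\gamma_1 \sim \gamma_2$ and $\gamma_2 \sim \gamma_3$ via lifts $H_{12}, H_{23}\colon \FT(\op{P}) \to \op{O}[t,dt]$, one assembles them into a map on an appropriate pushout representing a horn $\Lambda^2_1$ of $\Delta^2$ with values in $\op{O}\otimes \Omega_2$, and uses the cofibrancy of $\FT(\op{P})$ (Corollary of Theorem \ref{cofthm}) together with the fact that $\Omega_2 \to \Omega_1 \times_{\Omega_0} \Omega_1$ is an acyclic fibration of commutative dgas (a consequence of the simplicial contractibility of $\Omega_\bullet$ and the PPA for $\dgvect_k$) to produce a filler $\FT(\op{P}) \to \op{O}\otimes \Omega_2$. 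The restriction of this filler to the missing face then provides the required homotopy between $\gamma_1$ and $\gamma_3$. This argument in fact establishes the Kan condition on $ME_\bullet(\colim(\op{P}\otimes\op{O}))$ in our operadic setting, and completes the bijection $[\FT(\op{P}),\op{O}] \cong \pi_0(ME_\bullet(\colim(\op{P}\otimes\op{O})))$.
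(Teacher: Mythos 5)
Your main argument is essentially the paper's: both proofs run the bijection of Theorem \ref{methm2} on the path object $\op{O}[t,dt]$, identify $\colim(\op{P}\tensor\op{O}[t,dt])\cong\colim(\op{P}\tensor\op{O})\tensor\Omega_1$ (the automorphism groups act trivially on the $k[t,dt]$ factor, so the coinvariants pull it out), and then match the face maps $\delta_0,\delta_1$ of $\Omega_\bullet$ with the two components of the fibration $\psi\colon\op{O}[t,dt]\twoheadrightarrow\op{O}\oplus\op{O}$ from Lemma \ref{polem2}. That part is correct and is exactly what the paper does (up to the harmless difference in how you identify $k[t,dt]$ with $\Omega_1$).

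Where you diverge is the transitivity of the relation $\sim$ on $ME_0$. The paper does not fill horns: it simply observes that, since $\FT(\op{P})$ is cofibrant and every object of $\fopsc$ is fibrant, the homotopy relation on $Hom(\FT(\op{P}),\op{O})$ is an equivalence relation by general model category theory, and the established bijection then \emph{transports} transitivity to $\sim$ on MC elements. Your argument instead proves transitivity directly on the MC side, by gluing the two homotopies to a map into $\op{O}\tensor(\Omega_1\times_{\Omega_0}\Omega_1)$ and lifting along the levelwise acyclic fibration $\op{O}\tensor\Omega_2\to\op{O}\tensor(\Omega_1\times_{\Omega_0}\Omega_1)$ using cofibrancy of $\FT(\op{P})$. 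This works (the restriction map $\Omega_2\to\Omega_1\times_{\Omega_0}\Omega_1$ is a surjective quasi-isomorphism of commutative dgas in characteristic $0$, and tensoring levelwise over a field preserves both properties, so you do get an acyclic fibration of $\F$-$\opers$), and it buys you slightly more than the paper states: it is the first step toward the Kan condition on $ME_\bullet(\colim(\op{P}\tensor\op{O}))$, which the paper explicitly leaves as a future direction. The cost is that you re-prove by hand a fact (transitivity of the left/right homotopy relation between cofibrant and fibrant objects) that the model structure of Section \ref{htsec} already gives for free.
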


\begin{proof}  Let $\F$ be a Feynman category whose category of linear $\opers$ appears in the table.  Let $\op{P}$ be an odd $\F$-$\oper$ and let $\op{O}$ be an $\F$-$\oper$.  Then by Theorem $\ref{methm}$ we know there is a one to one correspondence
\begin{equation*}
Hom(\FT(\op{P}),\op{O})\stackrel{1\text{ to }1}\longleftrightarrow ME(colim(\op{P}\tensor\op{O}))
\end{equation*}
Let $\gamma_0$ and $\gamma_1$ be maps $\FT(\op{P})\to\op{O}$ and let $\tilde{\gamma}_0$ and $\tilde{\gamma}_1$ be their images via this bijection.  Then we want to show
\begin{equation*}
\gamma_0\sim\gamma_1 \Leftrightarrow \exists \  \tilde{\gamma}\in ME_1(colim(\op{P}\tensor\op{O})) \text{ such that } \delta_i(\tilde{\gamma})=\tilde{\gamma}_i \text{ for } i=0,1.
\end{equation*}
For the implication $\Rightarrow$, suppose that the maps $\gamma_1$ and $\gamma_2$ are homotopic.  Then, by Lemma $\ref{polem2}$ there is a lift of the sum of these maps, call it $\gamma$:
\begin{equation*}
\xymatrix{ & \op{O}[t,dt]\ar[d]^{\psi} \\ \FT(\op{P})\ar@{.>}[ur]^\gamma \ar[r]^{\gamma_0\oplus\gamma_1} & \op{O}\oplus\op{O}}
\end{equation*}
where, as above, $\psi(f(t)+g(t,dt)dt)=(f(0),f(1))$.  Next define an isomorphism of commutative dgas $\Omega_1\cong k[t,dt]$ by sending $t_0\mapsto t$ and $t_1\mapsto 1-t$.  Since $\op{O}[t,dt]$ is in particular a fibrant odd $\F$-$\oper$ we can apply the above $1$ to $1$ correspondence to get an element
\begin{equation*}
\tilde{\gamma}\in ME(colim(\op{P}\tensor(\op{O}[t,dt])))
\end{equation*}
Using the above isomorphism and the fact that this colimit is just a direct sum of coinvariant spaces, with the automorphism group acting trivially on the $\Omega_1$ factor, we have
\begin{equation*}
colim(\op{P}\tensor(\op{O}[t,dt]))\cong colim(\op{P}\tensor\op{O})\tensor\Omega_1
\end{equation*}
as Lie algebras.  As such we consider
\begin{equation*}
\tilde{\gamma}\in ME(colim(\op{P}\tensor\op{O})\tensor\Omega_1)=ME_1(colim(\op{P}\tensor\op{O}))
\end{equation*}
and it is enough to show that $\delta_i(\tilde{\gamma})=\tilde{\gamma}_i$ for $i=0,1$.  To see this note that under the above isomorphism $\Omega_1\cong k[t,dt]$ the morphism $id_\op{O}\tensor \delta_i\colon \op{O}\tensor\Omega_1\to\op{O}\tensor\Omega_0\cong\op{O}$ is sent to the morphism $\pi_{i+1}\circ\psi\colon\op{O}[t,dt]\to\op{O}$, where $\pi_1,\pi_2$ are projections.  Then the fact that $\pi_{i+1}\circ\psi(\gamma)=\gamma_i$ tells us that $id_{\op{O}}\tensor\delta_i(\tilde{\gamma})=\tilde{\gamma}_i$, where we have abused notation by identifying elements across the following isomorphisms:
\begin{equation*}
\xymatrix{colim(\op{P}\tensor\op{O})\tensor\Omega_1 \ar[d]^{\cong} \ar[rr]^{\delta_i} && colim(\op{P}\tensor\op{O}) \ar[d]^{\cong} \\ \ds\oplus_v Hom_{Aut(v)}(\op{P}^\ast(v),\op{O}(v)\tensor\Omega_1) \ar[rr]^{(id_{\op{O}}\tensor\delta_i)\circ -}&& \ds\oplus_v Hom_{Aut(v)}(\op{P}^\ast(v),\op{O}(v))}
\end{equation*}
The fact that this diagram commutes then tells us that $\delta_i(\tilde{\gamma})=\tilde{\gamma}_i$, proving the implication $\Rightarrow$.  Taking the argument in reverse yields $\Leftarrow$.

\end{proof}

The above theorem shows us that the equivalence relations coincide.  It was not however clear that the equivalence relation defining $\pi_0$ was transitive.  The above proof shows that in the case of the operadic Lie algebras, i.e. those arising in Table $\ref{Ftable}$, it is.

\subsection{W Construction}\label{Wsec}

In this subsection we work in the category $\op{C}=\textbf{Top}$ and with Feynman categories which we assume for convenience are strict.  In \cite{BV} Boardmann and Vogt give a construction, called the $W$ construction which, under suitable hypotheses, replaces a topological operad whose underlying $S$-module is cofibrant with a cofibrant operad in a functorial way.  In this section we will generalize the $W$ construction to $\fopsc$ when $\FF$ is cubical.  We will use the definitions set up in \S\ref{grfeysec}.

\begin{definition}  For a cubical Feynman category $\FF$ and an object $Y \in \F$, we define $w(\FF,Y)$ to be the category whose objects are the set $\coprod_n C_n(X,Y)\times [0,1]^n$.  An object in $w(\FF,Y)$ will be represented (uniquely up to contraction of isomorphisms) by a diagram
\begin{equation*}
X\xrightarrow[f_1]{t_1} X_1\xrightarrow[f_2]{t_2} X_2\to\dots\to X_{n-1}\xrightarrow[f_n]{t_n} Y
\end{equation*}
where each morphism is of positive degree and where $t_1,\dots,t_n$ represents a point in $[0,1]^n$.  These numbers will be called weights.  Note that in this labeling scheme isomorphisms are always unweighted.  The morphisms of $w(\FF,Y)$ are those generated by the following three classes:
\begin{enumerate}
\item  Levelwise commuting isomorphisms which fix $Y$, i.e.:
\begin{equation*}
\xymatrix{X \ar[r] \ar[d]^{\cong} & X_1 \ar[d]^{\cong} \ar[r] & X_2 \ar[d]^{\cong} \ar[r] & \dots \ar[r] & X_n \ar[d]^{\cong} \ar[r] & Y  \\ X^{\prime} \ar[r] & X^{\prime}_1\ar[r] & X^{\prime}_2\ar[r] &\dots \ar[r] & X^{\prime}_n \ar[ur] & }
\end{equation*}
\item  Simultaneous $\SS_n$ action.
\item  Truncation of $0$ weights: morphisms of the form $(X_1\stackrel{0}\to X_2\to\dots\to Y)\mapsto (X_2\to\dots\to Y)$.
\item  Decomposition of identical weights:  morphisms of the form $(\dots \to X_i\stackrel{t}\to X_{i+2} \to \dots) \mapsto (\dots\to X_i\stackrel{t}\to X_{i+1}\stackrel{t}\to X_{i+2}\to\dots)$ for each (composition preserving) decomposition of a morphism of degree $\geq 2$ into two morphisms each of degree $\geq 1$.
\end{enumerate}

\end{definition}

Note that since $\FF$ is assumed to be cubical every morphism of degree $n$ is generated by compositions of morphisms of degree $1$ uniquely upto $\SS_n$ action and composition of isomorphisms.  The graph based intuition takes the objects of $w(\FF,v)$ to be graphs with weighted edges along with a total order on the set of edges.  Here the total order is necessary to say which weight goes with which edge.  However, passing to the colimit, since we allow simultaneous $\SS_n$ action, removes the edge ordering, and here the role of an edge is played by the orbit of a morphism under the $\SS_n$ action.  Furthermore, the truncation of $0$ morphisms have the effect in the colimit of identifying an edge of weight $0$ with the graph having said edge contracted, as one expects from the classical $W$-construction.  As such we make the following definition.

\begin{definition}
\label{Wdef}
Let $\op{P}\in\fopst$.  For $Y \in ob(\F)$ we define
\begin{equation*}
W(\op{P})(Y):= colim_{w(\FF,Y)}\op{P}\circ s (-)
\end{equation*}
\end{definition}

\begin{proposition}\label{fopstprop}  $W(\op{P})$ is naturally an object in $\fopst$, that is $W(\op{P})(-)$ is a symmetric monoidal functor.
\end{proposition}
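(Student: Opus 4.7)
Proof plan: Three things must be established: that $W(\op{P})$ is functorial in its argument, that it preserves the monoidal product, and that it is compatible with the symmetry.

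For the functoriality, I define $W(\op{P})(\phi\colon Y\to Y')$ on a representative chain $(X\to X_1\to\dots\to Y,\vec{t})$ by appending $\phi$ at the end. If $\phi$ is an isomorphism, I compose it with the terminal arrow of the chain; no new weight appears. If $\phi$ has positive degree $n$, I decompose it as $\phi_n\circ\dots\circ\phi_1$ into degree $1$ morphisms, which is possible because $\FF$ is quadratic. By Definition \ref{gradeddef}, any two such decompositions differ by a unique $\SS_n$-action which is one of the generating classes of morphisms of $w(\FF,Y')$, so the resulting element in the colimit is well defined. I assign each new $\phi_i$ weight $1$; this is manifestly compatible with the identical-weight decomposition axiom under further subdivision. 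Since composition in $\F$ corresponds to concatenation of the appended chains, functoriality is immediate.

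For the monoidal structure, I construct a natural map
\begin{equation*}
\mu_{X,Y}\colon W(\op{P})(X)\times W(\op{P})(Y)\longrightarrow W(\op{P})(X\otimes Y)
\end{equation*}
from the tensor-of-chains functor $w(\FF,X)\times w(\FF,Y)\to w(\FF,X\otimes Y)$, using that $\op{P}$ is strong symmetric monoidal and that $\times$ in $\Top$ commutes with the relevant colimits. To invert $\mu_{X,Y}$, I invoke the hereditary condition (\ref{morcond}) of Feynman categories: each level of a chain into $X\otimes Y$ decomposes canonically as $X_i\cong X_i^{(1)}\otimes X_i^{(2)}$ and $f_i\cong f_i^{(1)}\otimes f_i^{(2)}$ mapping to $X$ and $Y$ respectively. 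The two resulting chains may have degree-zero entries at different levels; applying zero-weight truncation in $w(\FF,-)$ trims these, producing legitimate chains in $w(\FF,X)$ and $w(\FF,Y)$. Crucially, the simultaneous $\SS_n$ action together with the decomposition of identical weights allow any weighted chain into $X\otimes Y$ to be rearranged, modulo the equivalence relations defining the colimit, into a shuffle of the two factor chains with independent weights; this supplies the inverse to $\mu_{X,Y}$.

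Finally, the symmetric coherence follows because the symmetry constraint $X\otimes Y\to Y\otimes X$ is built from isomorphisms of $\F$, which act trivially on weights and correspond under $\mu$ to swapping the two factors of $W(\op{P})(X)\times W(\op{P})(Y)$ composed with the symmetry of $\op{P}$.

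The main obstacle is verifying that $\mu_{X,Y}$ is invertible: one must check that the identifications induced by (i) simultaneous $\SS_n$-action and (ii) decomposition of identical weights suffice to convert a single weighted chain into $X\otimes Y$ (whose weights are $\textit{a priori}$ correlated across factors) into a shuffle of two chains with independent weights. This is the analogue of the classical observation that, in the Boardmann--Vogt $W$-construction for operads, the weights on disjoint edges of a tree are independent; here it must be extracted abstractly from the combinatorics of $w(\FF,X\otimes Y)$, using the hereditary condition as the key input.
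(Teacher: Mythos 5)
Your proposal is correct and follows essentially the same route as the paper: the paper packages your map $\mu_{X,Y}$ and its inverse as the category equivalence $w(\FF,X)\cong\times_{i}w(\FF,v_i)$ (a consequence of the hereditary condition, with the shuffle ambiguity absorbed by the simultaneous $\SS_n$-action exactly as you argue) combined with the fact that $\Top$ is closed monoidal so products commute with the colimits, and it likewise defines the action of a generating morphism $\phi$ by composing and assigning weight $1$, which agrees with your degree-one decomposition via the identical-weight subdivision morphisms. One small correction: the degree-zero entries appearing in the factor chains are removed because objects are represented only up to contraction of isomorphisms (isomorphisms are unweighted), not by the zero-weight truncation morphisms, which apply to positive-degree morphisms whose weight happens to be $0$.
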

\begin{proof}  This proof follows similarly to the proof of the fact that the free $\F$-$\oper$ is in fact an $\F$-$\oper$.  In particular, using the fact that $\Top$ is closed monoidal, for $X=\tensor_{i\in I} v_i$ we have $W(\op{P})(X):=$
\begin{equation*}
colim_{ w(\FF,\times_{i\in I} v_i)}\op{P}\circ s (-)\cong colim_{ \times_{i\in I} w(\FF, v_i)}\op{P}\circ s (-) \cong \times_{i\in I} colim_{w(\FF,v_i)}\op{P}\circ s (-)
\end{equation*}
For morphisms we define the image of $\phi\colon X\to v$ via the functor $w(\FF, X)\cong\times_{i\in I} w(\FF, v_i)\to w(\FF, v)$ which composes with $\phi$ and which gives $\phi$ weight $1$.
\end{proof}

Note that there is a functor $Iso(\F\downarrow v) \to w(\F,v)$ which assigns weight $1$ to any non-isomorphism $X\to v$.  This functor induces a map on the respective colimits of $\op{P}\circ s$ and as a result induces a morphism $F(\op{P})(v)\to W(\op{P})(v)$.  The fact that the $\F$-$\oper$ structure on $W(\op{P})$ was defined by giving compositions weight $1$ ensures that these maps give a morphism in $\fopst$.  To show that $W(\op{P})$ is a cofibrant replacement for $\op{P}$, it will ultimately be necessary to determine conditions under which the map $F(\op{P})\to W(\op{P})$ is a cofibration.  To this end we make the following definitions.

\begin{definition}
\label{simpledef}
Let $\gamma\colon X\to v$ be a morphism in $\F$.  We define $Aut(\gamma)$ to be the subgroup of $Aut(X)$ such that $\sigma \in Aut(\gamma) \Leftrightarrow \gamma\sigma=\gamma$.  We define $\rho(\gamma)$ to be the subgroup of $Aut(v)$ such that $\tau \in \rho(\gamma) \Leftrightarrow \exists \ \sigma \in Aut(X) \text{ such that } \gamma\sigma=\tau\gamma$.  We define $\FF$ to be $\gamma$-simple if whenever $\gamma=\gamma^\prime\sigma$ for $\sigma\in Aut(X)$, there exists $\tau \in Aut(v)$ such that $\gamma = \tau \gamma^\prime$.  We define a cubical Feynman category $\FF$ to be simple if it is $\gamma$-simple for every $\gamma$.
\end{definition}
\begin{remark}  The graph based intuition for the above definitions is as follows.
The group $Aut(\gamma)$ consists of the automorphisms of the graph $\gamma$ which fix the tails of the graph element-wise, where a tail is a flag which is not part of an edge.  The group $\rho(\gamma)$ permutes the tails within each vertex, while fixing the non-tail flags.  The familiar graph based examples are simple since the condition that $\gamma=\gamma^\prime\sigma$ means that $\sigma$ doesn't interchange a tail with a non-tail, and thus the permutation of the tails can be performed after contracting edges instead of before.
\end{remark}

\begin{lemma}\label{lemcof}  Let $\op{P}\in\fopst$ and let $\gamma\colon X\to v$.  Then $\op{P}(X)_{Aut(\gamma)}$ is naturally a $\rho(\gamma)$-space.
\end{lemma}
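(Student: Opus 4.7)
The key idea is to lift a would-be action of $\rho(\gamma)$ on $\op{P}(X)_{Aut(\gamma)}$ through choices of elements in $Aut(X)$ that realize the relation $\gamma\sigma=\tau\gamma$, and then check that the coinvariants wash out the choices. I would proceed as follows.

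First, for each $\tau\in\rho(\gamma)$, by definition there exists some $\sigma_\tau\in Aut(X)$ with $\gamma\sigma_\tau=\tau\gamma$. Since $\op{P}$ is a (symmetric monoidal) functor, $\op{P}(X)$ carries an $Aut(X)$-action, so $\op{P}(\sigma_\tau)$ is an honest automorphism of $\op{P}(X)$. I would then define the candidate action by $\tau\cdot[x]:=[\op{P}(\sigma_\tau)(x)]$, where brackets denote classes in $\op{P}(X)_{Aut(\gamma)}$.

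Next I would verify that this definition is independent of the two arbitrary choices involved. If $\sigma_\tau$ and $\sigma_\tau'$ both satisfy $\gamma\sigma_\tau=\gamma\sigma_\tau'=\tau\gamma$, then $\gamma(\sigma_\tau\sigma_\tau^{\prime-1})=\gamma$, so $\sigma_\tau\sigma_\tau^{\prime-1}\in Aut(\gamma)$; applying $\op{P}$ and passing to $Aut(\gamma)$-coinvariants shows $[\op{P}(\sigma_\tau)(x)]=[\op{P}(\sigma_\tau')(x)]$. For independence of the representative $x$, suppose $x'=\op{P}(\eta)(x)$ with $\eta\in Aut(\gamma)$; then the computation
\begin{equation*}
\gamma(\sigma_\tau\eta\sigma_\tau^{-1})=\tau\gamma\eta\sigma_\tau^{-1}=\tau\gamma\sigma_\tau^{-1}=\tau\tau^{-1}\gamma=\gamma
\end{equation*}
shows $\sigma_\tau\eta\sigma_\tau^{-1}\in Aut(\gamma)$, hence $[\op{P}(\sigma_\tau)(x')]=[\op{P}(\sigma_\tau\eta\sigma_\tau^{-1})\op{P}(\sigma_\tau)(x)]=[\op{P}(\sigma_\tau)(x)]$. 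This is the crux of the argument; everything else is bookkeeping.

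Finally, I would check the two remaining axioms of a group action. The identity in $\rho(\gamma)$ can be represented by $\sigma_{\mathrm{id}}=\mathrm{id}_X$, giving the identity on coinvariants. For composition, if $\tau,\tau'\in\rho(\gamma)$ with chosen lifts $\sigma_\tau,\sigma_{\tau'}$, then $\gamma(\sigma_\tau\sigma_{\tau'})=\tau\gamma\sigma_{\tau'}=\tau\tau'\gamma$, so $\sigma_\tau\sigma_{\tau'}$ is a valid lift of $\tau\tau'$; by the well-definedness just proven, $(\tau\tau')\cdot[x]=[\op{P}(\sigma_\tau\sigma_{\tau'})(x)]=\tau\cdot(\tau'\cdot[x])$. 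Naturality of this $\rho(\gamma)$-structure in $\op{P}$ is immediate because every step is defined via functorial images of fixed morphisms in $Aut(X)$. The main obstacle, as indicated above, is confirming that the freedom in choosing the lift $\sigma_\tau$ is absorbed precisely by the $Aut(\gamma)$-coinvariants; this hinges on the two short identity computations displayed, which rely only on the definitions of $Aut(\gamma)$ and $\rho(\gamma)$ and on functoriality of $\op{P}$.
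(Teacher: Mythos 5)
Your proof is correct and follows the same route as the paper: define $\tau\cdot[x]:=[\op{P}(\sigma_\tau)(x)]$ for any lift $\sigma_\tau$ with $\gamma\sigma_\tau=\tau\gamma$, and observe that the ambiguity in the lift and in the representative is absorbed by the $Aut(\gamma)$-coinvariants because any two valid lifts differ by an element of $Aut(\gamma)$. You spell out the well-definedness computations and the group-action axioms in more detail than the paper does, but the underlying argument is identical.
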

\begin{proof}  For $a\in\op{P}(X)$ and $\tau\in\rho(\gamma)$, define $\tau([a]):= [\op{P}(\sigma)(a)]$ where $\sigma\in Aut(X)$ with $\gamma\sigma=\tau\gamma$ (which exists since $\tau\in\rho(\gamma)$).  It remains to show that this assignment is independent of the choice of such a $\sigma$ and is independent of the choice of representative of $[a]$.  These both follow from the fact that if $\sigma_1$ and $\sigma_2$ satisfy $\gamma\sigma_i=\tau\gamma$, then $\sigma^{-1}_2\sigma_1\in Aut(\gamma)$.
\end{proof}

\begin{definition}
\label{rhocofdef}
We say that $\op{P}\in\fopst$ is $\rho$-cofibrant if for each morphism $\gamma$, $(\op{P}\circ s (\gamma))_{Aut(\gamma)}$ is cofibrant in the category of $\rho(\gamma)$-spaces.
\end{definition}
\begin{remark}  Recall from \cite{BM1} that a (classic) operad is $\Sigma$-cofibrant if it is cofibrant in the category of $\Sigma$-modules after forgetting the operad structure.  When $\FF=\operads$, the Feynman category for operads, the notion of $\rho$-cofibrancy and $\Sigma$-cofibrancy coincide, since the group $Aut(\gamma)$ is trivial.  However in a context where there are tail fixing automorphisms, such as in $\modular$, $\rho$-cofibrancy is stronger than just asking that the image of the forgetful functor be cofibrant.  However for modular operads whose $\SS_{flag(v)}$ action corresponds to permuting labels, i.e. is free, $\rho$-cofibrancy is satisfied.
\end{remark}

\begin{theorem}
\label{Wthm}
Let $\FF$ be a simple Feynman category and let $\op{P}\in\fopst$ be $\rho$-cofibrant.  Then $W(\op{P})$ is a cofibrant replacement for $\op{P}$ with respect to the above model structure on $\fopst$.
\end{theorem}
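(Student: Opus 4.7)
The plan is to produce a natural morphism $\epsilon \colon W(\op{P}) \to \op{P}$ in $\fopst$, to verify that $\epsilon$ is a levelwise weak equivalence by constructing an explicit strong deformation retraction, and finally to prove that $W(\op{P})$ is cofibrant by exhibiting it as a transfinite composition of pushouts of generating cofibrations, in the spirit of the proof of Theorem \ref{cofthm}.

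For the augmentation, I would send the class of a decorated chain
\[
X \xrightarrow[f_1]{t_1} X_1 \xrightarrow[f_2]{t_2} \dots \xrightarrow[f_n]{t_n} Y
\]
together with $a \in \op{P}(X)$ to $\op{P}(f_n \circ \dots \circ f_1)(a) \in \op{P}(Y)$, which manifestly factors through the four classes of morphisms generating $w(\FF, Y)$ and is continuous in the weights. The fact that $\epsilon$ is a morphism in $\fopst$ follows from the observation in the proof of Proposition \ref{fopstprop}: the $\F$-$\op$ structure on $W(\op{P})$ is defined by composing and assigning weight $1$, and under $\epsilon$ this matches the original composition in $\op{P}$. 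There is an evident inclusion $\iota \colon \op{P} \hookrightarrow W(\op{P})$ (use the empty chain at each $v \in \V$) and $\epsilon \circ \iota = \mathrm{id}_{\op{P}}$. A homotopy $H_s \colon W(\op{P}) \to W(\op{P})$ for $s \in [0,1]$ multiplying every weight by $s$ gives $H_1 = \mathrm{id}$, while at $s = 0$ the truncation-of-$0$-weights relation collapses each chain to its composition, yielding $H_0 = \iota \circ \epsilon$. This shows $\epsilon$ is a levelwise homotopy equivalence, hence a weak equivalence in $\fopst$.

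For cofibrancy, I would filter $W(\op{P})$ by the subfunctors $W_n(\op{P})$ generated by classes of chains having at most $n$ positive-degree arrows. Then $W_0(\op{P}) = F(\op{P})$, which is cofibrant by Proposition \ref{cofprop} since $\rho$-cofibrancy of $\op{P}$ implies cofibrancy of its underlying $\V$-module. The key is to show that each inclusion $W_{n-1}(\op{P}) \hookrightarrow W_n(\op{P})$ is a cofibration. The decomposition-of-identical-weights relation, together with the definition of a graded Feynman category (Definition \ref{gradeddef}) which provides a free $\SS_n$ action on composition classes of length $n$, lets me identify, for each isomorphism class of a morphism $\gamma \colon X \to v$ of degree $n$, an $n$-cube $[0,1]^n$ of chains decomposing $\gamma$, with the boundary $\partial [0,1]^n$ corresponding to either truncation (some weight is $0$, giving a chain already in $W_{n-1}$) or reaching full weight $1$ (where the decomposition relation identifies two adjacent arrows with their composite, also in $W_{n-1}$). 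This exhibits $W_n(\op{P})$ as obtained from $W_{n-1}(\op{P})$ by a pushout along the disjoint union of the cofibrations $\op{P}(X) \otimes \partial[0,1]^n \hookrightarrow \op{P}(X) \otimes [0,1]^n$, taken over isomorphism classes of such $\gamma$, and then passed to appropriate coinvariants.

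The main obstacle is controlling these coinvariants so that each attaching map is a cofibration in $\fopst$. Here the hypotheses are used as follows: simplicity of $\FF$ guarantees that the truncation and decomposition relations on a chain representing $\gamma$ do not introduce extra identifications on the tails, so that the $Aut(\gamma)$-action on $\op{P}(X)$ descends (by Lemma \ref{lemcof}) to a well-defined $\rho(\gamma)$-action on $\op{P}(X)_{Aut(\gamma)}$; the free $\SS_n$-action on length-$n$ composition classes means that the cube $[0,1]^n$ carries a free $\SS_n$ action compatible with reshuffling of the chain, so that passing to the $\SS_n$ coinvariants of the pushout is clean; and $\rho$-cofibrancy of $\op{P}$ ensures that the resulting attaching map is a cofibration of $\rho(\gamma)$-spaces and hence, after quotienting and taking the free $\F$-$\op$ extension, a cofibration in $\fopst$. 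Assembling the filtration and applying the fact that $W(\op{P}) = \colim_n W_n(\op{P})$ yields that $W(\op{P})$ is cofibrant, completing the proof.
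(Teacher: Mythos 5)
Your overall architecture is the same as the paper's: the augmentation is the universal cocone map, the weak equivalence is established by the weight-scaling deformation retraction ($w(\F,v)\to w(\F,v,t)$ in the paper's notation), and cofibrancy is proved by exhibiting $F(\op{P})\to W(\op{P})$ as a (transfinite) composition of pushouts of cells $\op{P}(X)_{Aut(\gamma)}\times[0,1]^n$ attached along $\partial[0,1]^n$, with simplicity controlling the identifications and $\rho$-cofibrancy feeding into a cofibration of $Aut(v)$-spaces. The first half of your argument (augmentation, section $\iota$, homotopy $H_s$) matches the paper essentially verbatim.

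The cofibrancy half, however, has a gap in the filtration. You filter by the number of positive-degree arrows in a representing chain, but this is incompatible with the decomposition-of-identical-weights relation: a single arrow realizing a degree-$n$ morphism $\gamma$ with weight $t$ is identified in $W(\op{P})$ with the maximal chain of $n$ degree-$1$ arrows all of weight $t$, i.e., with the diagonal point $(t,\dots,t)$ of the $n$-cube attached to $[\gamma]$. Under your definition this point lies in $W_1$, so $W_{n-1}\hookrightarrow W_n$ is not obtained by attaching $[0,1]^n$ along $\partial[0,1]^n$ — interior points of every cube are already present at stage one. (Relatedly, chains with zero positive-degree arrows give $\op{P}$, not $F(\op{P})$, so the base of the induction needs the paper's convention of declaring the zeroth stage to be $F(\op{P})$, included via single arrows of weight $1$.) The paper instead filters by the total degree of the composite morphism, forms $\Psi_r$ and $\partial\Psi_r$ as colimits over truncation-free chains of total degree at most $r$, with boundary the locus where some weight equals $0$ \emph{or} $1$, and glues $F(\Psi_r)$ onto $W^{r-1}$ along $F(\partial\Psi_r)$. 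Two smaller points: the weight-$1$ faces land in the previous stage not because of the decomposition relation but because the $\F$-$\oper$ structure on $W(\op{P})$ composes at weight $1$, so a weight-$1$ face is an operadic composite of lower cells; and simplicity is not what makes Lemma \ref{lemcof} work (the $\rho(\gamma)$-action exists unconditionally) — it is needed to guarantee that the only identifications among the pieces $\op{P}(X)_{Aut(\gamma)}$ for a fixed isomorphism class of sources are those occurring inside a single $Aut(v)$-orbit, so that the relevant piece of the attaching map is the induction $Aut(v)\times_{\rho(\gamma)}(-)$ of a $\rho(\gamma)$-cofibrant space and hence $Aut(v)$-cofibrant.
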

\begin{proof}  First, there is a universal map $W(\op{P})(v) \to \op{P}(v)$ for each $v\in V$ and by definition this induces a morphism in $\fopst$; indeed for any weighted sequence $X\to\dots \to v$ there is a map $\op{P}(X)\to \op{P}(v)$ given by retaining only the composition type of the sequence and using the $\F$-$\oper$ structure of $\op{P}$.  These morphisms clearly give a morphism in $\fopst$.

To show the morphism $W(\op{P})\to\op{P}$ is a weak equivalence, it is enough to show that for any $v\in\V$, $W(\op{P})(v)\to\op{P}(v)$ is a weak homotopy equivalence.  Now taking the cocone morphism corresponding to $id_v$ we get a factorizaion of the identity $\op{P}(v)\stackrel{i}\to W(\op{P})(v)\stackrel{\pi}\to \op{P}(v)$.  It thus remains to show that $i\circ\pi\sim id_{W(\op{P})(v)}$, and we may define the homotopy $W(\op{P})(v)\times[0,1] \to W(\op{P})(v)$ as follows.

For $t\in[0,1]$ define $w(\F,v,t)\subset w(\F,v)$ to be the full subcategory whose weights are each less than or equal to $t$.  Define a functor $w(\F,v)\to w(\F,v,t)$ by multiplying all of the weights of a given sequence by $t$.  This functor and the inclusion functor induce maps,
\begin{equation*}
\alpha_t\colon colim_{w(\F,v)}\op{P}\circ s \leftrightarrows colim_{w(\F,v,t)}\op{P}\circ s\colon \beta_t
\end{equation*}
Clearly $\beta_1\alpha_1$ is the identity.  On the other hand, since we can truncate $0$ weights, one has $colim_{w(\F,v,0)}\op{P}\circ s \cong \op{P}(v)$ and $\beta_0\alpha_0 = i\circ \pi$.  We thus conclude that there is a weak equivalence $W(\op{P})\stackrel{\sim}\to \op{P}$.

To complete the proof we must show that $W(\op{P})$ is cofibrant.  We know, however, that $F(\op{P})$ is cofibrant, and that there is a morphism $F(\op{P})\to W(\op{P})$ induced by the functors $Iso(\F\downarrow v) \to w(\FF,v)$ taking all non isomorphisms to have weight $1$.  Thus it suffices to show that this morphism is a cofibration.  Using our assumptions, we may follow as in \cite{Vogt}, \cite{BM2}.

For each natural number $r$ we define $wt^r(\F, v)$ to be the subcategory of $w(\F,v)$ which omits the truncation morphisms and which omits those objects $X\to v$ of degree (in $\F$), greater than $r$.  We define $\partial wt^r(\F, v)\subset wt^r(\F, v)$ to be the full subcategory of those objects having at least one weight equal to $0$ or $1$.   Next define a $\V$-module $\Psi_r(\op{P})$ by:
\begin{equation*}
\Psi_r(\op{P})(v):=\substack{\text{colim} \\ wt^r(\F, v)} \ \op{P}\circ s(-)
\end{equation*}
and a $\V$-module $\partial\Psi_r(\op{P})$ by
\begin{equation*}
\partial\Psi_r(\op{P})(v):=\substack{\text{colim} \\ \partial wt^r(\F, v)} \ \op{P}\circ s(-)
\end{equation*}
We will show that the inclusion $\partial\Psi_r(\op{P})(v)\hookrightarrow \Psi_r(\op{P})(v)$ is a cofibration in the category of $Aut(v)$-spaces.  First, because we have removed truncations, the colimit $\Psi_r(\op{P})(v)$ splits over the isomorphism classes of the source.  Now, an isomorphism $X\stackrel{\cong}\to X^\prime$ fixes a bijection between objects in $wt^r(\F,v)$ with source $X$ and with source $X^\prime$, and so when considering the component of the colimit $\Psi_r(\op{P})(v)$ corresponding to the isomorphism class $[X]=[X^\prime]$, we may choose a representative, $X$ say, and consider only those sequences with source $X$.

The assumption that $\FF$ is cubical means that each composition class $[\gamma]$ contributes a factor of $\op{P}(X)_{Aut(\gamma)}\times [0,1]^{|\gamma|}$ to the colimit.  Now $Aut(v)$ acts on the colimit (by postcomposition), and by Lemma $\ref{lemcof}$ the action restricted to $\op{P}(X)_{Aut(\gamma)}$ is closed under the elements of $\rho(\gamma)$.  Thus the $Aut(v)$ orbit of $\op{P}(X)_{Aut(\gamma)}$ is the $\rho(\gamma)$ coinvariants of the free $Aut(v)$-space on $\op{P}(X)_{Aut(\gamma)}$.  Since $\op{P}(X)_{Aut(\gamma)}$ is cofibrant as a $\rho(\gamma)$-space, it follows that the $Aut(v)$ orbit of $\op{P}(X)_{Aut(\gamma)}$ is cofibrant as an $Aut(v)$-space (since it is the image of a cofibrant object by a left adjoint).  Then the assumption that $\FF$ is simple tells us that the only identifications in the $[X]$ component of the colimit occur within the $Aut(v)$ orbit of the morphism.  By \cite{BM2} Lemma 2.5.2, taking the product of this $Aut(v)$-cofibrant space with the topological cofibration $\partial[0,1]^n\to [0,1]^n$ is a cofibration of $Aut(v)$-spaces.  Thus we may write $\partial\Psi_r(\op{P})(v)\hookrightarrow \Psi_r(\op{P})(v)$ as a coproduct of cofibrations of $Aut(v)$-spaces; in particular this coproduct is taken over composition classes of morphisms modulo composing with automorphisms of the source.   It follows that $\partial\Psi_r(\op{P})(v)\hookrightarrow \Psi_r(\op{P})(v)$ is a cofibration in the category of $Aut(v)$-spaces, and from this we then conclude that $F(\partial\Psi_r(\op{P}))\hookrightarrow F(\Psi_r(\op{P}))$ is a cofibration in $\fopst$.

To complete the proof, define a sequence $\dots \to W^r(\op{P})\to W^{r+1}(\op{P})\to \dots$ as follows.  First, $W^0(\op{P}):=F(\op{P})$, and then inductively $W^r(\op{P})$ is defined to be the pushout:
\begin{equation*}
\xymatrix{F(\partial\Psi_r(\op{P})) \ar[r] \ar[d] & F(\Psi_r(\op{P})) \ar[d] \\ W^{r-1}(\op{P})\ar[r] & W^r(\op{P})}
\end{equation*}
Then it is straight forward to show that the direct limit is $F(\op{P})\to W(\op{P})$, see e.g. \cite{Vogt}, \cite{BM2}.  Thus $F(\op{P})\to W(\op{P})$ is a direct limit of pushouts of cofibrations, and hence a cofibration, implying in particular that $W(\op{P})$ is cofibrant, which implies the result.
\end{proof}

\begin{remark}  In analogy with the free operad, a reasonable question would be to ask if $W(\op{P})$ can be given as a Kan extension, and the answer is yes.  In particular if one considers $([0,1]^n)_{\SS_n}\cong \Delta^n$, we can achieve the truncation and $\SS_n$ identifications by considering the following nerve-like construction
\begin{equation}
|C_\ast(A,B)|=\coprod_{n\geq 0} C_n(A,B)\times \Delta^n/\sim
\end{equation}
where the equivalence relation identifies $(\delta_i(\gamma),p)\sim (\gamma, d^i(p))$ for $1\leq i \leq n-1$ for $\gamma \in C_n(A,B)$, and $p\in\Delta^{n-1}$, where $\delta_i$ composes morphisms as per usual.  Graphically, we consider ordering the edges from least to greatest weights, with the identifications accounting for any ambiguities.  The spaces $|C_\ast(A,B)|$ are then the morphisms of a category with composition given by composition of sequences followed by the unique permutation which reorders from least to greatest. If we call this category $W(\FF)$, we may define a family of morphisms
\begin{equation*}
\F\stackrel{\epsilon^t}\longrightarrow W(\FF)
\end{equation*}
for $t\in\Delta^1$ by setting $\epsilon^t$ to be the identity on objects and, if $A\stackrel{f}\to B$ is a morphism in $\F$, then
\begin{equation*}
\epsilon^t(f)= \begin{cases} (f,t) \in C_1(A,B)\times \Delta^1 & \text{ if } deg(f)\neq 0 \\ (f,\ast) \in C_0(A,B)\times \Delta^0 & \text{ if } deg(f)=0 \end{cases}
\end{equation*}
Then one can show
\begin{equation*}
W(\op{P})=\epsilon^1\circ Lan_{\epsilon_0}(\op{P})
\end{equation*}
\end{remark}

\begin{remark}  In assuming that $\FF$ is cubical we exclude from consideration any non-isomorphisms of the form $k\to X$ where $k$ is the monoidal unit.  For the $W$ construction to be cofibrant when such morphisms are included requires additional cofibrancy conditions.  For example considering operads with units $k\to \op{P}(1)$ requires the additional cofibrancy condition that $k\to \op{P}(1)$ is a cofibration.  Such operads are called well-pointed in \cite{BM1}, \cite{Vogt}.
\end{remark}

\begin{remark}  In \cite{BM2} Berger and Moerdijk give a generalization of the $W$ construction for classical operads in other monoidal model categories equipped with a suitable interval object.  Combining their arguments with the one above we expect an even more general notion of cofibrant replacement for $\fopsc$.  In particular one would expect an equivalence between the double Feynman transform and the generalized $W$-construction in $\dgvect_k$.
\end{remark}

\appendix

\section{Graph Glossary}
\subsection{The category of graphs}

\label{graphsec}
Most  of the known examples of Feynman categories used in operadic type
theories are indexed over a Feynman category built from graphs.
It is important to note that although we will first introduce a category of graphs $\Graphs$, the relevant
Feynman category is given by a full subcategory $\Agg$  whose
objects are disjoint unions (called {\it aggregates}) of corollas. The groupoid of corollas themselves
play the role of $\asts$.

Before giving more examples in terms of graphs, it will be useful to recall some terminology.  A nice presentation is given in \cite{BM} which we follow here.

\subsubsection{Abstract graphs}
An abstract graph $\G$ is a quadruple $(V_{\G},F_{\G},i_{\G},\del_{\G})$
of a finite set of vertices $V_{\G}$, a finite
set of half edges (aka {\it flags}) $F_{\G}$,
an involution on flags $i_{\G}\colon F_{\G}\to F_{\G}; i_{\Gamma}^2=id$, and
a map $\del_{\G} \colon F_{\G}\to V_{\G}$.
We will omit the subscripts $\G$ if no confusion arises.

Since the map $i$ is an involution, it has orbits of order one or two.
We will call the flags in an orbit of order one {\em tails} and denote the set of tails by $T_{\Gamma}$.
We will call an orbit of order two an {\em edge} and denote the set of edges by $E_{\Gamma}$. The flags of
an edge are its elements.
The function $\del$ gives the vertex a flag is incident to.
It is clear that the sets of vertices and edges index the cells in a 1--dim simplicial complex.
The realization of a graph is the realization of this simplicial complex. Alternatively this can be though of as a 1--dim CW complex.

A graph is (simply) connected if and only if its realization is.
Notice that the graphs do not need to be connected. Lone vertices, that
is vertices with no incident flags, are also possible.  We also allow the empty graph. That is the unique graph with $V_{\Gamma}=\egr$.
 It will serve as the monoidal unit.
\begin{ex}
A graph with one vertex and no edges is called a {\em corolla}.  Any set $S$ gives rise to a corolla. If $p$ is a one point set, we define the corolla $\crl_{p,S}:=(p,S, id,\del)$, where $\del$ is the constant map.

\end{ex}

Given a vertex $v$ of $\G$, we set
$F_v=F_v(\Gamma)=\del^{-1}(v)$ and call it {\em the
flags incident to $v$}. This set naturally gives rise to a corolla.
The {\em tails} at $v$ is the subset of tails of $F_v$.  As remarked above $F_{v}$ defines a corolla $\crl_{v}=\crl_{\{v\},F_v}$.

\begin{rmk}  The way things are set up, a graph is specifies particular finite sets.  Changing the sets even by a bijection changes the graph.
\end{rmk}

\begin{rmk} \label{disjointrmk}
Graphs do not need to be connected.  Moreover, given two graphs $\G$ and $\G'$
we can form their disjoint union: $\G\amalg\G'=(F_{\Gamma}\amalg F_{\Gamma'},V_{\Gamma}\amalg V_{\G'},
i_{\Gamma}\amalg i_{\G''}, \del_{\G}\amalg \del_{\G'})$.

One actually needs to be a bit careful about how disjoint unions are defined.
Although one tends to think that the disjoint union $X\amalg Y$
is commutative, this is not the case, which becomes apparent
if $X\cap Y\neq\emptyset$.  Of course
there is a bijection $X\amalg Y \stackrel{1-1}{\longleftrightarrow}Y\amalg X$.
Thus the categories here are symmetric monoidal, but not strict symmetric monoidal.
This is important, since we  consider functors into other not necessarily strict monoidal categories.

However, using MacLane's theorem it is possible to make a technical construction that makes the monoidal structure (on both sides) into a strict symmetric monoidal structure.
\end{rmk}

\begin{ex}
An {\em aggregate of corollas} or aggregate for short is a finite disjoint union of corollas, or equivalently,
a graph with no edges.  In an aggregate $X=\amalg_{v\in I} \ast_{S_v}$, the set of flags is automatically the disjoint union of the sets $S_v$. We will refer to specific elements in this disjoint union by their pre-image and just write $s\in F(X)$.
\end{ex}

\subsubsection{Category structure; Morphisms of Graphs}

\begin{df}\cite{BM}\label{gmdef}
Given two graphs $\G$ and $\G'$, a morphism from $\G$ to $\G'$ is a triple $(\phi^F,\phi_V, i_{\phi})$
where
\begin{itemize}
\item [(i)]
$\phi^F\colon F_{\G'}\hookrightarrow  F_{\G}$ is an injection,
\item [(ii)] $\phi_V\colon V_{\G}\twoheadrightarrow V_{\G'}$ is a surjection, and
\item [(iii)] $i_{\phi}$
is a fixed point free involution on the tails of $\G$ not in the image of $\phi^F$.
\end{itemize}

One calls the edges and flags that are
not in the image of $\phi$ the contracted edges
and flags. The orbits of $i_{\phi}$ are called ghost edges and denoted by $E_{ghost}(\phi)$.

Such a triple is {\it a morphism of graphs} $\phi\colon \G\to \G'$ if

\begin{enumerate}
\item The involutions are compatible:
\begin{enumerate}
\item An edge of $\G$
is either a subset of the image of $\phi^F$ or not contained in it.
\item
If an edge is in the image of $\phi^F$ then its pre--image
is also an edge.
\end{enumerate}
\item $\phi^F$ and $\phi_V$ are compatible with the maps $\del$:
 \begin{enumerate}
\item Compatibility with $\del$ on the image of  $\phi^F$:\\
\quad If $f=\phi^F(f')$ then
$\phi_V(\del f)=\del f'$
\item Compatibility with $\del$ on the complement of the image of  $\phi^F$:\\
 The two vertices of a ghost edge in $\G$ map to
the same vertex in $\G'$ under $\phi_V$.
\end{enumerate}

 \end{enumerate}

If the image of an edge under $\phi^F$ is not an edge,
we say that $\phi$ grafts the
two flags.

The composition $\phi'\circ \phi\colon\G\to \G''$
of two morphisms $\phi\colon\G\to \G'$ and $\phi'\colon\G'\to \G''$
is defined to be  $(\phi^F\circ \phi^{\prime F},\phi'_V\circ \phi_V,i)$
where $i$ is defined by its orbits viz.\ the ghost edges.
Both maps $\phi^{F}$ and $\phi^{\prime F}$ are injective,
so that the complement of their composition is in bijection
with the disjoint union of the complements of the two maps.
We take $i$ to be the involution whose
orbits are the union of the ghost edges of $\phi$ and $\phi'$
under this identification.
\end{df}

\begin{rmk}
One can define  {\em na\"ive morphisms} of graphs $\psi\colon\G\to \G'$
which are given by a pair of maps  $(\psi_F\colon F_{\G}\to F_{\G'},\psi_V\colon V_{\G}\to V_{\G'})$
compatible with the maps $i$ and $\del$ in the obvious fashion.
This notion is good to define subgraphs and automorphisms, but  this data {\em is not enough} to capture all the needed aspects
for composing along graphs.
For instance it is not possible to contract edges with such a map or graft
two flags into one edge. The basic operations of composition in an operad
viewed in graphs is however exactly grafting two flags and then contracting.
For this and other more subtle aspects, one needs the more involved definition above
which we will use.
\end{rmk}
\begin{df}
We let $\Graphs$ be the category whose objects are abstract graphs and
whose morphisms are the morphisms described above in Definition $\ref{gmdef}$.
We consider it to be a monoidal category with monoidal product $\amalg$ (see
Remark \ref{disjointrmk}).
\end{df}

\subsubsection{Decomposition of morphisms}
Given a morphism $\phi\colon X\to Y$ where $X=\amalg_{w\in V_X} \ast_w$ and $Y=\amalg_{v\in V_Y}\ast_v$
are two aggregates, we can decompose $\phi=\amalg \phi_v$ with $\phi_v\colon X_v\to \ast_v$ and $\amalg_v X_v=X$. Let $X_v$ be the sub--aggregate of $X$ consisting of  the  vertices $V_{X_v}=\phi_V^{-1}(v)$ together
with all the incident flags $F_{X_v}=\del^{-1}_X(V_{X_v})$. Let $(\phi_v)_V$, be the restriction of $\phi_V$ to $V_{X_v}$. Likewise let  $\phi_v^F$ be the restriction of
 $\phi^F$ to $(\phi^{F})^{-1}(F_{X_v}\cap \phi^F(F_Y))$. This is still injective.
Finally let $i_{\phi_v}$ be the restriction of $i_{\phi}$ to $F_{X_v}\setminus \phi^F(F_Y)$.
These restrictions are possible due to the condition (2) above.

\subsubsection{Ghost graph of a morphism}
\label{ghostpar}

Given a morphism $\phi\colon\G\to \G'$, the underlying ghost graph of $\phi$ is the graph
given by $\gh(\phi)=(V(\Gamma),F_{\Gamma},\hat i_{\phi})$ where $\hat i_{\phi}$ is $i_{\phi}$ on the complement of
$\phi^F(\Gamma')$ and identity on the image of  edges of  $\Gamma'$ under $\phi^F$.
The edges of $\gh(\phi)$ are exactly the ghost edges of $\phi$.

\begin{figure}
    \centering
    \includegraphics[width=.7\textwidth]{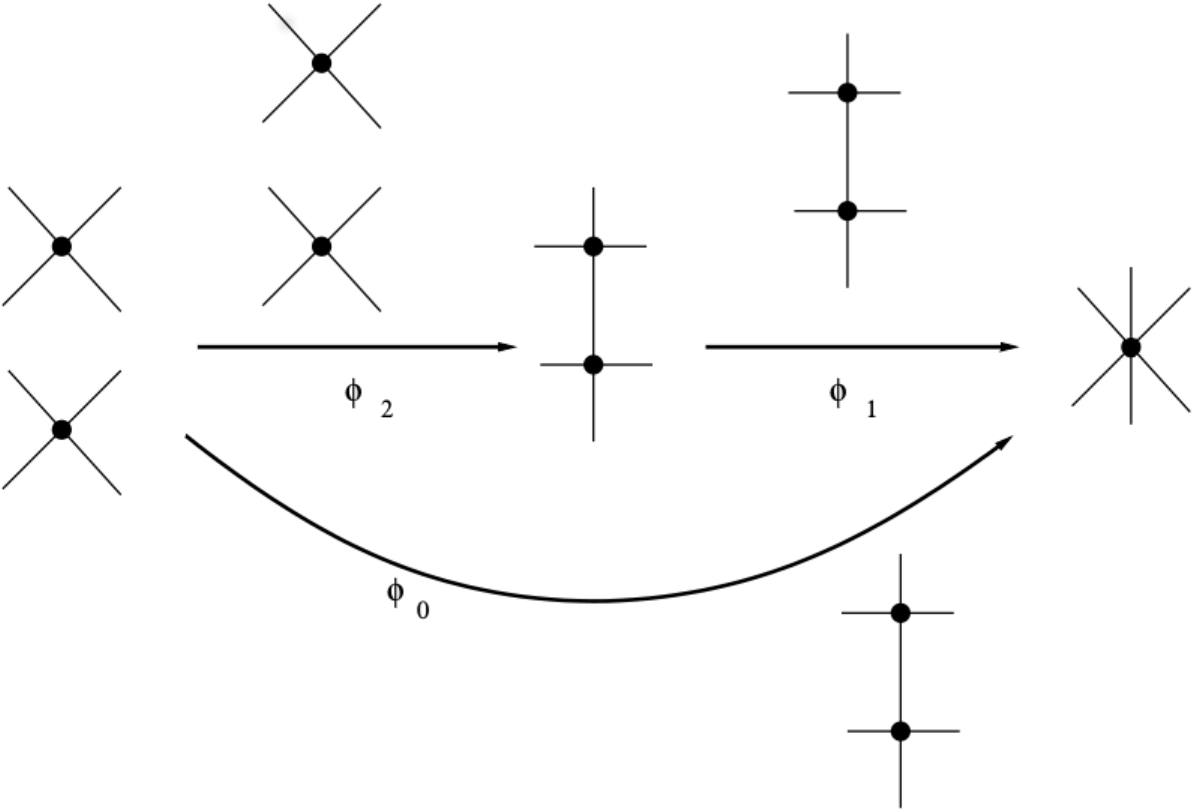}
    \caption{A composition of morphisms and the respective ghost graphs. The first morphism glues two flags to an edge, the second contracts an edge. The result is a morphism in $\Agg$.}
    \label{ghostgraphfig}
\end{figure}

\begin{figure}
    \centering
    \includegraphics[scale=.25]{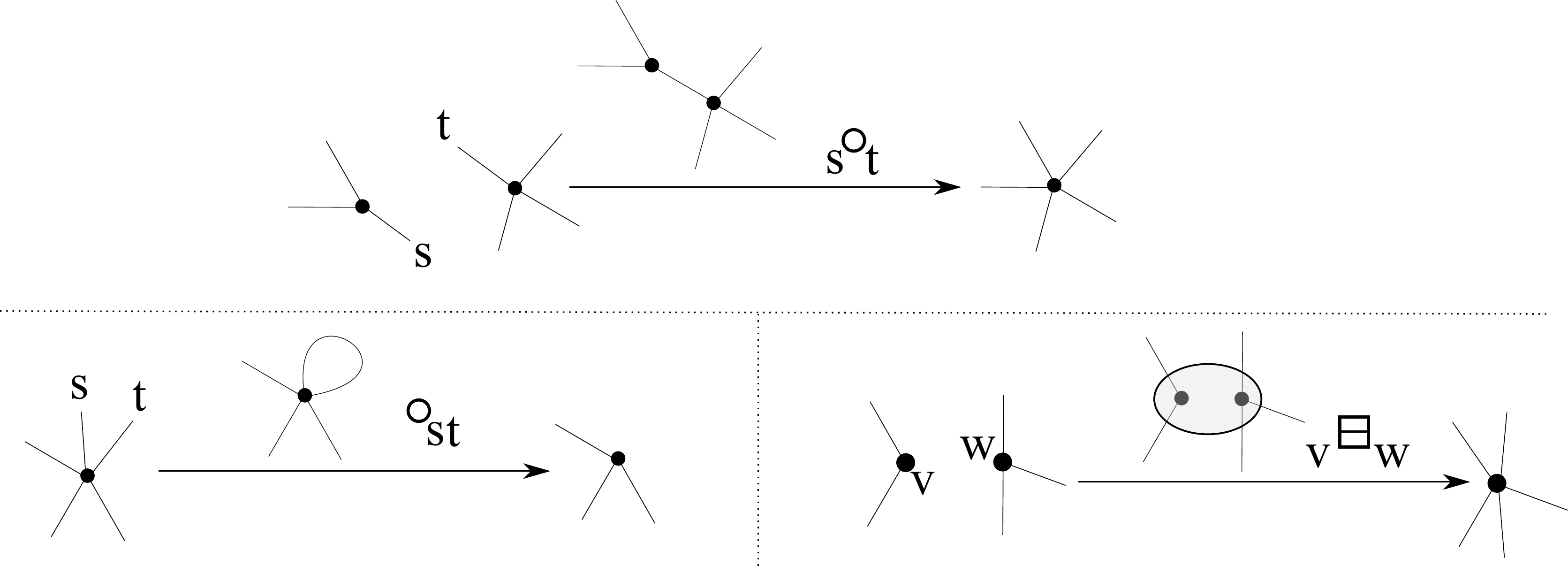}
    \caption{The three basic morphisms in $\GG$: an edge contraction (top), a loop contraction (left), and a merger (right). In the morphism, we give the ghost graph and label it by the standard notation. The shaded region is for illustration only.}
    \label{generatorfig}
\end{figure}

\begin{rmk}
Warning: many morphisms can have the same ghost graph. In fact even the data $(\phi_V,\phi^F,\imath_{\phi})$ appears for several morphisms.
Like always in mathematics  the source and the target of a map are part of its definition. For the ghost graph this discussion is given in detail in \S\ref{grsecondsec}.
\end{rmk}

\subsection{Extra structures}
\subsubsection{Glossary}
This section is intended as a reference section. All the following definitions are standard.

Recall that an order of a finite set $S$ is a bijection $S\to \{1,\dots ,|S|\}$.
Thus the group $\SS_{|S|}=Aut\{1,\dots,n\}$ acts on all orders.
  An  orientation of  a finite set $S$ is an
equivalence class of orders, where two orders are equivalent if they are
obtained from each other by an even permutation.

\begin{tabular}{l|l}
A tree& is
a connected, simply connected graph.\\

{A directed  graph $\G$}& is a graph together with  a map $F_{\G}\to \{in,out\}$\\
&such that the two flags of each edge are mapped\\
&to different values. \\

A rooted tree& is a directed tree such that each vertex has exactly one ``out'' flag.\\
& The unique ``out'' flag which is also a tail is called the root.\\
A level tree&is a rooted tree such that all the leaf vertices\footnotemark    {} are at \\
&the same distance (\# edges in shortest path) from the root.\\
A {ribbon or fat graph} &is a graph together with a cyclic order on each of \\&the
sets $F_{v}$.  \\
A planar graph& is a ribbon graph that can be embedded
into the\\
& plane such that the induced cyclic orders of the \\
&sets $F_v$ from the orientation of the plane  \\
&coincide with the chosen cyclic orders.\\
A planted planar tree&is a rooted planar tree together with a \\
&linear order on the set of ``in'' flags incident to the root's vertex.\\
An oriented graph& is a graph with an orientation on the set of its edges.\\
An ordered graph& is a graph with an order on the set of its edges.\\
A $\gamma$ labeled graph&is a graph together with a  map $\gamma:V_{\Gamma}\to \N_0$.\\
A b/w graph&is a graph $\G$ with a map $V_{\G}\to \{black,white\}$.\\
A bipartite graph& is a b/w graph whose edges connect only \\
&black to white vertices. \\
A $c$ colored graph& for a set $c$ is a graph $\G$ together with a map $F_{\G}\to c$\\
&s.t.\ each edge has flags of the same color.
\end{tabular}
\footnotetext{In a rooted tree, a leaf is a tail that is incoming and a leaf vertex is a vertex that has a leaf flag.}
\subsubsection{Remarks and language}\mbox{}
\begin{enumerate}

\item  In a directed graph one speaks about the ``in'' and the ``out''
edges, flags or tails at a vertex. For the edges this means the one flag of an edge
 is an ``in'' flag at a vertex. In pictorial versions the direction
is indicated by an arrow. A flag is an ``in'' flag if the arrow points to the vertex.

\item
As usual there are edge paths on a graph and the natural notion
of an oriented edge path. An edge path is a (oriented) cycle if it starts and
stops at the same vertex and all the edges are pairwise distinct. It is called simple if
each vertex on the cycle has exactly one incoming flag and one outgoing flag belonging to the cycle.
An oriented simple cycle will be called a {\em wheel}.
An edge whose two vertices coincide is called a {\em (small) loop}.

\item There is a notion of the genus of a graph, which is the minimal dimension
of the surface it can be embedded on. A ribbon graph is planar if this genus is $0$.
\item
 For any graph, its Euler characteristic is given by
$$\chi(\Gamma)=b_0(\Gamma)-b_1(\G)=|V_{\Gamma}|-|E_{\Gamma}|; $$
where $b_0,b_1$ are the Betti numbers of the (realization of) $\Gamma$.
Given a $\gamma$-labeled graph $\Gamma$, we define
the total $\gamma$ as
\begin{equation}
\label{gammaeq}
\gamma(\Gamma):=1-\chi(\Gamma)+\sum_{v \text{ vertex of $\Gamma$}}\gamma(v)
\end{equation}

If $\G$ is {\em connected}, that is if $b_0(\G)=1$, then a $\gamma$-labeled graph is traditionally called a genus labeled graph and
\begin{equation}\label{genuseq}
\gamma(\Gamma)=\sum_{v\in V_{\Gamma}}\gamma(v)+b_1(\Gamma)
\end{equation}
is called the genus of $\G$.
This is actually not the genus of the underlying graph, but the genus of
a connected Riemann surface with possible double points whose dual graph is the genus labeled graph.

A genus labeled graph is called {\em stable} if each vertex with genus labeling $0$ has at least 3 flags and
each vertex with genus label $1$ has at least one edge.
\item A planted planar tree induces a linear order on all sets $F_v$, by declaring the first
flag to be the unique outgoing one.
Moreover, there is a natural order on the edges, vertices and flags
given by its planar embedding.

\item A rooted tree is often taken to be a tree with a marked vertex. Note that a rooted tree as described above has exactly  one ``out'' tail. The unique vertex whose ``out'' flag is not a part of an edge is the root vertex.
The usual picture is obtained by deleting this unique ``out'' tail.
\end{enumerate}

\subsubsection{Category of directed/ordered/oriented graphs.}
\label{ordsec}
\begin{enumerate}

\item
Define the category of directed graphs $\Graphs^{dir}$ to be the category whose
objects are directed graphs. Morphisms are morphisms $\phi$ of the underlying graphs such that $\phi^F$ preserves orientation of the flags and $i_{\phi}$
only has orbits consisting of one ``in'' and one ``out'' flag, that is, the ghost graph is also directed.

\item

The category of edge ordered graphs $\Graphs^{ord}$ has as objects graphs with an order on the edges.
A morphism is a morphism of the underlying graphs together with an order $ord$ on all of the edges of the ghost graph.

The composition of orders on the ghost edges is as follows.
$(\phi,ord)\circ \amalg_{v\in V}(\phi_v,ord_v):=(\phi \circ \amalg_{v\in V}\phi_v,ord\circ\amalg_{v\in V}ord_v)$ where the order on the set of all ghost edges, that is $E_{ghost}(\phi)\amalg \amalg_vE_{ghost}(\phi_v)$,
is given by first enumerating  the elements of $E_{ghost}(\phi_v)$ in the
order $ord_v$, where the order of the sets  $E(\phi_v)$ is given by
the order on $V$  (i.e. given by the explicit  ordering of the tensor product in  $Y=\amalg_v \ast_v$)\footnote{Now we are working with ordered
tensor products. Alternatively one can just index the outer order
by the set $V$ by using \cite{TannakaDel}},
 and then enumerating the edges of $E_{ghost}(\phi)$ in their order $ord$.

\item The oriented version $\Graphs^{or}$ is then obtained by passing from orders to equivalence classes.

\end{enumerate}

\subsubsection{Category of planar aggregates and tree morphisms}
\label{planarsec}
Although it is hard to write down a consistent theory of planar graphs with planar morphisms, if not impossible,
there does exist a planar version of a special subcategory of $\Graphs$.

Let $\Crl^{pl}$ have as objects planar corollas --which simply means that there is a cyclic order on
the flags-- and as morphisms isomorphisms of these, that is isomorphisms of graphs which preserve the cyclic order.
 The automorphism group of a corolla $\ast_S$ are then isomorphic to $C_{|S|}$, the cyclic group of order $|S|$.
Let $\CCyclic^{pl}$ be the full subcategory of aggregates of planar
corollas whose morphisms are morphisms of the underlying
 corollas, for which the ghost graphs in their planar structure
 induced by the source  is compatible
 with the planar structure on the target via $\phi^F$. For this, we use the fact that the tails of a planar tree have a cyclic order.

Let $\Crl^{polypl}$ have as objects poly-cyclic corollas: that is corollas $*_S$ along with a bijection $N:S\to S$.

Let $\modular^{un,pl}$ be the category of aggregates of poly-cyclic  corollas and morphisms whose underlying  graphs have the induced structure of poly-cyclic order on the vertex, and the poly-cylic order on the target is the induced order obtained by contracting edges.

Let $\Crl^{pl,dir}$ be directed planar corollas with one output and let $\operads^{pl}$ be the  subcategory of $\Agg^{pl,dir}$ of aggregates of corollas of the type just mentioned, whose morphism are morphisms of the underlying directed corollas whose associated ghost graphs are compatible with the planar structures as above.

\subsection{Flag killing and leaf operators; insertion operations}
\label{insertionsec}
\subsubsection{Killing tails}
We define the operator $\kill$, which removes all tails from a graph. Technically, $\kill(\G)=(V_{\G},F_{\G}\setminus
T_{\G},\del_{\Gamma}|_{F_{\G}\setminus T_{\G}},
\imath_{\G}|_{F_{\G}\setminus T_{\G}})$.

\subsubsection{Adding tails}
Inversely, we define the formal expression
$\leaf$ which associates to each $\G$ without tails the formal sum
$\sum_n\sum_{\G':\kill(\G')=\G;F(\G')=F(\G')\amalg \bar n}\G'$, that is all possible
additions of tails where these tails are a standard set.
To make this well defined, we can consider the series as a power series in $t$:
$\leaf(\G)= \sum_n\sum_{\G':\kill(\G')=\G;F(\G')=F(\G')\amalg \bar n}\G't^{n}$.

This is a version of the foliage operator of \cite{KS,del} which was rediscovered in \cite{BBM}.

\subsubsection{Insertion}
Given graphs $\G$ and $\G'$, a vertex $v\in V_{\G}$, and an isomorphism $\phi$: $F_v\mapsto T_{\G'}$, we define $\G\circ_v\Gamma'$ to be the graph obtained by deleting $v$ and identifying the flags of $v$ with
the tails of $\G'$ via $\phi$. Notice that if $\G$ and $\G'$ are ghost graphs
of a morphism then this is just the composition of ghost graphs, with the morphisms at the other vertices being the identity.
\subsubsection{Unlabeled insertion} If we are considering graphs with unlabeled tails, that is classes $[\G]$ and $[\G']$
of coinvariants under the action of permutation of tails, then the insertion naturally lifts as
$[\G]\circ[\G']:=[\sum_{\phi} \G\circ_v\G']$ where $\phi$ runs through all the possible isomorphisms of two fixed lifts.

\subsubsection{No--tail insertion}
\label{nolabcompsec}
If $\G$ and $\G'$ are graphs without tails and $v$ a vertex of $\G$, then we
define $\G\circ_v\G'=\G\circ_v {\rm coeff}(\leaf(\G'),t^{|F_v|})$, the (formal) sum of graphs where $\phi$ is
one fixed identification of $F_v$ with $\overline{|F_v|}$.
In other words, one deletes $v$ and grafts all the tails to all possible positions on $\G'$.
Alternatively one can sum over all $\del:F_{\G}\amalg F_{\G'}\to V_{\G}\setminus v \amalg V_{\G'}$ where
$\del$ is $\del_{G}$ when restricted to $F_w,w\in V_{\G}$ and $\del_{\G'}$ when restricted to $F_{v'}, v'\in V_{\G'}$.


\section{Topological Model Structure}\label{appb}

In this appendix we fix $\op{C}$ to be the category of topological spaces with the Quillen model structure.  The goal of this section is to show that $\fopsc$ has a model structure via Theorem $\ref{transferthm1}$.  The general corollary to this theorem does not apply in this case however because not all objects are small \cite{Hovey}.  Thus we will devise a separate argument to show that the conditions of the theorem are satisfied.  This work is inspired by \cite{Fresse}.  The key to this argument is the fact that all topological spaces are small with respect to topological inclusions \cite{Hovey}, whose definition we now recall.

\begin{definition}  A continuous map $f\colon X\to Y$ is called a topological inclusion if it is injective and if for every open set $U\subset X$ there exists an open set $V\subset Y$ such that $f^{-1}(V)=U$.  We will often refer to topological inclusions as simply `inclusions'.
\end{definition}

We will use the following facts about topological inclusions.

\begin{lemma}\label{inclemma1}  Inclusions are closed under pushouts, transfinite compositions, and finite products.
\end{lemma}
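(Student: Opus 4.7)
The plan is to handle each of the three closure properties separately, in all cases exploiting the basic characterization: a continuous injection $f\colon X\to Y$ is a topological inclusion iff $X$ carries the subspace topology inherited from $Y$, i.e., every open $U\subset X$ arises as $f^{-1}(V)$ for some open $V\subset Y$.

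\emph{Pushouts.} Given a pushout square with $f\colon A\hookrightarrow B$ an inclusion and $g\colon A\to C$ arbitrary, I would verify directly that $\bar g\colon C\to B\cup_A C$ is an inclusion. Injectivity follows from the fact that the equivalence relation generated by $f(a)\sim g(a)$ cannot identify two distinct points of $C$ when $f$ is injective. For the topology: given an open $V\subset C$, set $S=g^{-1}(V)\subset A$. Since $f$ is an inclusion there is an open $U\subset B$ with $f^{-1}(U)=S$. One then shows that $U\sqcup V\subset B\sqcup C$ is saturated for $\sim$ (using injectivity of $f$ in both directions) and hence descends to an open $W$ of the pushout with $\bar g^{-1}(W)=V$.

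\emph{Transfinite compositions.} For a $\lambda$-sequence of inclusions $X_0\hookrightarrow X_1\hookrightarrow\cdots$ with colimit $X_\lambda$, I would show $X_0\hookrightarrow X_\lambda$ is an inclusion. Injectivity is immediate. For the topology: given open $U\subset X_0$, build open sets $U_\alpha\subset X_\alpha$ by transfinite induction so that $U_\alpha\cap X_\beta=U_\beta$ for all $\beta<\alpha$. At successors, the inclusion property of $X_\alpha\hookrightarrow X_{\alpha+1}$ produces some open $U'\subset X_{\alpha+1}$ with $U'\cap X_\alpha=U_\alpha$; by transitivity of the subspace topology along the chain, $U'\cap X_\beta=U_\beta$ holds automatically for all $\beta\le\alpha$. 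At limits, take $U_\alpha=\bigcup_{\beta<\alpha}U_\beta$, which is open in $X_\alpha$ since its preimage in each $X_\beta$ equals $U_\beta$. Then $W=\bigcup_\alpha U_\alpha$ is open in $X_\lambda$ with preimage $U$ in $X_0$.

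\emph{Finite products.} For inclusions $f\colon X\hookrightarrow Y$ and $g\colon X'\hookrightarrow Y'$, injectivity of $f\times g$ is clear. For the topology, every open set of $X\times X'$ is a union of basic opens $U\times U'$; each such basic open lifts to $V\times V'\subset Y\times Y'$ with $f^{-1}(V)=U$, $g^{-1}(V')=U'$, and $(f\times g)^{-1}(V\times V')=U\times U'$. Taking unions lifts the entire open set, giving the inclusion property. Iterating gives closure under all finite products.

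The main obstacle will be the pushout case, specifically checking that the candidate open set $U\sqcup V$ is saturated under the pushout equivalence relation: one must verify both directions of the condition $f(a)\in U\Leftrightarrow g(a)\in V$, using that $f^{-1}(U)=g^{-1}(V)$ together with injectivity of $f$. The other two cases are essentially bookkeeping once the inductive/union construction of preimage-compatible opens is set up correctly.
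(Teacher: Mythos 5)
Your proof is correct and follows essentially the same route as the paper: for finite products you give exactly the paper's argument (product of injections is injective, and opens in the product are unions of basic boxes $U\times U'$ that lift to $V\times V'$), while for pushouts and transfinite compositions the paper simply cites Hovey \S 2.4, and what you write out — the saturated open set $U\sqcup V$ descending to the quotient, and the transfinite construction of compatible opens $U_\alpha$ with $U_\alpha\cap X_\beta=U_\beta$ — is precisely the standard argument that reference supplies. No gaps; the only remark is that your pushout step is the one place where real verification is needed, and your saturation check $f(a)\in U\Leftrightarrow g(a)\in V$ via $f^{-1}(U)=g^{-1}(V)$ is exactly right.
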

\begin{proof}  This is straight forward.  See \cite{Hovey} section 2.4 for the first two statements.  For the third statement, note that a product of injective maps is injective, and that every open set in a product can be written as a union of products of open sets, from which the claim quickly follows.

\end{proof}

\begin{lemma}\label{inclemma2}  Let $J$ be a groupoid, let $\alpha,\beta$ be functors from $J$ to $\op{C}$ and let $\eta\colon \alpha\Rightarrow \beta$ be a natural transformation such that $\eta_j\colon\alpha(j)\to\beta(j)$ is an inclusion for each $j\in J$.  Then the induced map $colim(\alpha)\to colim(\beta)$ is an inclusion.
\end{lemma}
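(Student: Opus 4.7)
The plan is to exploit the groupoid structure of $J$ to explicitly describe the colimits in $\Top$ as quotients of coproducts and then check the inclusion properties component by component. Concretely, writing $\pi_\alpha\colon\coprod_{j}\alpha(j)\to\colim(\alpha)$ and $\pi_\beta\colon\coprod_{j}\beta(j)\to\colim(\beta)$ for the canonical surjections, and $\bar\eta\colon\colim(\alpha)\to\colim(\beta)$ for the induced map, the two things to verify are: (a) $\bar\eta$ is injective, and (b) for every open $U\subset\colim(\alpha)$ there exists an open $V\subset\colim(\beta)$ with $\bar\eta^{-1}(V)=U$.

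For (a) I would argue pointwise: if $\bar\eta([x])=\bar\eta([y])$ for $x\in\alpha(j),\, y\in\alpha(k)$, then $[\eta_j(x)]=[\eta_k(y)]$ in $\colim(\beta)$, and because $J$ is a groupoid this means there exists an isomorphism $\phi\colon j\to k$ with $\beta(\phi)(\eta_j(x))=\eta_k(y)$. Naturality of $\eta$ rewrites the left-hand side as $\eta_k(\alpha(\phi)(x))$, and injectivity of $\eta_k$ forces $\alpha(\phi)(x)=y$, so $[x]=[y]$.

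For (b), given an open $U\subset\colim(\alpha)$, set $U_j:=\pi_\alpha^{-1}(U)\cap\alpha(j)$, which is open by definition of the colimit topology, and pick for each $j$ an open $W_j\subset\beta(j)$ with $\eta_j^{-1}(W_j)=U_j$ using the hypothesis that $\eta_j$ is a topological inclusion. Define $V:=\pi_\beta\bigl(\coprod_j W_j\bigr)$. The key point, which is where the groupoid assumption bites, is that the preimage
\[
\pi_\beta^{-1}(V)\cap\beta(k)=\bigcup_{\phi\in\mathrm{Hom}_J(j,k),\, j\in J}\beta(\phi)(W_j)
\]
is a union of open sets (each $\beta(\phi)$ is a homeomorphism because $\phi$ is invertible), hence open, so $V$ is open in $\colim(\beta)$. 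Checking $\bar\eta^{-1}(V)=U$ is then a naturality computation: an element $[x]$ with $x\in\alpha(k)$ lies in $\bar\eta^{-1}(V)$ iff $\eta_k(x)=\beta(\phi)(w)$ for some $\phi\colon j\to k$ and $w\in W_j$; setting $x':=\alpha(\phi^{-1})(x)$, naturality plus injectivity of $\eta_j$ gives $w=\eta_j(x')\in W_j$, i.e.\ $x'\in U_j$, which is equivalent to $[x]\in U$.

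The step I expect to be slightly delicate is verifying openness of $\pi_\beta^{-1}(V)\cap\beta(k)$: a naive attempt to use $V=\bar\eta(U)$ fails because $\eta_k(U_k)$ need only be open in the subspace $\eta_k(\alpha(k))$, not in $\beta(k)$. The fix, as outlined above, is to enlarge $V$ from $\bar\eta(U)$ to the union over all isomorphisms in $J$, which is harmless for the preimage computation precisely because $U$ is already $J$-invariant and the $\beta(\phi)$ are homeomorphisms. Without $J$ being a groupoid, the morphisms $\beta(\phi)$ would not in general be open maps and this argument would collapse.
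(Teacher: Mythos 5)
Your proposal is correct and follows essentially the same route as the paper's proof: describing the colimits as quotients of coproducts (with the groupoid hypothesis making $\sim$ an equivalence relation), proving injectivity via naturality plus injectivity of the components, and establishing openness by saturating the chosen opens $W_j$ under the images $\beta(\phi)(W_j)$, which are open because each $\beta(\phi)$ is a homeomorphism. Your closing remark on why the naive choice $V=\bar\eta(U)$ fails is a useful clarification but does not change the argument.
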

\begin{proof}  Define $\phi$ to be the induced map between the colimits.  We first show $\phi$ is injective.  Recall that the forgetful functor from spaces to sets preserves colimits and thus as a set,
\begin{equation*}
colim(\alpha) = \coprod_{j\in J} \alpha(j)/\sim_\alpha
\end{equation*}
where $\sim_\alpha$ is the equivalence relation defined by $x_1\sim_\alpha x_2$ if and only if there exists $f\in Mor(J)$ such that $x_2=\alpha(f)(x_1)$.  Note that since $J$ is a groupoid this is an equivalence relation.  Similarly we define $colim(\beta)$ via an equivalence relation $\sim_\beta$.

Let $[x_1]$ and $[x_2]$ be elements in $colim(\alpha)$ with $\phi([x_1])=\phi([x_2])$.  Suppose $x_i\in \alpha(a_i)$.  Then $\phi([x_i]):=[\eta_{a_i}(x_i)]$, and by assumption $\eta_{a_1}(x_1)\sim_{\beta}\eta_{a_2}(x_2)$.  Thus there exists $f \in Mor(J)$ such that $\beta(f)(\eta_{a_1}(x_1))= \eta_{a_2}(x_2)$.  Since $\eta_{a_2}$ is an injection by assumption, the naturality of $\eta$ implies that $\alpha(f)(x_1)=x_2$, and thus $[x_1]=[x_2]$.  So $\phi$ is injective.

Next we must show that any open set in $colim(\alpha)$ is the pullback via $\phi$ of an open set in $colim(\beta)$.  Let $U\subset colim(\alpha)$ be open.  Then by definition of the topology of the colimit, $\lambda_a^{-1}(U)=:U_a$ is open in $\alpha(a)$ for every $a\in J$, where $\lambda_\bullet\colon \alpha(-)\to colim(\alpha)$ is the cocone.  Now by assumption for each $a\in J$ there exists an open set $V_a\subset \beta(a)$ such that $\eta_a^{-1}(V_a)=U_a$.  Define $V:=\cup_{a\in J} \gamma_a(V_a)$, where $\gamma_\bullet$ is the cocone for $\beta$.  This will be our candidate for an open set whose pull back under $\phi$ is $U$.

To show $V$ is open it is necessary and sufficient to show that for each $a\in J$, $\gamma_{a}^{-1}(V)\subset \beta(a)$ is open, and to show this we show
\begin{equation}\label{seteq1}
\gamma_a^{-1}(V)=\ds\bigcup_{\substack{f\in Mor(J) \\ t(f)=a}} \beta(f)(V_{s(f)}).
\end{equation}
First let $x\in \gamma_a^{-1}(V)$.  Then $\gamma_a(x)\in V$ and so there exists $b\in J$ and $y\in V_b$ with $\gamma_a(x)=\gamma_b(y)$, and thus there exists a morphism $f\colon b\to a$ in $J$ with $x = \beta(f)(y)$.  As a result, $x\in \beta(f)(V_b)$ and hence the inclusion $\subset$ in equation $\ref{seteq1}$.  For the other inclusion, suppose $w$ is an element of the rhs of equation $\ref{seteq1}$.  Then $w= \beta(f)(z)$ for some $z\in V_b$ and $f\colon b\to a$.  Using commutativity in the cocone this implies $\gamma_a(w)=\gamma_b(z)$ and hence $w\in \gamma_a^{-1}(\gamma_b(V_b))\subset \gamma_a^{-1}(V)$,  from which equation $\ref{seteq1}$ follows.  It follows that $V\subset colim(\beta)$ is open.

It remains to show that $\phi^{-1}(V)=U$.  For the $\supset$ inclusion:  if $[u]\in U$ then there exists $u\in U_a$ such that $\lambda_a(u)=[u]$ and by definition of $\phi$, $\gamma_a\eta_a(u) = \phi([u])$.  Now $\eta_a(u)\in V_a$ implies that $\phi([u])\in V$ and hence $[u] \in \phi^{-1}(V)$.  For the $\subset$ inclusion:  if $[w]\in \phi^{-1}(V)$, where $w\in \alpha(a)$, then there exists $[v]\in V$ with $\phi([w])=[v]$.  Since $[v]\in V$ there exists $b\in J$ with $v\in V_b$ and by commutativity of the relevant diagram we have $[v]=\gamma_b(v)=\gamma_a\eta_a(w)$.  Thus there exists a morphism $f\colon a\to b$ such that $v=\beta(f)(\eta_a(w))$, and by naturality, $v=\eta_b(\alpha(f)(w))$ so that there exists a unique $u:=\alpha(f)(w)\in U_b$ such that $\eta_b(u)=v$.  Then $[w]=[u]$ by definition of $\sim_\alpha$ and the fact that $[u]\in U$ permits the conclusion.
\end{proof}

\begin{corollary}\label{inccor1}  For $A\in\vseq$, the natural map $A\to GF(A)$ is a levelwise inclusion.

\end{corollary}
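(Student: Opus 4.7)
The plan is to realise the unit map $\eta_{A}(v)\colon A(v)\to GF(A)(v)$ as an iterated inclusion of coproduct summands, and then to invoke the fact that coproduct-summand inclusions in $\mathbf{Top}$ are topological inclusions (each summand being clopen in a disjoint union).

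First I would unravel $GF(A)(v)$ by factoring $F\colon\vseq\to\fopsc$ as a composite of two free functors $\vseq\to\vmodsc\to\fopsc$ (left Kan extensions along $\V_{id}\hookrightarrow \V$ and along $\imath$, respectively). Writing $\tilde A:=F_{\V}A$ for the free $\V$-module on $A$, the groupoid structure of $\V$ with discrete Hom-sets yields
\[
\tilde A(v)\;\cong\;\coprod_{\phi\colon w\to v\,\in\,\mathrm{Mor}(\V)} A(w),
\]
a \emph{topological} disjoint union, in which $A(v)$ sits as the summand indexed by $id_{v}$. By Theorem~\ref{patternthm} and Corollary~\ref{triplecor},
\[
GF(A)(v)\;=\;\colim_{Iso(\F\downarrow v)}\,\tilde A\circ s,
\]
with $\tilde A$ extended monoidally to $\F$ via a chosen basis.

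Next I would decompose the indexing groupoid $J:=Iso(\F\downarrow v)$ into its connected components; since colimits over a groupoid split as coproducts over components, $GF(A)(v)$ splits accordingly as a topological coproduct. I isolate the component $J_{0}$ containing $id_{v}$: an object $\phi\colon X\to v$ is isomorphic to $id_{v}$ in $(\F\downarrow v)$ precisely when $\phi$ is itself an isomorphism in $\F$, between any two such isomorphisms $\phi,\psi$ there is a unique $J$-arrow $\psi^{-1}\phi$, and the automorphism group of $id_{v}$ inside $J$ is trivial. Hence $J_{0}$ is equivalent to the terminal groupoid, so
\[
\colim_{J_{0}}\tilde A\circ s\;=\;\tilde A(v)\;=\;F_{\V}A(v),
\]
realising $F_{\V}A(v)$ as a coproduct summand of $GF(A)(v)$.

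Finally, chaining the two coproduct-summand inclusions
\[
A(v)\;\hookrightarrow\;F_{\V}A(v)\;\hookrightarrow\;GF(A)(v)
\]
and identifying the composite with $\eta_{A}(v)$ via a direct universal-property chase presents the unit as a summand-of-a-summand inclusion, hence as a topological inclusion. The main technical nuisance is the analysis of $J_{0}$ when objects of $\F$ isomorphic to $v$ need not strictly lie in $\imath(\V)$; this becomes transparent after first passing to the reduction of $\FF$ via Lemma~\ref{redlem}, where axiom~(i) pins these objects down strictly. Once this bookkeeping is in place, the argument is formal, and the resulting inclusion could alternatively be packaged via Lemma~\ref{inclemma2} applied to the inclusion $\{id_v\}\hookrightarrow J$ of the chosen object.
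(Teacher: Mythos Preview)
Your proposal is correct and is essentially the same argument as the paper's: both identify $A(v)$ with the contribution of the contractible component of isomorphism objects inside $J=Iso(\F\downarrow v)$ to the colimit $GF(A)(v)$. The paper packages this in one step via Lemma~\ref{inclemma2} (taking $\alpha$ constant at $A(v)$ on the isomorphism objects and $\emptyset$ elsewhere, with $\beta$ the functor whose colimit is $GF(A)(v)$), whereas you factor explicitly through the free $\V$-module $\tilde A(v)$ and chain two coproduct-summand inclusions---a minor organizational difference that you yourself note could be repackaged via Lemma~\ref{inclemma2}.
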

\begin{proof}  Fix $v \in \V$ and let $J$ be the groupoid $Iso(\F\downarrow v)$, let $\beta$ be the functor $A\circ s(-)(v)$ and let $\alpha$ be the functor which sends an isomorphism object in $J$ to $A(v)$, with morphisms in $J$ sent to the identity, and which sends all other objects in $J$ to $\emptyset$.  Note that $colim(\alpha)=A(v)$ and that there is an obvious natural transformation $\alpha\Rightarrow\beta$ which is an inclusion for each $j\in J$.  Thus Lemma $\ref{inclemma2}$ applies, from which the conclusion follows.

\end{proof}

Fix a Feynman category $\FF=(\V,\F, \imath)$ and let $\adj{F}{\vseq}{\fopsc}{G}$ be the forgetful free adjunction.

\begin{proposition}\label{soaprop}  Let $I$ be a collection of levelwise inclusions in $\vseq$. Then $F(I)$ permits the small object argument.
\end{proposition}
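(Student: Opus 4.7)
\medskip

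The plan is to verify the small object argument directly using the fact (from Hovey) that every topological space is small with respect to topological inclusions. Specifically, I will show (a) every $A \in \vseq$ is $\kappa$-small with respect to levelwise topological inclusions for a sufficiently large regular cardinal $\kappa$, and (b) relative $F(I)$-cell complexes, viewed in $\vseq$ via $G$, are levelwise topological inclusions. Granted (a) and (b), the domain $F(A)$ of any map in $F(I)$ is small relative to $F(I)$-cell complexes: by the free-forgetful adjunction, a morphism $F(A) \to X$ in $\fopsc$ is the same as a morphism $A \to G(X)$ in $\vseq$, and $G$ preserves the relevant filtered colimits by Lemma $\ref{limitslemma2}$.

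For (a), choose a regular cardinal $\kappa$ larger than the cardinality of a skeleton of $\V$ and such that every topological space is $\kappa$-small with respect to topological inclusions. Then for a levelwise-inclusion $\lambda$-sequence $X_0 \to X_1 \to \cdots$ in $\vseq$ with $\lambda \geq \kappa$, the argument already used in the proof of Theorem $\ref{modelthm}$ interchanges the product over $\V$ with the $\lambda$-filtered colimit (using the cardinality bound) and then applies $\kappa$-smallness of each $A(v)$ in $\Top$. This reduces the whole problem to analyzing the pushouts.

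For (b), since $G$ preserves filtered colimits and by Lemma $\ref{inclemma1}$ levelwise topological inclusions are closed under transfinite composition in $\vseq$, it is enough to show that $G$ sends each single pushout of a map in $F(I)$ to a levelwise inclusion. Consider a pushout square in $\fopsc$ of the form
\begin{equation*}
\xymatrix{F(A) \ar[r]^{F(j)} \ar[d] & F(B) \ar[d] \\ P \ar[r] & Q}
\end{equation*}
with $j\colon A\hookrightarrow B$ a levelwise topological inclusion in $\vseq$. Using the reflexive coequalizer description of colimits in $\fopsc$ from the proof of Lemma $\ref{limitslemma3}$, together with the explicit colimit formula for $F$ given in Corollary $\ref{triplecor}$, one can filter $G(Q)$ by ``word length'' in the new generators $B\setminus A$: set $G(Q)_0 := G(P)$ and build $G(Q)_{n+1}$ from $G(Q)_n$ as a pushout in $\vseq$ of a map that is, at each $v\in\V$, a colimit (indexed by the groupoid $Iso(\F\downarrow v)$) of a natural transformation whose components are finite tensor products of topological inclusions, some factors coming from $j$ and others being identities.

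The main obstacle is showing that each stage $G(Q)_n \to G(Q)_{n+1}$ is a levelwise topological inclusion. This reduces by Lemma $\ref{inclemma2}$ (applied to the groupoid $Iso(\F\downarrow v)$) to the statement that a finite tensor product in $\Top$ of topological inclusions is a topological inclusion, which is Lemma $\ref{inclemma1}$, together with the fact that pushouts in $\Top$ preserve topological inclusions. Passing to the sequential colimit $G(Q) = \colim_n G(Q)_n$ and applying Lemma $\ref{inclemma1}$ once more yields that $G(P) \to G(Q)$ is a levelwise inclusion, completing the proof. Once (b) is established, combining it with (a) gives the desired smallness and the small object argument applies.
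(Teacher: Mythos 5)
Your overall strategy --- reduce smallness of $F(A)$ to smallness of $A$ in $\vseq$ via the adjunction and Lemma \ref{limitslemma2}, and then show that relative $F(I)$-cell complexes become levelwise topological inclusions under $G$ --- is exactly the skeleton of the paper's proof (its Steps 1, 2 and 4), and your part (a) is verbatim the paper's Step 1. The divergence, and the problem, is in part (b). The paper does \emph{not} filter the pushout $Q=P\sqcup_{F(A)}F(B)$ by word length; it analyzes the explicit reflexive-coequalizer presentation of $Q$ from Lemma \ref{limitslemma3}, uses injectivity of $GF(j)$ to identify the $\vseq$-pushout as $A\amalg B'$ with $B'=B\setminus \mathrm{im}(j)$, exhibits a retraction $\epsilon$ splitting off the length-one part to get injectivity of $G(P)\to G(Q)$, and then constructs the required open sets by hand. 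You instead assert a word-length filtration $G(Q)_0=G(P)\subset G(Q)_1\subset\cdots$ whose successive stages are pushouts of groupoid-indexed colimits of ``finite tensor products of topological inclusions, some factors coming from $j$ and others being identities.'' This is the entire content of the proposition, and it is asserted rather than proved; moreover the description of the attaching maps is not correct.

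Concretely: in any such filtration the map $G(Q)_{n-1}\to G(Q)_n$ is a pushout along the $n$-fold iterated pushout-product (corner map) of $j$, i.e.\ the map from the colimit of the punctured $n$-cube with vertices $A^{\tensor \epsilon_1}\tensor\cdots\tensor A^{\tensor\epsilon_n}$ (with $B$ substituted where $\epsilon_i=1$) into $B^{\tensor n}$ --- not a plain tensor product of copies of $j$ and identities, since otherwise elements with fewer than $n$ new generators are attached twice. This matters for your argument because the punctured cube is a poset, not a groupoid, so Lemma \ref{inclemma2} does not apply to it, and Lemma \ref{inclemma1} (pushouts, transfinite compositions, finite products) does not yield that the corner map of arbitrary topological inclusions is again a topological inclusion: the source carries the colimit topology of a union of subspaces, and gluing the open sets witnessing each factor's initiality is exactly the kind of step that fails for general (non-open, non-closed) subspace inclusions. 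Since the proposition is stated for arbitrary levelwise inclusions $I$, you cannot appeal to closedness of the generating cofibrations either. So either you must prove the filtration exists with the correct corner-map attaching maps \emph{and} separately prove that corner maps of topological inclusions are topological inclusions, or you should follow the paper's route, which sidesteps the filtration entirely by exploiting the decomposition $\mathrm{colim}(G\alpha)=A\amalg B'$ and the splitting $\epsilon\beta_1=\mathrm{id}$ available precisely because $j$ is injective.
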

\begin{proof}  The proof will be given in several steps.
\begin{enumerate}
\item  Show the objects of $\vseq$ are small with respect to levelwise inclusions.
\item  Show $GF$ preserves levelwise inclusions.
\item  Show pushouts in $\fops$ preserve levelwise inclusions.
\item  Conclude that all objects in $\fopsc$ are small relative to $F(I)$, from which the claim follows.
\end{enumerate}

{\bf Step 1:}  This follows exactly as in the first part of the proof of Theorem $\ref{modelthm}$.

{\bf Step 2:}  Suppose $\eta\colon A\to B$ is a levelwise inclusion in $\vseq$.  For a fixed $v\in V$, we will apply Lemma $\ref{inclemma2}$ with $J= Iso (\F\downarrow v)$, $\alpha = A\circ s$ and $\beta = B\circ s$.  To do so note that the morphism of spaces $F(A)(v)\to F(B)(v)$ is induced by the natural transformation $A\circ s \to B\circ s$ given by mapping $\times_iA(v_i)\stackrel{\times_i\eta_{v_i}}\to \times_iB(v_i)$, and that $\times_i\eta_{v_i}$ is an inclusion by Lemma $\ref{inclemma1}$ since each $\eta_v$ is.  Lemma $\ref{inclemma2}$ thus implies $F(A)(v)\to F(B)(v)$ is an inclusion for each $v\in \V$.  In particular  $GF(\eta)$ is a levelwise inclusion.

{\bf Step 3:}  Let $J$ be the category with three objects whose nonidentity morphisms are $\ast_A\leftarrow \ast_C\rightarrow \ast_B$ and let $\alpha\colon J\to \fopsc$ be a functor whose image is of the form $ A\stackrel{s}\leftarrow C \stackrel{r}\rightarrow B$, where the induced map $G(C)(v)\to G(B)(v)$ is a topological inclusion for each $v\in V$.  In this step we want to show that for a given $v\in \V$, the natural map $\gamma\colon GA(v)\to Gcolim(\alpha)(v)$ is an inclusion.  Consider the diagram
\begin{equation}\label{d1}
\xymatrix{   && colim(G\alpha)(v)  \ar[d]^{\beta_1} &  \ar[dl]_{\beta:=\beta_1\circ\beta_2} GA(v) \ar[l]_{\beta_2}\ar[d]^{\gamma= \lambda\circ\beta} \\ GFcolim(GFG\alpha)(v) \ar@/^1pc/[rr]^f \ar@/_1pc/[rr]_g &&  \ar[ll]_h GFcolim(G\alpha)(v) \ar[r]^\lambda& Gcolim(\alpha)(v) }
\end{equation}
where the bottom line is given by taking the image under $G$ of the reflexive coequalizer of equation $\ref{refcoeq}$.  Note that by the construction of colimits in $\fopsc$ and the fact that $G$ preserves reflexive coequalizers, the bottom line of this diagram is still a reflexive coequalizer.  Also by the above construction of $colim(\alpha)$ we know the map $\gamma$ is equal to $\lambda\circ\beta$.  Finally note that $\beta$ is an inclusion by Lemma $\ref{inclemma1}$ and Corollary $\ref{inccor1}$.

{\it Substep 1: Analyze the morphisms $f,g,\beta$.}

Let us now consider the spaces $GFcolim(GFG\alpha)(v)$ and $GFcolim(G\alpha)(v)$ and the maps between them in more detail.  First note that since the map $r$ is injective, we can write $colim(G\alpha)(v)$ as  $A(v)\coprod (B(v)\setminus im(r(v)))$.  Therefore, writing $B^\prime(v):=B(v)\setminus im(r(v))$, we have
\begin{equation}
GFcolim(G\alpha)(v) = \ds\colim_{Iso(\F\downarrow v)} (A \coprod B^\prime)\circ s
\end{equation}
Similarly, if we define $B^{\prime\prime}(v):=FB(v)\setminus im(Fr(v))$, then
\begin{equation}
GFcolim(GFG\alpha)(v) = \ds\colim_{Iso(\F\downarrow v)} (FA \coprod B^{\prime\prime})\circ s
\end{equation}
Now these two spaces are subspaces of slightly less mysterious spaces, namely
\begin{equation}
GFcolim(G\alpha)(v) \subset \ds\colim_{Iso(\F\downarrow v)} (A \coprod B)\circ s
\end{equation}
and
\begin{align}
GFcolim(GFG\alpha)(v)\subset \ds\colim_{Iso(\F\downarrow v)} (FA \coprod FB)\circ s \nonumber \\
\cong\ds\colim_{Iso(\F\downarrow v)} (F(A \coprod B))\circ s \cong \ds\colim_{Iso(\F^2\downarrow v)} (A \coprod B)\circ s
\end{align}
So to define maps $GFcolim(GFG\alpha)(v)\rightarrow GFcolim(G\alpha)(v)$, it suffices to define maps
\begin{equation}
\xymatrix{\ds\colim_{Iso(\F^2\downarrow v)} (A \coprod B)\circ s \ar@/_1pc/[rr]_f \ar@/^1pc/[rr]^g && \ds\colim_{Iso(\F\downarrow v)} (A \coprod B)\circ s}
\end{equation}
and to show they land in the appropriate subspace of the target when restricted to the appropriate subspace of the source.

Recall that the objects of $Iso(\F^2\downarrow v)$ are sequences of morphisms $X\to Y\to v$ and that the morphisms are levelwise isomorphisms.  Define $g$ to be the map that is induced on the colimits by forgetting the middle term; $(X\to Y\to v) \mapsto (X\to v)$, and which takes the identity on the argument $(A\coprod B)(X)$.  Define $f$ to be the map that is induced on colimits by forgetting the left hand term; $(X\to Y\to v) \mapsto (Y\to v)$ and which uses to operad structure on the argument; $(A\coprod B)(X) \to (A\coprod B)(Y)$.  Note that although $FB\setminus im(F(r))\supset F(B\setminus im(r))$ are not equal, for each point in $im(F(r)(v))$ there is a unique point in $F(A)(v)$ which is equivalent, and so the maps will restrict appropriately via this relabeling.

Now it must be checked that this definition of $f$ and $g$ recover the morphisms defined in the proof of Lemma $\ref{limitslemma3}$.  The morphism $f$ is the morphism $(1)$ of the lemma, which is easily seen since contraction via the operad structure. corresponds to the natural transformation $FG\Rightarrow id$.  More delicate is to check that the morphism $g$ is the morphism $(2)$ of the lemma.  To see this note that when the Yoneda lemma was applied in the proof, we pushed forward the identity morphism of $Fcolim(G\alpha)$.  Therefore, taking $G\op{P}:=GFcolim(G\alpha)$, the morphism $(2)$ in the lemma can be described as the image of the cocone $GFG\alpha(-)\to colim(GFG\alpha)\to GFcolim(G\alpha)$ by $GF$ along with postcomposition by $FG\Rightarrow id$, i.e.
\begin{equation}
GFcolim (GFG\alpha)\to G(FG)Fcolim (G\alpha)\to GFcolim (G\alpha)
\end{equation}

On the other hand notice that
\begin{equation*}
GFGFcolim (G\alpha) = colim_{Iso(\F^2\downarrow v)}(A\coprod B^\prime)\circ s \subset colim_{Iso(\F^2\downarrow v)}(A\coprod B)\circ s
\end{equation*}
and so $g$ as described above is precisely restriction of the map $G(FG)Fcolim (G\alpha)\to GFcolim (G\alpha)$ to the subspace $GFcolim (GFG)\alpha\hookrightarrow G(FG)Fcolim (G\alpha)$, and hence the maps coincide.

There are two other maps that we wish to consider via colimits.  First, notice that we may write
\begin{equation}
colim(G\alpha)(v)= A(v)\coprod B^\prime(v) =colim_{Iso(\V\downarrow v)} (A\coprod B^\prime)\circ s
\end{equation}
and therefore can realize the map $\beta_1$ as being induced by the inclusion functor $Iso(\V\downarrow v)\hookrightarrow Iso(\F\downarrow v)$.  Second notice that there is a map
\begin{equation}
\epsilon\colon \ds\colim_{Iso(\F\downarrow v)} (A \coprod B) \circ s \to colim_{Iso(\V\downarrow v)} (A\coprod B)\circ s
\end{equation}
induced by composition in the $\F$-$\opers$ $A$ and $B$.  Moreover $\epsilon\beta_1$ is the identity.

{\it Substep 2:  Show $\gamma$ is injective.}

For simplicity of notation we will rewrite diagram $\ref{d1}$ as simply

\begin{equation}\label{d2}
\xymatrix{&&& Z \ar[d]^\gamma \ar@{_{(}->}[dl]_\beta &\\ X \ar@/^1pc/[rr]^f \ar@/_1pc/[rr]_g &&  \ar[ll]_h Y \ar[r]^\lambda& Y/\sim }
\end{equation}
where the equivalence relation is that generated by setting $f(x)\sim g(x)$ for each $x\in X$.

Let $z_1, z_2\in Z$ and suppose $\gamma(z_1)=\gamma(z_2)$.  Then $\beta(z_1)\sim\beta(z_2)$.  Since $\beta$ is an injection, it is sufficient to show that $\beta(z_1)=\beta(z_2)$.  Now since $\beta(z_1)\sim \beta(z_2)$, there exists a finite series of points $x_1\cdc x_m\in X$ along with an $m$-tuple $(l_1\cdc l_m) \in (\mathbb{Z}/2\mathbb{Z})^{m}$ such that:
\begin{equation}
\xymatrix{ \ar[d]^{f_{l_1}} x_1 \ar[dr]^{f_{l_1+1}} & \ar[d]^{f_{l_2}} \ar[dr] x_2 & \dots &\ar[d] \ar[dr]^{f_{l_m+1}} x_m &  \\ \beta(z_1)=f_{l_1}(x_1)& f_{l_1+1}(x_1)=f_{l_2}(x_2) & \dots &  & \beta(z_2)=f_{l_m+1}(x_m) }
\end{equation}
where we define $f_1:=f$ and $f_2=f_0:= g$.

Now clearly any adjacent pair of points in the bottom row have the same image via $\epsilon$ and as a result $\epsilon\beta(z_1)=\epsilon\beta(z_2)$, but $\epsilon\beta$ is the identity, from which we conclude $z_1=z_2$, and hence $\gamma$ is injective.

{\it Substep 3:  Show the injective map $\gamma$ is an inclusion.}
Let $U\subset A(v)$ be an open set.  We want to show that there is an open set $V\subset G(colim(\alpha))(v)$ such that $\gamma^{-1}(V)=U$.  In order to define $V$, we first define a subspace
\begin{equation}
W\subset GFcolim(G\alpha)(v) = \ds\colim_{Iso(\F\downarrow v)} (A \coprod B^\prime)\circ s
\end{equation}
as follows.  Let $(A\coprod B^\prime)\circ s \stackrel{\epsilon_\bullet}\to GFcolim(G\alpha)(v)$ be the cocone maps.  Then $W:=\cup W_\phi$ where $\phi$ runs over the objects in $Iso(\F\downarrow v)$ and where $W_\phi$ is defined in two cases.  First, if $\phi$ is not an isomorphism then define $W_\phi$ to be the image of $\epsilon_\phi$.  Second if $\phi$ is an isomorphism, from $v^\prime\to v$ say, then it induces a homeomorphism $A(v)\cong A(v^\prime)$ and we define $U^\prime$ to be the image of $U$ via this morphism.  We then define $W_\phi$ to be the image of $(U^\prime \coprod B^\prime)\circ s$ via $\epsilon_\phi$.

Now to show $W$ is open it suffices to show $W_\phi$ is open for each $\phi$, and to show this it suffices to show $\epsilon_\psi^{-1}(W_\phi)$ is open for every pair of objects $\phi$ and $\psi$.  However, the fact that $Iso(\F\downarrow v)$ is a groupoid tells us that $\epsilon_\psi^{-1}(W_\phi)$ is empty unless $\phi\cong \psi$, in which case $\epsilon_\psi^{-1}(W_\phi)$ is clearly open.  So $W$ is an open set and it was chosen to be large enough to contain the entirety of any of its equivalence classes.  Indeed the only potential snag would be having $\beta(z_1)\sim \beta(z_2)$ with $z_1\in U$ and $z_2\in A(v)\setminus U$, but this is precluded by the above argument, thus we may conclude $\lambda^{-1}\lambda(W)=W$.

Define $V:= \lambda(W)$ and note $V$ is open by the definition of the topology of the colimit of a reflexive coequalizer.  Further note that $\gamma^{-1}(V) = \beta^{-1}(W) = \beta_2^{-1}\epsilon_{id}^{-1}(W) = U$ as desired.  We thus conclude that $\gamma$ is an inclusion.

{\bf Step 4:}  Let $P\to Q$ be a relative $F(I)$-cell complex in $\fopsc$.  Then by definition there exists a sequence of pushouts:
\begin{equation}
\xymatrix{ F(A_0) \ar[d] \ar[r]^{F(i_0)} & \ar[dr] F(B_0) & F(A_1) \ar[d] \ar[r]^{F(i_2)} & \ar[dr] F(B_1) & \ar[d] F(A_2) \ar[r] &\dots \\  P=P_0 \ar[rr] && P_1 \ar[rr] && P_2 \ar[r] & \dots  }
\end{equation}
whose composition is the morphism $P\to Q$.  By the above work we know that the bottom line is a sequence of levelwise inclusions, and it thus follows from Lemma $\ref{inclemma1}$ that $GP\to GQ$ is a levelwise inclusion.  Thus for any domain of a morphism in $F(I)$, call it $F(A)$, we have
\begin{align*}
Hom(F(A), colim_i P_i)&\cong Hom(A, Gcolim_i P_i)  \cong Hom(A, colim_i(GP_i)) \\ & \cong colim_iHom(A, GP_i) \cong colim_i Hom(F(A), P_i)
\end{align*}
since $A$ is small with respect to levelwise inclusions.  Thus, $F(I)$ permits the small object argument.

\end{proof}

\begin{theorem}  Let $\op{C}$ be the category of topological spaces with the Quillen model structure.  The category $\fopsc$ has the structure of a cofibrantly generated model category in which the forgetful functor to $\vseq$ creates fibrations and weak equivalences.
\end{theorem}
\begin{proof}  We will apply Theorem $\ref{transferthm1}$.  Since both the generating cofibrations $I$ and the generating acyclic cofibrations $J$ in $\vseq$ are levelwise inclusions, it follows from Proposition $\ref{soaprop}$ that $F(I)$ and $F(J)$ permit the small object argument.  In addition, as discussed above in Example $\ref{tsex}$, conditions (ii),(iii) and (iv) of Corollary $\ref{transferprinciple}$ are satisfied, which imply (as seen in the proof of said corollary) that $G$ takes relative $F(J)$-cell complexes to weak equivalences.  Thus the conditions for transfer are verified, and Theorem $\ref{transferthm1}$ applies to imply the desired result.
\end{proof}

\bibliography{fcbib}
\bibliographystyle{halpha}
\end{document}